\documentclass[12pt]{amsart}

\usepackage{amssymb,a4wide,graphicx,hyperref}

\newtheorem{theorem}{Theorem}
\newtheorem{corollary}{Corollary}
\newtheorem{lemma}{Lemma}[section]
\newtheorem{proposition}[lemma]{Proposition}
\theoremstyle{definition}
\newtheorem{definition}{Definition}
\theoremstyle{remark}
\newtheorem{remark}[lemma]{Remark}
\newtheorem{example}[lemma]{Example}

\numberwithin{equation}{section}

\title{Rational self-affine tiles}

\dedicatory{Dedicated to Professor Shigeki Akiyama on the occasion of his $50^{th}$ birthday}

\author[W.~Steiner]{Wolfgang Steiner}
\address{LIAFA, CNRS UMR 7089, Universit\'e Paris Diderot -- Paris 7, Case 7014, 75205 Paris Cedex 13, FRANCE}
\email{steiner@liafa.univ-paris-diderot.fr}
\author[J. M. Thuswaldner]{J\"org M. Thuswaldner}
\address{Chair of Mathematics and Statistics, University of Leoben, A-8700 Leoben, AUSTRIA}
\email{joerg.thuswaldner@unileoben.ac.at}
\thanks{This research was supported by the Austrian Science Foundation (FWF), project S9610, which is part of the national research network FWF--S96 ``Analytic combinatorics and probabilistic number theory'', and by the Amad\'ee grant FR 16/2010 -- PHC Amadeus 2011 ``From fractals to numeration''. }

\date{March 3, 2012}

\keywords{Self-affine tile, tiling, shift radix system}

\subjclass[2010]{52C22, 11A63, 28A80}

\begin{document}
\begin{abstract}
An integral self-affine tile is the solution of a set equation $\mathbf{A} \mathcal{T} = \bigcup_{d \in \mathcal{D}} (\mathcal{T} + d)$, where $\mathbf{A}$ is an $n \times n$ integer matrix and $\mathcal{D}$ is a finite subset of $\mathbb{Z}^n$.
In the recent decades, these objects and the induced tilings have been studied systematically.
We extend this theory to matrices $\mathbf{A} \in \mathbb{Q}^{n \times n}$.
We define rational self-affine tiles as compact subsets of the open subring $\mathbb{R}^n\times \prod_\mathfrak{p} K_\mathfrak{p}$ of the ad\`ele ring $\mathbb{A}_K$, where the factors of the (finite) product are certain $\mathfrak{p}$-adic completions of a number field $K$ that is defined in terms of the characteristic polynomial of~$\mathbf{A}$.
Employing methods from classical algebraic number theory, Fourier analysis in number fields, and results on zero sets of transfer operators, we establish a general tiling theorem for these tiles.

We also associate a second kind of tiles with a rational matrix.
These tiles are defined as the intersection of a (translation of a) rational self-affine tile with $\mathbb{R}^n \times \prod_\mathfrak{p} \{0\} \simeq \mathbb{R}^n$.
Although these intersection tiles have a complicated structure and are no longer self-affine, we are able to prove a tiling theorem for these tiles as well.
For particular choices of the digit set $\mathcal{D}$, intersection tiles are instances of tiles defined in terms of shift radix systems and canonical number systems.
This enables us to gain new results for tilings associated with numeration systems.
\end{abstract}

\maketitle

\section{Introduction}

Let $\mathbf{A} \in \mathbb{R}^{n\times n}$ be a real matrix which is \emph{expanding}, i.e., all its eigenvalues are outside the unit circle, and let $\mathcal{D} \subset\mathbb{R}^n$ be a finite ``digit set''.
Then, according to the theory of iterated function systems (see e.g.\ Hutchinson~\cite{Hutchinson:81}), there is a unique non-empty compact set $\mathcal{T} = \mathcal{T}(\mathbf{A},\mathcal{D}) \subset \mathbb{R}^n$ satisfying the set equation
\[
\mathbf{A}\, \mathcal{T} = \bigcup_{d \in \mathcal{D}} (\mathcal{T} + d)\,.
\]
If $\mathcal{T}$ has positive Lebesgue measure, it is called a \emph{self-affine tile}. The investigation of these objects began with the work of Thurston~\cite{Thurston:89} and Kenyon~\cite{Kenyon:92}.
The foundations of a systematic theory of self-affine tiles were provided by Gr\"ochenig and Haas~\cite{Groechenig-Haas:94} as well as Lagarias and Wang \cite{Lagarias-Wang:96a,Lagarias-Wang:96b,Lagarias-Wang:96c,Lagarias-Wang:97} in the middle of the 1990s.
Up to now, various properties of self-affine tiles have been investigated. For instance, there are results on geometric \cite{Kenyon-Li-Strichartz-Wang:99,Strichartz-Wang:98,Duvall-Keesling-Vince:00} and topological \cite{Kirat-Lau:00,Bandt-Wang:01,Akiyama-Thuswaldner:04,Leung-Lau:07} aspects as well as characterizations of ``digit sets''~$\mathcal{D}$ that provide a self-affine tile for a given matrix~$\mathbf{A}$; see for instance \cite{Odlyzko:78,Lagarias-Wang:96a,Lau-Rao:03,Lai-Lau-Rao:13}.

An important feature of self-affine tiles are their remarkable tiling properties.
Particularly nice tiling properties come up if $\mathbf{A}$ is an integer matrix and $\mathcal{D}$ is a subset of~$\mathbb{Z}^n$. In this case, the tile $\mathcal{T}(\mathbf{A},\mathcal{D})$ is called \emph{integral self-affine tile}.
The starting point for our paper is the following result of Lagarias and Wang \cite{Lagarias-Wang:97} on tiling properties of such tiles.

Let $\mathbf{A}$ be an expanding integer matrix with irreducible characteristic polynomial and $\mathcal{D}$ be a complete set of coset representatives of the group $\mathbb{Z}^n / \mathbf{A} \mathbb{Z}^n$.
Denote by $\mathbb{Z}\langle \mathbf{A}, \mathcal{D}\rangle$ the smallest $\mathbf{A}$-invariant sublattice of $\mathbb{Z}^n$ containing the difference set $\mathcal{D} - \mathcal{D}$.
Then $\mathcal{T} = \mathcal{T}(\mathbf{A},\mathcal{D})$ induces a lattice tiling of the space $\mathbb{R}^n$ with respect to~$\mathbb{Z}\langle \mathbf{A}, \mathcal{D}\rangle$; see \cite[Corollary~6.2 and Lemma~2.1]{Lagarias-Wang:97}.
In particular, this means that
\[
\bigcup_{\mathbf{z} \in \mathbb{Z}\langle \mathbf{A},\mathcal{D}\rangle} (\mathcal{T}+\mathbf{z}) = \mathbb{R}^n \quad \hbox{with} \quad \mu\big((\mathcal{T}+\mathbf{z}) \cap (\mathcal{T}+\mathbf{z}')\big) = 0 \ \hbox{for all}\ \mathbf{z}, \mathbf{z}'\in \mathbb{Z}\langle \mathbf{A},\mathcal{D}\rangle,\ \mathbf{z} \not= \mathbf{z}',
\]
where $\mu$ denotes the ($n$-dimensional) Lebesgue measure.

We mention that tiling questions are of interest also in more general contexts. For instance, the tiling problem is of great interest for self-affine tiles that are defined as solutions of graph-directed systems (see for instance \cite{Kenyon-Vershik:98,Grochenig-Haas-Raugi:99,Lagarias-Wang:03,Ito-Rao:06,Kenyon-Solomyak:10}). However, the results are less complete here than in the above setting. Indeed, the quest for tiling theorems in the graph-directed case includes the {\it Pisot conjecture} which states that each Rauzy fractal associated with an irreducible unit Pisot substitution induces a tiling; see e.g. \cite{Barge-Kwapisz:06,Ito-Rao:06}.

The first aim of the present paper is to extend the tiling theorem of Lagarias and Wang in another direction. We shall define self-affine tiles (and tilings) associated with an expanding matrix $\mathbf{A} \in \mathbb{Q}^{n\times n}$ with irreducible characteristic polynomial, and develop a tiling theory for these tiles.

The first kind of tiles we are dealing with are self-affine tiles defined in spaces of the shape $\mathbb{R}^n\times \prod_\mathfrak{p} K_\mathfrak{p}$ where the factors of the (finite) product are certain $\mathfrak{p}$-adic completions of a number field $K$ that is defined in terms of the characteristic polynomial of~$\mathbf{A}$.
We call these tiles \emph{rational self-affine tiles} and establish fundamental properties of these objects in Theorem~\ref{basicProperties}.
Using characters of the ad\`ele ring $\mathbb{A}_K$ of $K$ and Fourier analysis on the locally compact Abelian group $\mathbb{R}^n\times \prod_\mathfrak{p} K_\mathfrak{p}$, we establish a Fourier analytic tiling criterion in the spirit of \cite[Proposition~5.3]{Groechenig-Haas:94} for these tiles; see Proposition~\ref{p:53}. This criterion is then used to establish a tiling theorem (stated as Theorem~\ref{newtilingtheorem}) for rational self-affine tiles in the flavor of the one by Lagarias and Wang mentioned above.
To this matter, as in~\cite{Lagarias-Wang:97}, we have to derive properties of the zero set of eigenfunctions of a certain transfer operator related to the tile under consideration. To achieve this, we use methods from classical algebraic number theory. One of the difficulties in the proof comes from the fact that Lagarias and Wang use a result on zero sets of transfer operators due to Cerveau, Conze, and Raugi~\cite{Cerveau-Conze-Raugi:96}. As this result seems to have no analogue in spaces containing $\mathfrak{p}$-adic factors, we have to adapt our setting to make it applicable in its original form.

It turns out that it is more natural to define a rational self-affine tile in terms of an expanding algebraic number~$\alpha$ rather than an expanding matrix $\mathbf{A} \in \mathbb{Q}^{n\times n}$. It will become apparent in Section~\ref{sec:basic-defin-main} that both formulations lead to the same objects. We mention here that in the context of Rauzy fractals, tilings with $\mathfrak{p}$-adic factors have been investigated in \cite{Siegel:03,ABBS:08}. However, in this setting up to now no general tiling theorems are known.

Interestingly, it is possible to associate a second kind of tiles with a rational matrix. These tiles, which turn out to be ``slices'' of a single rational self-affine tile (see Proposition~\ref{p:slices}) are defined as the intersection of a rational self-affine tile (as well as its translates) with $\mathbb{R}^n\times \prod_\mathfrak{p} \{0\}$. As this space is obviously isomorphic to~$\mathbb{R}^n$, these intersections can be regarded as subsets of the Euclidean space. In general, intersections of fractals with subspaces are hard to handle. In our context, the self-affine structure is lost and the tiles even cannot be described by a graph-directed system. Nevertheless, we are able to show that these tiles give rise to tilings of~$\mathbb{R}^n$ (Theorem~\ref{srstiling}). In proving this, we also show that the boundary of these tiles has zero $n$-dimensional Lebesgue measure. Note that the tiles forming such a tiling may have infinitely many different shapes and not each of them has to be equal to the closure of its interior (see Example~\ref{ex:1}). Nevertheless, we exhibit certain ``almost periodicity'' properties of these tilings for a large class of digit sets (Theorem~\ref{t:24}).

These ``intersective'' tilings are of special interest as they are related to so-called \emph{shift radix systems} (SRS for short), which form common generalizations of several kinds of numeration systems like canonical number systems and beta numeration; see \cite{Akiyama-Borbeli-Brunotte-Pethoe-Thuswaldner:05}.
Shift radix systems are dynamical systems depending on a parameter $\mathbf{r} \in \mathbb{R}^n$, defined by $\tau_\mathbf{r}:\, \mathbb{Z}^n \to \mathbb{Z}^n$,  $\mathbf{z} = (z_1, \ldots, z_n) \mapsto (z_2, \ldots, z_n,-\lfloor \mathbf{r z} \rfloor)$, where $\mathbf{r z}$ is the scalar product.
We can write $\tau_\mathbf{r}(\mathbf{z}) = \mathbf{M}_\mathbf{r}\, \mathbf{z} + (0, \ldots, 0, \mathbf{r z} - \lfloor \mathbf{r z} \rfloor)$, where $\mathbf{M}_\mathbf{r}$ is the companion matrix to the vector~$\mathbf{r}$.
SRS exhibit interesting properties when the spectral radius $\varrho(\mathbf{M}_\mathbf{r})$ of $\mathbf{M}_\mathbf{r}$ is less than~$1$, i.e., if $\mathbf{r}$ is contained in the so-called Schur-Cohn region $\mathcal{E}_n = \{\mathbf{r}\in\mathbb{R}^n:\, \varrho(\mathbf{M}_\mathbf{r}) < 1\}$; see \cite{Schur:18}. Recently, \emph{SRS tiles} associated with $\mathbf{r} \in \mathcal{E}_n$ were defined in \cite{BSSST:11} by the Hausdorff limit
\begin{equation}\label{srstiledef}
\mathcal{T}_\mathbf{r}(\mathbf{z}) = \mathop{\mathrm{Lim}}_{k \to \infty} \mathbf{M}_\mathbf{r}^k\, \tau_\mathbf{r}^{-k}(\mathbf{z}) \qquad (\mathbf{z} \in \mathbb{Z}^n)\,.
\end{equation}
It is conjectured that the collection $\mathcal{C}_\mathbf{r} =\{\mathcal{T}_\mathbf{r}(\mathbf{z})\;:\; \mathbf{z}\in \mathbb{Z}^n \}$ forms a tiling of $\mathbb{R}^n$ for every parameter~$\mathbf{r} \in \mathcal{E}_n$. 
As $\mathcal{C}_\mathbf{r}$ is known to be the collection of Rauzy fractals associated with beta numeration for special choices of~$\mathbf{r}$, this tiling conjecture includes the \emph{Pisot conjecture} for beta numeration asserting that the Rauzy fractals associated with each Pisot unit $\beta$ form a tiling, see e.g.\ \cite{Akiyama:02,Berthe-Siegel:05}.

It turns out that for certain choices of $(\mathbf{A},\mathcal{D})$ the above-mentioned intersections of rational self-affine tiles are just affine images of SRS tiles. Indeed, using our tiling theorem we are able to provide a dense subset of parameters $\mathbf{r} \in \mathcal{E}_n$ such that $\mathcal{C}_\mathbf{r}$ forms a tiling of~$\mathbb{R}^n$, see Theorem~\ref{srsthm}. This might be of interest also for the Pisot conjecture, as our tiling parameters are arbitrarily close to each parameter corresponding to beta numeration. For the parameters associated to beta numeration so far it is only known that they form a multiple tiling, see \cite{Berthe-Siegel:05,Kalle-Steiner:12}. 
As our results show that there exists no open subset of~$\mathcal{E}_n$ consisting only of ``non-tiling parameters'', they indicate that such parameters can only occur due to ``algebraic reasons''. 

\begin{remark}
In the framework of symmetric beta numeration, parameters giving rise to double tilings exist~(see \cite{Kalle-Steiner:12}).
Similarly to the classical case, symmetric beta numeration is a special instance of so-called symmetric SRS,
a~variant of SRS that has been introduced in \cite{Akiyama-Scheicher:07}.
The methods developed in Section~\ref{sec:relat-betw-tiles} can be carried over to exhibit a dense set of symmetric SRS parameters that give rise to tilings.
This indicates that the non-tiling parameters are exceptional in this case as well.
\end{remark}

\begin{remark}
Lagarias and Wang also considered integer matrices with reducible characteristic polynomial.
In this case, there exist situations where the tiling property fails.
These situations were characterized in \cite{Lagarias-Wang:97}.
By generalizing our methods, it is also possible to set up a tiling theory for matrices $\mathbf{A}\in\mathbb{Q}^{n\times n}$ with reducible characteristic polynomial.
As one has to keep track of the reducible factors and single out nontrivial Jordan blocks when defining the representation space~$\mathbb{K}_\alpha$, the definitions get more complicated than in the irreducible setting.
Since we wish to concentrate on the main ideas of our new theory in the present paper, we have decided to postpone the treatment of the reducible case to a forthcoming paper.
\end{remark}

\section{Basic definitions and main results} \label{sec:basic-defin-main}

In the present section, we give precise definitions of the classes of tiles to which this paper is devoted and state our main results. We start with some preparations and notations.

\subsection*{$\mathfrak{p}$-adic completions}
Let $K$ be a number field. For each given (finite or infinite) prime $\mathfrak{p}$ of~$K$, we choose an absolute value $\lvert\cdot\rvert_\mathfrak{p}$ and write $K_\mathfrak{p}$ for the completion of $K$ with respect to~$\lvert\cdot\rvert_\mathfrak{p}$. In all what follows, the absolute value $\lvert\cdot\rvert_\mathfrak{p}$ is chosen in the following way. Let $\xi \in K$ be given. If $\mathfrak{p} \mid \infty$, denote by $\xi^{(\mathfrak{p})}$ the associated Galois conjugate of~$\xi$. If $\mathfrak{p}$ is real, we set $|\xi|_\mathfrak{p} = |\xi^{(\mathfrak{p})}|$, and if $\mathfrak{p}$ is complex, we set $|\xi|_\mathfrak{p} = |\xi^{(\mathfrak{p})}|^2$.
Finally, if $\mathfrak{p}$ is finite, we put $|\xi|_\mathfrak{p}=\mathfrak{N}(\mathfrak{p})^{-v_\mathfrak{p}(\xi)}$, where $\mathfrak{N}(\cdot)$ is the norm of a (fractional) ideal and $v_\mathfrak{p}(\xi)$ denotes the exponent of $\mathfrak{p}$ in the prime ideal decomposition of the principal ideal~$(\xi)$. Note that in any case $\lvert\cdot\rvert_\mathfrak{p}$ induces a metric on $K_\mathfrak{p}$. If $\mathfrak{p} \mid \infty$, then we equip $K_\mathfrak{p}$ with the real Lebesgue measure in case $K_\mathfrak{p} = \mathbb{R}$ and with the complex Lebesgue measure otherwise.
If $\mathfrak{p} \nmid \infty$, then $\mathfrak{p}$ lies over the rational prime $p$ satisfying $(p) = \mathfrak{p} \cap \mathbb{Z}$.
In this case, we equip $K_\mathfrak{p}$ with the Haar measure $\mu_\mathfrak{p}(a + \mathfrak{p}^k) = \mathfrak{N}(\mathfrak{p})^{-k}= p^{-k f(\mathfrak{p})}$, where $f(\mathfrak{p})$ denotes the inertia degree of $\mathfrak{p}$ over~$p$.
For details, we refer to \cite[Chapter~I, \S8, and Chapter~III, \S1]{Neukirch:99}.

\subsection*{Representation space~$\mathbb{K}_\alpha$}
Throughout the paper, let $\alpha$ be an expanding algebraic number with primitive minimal polynomial 
\begin{equation}\label{minpol}
A(X) = a_n X^n + a_{n-1} X^{n-1} + \cdots + a_1 X + a_0 \in \mathbb{Z}[X]\,.
\end{equation}
Here, \emph{expanding} means that every root of $A$ is outside the unit circle (which implies that $|a_0| \ge 2$), and \emph{primitive} means that $(a_0, a_1, \ldots, a_n) = 1$.
A~sufficient condition for~$A$ to be expanding is given by $|a_0| > a_1+\cdots + a_n$; moreover, the Schur-Cohn Algorithm can be used to check whether a given polynomial is expanding or not; see e.g.\ \cite[pp.~491--494]{Henrici:88}.

The ring of integers of the number field $K = \mathbb{Q}(\alpha)$ will be denoted by~$\mathcal{O}$.
Let
\[
\alpha\, \mathcal{O} = \frac{\mathfrak{a}}{\mathfrak{b}}\,, \qquad (\mathfrak{a}, \mathfrak{b}) = \mathcal{O}\,,
\]
where $\mathfrak{a},\mathfrak{b}$ are ideals in~$\mathcal{O}$, $S_\alpha=\{\mathfrak{p}:\, \mathfrak{p} \mid \infty\ \hbox{or}\ \mathfrak{p} \mid \mathfrak{b}\}$, and define the representation space
\[
\mathbb{K}_\alpha = \prod_{\mathfrak{p} \in S_\alpha} K_\mathfrak{p} = \mathbb{K}_\infty \times \mathbb{K}_\mathfrak{b}\,, \quad \mbox{with} \quad \mathbb{K}_\infty =  \prod_{\mathfrak{p}\mid\infty} K_\mathfrak{p}\quad \mbox{and} \quad\ \mathbb{K}_\mathfrak{b} = \prod_{\mathfrak{p}\mid\mathfrak{b}} K_{\mathfrak{p}}\,.
\]
Moreover, $\mathbb{K}_\infty = \mathbb{R}^r \times \mathbb{C}^s$ when $\alpha$ has $r$ real and $s$ pairs of complex Galois conjugates.
The elements of $\mathbb{Q}(\alpha)$ are naturally represented in $\mathbb{K}_\alpha$ by the canonical ring homomorphism
\[
\Phi_\alpha:\, \mathbb{Q}(\alpha) \to \mathbb{K}_\alpha\,, \quad \xi \mapsto\prod_{\mathfrak{p}\in S_\alpha} \xi\,.
\]
We equip $\mathbb{K}_\alpha$ with the product metric of the metrics $\lvert\cdot\rvert_\mathfrak{p}$ and the product measure $\mu_\alpha$ of the measures~$\mu_\mathfrak{p}$, $\mathfrak{p}\in S_\alpha$.
Note that $\mathbb{Q}(\alpha)$ acts multiplicatively on $\mathbb{K}_\alpha$ by
\[
\xi \cdot (z_\mathfrak{p})_{\mathfrak{p}\in S_\alpha} = (\xi z_\mathfrak{p})_{\mathfrak{p}\in S_\alpha} \qquad (\xi \in \mathbb{Q}(\alpha)).
\]
We also use the canonical ring homomorphisms
\[
\Phi_\infty:\, \mathbb{Q}(\alpha) \to \mathbb{K}_\infty\,, \quad \xi \mapsto \prod_{\mathfrak{p}\mid\infty} \xi\,, \qquad \mbox{and} \qquad \Phi_\mathfrak{b}:\, \mathbb{Q}(\alpha) \to \mathbb{K}_\mathfrak{b}\,, \quad \xi \mapsto \prod_{\mathfrak{p}\mid\mathfrak{b}} \xi\,.
\]
The canonical projections from $\mathbb{K}_\alpha$ to~$\mathbb{K}_\infty$ and $\mathbb{K}_\mathfrak{b}$ will be denoted by $\pi_\infty$ and~$\pi_\mathfrak{b}$, respectively, and $\mu_\infty$ denotes the Lebesgue measure on~$\mathbb{K}_\infty$.

\subsection*{Rational self-affine tiles: definition and tiling theorem}
As mentioned in the introduction, we define rational self-affine tiles first in terms of algebraic numbers.

\begin{definition}
Let $\alpha$ be an expanding algebraic number and $\mathcal{D} \subset \mathbb{Z}[\alpha]$. 
The non-empty compact set $\mathcal{F}=\mathcal{F}(\alpha,\mathcal{D}) \subset \mathbb{K}_\alpha$ defined by the set equation
\begin{equation}\label{seteq}
\alpha \cdot \mathcal{F} = \bigcup_{d\in \mathcal{D}} \big(\mathcal{F} + \Phi_\alpha(d)\big)
\end{equation}
is called a \emph{rational self-affine tile} if $\mu_\alpha(\mathcal{F}) > 0$.
\end{definition}

It is immediate from this definition that
\[
\mathcal{F} = \bigg\{\sum_{k=1}^\infty \Phi_\alpha(d_k \alpha^{-k}):\, d_k \in \mathcal{D}\bigg\}\,.
\]
Moreover, the set $\mathcal{F}$ does not depend on the choice of the root~$\alpha$ of the polynomial~$A$.
In analogy to digit sets of integral self-affine tiles, we call $\mathcal{D}$ a
\begin{itemize}
\itemsep3pt
\item \emph{standard digit set} for $\alpha$ if $\mathcal{D}$ is a complete set of coset representatives of the group $\mathbb{Z}[\alpha] / \alpha \mathbb{Z}[\alpha]$ (which implies that $\# \mathcal{D} = |a_0|$), 
\item
\emph{primitive digit set} for $\alpha$ if $\mathbb{Z}\langle \alpha, \mathcal{D}\rangle = \mathbb{Z}[\alpha]$, where $\mathbb{Z}\langle \alpha, \mathcal{D}\rangle$ is the smallest $\alpha$-invariant $\mathbb{Z}$-submodule of $\mathbb{Z}[\alpha]$ containing the difference set $\mathcal{D} - \mathcal{D}$.
\end{itemize}
Note that
\[
\mathbb{Z}\langle \alpha, \mathcal{D}\rangle = \big\langle \mathcal{D} - \mathcal{D}, \alpha (\mathcal{D} - \mathcal{D}), \alpha^2 (\mathcal{D} - \mathcal{D}), \ldots\big\rangle_{\mathbb{Z}}\,.
\]

Rational self-affine tiles can also be defined in terms of rational matrices.
Indeed, let $\mathbf{A} \in \mathbb{Q}^{n\times n}$ be an expanding matrix with irreducible characteristic polynomial~$A$, and let $\mathcal{D} \subset \mathbb{Q}^{n}$. 
Let $\alpha$ be a root of~$A$, and choose a basis of the vector space $\mathbb{Q}(\alpha):\mathbb{Q}$ in a way that the multiplication by $\alpha$ can be viewed as multiplication by $\mathbf{A}$ in this vector space.
Then the non-empty compact set $\mathcal{F} \subset \mathbb{K}_\alpha \simeq \mathbb{R}^n \times \mathbb{K}_\mathfrak{b}$ defined by the set equation
\[
\mathbf{A}\, \mathcal{F} = \bigcup_{d\in \mathcal{D}} \big(\mathcal{F} + \Phi_\alpha(d)\big)
\]
is exactly the same as the set defined in~\eqref{seteq}.

Our first main result contains fundamental properties of rational self-affine tiles.
Before stating it, we define the notion of (multiple) tiling in our context. 

\begin{definition}
Let $(X,\Sigma,\mu)$ be a measure space. 
A~collection $\mathcal{C}$ of compact subsets of~$X$ is called a \emph{multiple tiling} of~$X$ if there exists a positive integer~$m$ such that $\mu$-almost every point of~$X$ is contained in exactly $m$ elements of~$\mathcal{C}$. 
If $m=1$, then $\mathcal{C}$ is called a \emph{tiling} of~$X$.
\end{definition}

\begin{theorem}\label{basicProperties}
Let $\alpha$ be an expanding algebraic number and let $\mathcal{D}$ be a standard digit set for~$\alpha$.
Then the following properties hold for the rational self-affine tile $\mathcal{F} = \mathcal{F}(\alpha,\mathcal{D})$.
\renewcommand{\theenumi}{\roman{enumi}}
\begin{enumerate}
\item \label{property1}
$\mathcal{F}$ is a compact subset of $\mathbb{K}_\alpha$.
\item \label{property2}
$\mathcal{F}$ is the closure of its interior.
\item \label{property3}
The boundary $\partial\mathcal{F}$ of $\mathcal{F}$ has measure zero, i.e., $\mu_\alpha(\partial\mathcal{F})=0$.
\item \label{property5}
The collection $\{\mathcal{F} + \Phi_\alpha(x):\, x \in \mathbb{Z}[\alpha]\}$ forms a multiple tiling of~$\mathbb{K}_\alpha$.
\end{enumerate}
\end{theorem}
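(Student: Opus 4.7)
The plan is to establish the four properties in the order (i), (iv), (iii), (ii), with the multiple tiling~(iv) carrying essentially all of the analytic weight. Property~(i) follows by a standard IFS argument: since $|\alpha|_\mathfrak{p}>1$ for every $\mathfrak{p}\in S_\alpha$---at infinite places because $\alpha$ is expanding, and at finite places $\mathfrak{p}\mid\mathfrak{b}$ because $v_\mathfrak{p}(\alpha)=-v_\mathfrak{p}(\mathfrak{b})<0$---the affine maps $z\mapsto\alpha^{-1}(z+\Phi_\alpha(d))$, $d\in\mathcal{D}$, are uniform contractions on the complete metric space $\mathbb{K}_\alpha$, so Hutchinson's theorem yields a unique non-empty compact attractor $\mathcal{F}$ satisfying~\eqref{seteq}.

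Next I would record the measure-scaling identity $\mu_\alpha(\alpha\cdot E)=\prod_{\mathfrak{p}\in S_\alpha}|\alpha|_\mathfrak{p}\,\mu_\alpha(E)$; the product formula on $K$ combined with $\alpha\mathcal{O}=\mathfrak{a}/\mathfrak{b}$ and primitivity of the minimal polynomial identifies this scaling factor as $\mathfrak{N}(\mathfrak{a})=|a_0|=\#\mathcal{D}$. Inserted into~\eqref{seteq} this forces
\[
|a_0|\,\mu_\alpha(\mathcal{F})=\mu_\alpha(\alpha\cdot\mathcal{F})\le\sum_{d\in\mathcal{D}}\mu_\alpha(\mathcal{F}+\Phi_\alpha(d))=|a_0|\,\mu_\alpha(\mathcal{F}),
\]
so the union in~\eqref{seteq} is essentially disjoint. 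Iterating~\eqref{seteq} expresses $\alpha^k\mathcal{F}$ as a finite union of $\mathbb{Z}[\alpha]$-translates of~$\mathcal{F}$, from which a compactness-and-contractivity argument yields a covering $\bigcup_{x\in\mathbb{Z}[\alpha]}(\mathcal{F}+\Phi_\alpha(x))=\mathbb{K}_\alpha$; countable subadditivity against the $\sigma$-finite measure $\mu_\alpha$ then forces $\mu_\alpha(\mathcal{F})>0$. To upgrade this covering to property~(iv) I apply the Fourier-analytic tiling criterion of Proposition~\ref{p:53}: its hypotheses are met in the mixed Archimedean/$\mathfrak{p}$-adic setting once characters of the ad\`ele ring $\mathbb{A}_K$ are used to describe the Pontryagin dual, and it converts the covering into the statement that the covering multiplicity is almost-everywhere equal to a finite constant $m$, which is by definition a multiple tiling.

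Given~(iv), property~(iii) is immediate: $\partial\mathcal{F}\subseteq\bigcup_{x\in\mathbb{Z}[\alpha]\setminus\{0\}}\mathcal{F}\cap(\mathcal{F}+\Phi_\alpha(x))$, and each pairwise intersection has measure zero because of the finite and constant multiplicity. With $\mu_\alpha(\partial\mathcal{F})=0$ and $\mu_\alpha(\mathcal{F})>0$, the interior $\mathcal{F}^\circ$ is non-empty; pick $z^\circ\in\mathcal{F}^\circ$. For an arbitrary $z=\sum_{k\ge 1}\Phi_\alpha(\alpha^{-k}d_k)\in\mathcal{F}$, truncate at level~$N$ and replace the tail by $\alpha^{-N}\cdot z^\circ$. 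The $N$-fold iterated set equation shows that the resulting point $z_N$ lies in the open set $\sum_{k=1}^N\Phi_\alpha(\alpha^{-k}d_k)+\alpha^{-N}\cdot\mathcal{F}^\circ\subseteq\mathcal{F}^\circ$, while contractivity of $\alpha^{-1}$ on every factor of~$\mathbb{K}_\alpha$ gives $z_N\to z$; hence $z\in\overline{\mathcal{F}^\circ}$ and (ii) follows.

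The principal obstacle is the multiple-tiling step~(iv). Classical Gr\"ochenig--Haas-type criteria apply to full-rank lattices in $\mathbb{R}^n$, whereas here $\Phi_\alpha(\mathbb{Z}[\alpha])$ lies in the mixed Archimedean/$\mathfrak{p}$-adic space~$\mathbb{K}_\alpha$ and is not on its own cocompact; it has to be organized via the larger discrete cocompact $S_\alpha$-integer subgroup of~$K$ and analyzed through characters of~$\mathbb{A}_K$. Carrying out this adelic translation---the content of Proposition~\ref{p:53}---is where the analytic weight of the theorem lies, after which the remaining properties reduce to the IFS and measure-theoretic manipulations sketched above, the only non-obvious algebraic input being the identity $\mathfrak{N}(\mathfrak{a})=|a_0|$.
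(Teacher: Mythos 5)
Your steps (i) and (ii) are fine, but the core of your plan --- deducing (iv) from Proposition~\ref{p:53} and then (iii) from (iv) --- does not work. First, invoking Proposition~\ref{p:53} here is circular: the contact-matrix/transfer-operator machinery of Section~\ref{sec:tiling-criteria} is built \emph{on top of} Theorem~\ref{basicProperties}; in particular Lemma~\ref{lem:measuremultiple} explicitly uses the multiple tiling property ``seen in Section~\ref{sec:prop-rati-self}'' to identify the covering multiplicity as $\mu_\alpha(\mathcal{F})/\mu_\alpha(D)$. Second, even ignoring circularity, Proposition~\ref{p:53} does not say what you need: it asserts that \emph{failure of the (multiplicity-one) tiling} forces a non-constant eigenfunction of $\widehat{\mathbf{C}}$; it is a necessary condition for non-tiling, not a device that upgrades a covering to an a.e.\ constant multiplicity. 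The multiple tiling in (iv) is proved in the paper by an elementary argument with no Fourier analysis at all: if the multiplicity were not a.e.\ constant, pick $m_1<m_2$ so that the set $M_1$ of points of multiplicity exactly $m_1$ and the set $M_2$ of multiplicity $\ge m_2$ both have positive measure; $M_1$ is relatively dense (since $\Phi_\alpha(\mathfrak{z})$ is, via Lemmas~\ref{delonelemma} and~\ref{l:subgroup}), while a ball $B_\varepsilon(\mathbf{z})\subset M_2$ avoiding all tile boundaries satisfies $\alpha^k\cdot B_\varepsilon(\mathbf{z})\subset M_2$ for every $k$ --- the induction uses the set equation~\eqref{seteq} and the standard-digit-set property that the points $\alpha x_\ell+d_\ell$ remain distinct --- so $M_2$ contains arbitrarily large balls, a contradiction. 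The adelic/character input you ascribe to Proposition~\ref{p:53} is in fact only needed for the genuine tiling theorem (Theorem~\ref{newtilingtheorem}); what (iv) needs from the ad\`ele ring is just the Delone property of $\Phi_\alpha(\mathbb{Z}[\alpha])$.

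The deduction of (iii) from (iv) is also genuinely false as stated: a multiple tiling of multiplicity $m\ge 2$ does \emph{not} make the pairwise intersections $\mathcal{F}\cap(\mathcal{F}+\Phi_\alpha(x))$ null --- on the contrary, if a.e.\ point is covered $m\ge2$ times, some such intersection has positive measure. Multiplicity $\ge 2$ really occurs under the hypotheses of the theorem (standard digit set): for $\alpha=\frac32$, $\mathcal{D}=\{0,2,4\}$ almost every point of $\mathbb{K}_{3/2}$ is covered twice by the $\mathbb{Z}[\frac32]$-translates, yet $\mu_\alpha(\partial\mathcal{F})=0$ still holds; your argument would fail exactly there. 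The paper proves (iii) independently of (iv): nonempty interior comes from the uniformly locally finite covering (Lemma~\ref{lfcovering}, again resting on the Delone property) plus a Baire category argument, and then in the iterated set equation~\eqref{iteratedset} one small copy $\alpha^{-k}\cdot(\mathcal{F}+\Phi_\alpha(d'))$ lies in $\mathrm{int}(\mathcal{F})$, so its boundary is covered twice; taking measures with Lemma~\ref{measuremult} gives $\mu_\alpha(\mathcal{F})\le\mu_\alpha(\mathcal{F})-|a_0|^{-k}\mu_\alpha(\partial\mathcal{F})$, i.e.\ $\mu_\alpha(\partial\mathcal{F})=0$. Note moreover that the paper's proof of (iv) \emph{uses} (iii), to choose the ball $B_\varepsilon(\mathbf{z})$ off the boundaries; so the logical order must be (i), covering lemma, (ii), (iii), (iv) --- essentially the reverse of the order you propose.
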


The proof of this result is given in Section~\ref{sec:prop-rati-self}.
There, we also show that $\{\mathcal{F} + \Phi_\alpha(x):\, x \in \mathbb{Z}\langle \alpha, \mathcal{D}\rangle\}$ forms a multiple tiling of~$\mathbb{K}_\alpha$.
With considerably more effort, we are able to sharpen this result.
Indeed, our second main result is the following general tiling theorem for rational self-affine tiles, which is proved in Sections~\ref{sec:tiling-criteria} and~\ref{sec:tilingcriterion}.

\begin{theorem} \label{newtilingtheorem}
Let $\alpha$ be an expanding algebraic number and let $\mathcal{D}$ be a standard digit set for~$\alpha$.
Then $\{\mathcal{F} + \Phi_\alpha(x):\, x \in \mathbb{Z}\langle \alpha, \mathcal{D}\rangle\}$ forms a tiling of~$\mathbb{K}_\alpha$.
\end{theorem}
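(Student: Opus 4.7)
The plan is to sharpen the multiple tiling that already follows (as noted after Theorem~\ref{basicProperties}) from the fact that $\{\mathcal{F}+\Phi_\alpha(x):x\in\mathbb{Z}\langle\alpha,\mathcal{D}\rangle\}$ covers $\mathbb{K}_\alpha$ with some constant multiplicity~$m$, by showing that $m=1$. I would apply the Fourier analytic tiling criterion (Proposition~\ref{p:53}). Writing $\Lambda=\Phi_\alpha(\mathbb{Z}\langle\alpha,\mathcal{D}\rangle)$, Pontryagin duality for the locally compact abelian group~$\mathbb{K}_\alpha$ identifies the annihilator $\Lambda^\perp$ of~$\Lambda$ in the character group $\widehat{\mathbb{K}_\alpha}$ with an explicit sub-module of~$K$ via the adelic trace pairing. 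The criterion then reduces the assertion $m=1$ to verifying that $\widehat{\mathbf{1}_\mathcal{F}}(\chi)=0$ for every nontrivial $\chi\in\Lambda^\perp$.

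Iterating the set equation~\eqref{seteq} and passing to Fourier transforms yields an infinite-product expansion
\[
\widehat{\mathbf{1}_\mathcal{F}}(\chi) \;=\; \mu_\alpha(\mathcal{F})\,\prod_{k\ge 1} \frac{1}{|a_0|}\sum_{d\in\mathcal{D}} \chi\bigl(\Phi_\alpha(\alpha^{-k}d)\bigr).
\]
Because $\mathcal{D}$ is a complete set of coset representatives of $\mathbb{Z}[\alpha]/\alpha\mathbb{Z}[\alpha]$, each finite factor vanishes unless the character $d\mapsto\chi(\Phi_\alpha(\alpha^{-k}d))$ is trivial on~$\mathbb{Z}[\alpha]$, i.e.\ unless the dual orbit point $(\alpha^*)^{k}\chi$ lies in a particular discrete subgroup $\Lambda_0\subset\widehat{\mathbb{K}_\alpha}$. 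Non-vanishing of the whole product therefore forces the entire forward orbit of~$\chi$ under $\alpha^*$ to remain inside~$\Lambda_0$, which is an invariance/eigenfunction condition for a transfer operator of the type studied by Cerveau, Conze and Raugi~\cite{Cerveau-Conze-Raugi:96}. Combining this with the prime ideal factorization $\alpha\mathcal{O}=\mathfrak{a}/\mathfrak{b}$, which controls how $\alpha^*$ acts $\mathfrak{p}$-adically at each place in~$S_\alpha$, I would argue that the only orbit that remains trapped in~$\Lambda_0$ is the trivial one, yielding $\chi=0$.

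The main obstacle, as the authors emphasize in the introduction, is that the Cerveau--Conze--Raugi zero-set theorem is formulated in a purely archimedean setting and does not apply out of the box to a product that includes $\mathfrak{p}$-adic factors. My strategy is therefore to engineer a reduction so that the original form of CCR becomes applicable: I would replace the adelic system $(\alpha,\mathcal{D})$ by an auxiliary integral self-affine system on a Euclidean space whose transfer operator shares the same non-trivial eigenfunctions, essentially by using a compact fundamental domain of~$\mathbb{K}_\mathfrak{b}$ modulo $\Phi_\mathfrak{b}(\mathbb{Z}[\alpha])$ to absorb the $\mathfrak{p}$-adic coordinates into an enlarged digit set. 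Making this reduction intertwine the action of $\alpha$ at every place of~$S_\alpha$ simultaneously is the delicate technical heart of the argument; this is where the classical algebraic number theory announced in the introduction (ramification, approximation, and the explicit structure of $\mathbb{K}_\mathfrak{b}$) does the real work.
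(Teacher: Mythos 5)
Your opening reduction does not do what you claim. Vanishing of $\widehat{\mathbf{1}_\mathcal{F}}$ on the nontrivial characters annihilating $\Phi_\alpha(\mathbb{Z}\langle\alpha,\mathcal{D}\rangle)$ is equivalent only to the covering function $\sum_{x}\mathbf{1}_{\mathcal{F}+\Phi_\alpha(x)}$ being a.e.\ constant, i.e.\ to the multiple tiling property already established in Theorem~\ref{basicProperties}; it does not give $m=1$ (compare $[0,2]+\mathbb{Z}$ in $\mathbb{R}$, where the Fourier transform vanishes on all nonzero dual lattice points yet the multiplicity is $2$). Multiplicity one requires the measure identity $\mu_\alpha(\mathcal{F})=\mu_\alpha(D)$ of Lemma~\ref{lem:measuremultiple}, and the paper reaches it through the contact matrix; Proposition~\ref{p:53}, which you invoke, is a statement about non-constant eigenfunctions of the transfer operator $\widehat{\mathbf{C}}$ on the finite-dimensional space $\widehat\Omega$ spanned by the neighbor characters $\chi_{\alpha,x}$, $x\in\mathcal{V}$, not about $\widehat{\mathbf{1}_\mathcal{F}}$ on a dual lattice. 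Moreover, the termwise dichotomy you assert for the infinite product is false: the factor $\frac{1}{|a_0|}\sum_{d\in\mathcal{D}}\chi\big(\Phi_\alpha(\alpha^{-k}d)\big)$ is forced to be $0$ or $1$ only when the character $d\mapsto\chi(\Phi_\alpha(\alpha^{-k}d))$ is trivial on $\alpha\,\mathbb{Z}[\alpha]$; otherwise the sum depends on the choice of representatives and is in general neither $0$ nor of modulus $1$, so non-vanishing of the product does not confine the dual orbit to a discrete subgroup. This is exactly why the naive product argument fails and why both Lagarias--Wang and this paper work with zero sets of transfer-operator eigenfunctions instead.

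Independently of this, the step you yourself call the delicate heart --- making the Cerveau--Conze--Raugi theorem applicable despite the $\mathfrak{p}$-adic factors --- is left as an unexecuted plan, and your proposed device (absorbing the coordinates of $\mathbb{K}_\mathfrak{b}$ into an enlarged digit set of an auxiliary integral self-affine system) is not how the difficulty is resolved in the paper. The paper exploits the freedom in the \emph{dual} digits: by strong approximation one chooses a complete residue system $\mathcal{D}^*$ of $\mathfrak{z}^*/\alpha\,\mathfrak{z}^*$ that is $\mathfrak{p}$-adically small (Lemma~\ref{residueSystemLemma}), which traps every compact minimal $\tau_\alpha$-invariant subset of the zero set inside the slab $E$ of \eqref{e:E}, where the special eigenfunction factors through $\pi_\infty$ (Lemma~\ref{infinity}); only then does CCR apply verbatim on $\mathbb{K}_\infty\simeq\mathbb{R}^n$. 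Even after that, two arguments your sketch does not address are essential: the cycle argument of Lemma~\ref{l:33} excluding finite invariant sets (this is where the specific module $\mathbb{Z}\langle\alpha,\mathcal{D}\rangle$ and the $\Phi_\alpha(\mathfrak{z}^*)$-periodicity of the zero set enter), and the density/periodicity arguments of Lemma~\ref{l:42} together with a final strong-approximation step pulling $Z_{f_\infty}=\mathbb{K}_\infty$ back to $f\equiv 0$ on $\mathbb{K}_\alpha$. As it stands, the proposal has genuine gaps both at the initial Fourier reduction and at the point where the contradiction must actually be derived.
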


For primitive digit sets, we get the following immediate corollary.
Note that in particular $\{0,1\} \subset \mathcal{D}$ implies primitivity of the digit set.

\begin{corollary} \label{c:1}
Let $\alpha$ be an expanding algebraic number and let $\mathcal{D}$ be a primitive, standard digit set for~$\alpha$.
Then $\{\mathcal{F} + \Phi_\alpha(x):\, x \in \mathbb{Z}[\alpha]\}$ forms a tiling of~$\mathbb{K}_\alpha$.
\end{corollary}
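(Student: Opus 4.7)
The plan is to deduce Corollary~\ref{c:1} directly from Theorem~\ref{newtilingtheorem}. First I would observe that, by the definition given earlier in this section, the digit set $\mathcal{D}$ is primitive for~$\alpha$ precisely when $\mathbb{Z}\langle \alpha, \mathcal{D}\rangle = \mathbb{Z}[\alpha]$. Substituting this equality into the conclusion of Theorem~\ref{newtilingtheorem} immediately gives that $\{\mathcal{F} + \Phi_\alpha(x):\, x \in \mathbb{Z}[\alpha]\}$ is a tiling of~$\mathbb{K}_\alpha$, which is precisely the statement of the corollary. No further analytic or arithmetic ingredient is required, since the nontrivial work has already been absorbed into the theorem.

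Second, to justify the side remark that $\{0,1\} \subset \mathcal{D}$ implies primitivity, I would argue as follows. The element $1 = 1 - 0$ lies in~$\mathcal{D} - \mathcal{D}$, and hence in~$\mathbb{Z}\langle \alpha, \mathcal{D}\rangle$. Since this submodule is $\alpha$-invariant, it contains $\alpha^k \cdot 1 = \alpha^k$ for every integer $k \ge 0$, and therefore also the $\mathbb{Z}$-span of these powers, which is all of~$\mathbb{Z}[\alpha]$. The reverse inclusion $\mathbb{Z}\langle \alpha, \mathcal{D}\rangle \subseteq \mathbb{Z}[\alpha]$ is immediate from the definition, so $\mathbb{Z}\langle \alpha, \mathcal{D}\rangle = \mathbb{Z}[\alpha]$ and primitivity follows.

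Because the corollary is purely formal given Theorem~\ref{newtilingtheorem}, there is no genuine obstacle. If I had to name a conceptual point, it would be the recognition that the module $\mathbb{Z}\langle \alpha, \mathcal{D}\rangle$ furnished by the theorem is, by design, the smallest natural candidate for a tiling lattice, and that the primitivity hypothesis is tailored precisely to promote this to the full module~$\mathbb{Z}[\alpha]$.
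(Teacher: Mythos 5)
Your proposal is correct and coincides with the paper's reasoning: the corollary is obtained immediately from Theorem~\ref{newtilingtheorem} by substituting the defining property of a primitive digit set, namely $\mathbb{Z}\langle \alpha, \mathcal{D}\rangle = \mathbb{Z}[\alpha]$. Your justification of the side remark that $\{0,1\} \subset \mathcal{D}$ implies primitivity is also the intended one.
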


\subsection*{Tiles in $\mathbb{R}^n$ and shift radix systems}
A~second objective of the present paper is the investigation of tiles that are subsets of $\mathbb{K}_\infty \simeq \mathbb{R}^n$.

\begin{definition}
For a given rational self-affine tile $\mathcal{F}=\mathcal{F}(\alpha,\mathcal{D})$, we define the sets
\[
\mathcal{G}(x) = \big\{(z_\mathfrak{p})_{\mathfrak{p}\in S_\alpha} \in \mathcal{F} + \Phi_\alpha(x):\, z_\mathfrak{p}=0\ \hbox{for each}\ \mathfrak{p}\mid \mathfrak{b}\big\} \qquad \big(x \in \mathbb{Z}[\alpha]\big)\,.
\]
The set $\mathcal{G}(x)$ is the intersection of $\mathcal{F} + \Phi_\alpha(x)$ with $\mathbb{K}_\infty \times \Phi_\mathfrak{b}(\{0\})$.
For this reason, we call $\mathcal{G}(x)$ the \emph{intersective tile} at~$x$.
\end{definition}

We will show in Proposition~\ref{p:slices} that $\mathcal{F}$ is essentially made up of slices of translated copies of $\mathcal{G}(x)$, $x \in \mathbb{Z}\langle \alpha, \mathcal{D}\rangle$, more precisely, we will prove that
\begin{equation} \label{e:slices}
\mathcal{F} = \overline{\bigcup_{x \in \mathbb{Z}\langle \alpha, \mathcal{D}\rangle} \big(\mathcal{G}(x) - \Phi_\alpha(x)\big)}\,.
\end{equation}

We will often identify $\mathbb{K}_\infty \times \Phi_\mathfrak{b}(\{0\})$ with~$\mathbb{K}_\infty$, in particular we will then regard $\mathcal{G}(x)$ as a subset of $\mathbb{K}_\infty \simeq \mathbb{R}^n$.
Our third main result is a tiling theorem for intersective tiles.

\begin{theorem} \label{srstiling}
Let $\alpha$ be an expanding algebraic number and let $\mathcal{D}$ be a standard digit set for~$\alpha$. Then the following assertions hold.
\renewcommand{\theenumi}{\roman{enumi}}
\begin{enumerate}
\itemsep1ex
\item \label{three:i}
$\mu_\infty(\partial \mathcal{G}(x))=0$ for each $x\in\mathbb{Z}[\alpha]$.
\item \label{three:ii}
The collection $\{\mathcal{G}(x):\, x \in \mathbb{Z}\langle \alpha, \mathcal{D}\rangle\}$ forms a tiling of $\mathbb{K}_\infty \simeq \mathbb{R}^n$.
\end{enumerate}
\end{theorem}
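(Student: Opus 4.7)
The plan is to deduce Theorem~\ref{srstiling} from the ad\`elic tiling of Theorem~\ref{newtilingtheorem} by restricting to the zero slice $\mathbb{K}_\infty\times\Phi_\mathfrak{b}(\{0\})$. The main structural observation is that every prime $\mathfrak{p}\mid\mathfrak{b}$ satisfies $|\alpha|_\mathfrak{p}>1$, so multiplication by $\Phi_\mathfrak{b}(\alpha)$ is an invertible expansion on $\mathbb{K}_\mathfrak{b}$. Intersecting the set equation~\eqref{seteq} with the zero slice therefore yields the graph-directed identity
\[
\Phi_\infty(\alpha)\,\mathcal{G}(x)\;=\;\bigcup_{d\in\mathcal{D}}\mathcal{G}(\alpha x+d)\qquad(x\in\mathbb{Z}[\alpha]),
\]
which on passing to topological boundaries in $\mathbb{K}_\infty$ gives $\Phi_\infty(\alpha)\,\partial\mathcal{G}(x)\subset\bigcup_{d\in\mathcal{D}}\partial\mathcal{G}(\alpha x+d)$.

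For part~(i), I would first combine Theorem~\ref{basicProperties}(iii) with Fubini applied to $\mu_\alpha=\mu_\infty\otimes\mu_\mathfrak{b}$ to conclude that the slice of $\partial\mathcal{F}$ at $v$ has $\mu_\infty$-measure zero for $\mu_\mathfrak{b}$-a.e.\ $v\in\mathbb{K}_\mathfrak{b}$. Since $\partial\mathcal{G}(x)$ is contained (up to translation by $\Phi_\infty(x)$) in the slice of $\partial\mathcal{F}$ at $-\Phi_\mathfrak{b}(x)$, this already gives $\mu_\infty(\partial\mathcal{G}(x_0))=0$ for $x_0$ in a subset of $\mathbb{Z}[\alpha]$ whose $\Phi_\mathfrak{b}$-image is $\mu_\mathfrak{b}$-conull. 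The main obstacle of the theorem is to transfer this vanishing to every remaining $x\in\mathbb{Z}[\alpha]$: one uses the graph-directed inclusion together with the ad\`elic tiling of Theorem~\ref{newtilingtheorem} to confine the boundary points of $\mathcal{G}(x)$ to the overlap region of distinct translates $\mathcal{F}+\Phi_\alpha(x')$, and then exploits the density of $\Phi_\mathfrak{b}(\mathbb{Z}[\alpha])$ in $\mathbb{K}_\mathfrak{b}$ and the expansion of $\Phi_\infty(\alpha)$ to force $\mu_\infty(\partial\mathcal{G}(x))=0$ for every~$x$.

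For part~(ii), set $\Lambda=\mathbb{Z}\langle\alpha,\mathcal{D}\rangle$ and consider the covering function $g(y)=\sum_{x\in\Lambda}\mathbf{1}_{\mathcal{F}+\Phi_\alpha(x)}(y)$ on $\mathbb{K}_\alpha$, which by Theorem~\ref{newtilingtheorem} equals~$1$ at $\mu_\alpha$-a.e.\ point. Fubini provides a $\mu_\mathfrak{b}$-conull set $N\subset\mathbb{K}_\mathfrak{b}$ such that $g(\,\cdot\,,v)\equiv 1$ holds $\mu_\infty$-almost everywhere for $v\in N$. The specific slice $v=0$ has $\mu_\alpha$-measure zero and is invisible to Fubini alone, so part~(i) is invoked to close this gap: since $\Phi_\alpha(\Lambda)$ is discrete in $\mathbb{K}_\alpha$ and $\mathcal{F}$ is compact, only finitely many $x\in\Lambda$ contribute to $g$ in any bounded neighbourhood of $(y_\infty,0)$, and for $\mu_\infty$-a.e.\ $y_\infty$, (i) ensures that $(y_\infty,0)$ avoids each of the corresponding boundaries $\partial(\mathcal{F}+\Phi_\alpha(x))$. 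Hence $v\mapsto g(y_\infty,v)$ is locally constant near $v=0$ for such $y_\infty$; combining this local constancy with the $\mu_\mathfrak{b}$-conullity of $N$ inside every neighbourhood of~$0$ forces $g(y_\infty,0)=1$ for $\mu_\infty$-a.e.\ $y_\infty$, which is precisely the tiling statement~(ii).
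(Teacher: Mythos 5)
Your proposal has genuine gaps in both parts, and they stem from the same source: the slice $\mathbb{K}_\infty\times\Phi_\mathfrak{b}(\{0\})$ is a $\mu_\alpha$-null set, so no Fubini argument applied to $\mu_\alpha(\partial\mathcal{F})=0$ or to the a.e.\ statement of Theorem~\ref{newtilingtheorem} can see it. In part~(i), the Fubini step gives a $\mu_\mathfrak{b}$-conull set of good parameters $v$, but $\Phi_\mathfrak{b}(\mathbb{Z}[\alpha])$ is countable, hence $\mu_\mathfrak{b}$-null; there is no guarantee that even a single $x_0\in\mathbb{Z}[\alpha]$ has $\Phi_\mathfrak{b}(-x_0)$ among the good parameters (your phrase ``a subset of $\mathbb{Z}[\alpha]$ whose $\Phi_\mathfrak{b}$-image is $\mu_\mathfrak{b}$-conull'' cannot hold for any subset of the countable set $\mathbb{Z}[\alpha]$). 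The subsequent ``transfer'' via the graph-directed identity is only an inclusion of $\alpha^\ell\cdot\partial\mathcal{G}(x)$ into $\bigcup_y\partial\mathcal{G}(y)$, and since the collection $\{\mathcal{G}(y)\}$ contains infinitely many shapes, this gives nothing without a quantitative input. The paper's proof of~(i) is exactly such a quantitative counting argument: it bounds the density of $\bigcup_y\partial\mathcal{G}(y)$ in the large boxes $\alpha^\ell\cdot X$ by combining $\#R_k=O(r^k)$ with $r<|a_0|$ (from the proof of Theorem~\ref{basicProperties}~(iii)), the index $|a_n|^k$ of $\Lambda_{\alpha,m-k}$ in $\Lambda_{\alpha,m}$ (Lemma~\ref{l:zLambda}), and $\mu_\infty(\pi_\infty(\alpha^{-k}\cdot\mathcal{F}))=O((|a_n|/|a_0|)^k)$, yielding the bound $c\,r^k/|a_0|^k$ for every $k$; none of this is present in your sketch.

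In part~(ii) there is a second, independent error: you invoke~(i) to conclude that for $\mu_\infty$-a.e.\ $y_\infty$ the point $(y_\infty,0)$ avoids the boundaries $\partial(\mathcal{F}+\Phi_\alpha(x))$ taken in $\mathbb{K}_\alpha$. But~(i) concerns $\partial_{\mathbb{K}_\infty}\mathcal{G}(x)$, and only the inclusion $\partial_{\mathbb{K}_\infty}\mathcal{G}(x)\subseteq\partial_{\mathbb{K}_\alpha}(\mathcal{F}+\Phi_\alpha(x))\cap(\mathbb{K}_\infty\times\Phi_\mathfrak{b}(\{0\}))$ holds; the slice of the ad\`elic boundary may be far larger than the boundary of the slice (a priori all of $\mathcal{G}(x)$ could lie on the ad\`elic boundary, approached from the $\mathfrak{b}$-adic directions), so the local constancy of $v\mapsto g(y_\infty,v)$ near $v=0$ is not established and the argument collapses at its key step. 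The paper circumvents precisely this difficulty by a pointwise combinatorial argument instead of a measure-theoretic one: using Theorem~\ref{newtilingtheorem} and an open ball inside $\mathrm{int}(\mathcal{F})$ it produces \emph{exclusive points} (Lemmas~\ref{l:Takz} and~\ref{l:exclusive}), shows via Proposition~\ref{p:seteqG} that these are dense, and then concludes the tiling from uniform local finiteness together with part~(i). If you want to salvage your Fubini strategy for~(ii), you would first have to prove that the union over $x$ of the slices of the ad\`elic boundaries has $\mu_\infty$-measure zero, which is a strictly stronger statement than~(i) and essentially requires the paper's counting estimates anyway.
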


Recall that the translation set $\mathbb{Z}\langle \alpha, \mathcal{D}\rangle$ is equal to~$\mathbb{Z}[\alpha]$ in the case of a primitive digit set~$\mathcal{D}$.

In our tiling definition, we have not excluded that some tiles are empty.
Indeed, many intersective tiles $\mathcal{G}(x)$, $x \in \mathbb{Z}[\alpha]$, are empty when $A$ is not monic.
To be more precise, set
\[
\Lambda_{\alpha,m} = \mathbb{Z}[\alpha] \cap \alpha^{m-1} \mathbb{Z}[\alpha^{-1}] \qquad (m \in \mathbb{Z}).
\]
We will see in Lemma~\ref{l:G} that $\mathcal{G}(x)$ can be represented in terms of this $\mathbb{Z}$-module, with $m$ chosen in a way that $\mathcal{D} \subset \alpha^{m}\mathbb{Z}[\alpha^{-1}]$.
As an immediate consequence of this representation, we get that $\mathcal{G}(x) = \emptyset$ for all $x \in \mathbb{Z}[\alpha] \setminus \Lambda_{\alpha,m}$.
If $\mathcal{D}$ contains a complete residue system of $\alpha^m \mathbb{Z}[\alpha^{-1}] / \alpha^{m-1} \mathbb{Z}[\alpha^{-1}]$, then Lemma~\ref{l:continuation} shows that these are the only $x \in \mathbb{Z}[\alpha]$ with $\mathcal{G}(x) = \emptyset$.
Moreover, if $x-y$ is in the sublattice $\Lambda_{\alpha,m-k}$ of $\Lambda_{\alpha,m}$ for some large integer~$k$, then the tiles $\mathcal{G}(x) - \Phi_\infty(x)$ and $\mathcal{G}(y) - \Phi_\infty(y)$ are close to each other in Hausdorff metric, i.e., the tiling formed by the collection $\{\mathcal{G}(x):\, x \in \Lambda_{\alpha,m} \cap \mathbb{Z}\langle \alpha, \mathcal{D}\rangle\}$ is almost periodic.
Summing up, we get the following theorem.

\begin{theorem} \label{t:24}
Let $\alpha$ be an expanding algebraic number, let $\mathcal{D}$ be a standard digit set for~$\alpha$, and choose $m \in \mathbb{Z}$ such that $\mathcal{D} \subset \alpha^{m}\mathbb{Z}[\alpha^{-1}]$.
Then $\{\mathcal{G}(x):\, x \in \Lambda_{\alpha,m} \cap \mathbb{Z}\langle \alpha, \mathcal{D}\rangle\}$ forms a tiling of $\mathbb{K}_\infty \simeq \mathbb{R}^n$.

If moreover $\mathcal{D}$ contains a complete residue system of $\alpha^m \mathbb{Z}[\alpha^{-1}] / \alpha^{m-1} \mathbb{Z}[\alpha^{-1}]$, the following assertions hold.
\renewcommand{\theenumi}{\roman{enumi}}
\begin{enumerate}
\itemsep1ex
\item \label{Gnotempty}
Let $x\in\mathbb{Z}[\alpha]$. Then $\mathcal{G}(x) \ne \emptyset$ if and only if $x \in \Lambda_{\alpha,m}$.
\item \label{almostperiodic}
There exists a constant $c > 0$ such that
\[
\delta_H\big(\mathcal{G}(x) - \Phi_\infty(x), \mathcal{G}(y) - \Phi_\infty(y)\big) \le c \max_{\mathfrak{p}\mid\infty} |\alpha^{-k}|_\mathfrak{p}
\]
for all $x,y \in \Lambda_{\alpha,m}$ with $x-y \in \Lambda_{\alpha,m-k}$, $k \ge 0$, where $\delta_H(Y,Z)$ denotes the Hausdorff distance with respect to some metric on $\mathbb{K}_\infty \simeq \mathbb{R}^n$.
\end{enumerate}
\end{theorem}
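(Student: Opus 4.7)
The plan is to combine Theorem~\ref{srstiling} with the preparatory results Lemma~\ref{l:G} and Lemma~\ref{l:continuation} advertised in the paragraph preceding the statement, and to derive the almost-periodicity bound by iterating the set equation for~$\mathcal{G}$ together with the compactness of~$\pi_\infty(\mathcal{F})$.

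Concretely, Lemma~\ref{l:G} should describe $\mathcal{G}(x)$ as the $\Phi_\infty$-image of those series $x+\sum_{j\ge 1}d_j\alpha^{-j}$ with $d_j\in\mathcal{D}$ satisfying $\Phi_\mathfrak{b}(x+\sum_{j\ge 1}d_j\alpha^{-j})=0$. Because $\mathcal{D}\subset\alpha^m\mathbb{Z}[\alpha^{-1}]$, every such tail lies in $\alpha^{m-1}\mathbb{Z}[\alpha^{-1}]$, so $\mathcal{G}(x)\ne\emptyset$ forces $x\in\mathbb{Z}[\alpha]\cap\alpha^{m-1}\mathbb{Z}[\alpha^{-1}]=\Lambda_{\alpha,m}$. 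This gives the forward direction of~(\ref{Gnotempty}) and, via Theorem~\ref{srstiling}, the tiling indexed by $\Lambda_{\alpha,m}\cap\mathbb{Z}\langle\alpha,\mathcal{D}\rangle$ (after discarding the empty members). For the converse of~(\ref{Gnotempty}), Lemma~\ref{l:continuation} produces, for every $x\in\Lambda_{\alpha,m}$, a greedy digit sequence $(d_j)_{j\ge 1}\subset\mathcal{D}$ whose $\mathfrak{b}$-adic partial sums converge to $-\Phi_\mathfrak{b}(x)$, yielding a point of $\mathcal{G}(x)$; the complete-residue hypothesis on $\mathcal{D}$ is exactly what keeps this construction alive at each step.

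For the Hausdorff estimate~(\ref{almostperiodic}), I iterate the set equation $\mathcal{G}(x)=\alpha^{-1}\bigcup_{d\in\mathcal{D}}\mathcal{G}(\alpha x+d)$ (which follows from~\eqref{seteq} by intersecting with the $\alpha^{-1}$-invariant subspace $\mathbb{K}_\infty\times\Phi_\mathfrak{b}(\{0\})$) to obtain
\[ \mathcal{G}(x) = \alpha^{-k}\bigcup_{(d_1,\ldots,d_k)\in\mathcal{D}^k}\mathcal{G}(u_d), \qquad u_d = \alpha^k x + \textstyle\sum_{j=1}^k\alpha^{k-j}d_j, \]
with the analogous formula for $\mathcal{G}(y)$ and $v_d=u_d-\alpha^k(x-y)$. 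Since $x-y\in\Lambda_{\alpha,m-k}$ implies $\alpha^k(x-y)\in\Lambda_{\alpha,m}$, part~(\ref{Gnotempty}) tells us $\mathcal{G}(u_d)$ and $\mathcal{G}(v_d)$ are simultaneously empty or non-empty, so the two unions share the same support. After subtracting the basepoints, the $d$-indexed summands of $\mathcal{G}(x)-\Phi_\infty(x)$ and $\mathcal{G}(y)-\Phi_\infty(y)$ become $\alpha^{-k}(\mathcal{G}(u_d)-\Phi_\infty(u_d))+\sum_{j=1}^k\Phi_\infty(d_j)\alpha^{-j}$ and $\alpha^{-k}(\mathcal{G}(v_d)-\Phi_\infty(v_d))+\sum_{j=1}^k\Phi_\infty(d_j)\alpha^{-j}$ respectively; both $\mathcal{G}(u_d)-\Phi_\infty(u_d)$ and $\mathcal{G}(v_d)-\Phi_\infty(v_d)$ sit inside the compact set $\pi_\infty(\mathcal{F})$ (independent of the basepoint, by Theorem~\ref{basicProperties}), so both atoms are contained in a common translate of $\alpha^{-k}\pi_\infty(\mathcal{F})$, and their Hausdorff distance is at most the diameter of this set, which is $O(|\alpha^{-k}|_\mathfrak{p})$ place-by-place. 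Applying $\delta_H(\bigcup_d A_d,\bigcup_d B_d)\le\max_d\delta_H(A_d,B_d)$ and maximizing over $\mathfrak{p}\mid\infty$ yields the claimed bound with $c$ a multiple of the $\pi_\infty(\mathcal{F})$-diameter in the chosen metric.

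The main obstacle I anticipate is the clean setup of the iterated set equation for~$\mathcal{G}$, namely verifying that multiplication by $\alpha^{-1}$ and its iterates commute with the intersection $\cdot\cap(\mathbb{K}_\infty\times\Phi_\mathfrak{b}(\{0\}))$ and with the union in~\eqref{seteq}. Conceptually this is routine, since $\alpha^{-1}\cdot 0=0$ in each $K_\mathfrak{p}$ for $\mathfrak{p}\mid\mathfrak{b}$, but it requires careful bookkeeping across the product structure of~$\mathbb{K}_\mathfrak{b}$ and a uniform verification that $\pi_\infty(\mathcal{F})$ indeed envelops all the pieces $\mathcal{G}(w)-\Phi_\infty(w)$. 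Once these ingredients are in hand, the remainder of the proof is formal.
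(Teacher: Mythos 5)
Your proposal is correct and follows essentially the paper's own route: the tiling statement and part (\ref{Gnotempty}) are obtained exactly as in the paper, namely from Theorem~\ref{srstiling} combined with Lemmas~\ref{l:G} and~\ref{l:continuation}. For part (\ref{almostperiodic}) you simply inline the paper's Proposition~\ref{p:seteqG} and Lemmas~\ref{l:GLim} and~\ref{l:xminusy}: iterating the set equation, matching the atoms of the $k$-fold decompositions of $\mathcal{G}(x)$ and $\mathcal{G}(y)$ via the group property of $\Lambda_{\alpha,m}$ (the paper's Lemma~\ref{l:xminusy}), and bounding each matched pair by the diameter of $\alpha^{-k}\cdot\pi_\infty(\mathcal{F})$ is precisely the mechanism behind \eqref{e:deltaH} and the paper's triangle-inequality argument.
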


Again, the intersection with $\mathbb{Z}\langle \alpha, \mathcal{D}\rangle$ can clearly be omitted if $\mathcal{D}$ is a primitive digit set.

In the special instance where $\mathcal{D} = \{0,1,\ldots,|a_0|-1\}$, the conditions of Theorem~\ref{t:24} are satisfied with $m = 0$.
Furthermore, we get a relation to shift radix systems.
To make this precise, associate the parameter $\mathbf{r} = (\frac{a_n}{a_0},\ldots,\frac{a_1}{a_0})$ with the minimal polynomial $a_n X^n + \cdots + a_1 X + a_0$ of~$\alpha$ and let $\mathbf{M}_\mathbf{r}$ be the companion matrix of~$\mathbf{r}$.
As $\alpha$ is expanding, the vector $\mathbf{r}$~lies in the Schur-Cohn region $\mathcal{E}_n = \{\mathbf{r}\in\mathbb{R}^n:\, \varrho(\mathbf{M}_\mathbf{r}) < 1\}$.
Then the collection $\{\mathcal{G}(x):\, x \in \Lambda_{\alpha,0}\}$ of intersective tiles is --- up to a linear transformation --- equal to the collection $\{\mathcal{T}_\mathbf{r}(\mathbf{z}):\, \mathbf{z}\in \mathbb{Z}^n \}$ of SRS tiles defined in \eqref{srstiledef}, see Proposition~\ref{p:SRS}.
In \cite{BSSST:11}, it could not be shown that these collections of tiles always form tilings.
With the help of Theorem~\ref{t:24}, we are now able to fill this gap.

\begin{theorem} \label{srsthm}
Let $a_n X^n + \cdots + a_1 X + a_0 \in \mathbb{Z}[X]$ be an expanding, irreducible polynomial.
Then the collection of SRS tiles $\{\mathcal{T}_\mathbf{r}(\mathbf{z}):\, \mathbf{z}\in \mathbb{Z}^n\}$ forms a tiling of~$\mathbb{R}^n$ for $\mathbf{r} = (\frac{a_n}{a_0},\ldots,\frac{a_1}{a_0})$.
\end{theorem}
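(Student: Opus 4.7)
The plan is to derive Theorem~\ref{srsthm} as a direct application of Theorem~\ref{t:24} combined with the identification of intersective tiles with SRS tiles announced in Proposition~\ref{p:SRS}. The rational self-affine tile framework built up in the preceding sections has been set up precisely so that this reduction becomes essentially bookkeeping.

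The first step is to pick a root $\alpha$ of $A(X) = a_n X^n + \cdots + a_1 X + a_0$; since $A$ is expanding and irreducible, $\alpha$ is an expanding algebraic number, and the tile $\mathcal{F}(\alpha,\mathcal{D})$ does not depend on which root is chosen. Then I would take $\mathcal{D} = \{0,1,\ldots,|a_0|-1\}$ and verify two elementary properties: (i) $\mathcal{D}$ is a standard digit set for $\alpha$, because $\#\mathcal{D} = |a_0| = [\mathbb{Z}[\alpha]:\alpha\mathbb{Z}[\alpha]]$ and distinct integers in $\{0,\ldots,|a_0|-1\}$ lie in distinct cosets modulo $\alpha$ (using that $\alpha\mathbb{Z}[\alpha] \cap \mathbb{Z} = a_0 \mathbb{Z}$, which follows from the minimal polynomial of $\alpha$); (ii) $\mathcal{D}$ is primitive, by the remark immediately following Corollary~\ref{c:1}, as $\{0,1\} \subset \mathcal{D}$.

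Next, the hypothesis $\mathcal{D} \subset \alpha^{m}\mathbb{Z}[\alpha^{-1}]$ of Theorem~\ref{t:24} holds with $m=0$ trivially, since $\mathcal{D} \subset \mathbb{Z} \subset \mathbb{Z}[\alpha^{-1}]$. Applying Theorem~\ref{t:24}, the collection $\{\mathcal{G}(x):\, x \in \Lambda_{\alpha,0} \cap \mathbb{Z}\langle \alpha,\mathcal{D}\rangle\}$ forms a tiling of $\mathbb{K}_\infty \simeq \mathbb{R}^n$. Primitivity of $\mathcal{D}$ lets me replace $\mathbb{Z}\langle \alpha,\mathcal{D}\rangle$ by $\mathbb{Z}[\alpha]$, so the tiling is $\{\mathcal{G}(x):\, x \in \Lambda_{\alpha,0}\}$.

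Finally, I would invoke Proposition~\ref{p:SRS}, which for the present choice of $(\alpha,\mathcal{D})$ exhibits an explicit linear isomorphism of $\mathbb{K}_\infty \simeq \mathbb{R}^n$ sending the family $\{\mathcal{G}(x):\, x \in \Lambda_{\alpha,0}\}$ to the SRS-tile family $\{\mathcal{T}_\mathbf{r}(\mathbf{z}):\, \mathbf{z}\in\mathbb{Z}^n\}$ associated with $\mathbf{r} = (a_n/a_0,\ldots,a_1/a_0)$. Since a linear bijection pushes tilings to tilings (it preserves both the covering property and essential disjointness, as Lebesgue nullsets are preserved), Theorem~\ref{srsthm} follows at once. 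The main obstacle is therefore not in this reduction, which is almost mechanical, but is entirely absorbed into the proofs of Theorem~\ref{t:24} (which relies on the full rational self-affine tile machinery, Fourier analysis on $\mathbb{A}_K$, and the transfer-operator zero-set arguments of Sections~\ref{sec:tiling-criteria}--\ref{sec:tilingcriterion}) and of Proposition~\ref{p:SRS} (which requires identifying intersective tiles with the Hausdorff limits defining $\mathcal{T}_\mathbf{r}(\mathbf{z})$ through a concrete change of basis reflecting the companion-matrix description of multiplication by $\alpha$).
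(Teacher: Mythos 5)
Your proposal is correct and follows exactly the paper's route: the paper also deduces Theorem~\ref{srsthm} immediately from Theorem~\ref{t:24} (with $\mathcal{D}=\{0,1,\ldots,|a_0|-1\}$, $m=0$) together with Proposition~\ref{p:SRS}. The extra verifications you spell out (standardness and primitivity of $\mathcal{D}$, and that a linear bijection transports the tiling) are the same routine facts the paper records in the remarks preceding the theorem.
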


Thus we exhibited a dense subset of parameters $\mathbf{r}\in \mathcal{E}_n$ that give rise to a tiling.

\subsection*{Examples}
We now provide two examples in order to illustrate the main results of the present paper.
The first example deals with a rational base number system.
The arithmetics of such number systems was studied in \cite{Akiyama-Frougny-Sakarovitch:07}, where also interesting relations to Mahler's $\frac{3}{2}$-problem \cite{Mahler:68} (which is already addressed by Vijayaraghavan~\cite{Vijayaraghavan:40} and remains still unsolved) were exhibited.

\begin{example}\label{ex:1}
Let $\alpha = \frac{3}{2}$ and $\mathcal{D} = \{0,1,2\}$.
In this example, we have $K = \mathbb{Q}$, $\mathcal{O} = \mathbb{Z}$, thus $\alpha \mathcal{O} = \frac{(3)}{(2)}$, which leads to the representation space $\mathbb{K}_{3/2} = \mathbb{R} \times \mathbb{Q}_2$.
According to Theorem~\ref{basicProperties}, the rational self-affine tile $\mathcal{F} = \mathcal{F}(\frac{3}{2}, \{0,1,2\})$ is a compact subset of~$\mathbb{K}_{3/2}$, which satisfies $\mathcal{F} = \overline{\mathrm{int}(\mathcal{F})}$ and $\mu_{3/2}(\partial \mathcal{F}) = 0$.
Moreover, Theorem~\ref{newtilingtheorem} implies that the collection $\{\mathcal{F} + \Phi_{3/2}(x):\, x \in \mathbb{Z}[\frac{3}{2}]\}$, which is depicted in Figure~\ref{fig:m23}, forms a tiling of~$\mathbb{K}_{3/2}$.

\begin{figure}[ht]
\includegraphics{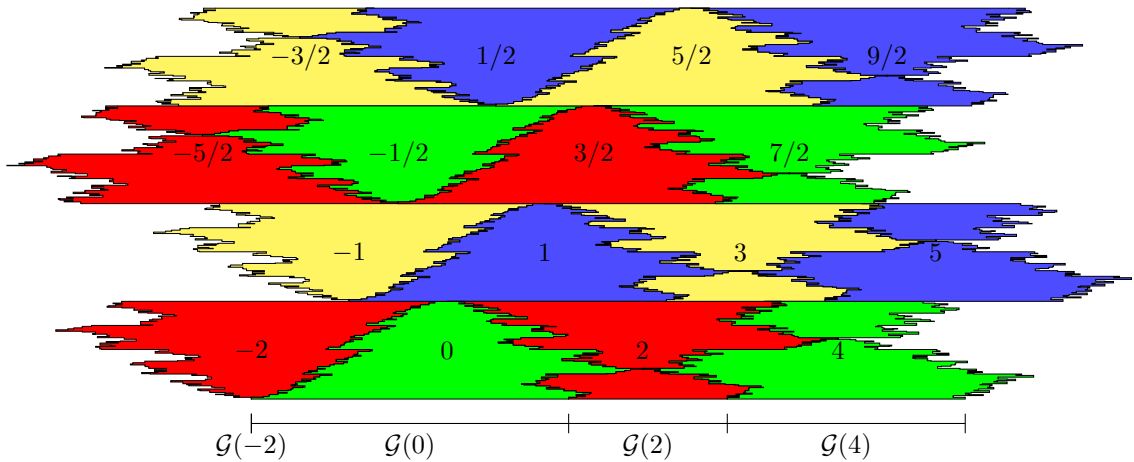}
\caption{The tiles $\mathcal{F} + \Phi_\alpha(x) \in \mathbb{R} \times \mathbb{Q}_2$ for $\alpha=\frac{3}{2}$, $\mathcal{D} = \{0,1,2\}$, $x \in \{\frac{-5}{2},\frac{-4}{2},\ldots,\frac{10}{2}\}$.
Here, an element $\sum_{j=k}^\infty b_j \alpha^{-j}$ of $\mathbb{Q}_2$, with $b_j \in \{0,1\}$, is represented by $\sum_{j=k}^\infty b_j 2^{-j}$.
The intersection of $\mathcal{F} + \Phi_\alpha(x)$ with $\mathbb{R} \times \{0\}$ is equal to~$\mathcal{G}(x)$.} \label{fig:m23}
\end{figure}

The tile labeled by ``$0$'' in this figure is equal to~$\mathcal{F}=\mathcal{F} + \Phi_{3/2}(0)$, and the set equation
\[
\tfrac{3}{2} \cdot \mathcal{F} = \big(\mathcal{F} + \Phi_{3/2}(0)\big) \cup \big(\mathcal{F} + \Phi_{3/2}(1)\big) \cup \big(\mathcal{F} + \Phi_{3/2}(2)\big)
\]
can be seen in the picture.
Due to the embedding of $\mathbb{Q}_2$ into~$\mathbb{R}$, the translated tiles $\mathcal{F} + \Phi_{3/2}(x)$ appear with different shapes in the picture, though they are congruent in~$\mathbb{K}_{3/2}$. 

In this example, the intersective tiles are defined by
\[
\mathcal{G}(x) = \big\{z \in \mathbb{R}:\, (z,0) \in \mathcal{F} + (x,x) \subset \mathbb{K}_{3/2} = \mathbb{R} \times \mathbb{Q}_2\big\}\quad (x\in \mathbb{Z}[\tfrac32])\,;
\]
in particular, $\mathcal{G}(0) = \mathcal{F} \cap (\mathbb{R}\times \{0\})$. Since $\mathbb{Z}\langle \frac32,\{0,1,2\}\rangle = \mathbb{Z}[\frac32]$, Theorem~\ref{srstiling} yields that the intersective tile $\partial \mathcal{G}(x)$ has measure zero for each $x\in\mathbb{Z}[\tfrac32]$ and that the collection $\{\mathcal{G}(x):\, x \in \mathbb{Z}[\tfrac32]\}$ forms a tiling of $\mathbb{K}_\infty=\mathbb{R}$. 

Observe that $\{0,1\} \subset \mathcal{D}$ is a complete residue system of $\mathbb{Z}[\frac{2}{3}]/\frac{2}{3}\mathbb{Z}[\frac{2}{3}] = \mathbb{Z}[\frac{1}{3}]/2\mathbb{Z}[\frac{1}{3}]$.
Therefore, Theorem~\ref{t:24} implies that $\mathcal{G}(x) \ne \emptyset$ if and only if $x \in \Lambda_{3/2,0} = 2 \mathbb{Z}$, and $\{\mathcal{G}(x):\, x \in 2 \mathbb{Z}\}$ forms a tiling of~$\mathbb{R}$.
As shown in \cite[Corollary~5.20]{BSSST:11}, the intersective tiles are (possibly degenerate) intervals with infinitely many different lengths in this case.   
Some of them are depicted in Figure~\ref{fig:m23}.
Note that $\mathcal{G}(-2)$ is equal to the singleton~$\{0\}$ and therefore an example of an intersective tile that is not the closure of its interior (see \cite[Example~3.12]{BSSST:11} for another example of that kind).
Moreover, according to Theorem~\ref{t:24}~(\ref{almostperiodic}), $\mathcal{G}(x)-x$ and $\mathcal{G}(y)-y$ are close to each other (with respect to the Hausdorff distance) if $x-y$ is divisible by a large power of~$2$.

We mention that $\mathcal{G}(0) = [0, 2 K(3)]$, where $K(3) = 1.62227 \cdots$ is related to the solution of the Josephus problem, cf.\ \cite[Corollary~1]{Odlyzko-Wilf:91}. For a discussion of this relation we refer the reader to \cite[Section~4.4 and Theorem~2]{Akiyama-Frougny-Sakarovitch:07}.
\end{example}

\begin{remark}
For the choice $\alpha=\frac32$ and $\mathcal{D}=\{0,2,4\}$ the collection  $\{\mathcal{F} + \Phi_{3/2}(x):\, x \in \mathbb{Z}[\frac{3}{2}]\}$ forms a multiple tiling of $\mathbb{K}_{3/2}$ since $\mathcal{D}$ is not a primitive digit set.
More precisely, almost every point of $\mathbb{K}_{3/2}$ is covered twice by this collection.
\end{remark}

\begin{example}
\begin{figure}[ht]
\includegraphics{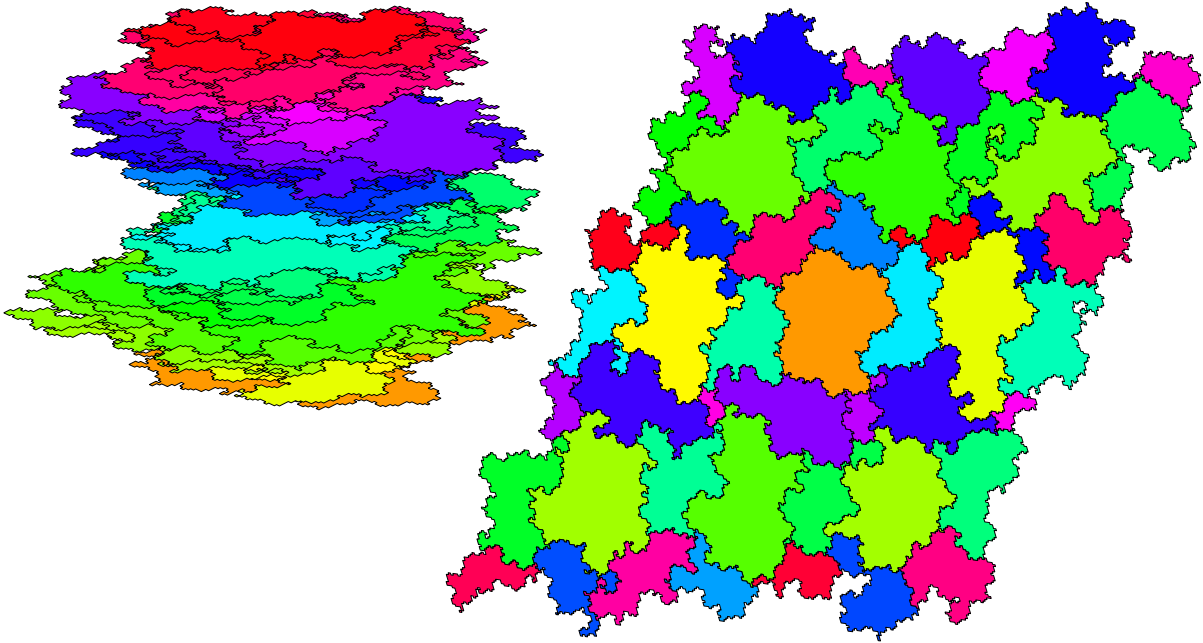}
\caption{The fundamental domain $\mathcal{F} \in \mathbb{C} \times K_{(2,\, 1+\sqrt{-5})}$ for $\alpha = \frac{1+\sqrt{-5}}{2}$ and $\mathcal{D} = \{0,1,2\}$, represented as a ``tower'' of its cuts at several fixed $(2,1+\sqrt{-5})$-adic coordinates (left), and the corresponding intersective tiles $\mathcal{G}(2x_0 + (2\alpha+2) x_1)$, with $x_0, x_1 \in \{-3,-2,\ldots,3\}$ (right).
An element $\sum_{j=k}^\infty b_j \alpha^{-j}$ of $K_{(2,\, -1+\sqrt{-5})}$, with $b_j \in \{0,1\}$, is represented by $\sum_{j=k}^\infty b_j 2^{-j}$.} \label{fig:223}
\end{figure}
Let $\alpha = \frac{-1+\sqrt{-5}}{2}$ be a root of the expanding polynomial $2X^2 + 2X + 3$ and $\mathcal{D} = \{0,1,2\}$.
In this example, we have $K = \mathbb{Q}(\sqrt{-5})$, $\mathcal{O} = \mathbb{Z}[\sqrt{-5}]$, and $\alpha \mathcal{O} = \frac{(3,\,2+\sqrt{-5})}{(2,\,1+\sqrt{-5})}$, hence, the representation space is $\mathbb{K}_\alpha = \mathbb{C} \times K_{(2,\,1+\sqrt{-5})}$.
It is easy to see that $\mathcal{D}$ is a primitive, standard digit set for~$\alpha$.
Therefore, according to Corollary~\ref{c:1}, the collection $\{\mathcal{F} + \Phi_\alpha(x):\, x \in \mathbb{Z}[\alpha]\}$ forms a tiling, whose fundamental domain $\mathcal{F} = \mathcal{F}(\alpha, \mathcal{D})$ is depicted in Figure~\ref{fig:223}.
For the representation of~$\mathcal{F}$, we have used its decomposition into ``slices'' of the form $\mathcal{G}(x) - \Phi_\alpha(x)$, according to~\eqref{e:slices}. Each intersective tile $\mathcal{G}(x)$, $x\in\mathbb{Z}[\alpha]$, satisfies $\mu_\infty(\partial \mathcal{G}(x))=0$ by Theorem~\ref{srstiling}~(\ref{three:i}). Figure~\ref{fig:223} also shows the collection of intersective tiles, which forms again a tiling according to Theorem~\ref{srstiling}~(\ref{three:ii}).
The color of $\mathcal{G}(x)$ on the right hand side of Figure~\ref{fig:223} is the same as the color of $\mathcal{G}(x) - \Phi_\alpha(x)$ in the ``slice representation'' on the left hand side.
By Proposition~\ref{p:SRS}, the collection $\{\mathcal{G}(x):\, x \in \Lambda_{\alpha,0}\}$ is a linear image of the collection $\{\mathcal{T}_\mathbf{r}(\mathbf{z}):\, \mathbf{z} \in \mathbb{Z}^2\}$ of SRS tiles, with $\mathbf{r} = (\frac{2}{3}, \frac{2}{3})$.

Observe that $\{0,1\} \subset \mathcal{D}$ is a complete residue system of $\mathbb{Z}[\alpha^{-1}]/\alpha^{-1}\mathbb{Z}[\alpha^{-1}]$.
Therefore, Theorem~\ref{t:24} implies that $\mathcal{G}(x) \ne \emptyset$ if and only if $x \in \Lambda_{\alpha,0}$, and  $\{\mathcal{G}(x):\, x \in \Lambda_{\alpha,0}\}$ forms a tiling of~$\mathbb{C}$.
According to Lemma~\ref{l:Lambda}, we have $\Lambda_{\alpha,0} = 2 \mathbb{Z} + (2\alpha+2) \mathbb{Z}$.
Here, by Theorem~\ref{t:24}~(\ref{almostperiodic}), $\mathcal{G}(x)-x$ and $\mathcal{G}(y)-y$ are close to each other (with respect to the Hausdorff distance) if $x-y \in \Lambda_{\alpha,-k}$ for some large integer~$k$.
For instance, since $2^k \in \Lambda_{\alpha,-k}$, the tiling $\{\mathcal{G}(x):\, x \in \Lambda_{\alpha,0}\}$ is almost periodic with respect to the lattice $2^k \Lambda_{\alpha,0}$ for large~$k$.
To illustrate this fact, the tiles $\mathcal{G}(0)$ and $\mathcal{G}(2^{k+1})$, $0 \le k \le 8$, are drawn in Figure~\ref{fig:223tiles}.
Note that the shape of $\mathcal{G}(2^9)$ is already very close to that of~$\mathcal{G}(0)$.
\begin{figure}[ht]
\includegraphics{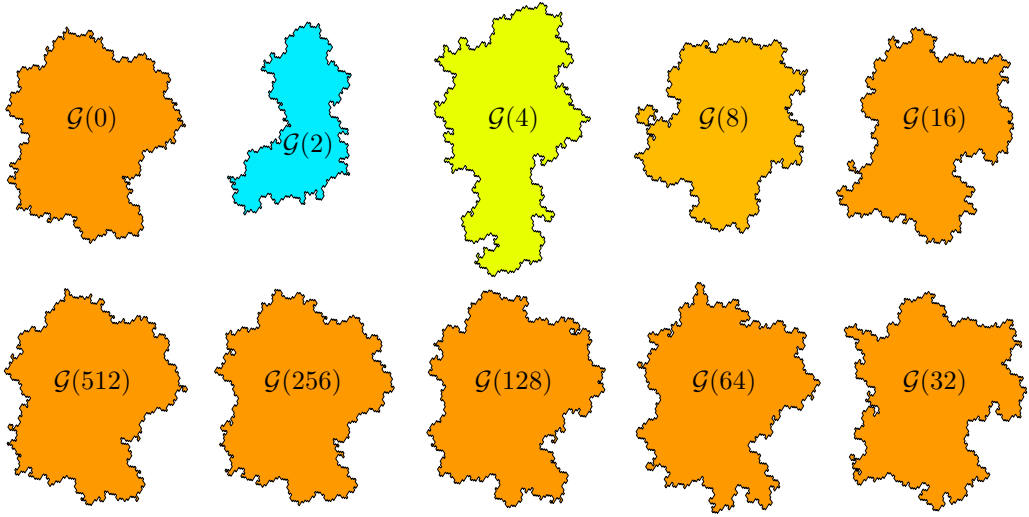}
\caption{The intersective tiles $\mathcal{G}(0)$ and $\mathcal{G}(2^k)$, $1 \le k \le 9$, for $\alpha = \frac{1+\sqrt{-5}}{2}$ and $\mathcal{D} = \{0,1,2\}$.} \label{fig:223tiles}
\end{figure}
\end{example}

\section{Properties of rational self-affine tiles} \label{sec:prop-rati-self}

This section is devoted to fundamental properties of the rational self-affine tile $\mathcal{F} = \mathcal{F}(\alpha, \mathcal{D})$, with $\alpha$ and $\mathcal{D}$ as in Theorem~\ref{basicProperties}.
It is subdivided into two parts.
In the first part we supply auxiliary results that will be needed throughout the paper. The second part is devoted to the proof of Theorem~\ref{basicProperties}.

Since a translation of $\mathcal{D}$ by $x \in \mathbb{Z}[\alpha]$ results in $\mathcal{F}(\alpha, x+ \mathcal{D}) = \mathcal{F}(\alpha, \mathcal{D}) + \Phi_\alpha(x/(\alpha-1))$ and $\mathbb{Z}\langle \alpha, x + \mathcal{D}\rangle = \mathbb{Z}\langle \alpha, \mathcal{D}\rangle$, we can assume w.l.o.g. that $0 \in \mathcal{D}$.
Note that this implies
\[
\mathbb{Z}\langle \alpha, \mathcal{D}\rangle = \langle \mathcal{D}, \alpha \mathcal{D}, \alpha^2 \mathcal{D}, \ldots\rangle_{\mathbb{Z}}\,.
\]
For convenience, in all what follows we use the abbreviation
\[
\mathfrak{z} = \mathbb{Z}\langle \alpha, \mathcal{D}\rangle\,.
\]

\subsection*{Preliminaries}
We start with a basic result on the sets $\mathcal{O}_{S_\alpha}$, $\mathbb{Z}[\alpha]$, and~$\mathfrak{z}$, where
\[
\mathcal{O}_{S_\alpha} = \{x \in \mathbb{Q}(\alpha):\, |x|_{\mathfrak{p}} \le 1\ \mbox{for all}\ \mathfrak{p} \not\in S_\alpha\}
\]
denotes the set of $S_\alpha$-integers.
Recall that a set $M \subset \mathbb{K}_\alpha$ is called a \emph{Delone set} if it is uniformly discrete and relatively dense; i.e., if there are numbers $R>r>0$, such that each ball of radius $r$ contains at most one point of~$M$, and every ball of radius $R$ contains at least one point of~$M$.

\begin{lemma}\label{delonelemma}
The set $\Phi_\alpha(\mathcal{O}_{S_\alpha})$ is a Delone set in $\mathbb{K}_\alpha$.
\end{lemma}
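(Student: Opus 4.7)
The plan is to verify the two defining Delone properties---uniform discreteness and relative density---separately, since $\Phi_\alpha(\mathcal{O}_{S_\alpha})$ is an additive subgroup of the topological group $\mathbb{K}_\alpha$ and both reduce to algebraic statements about the $S_\alpha$-integers.

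For uniform discreteness, it suffices to show that $0$ is isolated in $\Phi_\alpha(\mathcal{O}_{S_\alpha})$, and the natural tool is the product formula $\prod_\mathfrak{p} |\xi|_\mathfrak{p} = 1$ for $\xi \in K\setminus\{0\}$, which holds precisely for the absolute values normalized as in the preliminaries (the squaring at complex places and $\mathfrak{N}(\mathfrak{p})^{-v_\mathfrak{p}}$ at finite places are chosen with the product formula in mind). If $x \in \mathcal{O}_{S_\alpha} \setminus \{0\}$ satisfies $|x|_\mathfrak{p} < \varepsilon$ for every $\mathfrak{p} \in S_\alpha$, while $|x|_\mathfrak{p} \le 1$ for every $\mathfrak{p} \notin S_\alpha$ by definition of the $S_\alpha$-integers, then multiplying over all places gives $\prod_\mathfrak{p} |x|_\mathfrak{p} \le \varepsilon^{\#S_\alpha}$. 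Choosing $\varepsilon < 1$ contradicts the product formula, producing a neighbourhood of $0$ in $\mathbb{K}_\alpha$ that meets $\Phi_\alpha(\mathcal{O}_{S_\alpha})$ only at $0$.

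For relative density, I would pass to the ad\`ele ring $\mathbb{A}_K$ of $K$, in which $K$ (diagonally embedded) is known to be discrete and cocompact. Set $U = \mathbb{K}_\alpha \times \prod_{\mathfrak{p} \notin S_\alpha} \mathcal{O}_\mathfrak{p} \subset \mathbb{A}_K$, an open subring of $\mathbb{A}_K$; unpacking the definition of $\mathcal{O}_{S_\alpha}$ yields $K \cap U = \mathcal{O}_{S_\alpha}$. A standard strong-approximation argument gives $\mathbb{A}_K = K + U$, so the inclusion $U \hookrightarrow \mathbb{A}_K$ descends to a continuous isomorphism $U/\mathcal{O}_{S_\alpha} \cong \mathbb{A}_K/K$, and the right-hand side is compact. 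Composing with the projection $U \to \mathbb{K}_\alpha$ (whose kernel $\prod_{\mathfrak{p} \notin S_\alpha} \mathcal{O}_\mathfrak{p}$ is itself compact) yields a continuous surjection $U/\mathcal{O}_{S_\alpha} \twoheadrightarrow \mathbb{K}_\alpha/\Phi_\alpha(\mathcal{O}_{S_\alpha})$, so the target quotient is compact; this is precisely relative density of $\Phi_\alpha(\mathcal{O}_{S_\alpha})$ in $\mathbb{K}_\alpha$.

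The main obstacle is the equality $\mathbb{A}_K = K + U$: concretely, that any ad\`ele $(a_\mathfrak{p})$ can be corrected by a global element $k \in K$ so as to become integral at every finite prime outside $S_\alpha$. This is a standard consequence of the Chinese Remainder Theorem in the Dedekind domain $\mathcal{O}$, applied to the finite set of primes at which $a_\mathfrak{p} \not\in \mathcal{O}_\mathfrak{p}$; the remaining infinitely many places give the inclusion $k \in \mathcal{O}_\mathfrak{p}$ automatically once a suitable $k$ is produced by approximation. Everything else in the plan is routine, so this is the only piece requiring input from algebraic number theory beyond the product formula and compactness of $\mathbb{A}_K/K$.
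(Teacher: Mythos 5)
Your proof is correct, and it follows the same broad strategy as the paper (reduce both Delone properties to adelic/global statements, with compactness of $\mathbb{A}_K/K$ driving relative density), but the two halves are executed differently. For uniform discreteness you argue directly from the product formula in $\mathbb{K}_\alpha$, whereas the paper simply quotes the (uniform) discreteness of $K$ embedded in $\mathbb{A}_K$ and intersects with the $S_\alpha$-ad\`eles; these are two faces of the same fact, since discreteness of $K$ in $\mathbb{A}_K$ is itself proved via the product formula, so your version is more self-contained at the cost of redoing a standard step. For relative density the difference is more substantive: you establish the decomposition $\mathbb{A}_K = K + U$ with $U = \mathbb{K}_\alpha \times \prod_{\mathfrak{p}\notin S_\alpha}\mathcal{O}_\mathfrak{p}$ (via the approximation lemma/CRT in the Dedekind domain $\mathcal{O}$, which does work since $S_\alpha$ contains all infinite places) and thereby identify $U/\mathcal{O}_{S_\alpha}$ with $\mathbb{A}_K/K$, while the paper avoids this extra input entirely: since $U$ is open in $\mathbb{A}_K$ and $K \cap U = \mathcal{O}_{S_\alpha}$, the group $U/\mathcal{O}_{S_\alpha}$ injects as an \emph{open} subgroup of the compact group $\mathbb{A}_K/K$, and an open subgroup of a compact group is automatically closed, hence compact --- no surjectivity onto $\mathbb{A}_K/K$ is needed. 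Your final step, passing from compactness of $U/\mathcal{O}_{S_\alpha}$ through the projection $U\to\mathbb{K}_\alpha$ to compactness of $\mathbb{K}_\alpha/\Phi_\alpha(\mathcal{O}_{S_\alpha})$ and hence relative density, matches the paper. In short: your argument buys explicitness (an actual coset decomposition of $\mathbb{A}_K$) at the price of an approximation argument the paper's open-subgroup observation renders unnecessary.
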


\begin{proof}
The subring $\mathbb{A}_{\mathbb{Q}(\alpha),S_\alpha}$ of $S_\alpha$-ad\`eles in $\mathbb{A}_{\mathbb{Q}(\alpha)}$ (i.e., the ad\`eles which are integral outside~$S_\alpha$) intersects the uniformly discrete subring $\mathbb{Q}(\alpha)$ in~$\mathcal{O}_{S_\alpha}$, so $\Phi_\alpha(\mathcal{O}_{S_\alpha})$ is likewise uniformly discrete in the closed subring~$\mathbb{K}_\alpha$.
In order to show the relative denseness, note that $\mathbb{A}_{\mathbb{Q}(\alpha),S_\alpha}$ is open in~$\mathbb{A}_{\mathbb{Q}(\alpha)}$, so $\mathbb{A}_{\mathbb{Q}(\alpha),S_\alpha}/\mathcal{O}_{S_\alpha}$ (with its quotient topology) is an open subgroup of the compact group $\mathbb{A}_{\mathbb{Q}(\alpha)}/\mathbb{Q}(\alpha)$ and, hence, is compact.
As $\mathbb{K}_\alpha/\Phi_\alpha(\mathcal{O}_{S_\alpha})$ is a quotient of $\mathbb{A}_{\mathbb{Q}(\alpha),S_\alpha}/\mathcal{O}_{S_\alpha}$, it is also compact.
\end{proof}

\begin{lemma} \label{l:subgroup}
The following assertions hold.
\renewcommand{\theenumi}{\roman{enumi}}
\begin{enumerate}
\item \label{OSalpha}
$\mathcal{O}_{S_\alpha} = \mathcal{O}[\alpha]$.
\item \label{Zalphasubroup}
$\mathbb{Z}[\alpha]$ is a subgroup of finite index of~$\mathcal{O}_{S_\alpha}$.
\item \label{zsubgroup}
$\mathfrak{z}$ is a subgroup of finite index of $\mathbb{Z}[\alpha]$.
\end{enumerate}
\end{lemma}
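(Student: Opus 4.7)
The plan is to treat the three parts in sequence, reducing each to the previous one and to standard facts about orders and localizations in number fields.

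For~(i): The inclusion $\mathcal{O}[\alpha]\subset\mathcal{O}_{S_\alpha}$ is clear because every finite $\mathfrak{p}\notin S_\alpha$ satisfies $\mathfrak{p}\nmid\mathfrak{b}$, hence $v_\mathfrak{p}(\alpha)=v_\mathfrak{p}(\mathfrak{a})\ge 0$. For the reverse, I would first record the description $\mathcal{O}_{S_\alpha}=\bigcup_{N\ge 0}\mathfrak{b}^{-N}$, obtained by taking $N$ large enough to absorb the poles of a given $S_\alpha$-integer at the finitely many primes dividing~$\mathfrak{b}$. Given $x\in\mathfrak{b}^{-N}$, the coprimality $(\mathfrak{a},\mathfrak{b})=\mathcal{O}$ propagates to $(\mathfrak{a}^N,\mathfrak{b}^N)=\mathcal{O}$, and Bezout furnishes $a\in\mathfrak{a}^N$, $b\in\mathfrak{b}^N$ with $a+b=1$. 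Since $\alpha^N\mathcal{O}=\mathfrak{a}^N\mathfrak{b}^{-N}$, we have $xa\in\alpha^N\mathcal{O}\subset\mathcal{O}[\alpha]$ and $xb\in\mathfrak{b}^N\mathfrak{b}^{-N}=\mathcal{O}\subset\mathcal{O}[\alpha]$, so $x=xa+xb\in\mathcal{O}[\alpha]$.

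For~(ii): The element $\beta:=a_n\alpha$ is an algebraic integer, being a root of the monic polynomial $X^n+a_{n-1}X^{n-1}+a_{n-2}a_n X^{n-2}+\cdots+a_0a_n^{n-1}\in\mathbb{Z}[X]$ derived from~\eqref{minpol}. Since $\mathbb{Q}(\beta)=K$, the order $\mathbb{Z}[\beta]$ has some finite index $c_0$ in~$\mathcal{O}$, so $c_0\mathcal{O}\subset\mathbb{Z}[\beta]\subset\mathbb{Z}[\alpha]$. Multiplying by $\alpha^k$ and summing over $k\ge 0$ yields $c_0\mathcal{O}[\alpha]\subset\mathbb{Z}[\alpha]$, so $\mathcal{O}[\alpha]/\mathbb{Z}[\alpha]$ is a quotient of $\mathcal{O}[\alpha]/c_0\mathcal{O}[\alpha]=\mathcal{O}_{S_\alpha}/c_0\mathcal{O}_{S_\alpha}$, the latter equality coming from~(i). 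This last ring is a localization of the finite ring $\mathcal{O}/c_0\mathcal{O}$ and is therefore itself finite, completing~(ii).

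For~(iii): The $\alpha$-invariance $\alpha\mathfrak{z}\subset\mathfrak{z}$ together with the $\mathbb{Z}$-module structure of~$\mathfrak{z}$ makes it an ideal of $\mathbb{Z}[\alpha]$. Since $|a_0|\ge 2$ (noted in the paper just below~\eqref{minpol}) and $0\in\mathcal{D}$ is the standing assumption, $\mathcal{D}$ contains a nonzero element~$d$; then $d\in\mathcal{D}\subset\mathcal{D}-\mathcal{D}\subset\mathfrak{z}$ gives $d\mathbb{Z}[\alpha]\subset\mathfrak{z}$, so it suffices to show $[\mathbb{Z}[\alpha]:d\mathbb{Z}[\alpha]]<\infty$. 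Since $\mathcal{O}[\alpha]=\mathcal{O}_{S_\alpha}$ is a Dedekind domain with finite residue fields, $\mathcal{O}[\alpha]/d\mathcal{O}[\alpha]$ is finite. Moreover, multiplication by~$d$ is a $\mathbb{Z}$-module isomorphism $\mathcal{O}[\alpha]\to d\mathcal{O}[\alpha]$ carrying $\mathbb{Z}[\alpha]$ onto $d\mathbb{Z}[\alpha]$, so $[d\mathcal{O}[\alpha]:d\mathbb{Z}[\alpha]]=[\mathcal{O}[\alpha]:\mathbb{Z}[\alpha]]$ is finite by~(ii). Multiplicativity of indices applied to the chains $d\mathbb{Z}[\alpha]\subset\mathbb{Z}[\alpha]\subset\mathcal{O}[\alpha]$ and $d\mathbb{Z}[\alpha]\subset d\mathcal{O}[\alpha]\subset\mathcal{O}[\alpha]$ then yields $[\mathbb{Z}[\alpha]:d\mathbb{Z}[\alpha]]=[\mathcal{O}[\alpha]:d\mathcal{O}[\alpha]]<\infty$, and a fortiori $[\mathbb{Z}[\alpha]:\mathfrak{z}]<\infty$.

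The main obstacle is~(i), where the primes dividing~$\mathfrak{b}$ need not be principal in~$\mathcal{O}$, so one cannot directly pull $1/\pi$ into $\mathcal{O}[\alpha]$ prime by prime. The Bezout identity applied to the coprime ideals $\mathfrak{a}^N$ and~$\mathfrak{b}^N$ neatly sidesteps this obstruction by decomposing each element of~$\mathfrak{b}^{-N}$ into one piece in the principal ideal $\alpha^N\mathcal{O}$ and another in~$\mathcal{O}$, at which point (ii) and~(iii) fall out by straightforward finiteness arguments.
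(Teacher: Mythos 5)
Your proof is correct, and it is worth comparing with the paper's. Part~(i) is essentially the paper's own argument: there too one takes $x\in\mathfrak{b}^{-k}$ and uses $(\mathfrak{a}^k,\mathfrak{b}^k)=\mathcal{O}$ to conclude $x\in\alpha^k\mathcal{O}+\mathcal{O}\subseteq\mathcal{O}[\alpha]$; your Bezout decomposition $x=xa+xb$ merely makes this explicit at the level of elements. In~(ii) you genuinely diverge: the paper sandwiches $\mathbb{Z}[\alpha]\subseteq\mathcal{O}[\alpha]\subseteq\frac1q\mathbb{Z}[\alpha]$ with a \emph{rational integer}~$q$ and then runs an elementary pigeonhole argument on the rank-$\le n$ modules $\langle1,\alpha,\ldots,\alpha^m\rangle_\mathbb{Z}$ to bound the index by~$q^n$, whereas you pass through $c_0\mathcal{O}[\alpha]\subseteq\mathbb{Z}[\alpha]$ and the finiteness of $\mathcal{O}_{S_\alpha}/c_0\mathcal{O}_{S_\alpha}$. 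That finiteness is true, but your justification (``a localization of $\mathcal{O}/c_0\mathcal{O}$'') silently uses that $\mathcal{O}_{S_\alpha}$ is a ring of fractions of~$\mathcal{O}$, which itself requires a word of proof (for instance, finiteness of the class group gives $\mathfrak{b}^h=(b)$ and then $\mathcal{O}_{S_\alpha}=\mathcal{O}[\frac1b]$); alternatively you could invoke here, as you do in~(iii), that $\mathcal{O}_{S_\alpha}$ is a Dedekind domain with finite residue fields, so every quotient by a nonzero ideal is finite. So your route to~(ii) leans on more number-theoretic machinery than the paper's purely module-theoretic pigeonhole, in exchange for a shorter and arguably more transparent deduction. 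In~(iii) the paper only records the inclusions $\mathfrak{z}\subseteq\mathbb{Z}[\alpha]\subseteq\frac1d\,\mathfrak{z}$ for $d\in\mathcal{D}\setminus\{0\}$ and leaves the finiteness step implicit; your version --- $\mathfrak{z}$ is an ideal of $\mathbb{Z}[\alpha]$ containing~$d$, and $[\mathbb{Z}[\alpha]:d\mathbb{Z}[\alpha]]=[\mathcal{O}[\alpha]:d\mathcal{O}[\alpha]]<\infty$ by comparing the two chains through $d\mathcal{O}[\alpha]$ and using~(ii) --- supplies exactly the missing detail (note $d$ need not be a rational integer, so this is not completely automatic), which is a welcome addition.
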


\begin{proof}
We clearly have $\mathcal{O}[\alpha] \subseteq \mathcal{O}_{S_\alpha}$.
For any $x \in \mathcal{O}_{S_\alpha}$, there exists $k \in \mathbb{N}$ such that $x \in \mathfrak{b}^{-k}$.
Since $(\mathfrak{a}, \mathfrak{b}) = \mathcal{O}$, this implies that $x \in (\frac{\mathfrak{a}^k}{\mathfrak{b}^k}, \mathcal{O}) = (\alpha^k \mathcal{O}, \mathcal{O}) \subseteq \mathcal{O}[\alpha]$, which proves~(\ref{OSalpha}).

Recall that $a_n$ is the leading coefficient of the minimal polynomial~$A$ defined in~\eqref{minpol}. As $a_n \alpha \in \mathcal{O}$, the set $\mathbb{Z}[a_n\alpha] \subset \mathbb{Z}[\alpha]$ is an order of $\mathbb{Q}(\alpha)$.
Therefore, there exists $q \in \mathbb{N}$ such that $\mathbb{Z}[a_n \alpha] \subseteq \mathcal{O} \subseteq \frac1q \mathbb{Z}[a_n \alpha]$, thus $\mathbb{Z}[\alpha] \subseteq \mathcal{O}[\alpha] \subseteq \frac1q \mathbb{Z}[\alpha]$.
Suppose that $\mathcal{O}[\alpha] / \mathbb{Z}[\alpha]$ is infinite.
Then there exist $q^n+1$ elements $x_1, x_2, \ldots, x_{q^n+1}$ of $\mathcal{O}[\alpha]$ lying in pairwise distinct congruence classes $\bmod\, \mathbb{Z}[\alpha]$.
Since $x_1, x_2, \ldots, x_{q^n+1} \in \frac{1}{q} \mathbb{Z}[\alpha]$, there exists~$m$ such that $x_1, x_2, \ldots, x_{q^n+1} \in \frac{1}{q} \langle 1, \alpha, \ldots, \alpha^m \rangle_\mathbb{Z}$.
As $\langle 1, \alpha, \ldots, \alpha^m \rangle_\mathbb{Z}$ is a $\mathbb{Z}$-module of rank at most~$n$, the index of $\frac{1}{q} \langle 1, \alpha, \ldots, \alpha^m \rangle_\mathbb{Z} / \langle 1, \alpha, \ldots, \alpha^m \rangle_\mathbb{Z}$ is at most~$q^n$, which implies that $x_i \equiv x_j \bmod\, \langle 1, \alpha, \ldots, \alpha^m \rangle_\mathbb{Z}$ for some $1 \le i < j \le q^n+1$, contradicting that $x_i \not\equiv x_j \bmod\, \mathbb{Z}[\alpha]$.
Therefore, $\mathbb{Z}[\alpha]$ is a subgroup of~$\mathcal{O}[\alpha]$ of index at most~$q^n$.
Using~(\ref{OSalpha}), this gives~(\ref{Zalphasubroup}).

Finally, for each $d \in \mathcal{D} \setminus \{0\}$, we have $\mathfrak{z} \subseteq \mathbb{Z}[\alpha] \subseteq \frac{1}{d}\, \mathfrak{z}$, which implies~(\ref{zsubgroup}).
\end{proof}

In particular, Lemmas~\ref{delonelemma} and~\ref{l:subgroup} yield that $\Phi_\alpha(\mathfrak{z})$ and $\Phi_\alpha(\mathbb{Z}[\alpha])$ are Delone sets in $\mathbb{K}_\alpha$.

Next we show the effect of the action of~$\alpha$ on the measure of a measurable subset of~$\mathbb{K}_\alpha$. To this matter we use the Dedekind-Mertens Lemma for the content of polynomials.
Recall that the \emph{content} $c(f)$ of a polynomial~$f$ over the number field~$K$ is the ideal generated by the coefficients of~$f$, and the Dedekind-Mertens Lemma (see e.g.\ \cite[Section~8]{Anderson:00} or \cite[\S9]{Prufer:32}) asserts that
\begin{equation}\label{DMlemma}
c(f g) = c(f)\, c(g) \qquad (f, g \in K[X])\,.
\end{equation}

\begin{lemma}\label{measuremult}
Let $M \subset \mathbb{K}_\alpha$  be a measurable set. Then
\[
\mu_\alpha(\alpha \cdot M) = \mu_\alpha(M) \prod_{\mathfrak{p}\in S_\alpha} |\alpha|_\mathfrak{p} = |a_0|\, \mu_\alpha(M)\,.
\]
\end{lemma}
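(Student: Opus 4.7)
The plan is to treat the two equalities separately. Since $\mu_\alpha = \prod_{\mathfrak{p}\in S_\alpha}\mu_\mathfrak{p}$ is a product measure and multiplication by~$\alpha$ acts coordinatewise, the first equality $\mu_\alpha(\alpha\cdot M)=\mu_\alpha(M)\prod_{\mathfrak{p}\in S_\alpha}|\alpha|_\mathfrak{p}$ follows by Fubini from the one-factor statement $\mu_\mathfrak{p}(\alpha\cdot E)=|\alpha|_\mathfrak{p}\,\mu_\mathfrak{p}(E)$ on each $K_\mathfrak{p}$. This last identity is built into the normalizations made in Section~\ref{sec:basic-defin-main}: at a real (resp.\ complex) archimedean place, multiplication by $\alpha^{(\mathfrak{p})}$ scales one-dimensional (resp.\ two-dimensional) Lebesgue measure by $|\alpha^{(\mathfrak{p})}|$ (resp.\ by the Jacobian $|\alpha^{(\mathfrak{p})}|^2$), which is exactly $|\alpha|_\mathfrak{p}$; at a finite place, $\mu_\mathfrak{p}(\alpha(a+\mathfrak{p}^k))=\mathfrak{N}(\mathfrak{p})^{-k-v_\mathfrak{p}(\alpha)}=|\alpha|_\mathfrak{p}\,\mu_\mathfrak{p}(a+\mathfrak{p}^k)$ on balls, and this extends to all measurable sets by uniqueness of Haar measure.

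For the second equality $\prod_{\mathfrak{p}\in S_\alpha}|\alpha|_\mathfrak{p}=|a_0|$, I would first apply the product formula $\prod_\mathfrak{p}|\alpha|_\mathfrak{p}=1$ over all places of~$K$. For $\mathfrak{p}\notin S_\alpha$, $\mathfrak{p}$~is finite with $\mathfrak{p}\nmid\mathfrak{b}$, so $v_\mathfrak{p}(\alpha)=v_\mathfrak{p}(\mathfrak{a})\ge 0$; hence $\prod_{\mathfrak{p}\notin S_\alpha}|\alpha|_\mathfrak{p}=\mathfrak{N}(\mathfrak{a})^{-1}$, and therefore $\prod_{\mathfrak{p}\in S_\alpha}|\alpha|_\mathfrak{p}=\mathfrak{N}(\mathfrak{a})$. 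Since $\mathfrak{N}(\alpha\mathcal{O})=\mathfrak{N}(\mathfrak{a})/\mathfrak{N}(\mathfrak{b})=|N_{K/\mathbb{Q}}(\alpha)|=|a_0/a_n|$, the claim reduces to showing $\mathfrak{N}(\mathfrak{b})=|a_n|$.

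To establish $\mathfrak{N}(\mathfrak{b})=|a_n|$, I would invoke the Dedekind--Mertens lemma~\eqref{DMlemma}. Factor $A(X)=(X-\alpha)B(X)$ in $K[X]$. Primitivity of $A$ in $\mathbb{Z}[X]$ forces $c_\mathcal{O}(A)=\mathcal{O}$, and a short computation using $(\mathfrak{a},\mathfrak{b})=\mathcal{O}$ gives $c_\mathcal{O}(X-\alpha)=\mathcal{O}+\alpha\mathcal{O}=(\mathfrak{b}+\mathfrak{a})\mathfrak{b}^{-1}=\mathfrak{b}^{-1}$, so \eqref{DMlemma} yields $c_\mathcal{O}(B)=\mathfrak{b}$. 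To convert this content identity into a norm identity, I would pass to the normal closure~$L$ of~$K/\mathbb{Q}$ and write $A(X)=a_n\prod_{i=1}^n(X-\sigma_i\alpha)$ in $\mathcal{O}_L[X]$, where $\sigma_1,\ldots,\sigma_n$ are the distinct embeddings $K\hookrightarrow L$. The same content computation, transported by each~$\sigma_i$, gives $c_{\mathcal{O}_L}(X-\sigma_i\alpha)=(\sigma_i(\mathfrak{b})\mathcal{O}_L)^{-1}$, so iterating~\eqref{DMlemma} produces the ideal identity $a_n\mathcal{O}_L=\prod_{i=1}^n\sigma_i(\mathfrak{b})\mathcal{O}_L$. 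The right-hand side is $\mathrm{Gal}(L/\mathbb{Q})$-invariant, hence extended from a positive integer of~$\mathbb{Z}$; taking absolute norms and using both Galois invariance $\mathfrak{N}_L(\sigma_i(\mathfrak{b})\mathcal{O}_L)=\mathfrak{N}_L(\mathfrak{b}\mathcal{O}_L)$ and the extension formula $\mathfrak{N}_L(\mathfrak{b}\mathcal{O}_L)=\mathfrak{N}(\mathfrak{b})^{[L:K]}$ identifies that integer as $\mathfrak{N}(\mathfrak{b})$, so comparison with $a_n\mathcal{O}_L$ gives $\mathfrak{N}(\mathfrak{b})=|a_n|$.

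The main obstacle is precisely this last step. The product formula alone only pins down the ratio $\mathfrak{N}(\mathfrak{a})/\mathfrak{N}(\mathfrak{b})$, so an independent relation is needed; Dedekind--Mertens supplies one at the level of contents, but converting the content identity $c_\mathcal{O}(B)=\mathfrak{b}$ into the norm identity $\mathfrak{N}(\mathfrak{b})=|a_n|$ must correctly account for the residue degrees of the primes occurring in~$\mathfrak{b}$ (the smallest positive integer in~$\mathfrak{b}$ can be strictly smaller than $|a_n|$ when inertia degrees exceed~$1$), which is what makes the passage to the normal closure and the Galois-invariance argument essential.
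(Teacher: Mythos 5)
Your proof is correct and, at its core, follows the same route as the paper: both arguments reduce the second equality to the identity $\mathfrak{N}(\mathfrak{b})=|a_n|$, obtained from the Dedekind--Mertens Lemma together with the computation $c(X-\alpha)=\mathcal{O}+\alpha\mathcal{O}=\mathfrak{b}^{-1}$, while the first equality is exactly the normalization of the local measures, as you say. The differences are presentational: the paper splits the product as $\prod_{\mathfrak{p}\mid\infty}|\alpha|_\mathfrak{p}=|N_{\mathbb{Q}(\alpha):\mathbb{Q}}(\alpha)|=|a_0|/|a_n|$ and $\prod_{\mathfrak{p}\mid\mathfrak{b}}|\alpha|_\mathfrak{p}=\mathfrak{N}(\mathfrak{b})$, whereas you invoke the product formula to get $\prod_{\mathfrak{p}\in S_\alpha}|\alpha|_\mathfrak{p}=\mathfrak{N}(\mathfrak{a})$ --- equivalent bookkeeping; and where the paper compresses the key step into the single equality $c\big(N_{\mathbb{Q}(\alpha):\mathbb{Q}}(X-\alpha)\big)=\mathfrak{N}\big(c(X-\alpha)\big)$, you unpack precisely this by factoring $A$ into linear factors over the normal closure $L$ and applying Dedekind--Mertens there, which is a legitimate and more self-contained justification of that equality. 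One small blemish: your aside that a $\mathrm{Gal}(L/\mathbb{Q})$-invariant ideal of $\mathcal{O}_L$ must be extended from a rational integer is false in general (any ramified prime yields an invariant ideal not extended from $\mathbb{Z}$), but it is also superfluous: since you have already established $a_n\mathcal{O}_L=\prod_{i=1}^n\sigma_i(\mathfrak{b})\mathcal{O}_L$, taking absolute norms of both sides directly gives $|a_n|^{[L:\mathbb{Q}]}=\mathfrak{N}(\mathfrak{b})^{n[L:K]}=\mathfrak{N}(\mathfrak{b})^{[L:\mathbb{Q}]}$, hence $\mathfrak{N}(\mathfrak{b})=|a_n|$. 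With that remark deleted, your argument is complete and matches the paper's in substance.
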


\begin{proof}
By the definition of the measure $\mu_\alpha$ and the absolute values $\lvert\cdot\rvert_\mathfrak{p}$, one  immediately gets the first equality. In order to prove the second one, first note that $\prod_{\mathfrak{p}\mid\infty} |\alpha|_\mathfrak{p} = |N_{\mathbb{Q}(\alpha):\mathbb{Q}}(\alpha)| = \frac{|a_0|}{|a_n|}$. Moreover, from the definition of $\lvert\cdot\rvert_\mathfrak{p}$, we get that $\prod_{\mathfrak{p}\mid\mathfrak{b}} |\alpha|_\mathfrak{p} = \mathfrak{N}(\mathfrak{b})$. Combining these identities, we arrive at
\[
\prod_{\mathfrak{p}\in S_\alpha} |\alpha|_\mathfrak{p} = |a_0| \frac{\mathfrak{N}(\mathfrak{b})}{|a_n|}\,.
\]
Now, the lemma follows from
\[
\frac{1}{|a_n|} = c\left(\frac{A(X)}{a_n}\right) = c\big(N_{\mathbb{Q}(\alpha):\mathbb{Q}}(X-\alpha)\big) = \mathfrak{N}\big(c(X-\alpha)\big) = \frac{1}{\mathfrak{N}(\mathfrak{b})}\,,
\]
where the third equality is a consequence of the Dedekind-Mertens Lemma in~\eqref{DMlemma}, and $(\mathcal{O}, \alpha\mathcal{O}) =\big(\mathcal{O}, \frac{\mathfrak{a}}{\mathfrak{b}}\big) = \frac{1}{\mathfrak{b}}$ is used for the last equality.
\end{proof}

\subsection*{Proof of Theorem~\ref{basicProperties}}

We start with the proof of the first assertion of Theorem~\ref{basicProperties}.

\begin{proof}[Proof of Theorem~\ref{basicProperties}~(\ref{property1})]
As $\mathcal{D}$ is a finite set and $|\alpha^{-1}|_\mathfrak{p} < 1$ for all $\mathfrak{p} \in S_\alpha$, the map $(d_k)_{k\ge1} \mapsto \sum_{k=1}^\infty \Phi_\alpha(d_k \alpha^{-k})$ is a continuous map from the compact set of infinite sequences with elements in~$\mathcal{D}$ to $\mathbb{K}_\alpha$. Here, the topology on $\mathcal{D}^\mathbb{N}$ is the usual one, i.e., two sequences are close to each other if the first index where they disagree is large. Therefore, being the continuous image of a compact set, $\mathcal{F}$~is compact.
\end{proof}

To prove the second assertion of Theorem~\ref{basicProperties}, we need the following lemma.
Here, a~collection is \emph{uniformly locally finite} if each open ball meets at most $m$ members of the collection, where $m \in \mathbb{N}$ is a fixed number.

\begin{lemma}\label{lfcovering}
The collection $\{\mathcal{F} + \Phi_\alpha(x): x \in \mathfrak{z}\}$ is a uniformly locally finite covering~of~$\mathbb{K}_\alpha$.
\end{lemma}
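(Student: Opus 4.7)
My plan has two steps: uniform local finiteness by a direct compactness argument, and covering by passing to a compact quotient and invoking ergodicity of an induced endomorphism.

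For uniform local finiteness, recall from Theorem~\ref{basicProperties}~(\ref{property1}) that $\mathcal{F}$ is compact, so it lies in some ball $B(0,R)$; hence $(\mathcal{F}+\Phi_\alpha(x))\cap B(z,\rho)\neq\emptyset$ only when $\Phi_\alpha(x)\in B(z,R+\rho)$. By Lemma~\ref{delonelemma} together with Lemma~\ref{l:subgroup}, $\Phi_\alpha(\mathfrak{z})$ is a Delone set in $\mathbb{K}_\alpha$, so its uniform discreteness yields a uniform bound on the number of such~$x$. This gives the uniformly locally finite condition and also shows that the closed Delone set $\Phi_\alpha(\mathfrak{z})$ combined with the compact $\mathcal{F}$ makes $M:=\bigcup_{x\in\mathfrak{z}}(\mathcal{F}+\Phi_\alpha(x))$ closed in $\mathbb{K}_\alpha$ (a convergent sequence $f_n+\Phi_\alpha(x_n)\to z$ forces $(x_n)$ eventually constant).

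For the covering, iterating the set equation~\eqref{seteq} yields $\alpha^k\mathcal{F}=\bigcup_{x\in\mathcal{D}_k}(\mathcal{F}+\Phi_\alpha(x))$, where $\mathcal{D}_k=\{\sum_{i=0}^{k-1}d_i\alpha^i:\,d_i\in\mathcal{D}\}\subseteq\mathfrak{z}$; hence $\alpha^k\mathcal{F}\subseteq M$ for every~$k$ and $\alpha M\subseteq M$. To prove $M=\mathbb{K}_\alpha$, I plan to pass to the compact abelian quotient $Q=\mathbb{K}_\alpha/\Phi_\alpha(\mathfrak{z})$ via the quotient map $p$. The image $\tilde{\mathcal{F}}=p(\mathcal{F})$ is closed, has positive Haar measure (since $\mu_\alpha(\mathcal{F})>0$), and satisfies $\tilde\alpha(\tilde{\mathcal{F}})=\tilde{\mathcal{F}}$, where $\tilde\alpha$ is the $|a_0|$-to-one surjective endomorphism of~$Q$ induced by multiplication by~$\alpha$. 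Using Lemma~\ref{measuremult} together with the index identity $[\mathfrak{z}:\alpha\mathfrak{z}]=|a_0|$ (a tower-formula consequence of $\mathcal{D}$ being a complete residue system of $\mathbb{Z}[\alpha]/\alpha\mathbb{Z}[\alpha]$), one checks that $\tilde\alpha$ preserves the Haar measure~$\mu_Q$, whence $\tilde{\mathcal{F}}$ is $\tilde\alpha^{-1}$-invariant up to a $\mu_Q$-null set.

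The decisive step, which I expect to be the main obstacle, is ergodicity of~$\tilde\alpha$ on~$Q$. I would derive it via Pontryagin duality: identifying $\widehat{Q}$ with the discrete group of characters of~$\mathbb{K}_\alpha$ trivial on $\Phi_\alpha(\mathfrak{z})$, the dual $\widehat{\tilde\alpha}$ is pullback by multiplication by~$\alpha$, and it has no non-trivial periodic orbit since $\alpha$ is expanding (a periodic character would produce a root-of-unity relation incompatible with expansion). Ergodicity then forces $\mu_Q(\tilde{\mathcal{F}})\in\{0,\mu_Q(Q)\}$, and positivity of its measure excludes~$0$. Finally, closedness of $\tilde{\mathcal{F}}$ in the compact group~$Q$, together with the fact that any non-empty open subset of~$Q$ has positive Haar measure, upgrades the full-measure conclusion to $\tilde{\mathcal{F}}=Q$, equivalently $M=\mathbb{K}_\alpha$.
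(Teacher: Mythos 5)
Your first half (uniform local finiteness and closedness of $M=\mathcal{F}+\Phi_\alpha(\mathfrak{z})$ from compactness of $\mathcal{F}$ plus the Delone property of $\Phi_\alpha(\mathfrak{z})$) is exactly the paper's argument and is fine. The covering half, however, has a genuine circularity: you feed in $\mu_\alpha(\mathcal{F})>0$ to get $\mu_Q(p(\mathcal{F}))>0$, but at this stage of the paper positivity of $\mu_\alpha(\mathcal{F})$ is not available — it is deduced \emph{from} Lemma~\ref{lfcovering}: the proof of Theorem~\ref{basicProperties}~(\ref{property2}) uses the uniformly locally finite covering together with a Baire category argument to show that $\mathcal{F}$ has nonempty interior, and only then does one know $\mu_\alpha(\mathcal{F})>0$. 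Reading Theorem~\ref{basicProperties} as silently assuming positivity would empty it of its main content (that standard digit sets automatically give tiles), and you offer no independent proof of positivity; all standard proofs of it go through precisely the covering statement you are trying to establish. Note also that the purely topological part of your quotient argument cannot rescue this: a nonempty closed set $\tilde{\mathcal{F}}\subseteq Q$ with $\tilde\alpha(\tilde{\mathcal{F}})=\tilde{\mathcal{F}}$ need not be all of $Q$ (the single point $0$ already satisfies this), so some input beyond forward invariance and closedness is unavoidable, and in your scheme that input is exactly the unproven positivity.

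Apart from this, your machinery is sound and genuinely different from the paper's: the quotient $Q=\mathbb{K}_\alpha/\Phi_\alpha(\mathfrak{z})$ is indeed compact, multiplication by $\alpha$ induces a surjective, Haar-measure-preserving, $|a_0|$-to-one endomorphism (the index computation $[\mathfrak{z}:\alpha\mathfrak{z}]=|a_0|$ via the tower through $\mathbb{Z}[\alpha]$ is correct), forward invariance plus measure preservation gives invariance of $p(\mathcal{F})$ modulo null sets, and ergodicity follows from the absence of nontrivial periodic characters, since a character fixed by the $k$-th dual iterate is trivial on $(\alpha^k-1)\mathbb{K}_\alpha=\mathbb{K}_\alpha$. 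The paper instead avoids measure theory altogether: from $\mathfrak{z}=\alpha\,\mathfrak{z}+\mathcal{D}$ and~\eqref{seteq} one gets $\alpha^{-k}\cdot(\mathcal{F}+\Phi_\alpha(\mathfrak{z}))=\mathcal{F}+\Phi_\alpha(\mathfrak{z})$ for all $k$, and since this set is relatively dense and $\alpha^{-1}$ is a uniform contraction, it is dense; closedness then yields the covering. If you replace your positivity step by this contraction--denseness observation, your argument collapses to the paper's; as it stands, the ergodic route cannot be completed without first proving, independently, that $\mu_\alpha(\mathcal{F})>0$.
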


\begin{proof}
Since $\mathcal{D}$ is a standard digit set and $\mathcal{D} \subset \mathfrak{z}$, we have $\mathfrak{z} = \alpha\, \mathfrak{z} + \mathcal{D}$.
With~\eqref{seteq} we get
\[
\alpha \cdot \big(\mathcal{F} + \Phi_\alpha(\mathfrak{z})\big) = \mathcal{F} + \Phi_\alpha(\mathcal{D}) + \Phi_\alpha(\alpha\, \mathfrak{z}) = \mathcal{F} + \Phi_\alpha(\mathfrak{z})\,,
\]
and, hence, $\alpha^{-k} \cdot (\mathcal{F} + \Phi_\alpha(\mathfrak{z})) = \mathcal{F} + \Phi_\alpha(\mathfrak{z})$ for all $k\in \mathbb{N}$.
By Lemmas~\ref{delonelemma} and~\ref{l:subgroup}, $\mathcal{F} + \Phi_\alpha(\mathfrak{z})$ is relatively dense in~$\mathbb{K}_\alpha$.
As the action of $\alpha^{-1}$ on $\mathbb{K}_\alpha$ is a uniform contraction, $\mathcal{F} + \Phi_\alpha(\mathfrak{z})$ is even dense in~$\mathbb{K}_\alpha$.
The lemma follows because $\mathcal{F}$ is compact by Theorem~\ref{basicProperties}~\eqref{property1} and $\Phi_\alpha(\mathfrak{z})$ is uniformly discrete by Lemmas~\ref{delonelemma} and~\ref{l:subgroup}.
\end{proof}

We continue with the proof of the remaining assertions of Theorem~\ref{basicProperties}.

\begin{proof}[Proof of Theorem~\ref{basicProperties}~(\ref{property2})]
By a Baire category argument, the uniform local finiteness of the covering $\{\mathcal{F} + \Phi_\alpha(x) \,:\, x \in \mathbb{Z}[\alpha]\}$ established in Lemma~\ref{lfcovering} implies together with the compactness of $\mathcal{F}$ that $\mathcal{F}$ has nonempty interior.
Multiplying~\eqref{seteq} by $\alpha^{-1}$ and iterating this equation $k$ times yields that
\begin{equation}\label{iteratedset}
\mathcal{F} = \bigcup_{d \in \mathcal{D}_k} \alpha^{-k} \cdot \big(\mathcal{F} + \Phi_\alpha(d)\big) \quad \mbox{with} \quad  \mathcal{D}_k = \bigg\{\sum_{j=0}^{k-1} \alpha^j d_j \,:\, d_0,\ldots, d_{k-1} \in \mathcal{D}\bigg\}.
\end{equation}
As the operator $\alpha^{-1}$ acts as a uniform contraction on~$\mathbb{K}_\alpha$, the diameter of $\alpha^{-k} \cdot \mathcal{F}$ tends to zero for $k \to \infty$. Since $k$ can be chosen arbitrarily large and $\alpha^{-k} \cdot (\mathcal{F} + \Phi_\alpha(d))$ contains inner points of~$\mathcal{F}$ for each $d\in \mathcal{D}_k$, the iterated set equation in \eqref{iteratedset} implies that $\mathcal{F}=\overline{\mathrm{int}(\mathcal{F})}$.
\end{proof}

\begin{proof}[Proof of Theorem~\ref{basicProperties}~(\ref{property3})]
Choose $\varepsilon>0$ in a way that $\mathcal{F}$ contains a ball of radius~$\varepsilon$.
Since $\alpha^{-1}$ is a uniform contraction, we may choose $k \in \mathbb{N}$ such that $\mathrm{diam}(\alpha^{-k} \cdot \mathcal{F}) < \frac\varepsilon2$ holds. Thus, for this choice of~$k$, there is at least one $d' \in \mathcal{D}_k$ satisfying $\alpha^{-k} \cdot (\mathcal{F} + \Phi_\alpha(d')) \subset \mathrm{int}(\mathcal{F})$.
Taking measures in the iterated set equation \eqref{iteratedset} now yields
\begin{equation}\label{measureunion}
\mu_\alpha(\mathcal{F}) = \mu_\alpha\bigg(\bigcup_{d \in \mathcal{D}_k} \alpha^{-k} \cdot \big(\mathcal{F} + \Phi_\alpha(d)\big)\bigg)\,.
\end{equation}
As $\alpha^{-k} \cdot (\mathcal{F} + \Phi_\alpha(d')) \subset \mathrm{int}(\mathcal{F})$, the boundary of $\alpha^{-k} \cdot (\mathcal{F} + \Phi_\alpha(d'))$ is covered at least twice by the union in~\eqref{measureunion}.
Thus, using Lemma~\ref{measuremult}, we get from~\eqref{measureunion} that
\[
\mu_\alpha(\mathcal{F}) \le \bigg( \sum_{d \in \mathcal{D}_k} \mu_\alpha(\alpha^{-k} \cdot \mathcal{F}) \bigg) - \mu_\alpha(\alpha^{-k} \cdot \partial\mathcal{F}) = \mu_\alpha(\mathcal{F}) - |a_0|^{-k} \mu_\alpha(\partial \mathcal{F})\,,
\]
which implies that $\mu_\alpha(\partial \mathcal{F}) = 0$.
\end{proof}

\begin{proof}[Proof of Theorem~\ref{basicProperties}~(\ref{property5})]
We show that $\{\mathcal{F} + \Phi_\alpha(x):\, x \in \mathfrak{z}\}$ is a multiple tiling, which implies in view of Lemma~\ref{l:subgroup}~(\ref{zsubgroup}) that the same is true for $\{\mathcal{F} + \Phi_\alpha(x):\, x \in \mathbb{Z}[\alpha]\}$.

Let $m_1<m_2$ be positive integers. Let $M_1$ be the set of points covered by exactly $m_1$ sets of the collection $\{\mathcal{F} + \Phi_\alpha(x):\, x \in \mathfrak{z}\}$, and let $M_2$ be the set of all points covered by $m_2$ or more sets of $\{\mathcal{F} + \Phi_\alpha(x):\, x \in \mathfrak{z}\}$.
Obviously, we have $M_1 \cap M_2 = \emptyset$.
If the assertion is false, then we may choose $m_1$ and $m_2$ with $m_1 < m_2$ in a way that $M_1$ and $M_2$ have positive measure.
Since $\Phi_\alpha(\mathfrak{z})$ is a relatively dense additive group, $M_1$~is a relatively dense subset of~$\mathbb{K}_\alpha$.

Thus it suffices to prove that $M_2$ contains an arbitrarily large ball.
As $\mu_\alpha(\partial\mathcal{F}) = 0$, we can choose $\mathbf{z} \in M_2$ with $\mathbf{z} \not\in \bigcup_{x \in \mathfrak{z}} \partial\big(\mathcal{F} + \Phi_\alpha(x)\big)$.
Therefore, there exists a ball $B_\varepsilon(\mathbf{z})$ centered in $\mathbf{z}$ of radius $\varepsilon>0$ with $B_\varepsilon(\mathbf{z}) \subset M_2$.
As the action of $\alpha$ on $\mathbb{K}_\alpha$ is expanding, it suffices to show that
\begin{equation}\label{inductionM2}
\alpha^k \cdot B_\varepsilon(\mathbf{z}) \subset M_2
\end{equation}
holds for each positive integer~$k$. We do this by induction. The assertion is true for $k=0$. Assume that we already proved it for a certain $k$ and let $\mathbf{z}' \in \alpha^{k} \cdot B_\varepsilon(\mathbf{z})$. Then there exist $x_1, \ldots, x_{m_2} \in \mathfrak{z}$ such that $\mathbf{z}' \in \mathcal{F} + \Phi_\alpha(x_\ell)$ for $\ell \in \{1,\ldots ,m_2\}$, thus $\alpha \cdot \mathbf{z}' \in \alpha \cdot (\mathcal{F} + \Phi_\alpha(x_\ell))$. By the set equation~\eqref{seteq}, this implies that there exist $d_1,\ldots,d_{m_2} \in \mathcal{D}$ such that $\alpha \cdot \mathbf{z}' \in \mathcal{F} + \Phi_\alpha(\alpha x_\ell + d_\ell)$.
Since $\mathcal{D}$ is a standard digit set, we conclude that $\alpha x_\ell  + d_\ell$ are pairwise disjoint elements of~$\mathfrak{z}$, thus $\alpha \cdot \mathbf{z}' \in M_2$. Since $\mathbf{z}'$ was an arbitrary element of $\alpha^k \cdot B_\varepsilon(\mathbf{z})$, this implies that \eqref{inductionM2} is true also for $k+1$ instead of~$k$, and the assertion is established.
\end{proof}

\section{Tiling criteria} \label{sec:tiling-criteria}

In this section, we establish tiling criteria in the spirit of Gr\"ochenig and Haas \cite [Theorem~4.8 and Proposition~5.3]{Groechenig-Haas:94} for the collection $\{\mathcal{F} + \Phi_\alpha(x):\, x \in \mathfrak{z}\}$.

\subsection*{Contact matrix}
The contact matrix governs the neighboring structure of tilings induced by approximations of~$\mathcal{F}$.
In order to define this matrix, note that $\Phi_\alpha(\mathfrak{z})$ is a lattice in~$\mathbb{K}_\alpha$ in the sense that it is a Delone set and an additive group.
Therefore, there exists a compact set $D \subset \mathbb{K}_\alpha$ such that the collection $\{D + \Phi_\alpha(x):\, x \in \mathfrak{z}\}$ forms a tiling of $\mathbb{K}_\alpha$.
Set
\[
\mathcal{V}_0 = \{x \in \mathfrak{z} \setminus \{0\}:\, D \cap (D + \Phi_\alpha(x)) \neq \emptyset\}\,,
\]
define recursively
\[
\mathcal{V}_k = \big\{x \in \mathfrak{z}\setminus\{0\}:\, (\alpha x + \mathcal{D}) \cap \big(y + \mathcal{D}) \neq \emptyset\ \hbox{for some}\ y \in \mathcal{V}_{k-1}\big\}\,,
\]
and let $\mathcal{V} = \bigcup_{k\in \mathbb{N}} \mathcal{V}_k$.
Note that the compactness of $D$ and the fact that multiplication by $\alpha$ is an expanding operator on $\mathbb{K}_\alpha$ imply that $\mathcal{V}$ is a finite set.
The $\#\mathcal{V} \times \#\mathcal{V}$-matrix $\mathbf{C} = (c_{x y})_{x,y \in\mathcal{V}}$ defined by
\[
c_{x y} = \#\big((\alpha x + \mathcal{D}) \cap (y + \mathcal{D})\big) \qquad (x, y \in \mathcal{V})
\]
is called the \emph{contact matrix} of~$\mathcal{F} = \mathcal{F}(\alpha,\mathcal{D})$. (We mention that $\mathbf{C}$ depends on the defining data $\alpha$ and $\mathcal{D}$ of $\mathcal{F}$ as well as on the choice of the compact set $D$.) 

Consider the approximations $\mathcal{F}_k = \bigcup_{d \in \mathcal{D}_k} (D+\Phi_\alpha(d))$ of~$\alpha^k \cdot \mathcal{F}$.
As the sets in the union are measure disjoint, Lemma~\ref{measuremult} yields that
\begin{equation}\label{measureequal}
\mu_\alpha(\alpha^{-k} \cdot \mathcal{F}_k) = \mu_\alpha(D) \qquad (k \in \mathbb{N}).
\end{equation}
The collection $\{\mathcal{F}_k + \Phi_\alpha(\alpha^k x):\, x \in \mathfrak{z}\}$ forms a tiling of~$\mathbb{K}_\alpha$.
By induction on~$k$, we see that
\begin{equation} \label{e:boundaryFk}
\partial \mathcal{F}_k = \bigcup_{x \in \mathcal{V}_k} \mathcal{F}_k \cap (\mathcal{F}_k + \Phi_\alpha(\alpha^k x))\,,
\end{equation}
thus $\alpha^k\, \mathcal{V}_k$ contains all the ``neighbors'' of~$\mathcal{F}_k$ in this tiling.

Our aim in this subsection is to establish a tiling criterion in terms of the spectral radius $\varrho(\mathbf{C})$ of the contact matrix.
The following lemma states that the tiling property of~$\{\mathcal{F} + \Phi_\alpha(x):\, x \in \mathfrak{z}\}$ can be decided by looking at the measure of~$\mathcal{F}$.

\begin{lemma}\label{lem:measuremultiple}
The quantity $\frac{\mu_\alpha(\mathcal{F})}{\mu_\alpha(D)}$ is an integer.
If $\frac{\mu_\alpha(\mathcal{F})}{\mu_\alpha(D)} = 1$, then $\{\mathcal{F} + \Phi_\alpha(x):\, x \in \mathfrak{z}\}$ forms a tiling of~$\mathbb{K}_\alpha$.
\end{lemma}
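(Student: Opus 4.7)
The plan is to identify the ratio $\mu_\alpha(\mathcal{F})/\mu_\alpha(D)$ with the multiplicity of the multiple tiling furnished by Theorem~\ref{basicProperties}~(\ref{property5}). From that theorem (or more precisely the version for $\mathfrak{z}$-translates noted immediately after it), we know that there is a positive integer $m$ such that $\mu_\alpha$-almost every point of $\mathbb{K}_\alpha$ lies in exactly $m$ tiles of the collection $\{\mathcal{F} + \Phi_\alpha(x):\, x \in \mathfrak{z}\}$. Equivalently, the covering-multiplicity function
\[
f(z) \;=\; \sum_{x \in \mathfrak{z}} \chi_{\mathcal{F}}\!\big(z - \Phi_\alpha(x)\big)
\]
equals $m$ $\mu_\alpha$-almost everywhere.

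The next step is to integrate $f$ against~$\mu_\alpha$ over the fundamental domain~$D$. On the one hand, since $f=m$ a.e., one has $\int_D f\,d\mu_\alpha = m\,\mu_\alpha(D)$. On the other hand, Tonelli's theorem (applicable since the summand is nonnegative) together with the translation-invariance of $\mu_\alpha$ gives
\[
\int_D f(z)\,d\mu_\alpha(z) \;=\; \sum_{x \in \mathfrak{z}} \mu_\alpha\!\big((D+\Phi_\alpha(x)) \cap \mathcal{F}\big).
\]
The choice of $D$ ensures that $\{D+\Phi_\alpha(x):\, x\in \mathfrak{z}\}$ is a tiling of~$\mathbb{K}_\alpha$, so the sets on the right partition $\mathcal{F}$ up to a $\mu_\alpha$-null set, whence the sum equals $\mu_\alpha(\mathcal{F})$. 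Comparing the two evaluations yields $m\,\mu_\alpha(D) = \mu_\alpha(\mathcal{F})$, proving that $\mu_\alpha(\mathcal{F})/\mu_\alpha(D) = m \in \mathbb{N}$.

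The second assertion is then immediate: if the ratio equals~$1$, then $m=1$, which by definition of multiplicity means that $\{\mathcal{F} + \Phi_\alpha(x):\, x \in \mathfrak{z}\}$ is a tiling of~$\mathbb{K}_\alpha$. There is no real obstacle in this argument beyond the measure-theoretic bookkeeping; the only subtlety is making sure that the multiple-tiling constant $m$ is genuinely available for the lattice $\mathfrak{z}$ (not only for~$\mathbb{Z}[\alpha]$), which is handled by the stronger statement proved in Section~\ref{sec:prop-rati-self}, and that the Fubini/Tonelli interchange is legitimate, which is automatic because the integrand is a nonnegative sum.
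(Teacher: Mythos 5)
Your argument is correct and is essentially the paper's own proof: the paper simply asserts that, since $\{D + \Phi_\alpha(x):\, x \in \mathfrak{z}\}$ tiles $\mathbb{K}_\alpha$, the multiplicity $m$ of the multiple tiling $\{\mathcal{F} + \Phi_\alpha(x):\, x \in \mathfrak{z}\}$ (already established in Section~\ref{sec:prop-rati-self}) equals $\frac{\mu_\alpha(\mathcal{F})}{\mu_\alpha(D)}$, and your Tonelli computation of $\int_D \sum_{x\in\mathfrak{z}} \chi_{\mathcal{F}}(z-\Phi_\alpha(x))\,d\mu_\alpha(z)$ is exactly the standard justification of that assertion. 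The deduction that $m=1$ gives a tiling is the same in both.
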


\begin{proof}
We have seen in Section~\ref{sec:prop-rati-self} that $\{\mathcal{F} + \Phi_\alpha(x):\, x \in \mathfrak{z}\}$ forms a multiple tiling of~$\mathbb{K}_\alpha$.
As $\{D + \Phi_\alpha(x):\, x \in \mathfrak{z}\}$ forms a tiling of~$\mathbb{K}_\alpha$,  the multiplicity is equal to $\frac{\mu_\alpha(\mathcal{F})}{\mu_\alpha(D)}$.
\end{proof}

Let
\[
U_k = \big\{(d,d') \in \mathfrak{z}^2:\, d \in \mathcal{D}_k,\, d' \not\in \mathcal{D}_k\ \hbox{and}\ \big(D+\Phi_\alpha(d)\big) \cap \big(D+\Phi_\alpha(d')\big) \neq \emptyset\big\}\,.
\]
The cardinality of $U_k$ is used in the following criterion.

\begin{proposition}\label{sigmacriterion}
If $\lim_{k\to\infty} |a_0|^{-k}\, \#U_k = 0$, then the collection $\{\mathcal{F} + \Phi_\alpha(x):\, x \in \mathfrak{z}\}$ forms a tiling of~$\mathbb{K}_\alpha$.
\end{proposition}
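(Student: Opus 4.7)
My plan is to combine Lemma~\ref{lem:measuremultiple} with a measure-theoretic comparison of $\alpha^k\mathcal{F}$ and~$\mathcal{F}_k$ at scale~$k$. By Theorem~\ref{basicProperties}~(\ref{property5}) together with Lemma~\ref{lem:measuremultiple}, the collection $\{\mathcal{F}+\Phi_\alpha(x):x\in\mathfrak{z}\}$ is a multiple tiling of some integer multiplicity $m=\mu_\alpha(\mathcal{F})/\mu_\alpha(D)\ge 1$, so the tiling claim reduces to establishing the upper bound $\mu_\alpha(\mathcal{F})\le\mu_\alpha(D)$.

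I~would introduce the finite neighbour set
\[
\mathcal{V}_D=\{v\in\mathfrak{z}:\mathcal{F}\cap(D+\Phi_\alpha(v))\neq\emptyset\}
\]
(finite by compactness of~$\mathcal{F}$ and uniform local finiteness of the $D$-tiling) and the interior digit subset $\mathcal{D}_k^{\mathrm{int}}=\{d\in\mathcal{D}_k:d+\mathcal{V}_D\subseteq\mathcal{D}_k\}$. Since $\mathcal{F}+\Phi_\alpha(d)\subseteq\bigcup_{v\in\mathcal{V}_D}(D+\Phi_\alpha(d+v))$, every $d\in\mathcal{D}_k^{\mathrm{int}}$ satisfies $\mathcal{F}+\Phi_\alpha(d)\subseteq\mathcal{F}_k$. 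Iterating~\eqref{seteq} and comparing with Lemma~\ref{measuremult} forces the union $\alpha^k\mathcal{F}=\bigsqcup_{d\in\mathcal{D}_k}(\mathcal{F}+\Phi_\alpha(d))$ to be measure-disjoint, so
\[
\#\mathcal{D}_k^{\mathrm{int}}\cdot\mu_\alpha(\mathcal{F})\le\mu_\alpha(\mathcal{F}_k)=|a_0|^k\mu_\alpha(D)
\]
by~\eqref{measureequal} and Lemma~\ref{measuremult}. Taking $k\to\infty$, the desired bound $\mu_\alpha(\mathcal{F})\le\mu_\alpha(D)$ reduces to showing $\#(\mathcal{D}_k\setminus\mathcal{D}_k^{\mathrm{int}})=o(|a_0|^k)$.

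To connect this boundary count to $\#U_k$, I~would exploit that $\mathcal{V}_0$ is symmetric and argue that every $v\in\mathcal{V}_D$ admits a decomposition $v=w_1+\cdots+w_J$ with $w_j\in\mathcal{V}_0$ and uniformly bounded length $J\le L$. Whenever $d\in\mathcal{D}_k$ and $d+v\notin\mathcal{D}_k$, the chain of partial sums $d,d+w_1,\ldots,d+v$ must exit $\mathcal{D}_k$ at some step~$i$, producing a pair $(e,e+w_{i+1})\in U_k$. Since at most $L\cdot\#\mathcal{V}_D$ triples $(d,v,i)$ give rise to the same pair, one obtains
\[
\#(\mathcal{D}_k\setminus\mathcal{D}_k^{\mathrm{int}})\le L\cdot\#\mathcal{V}_D\cdot\#U_k,
\]
so the hypothesis $|a_0|^{-k}\#U_k\to 0$ closes the argument.

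The main obstacle is justifying this uniform chain decomposition, equivalently that $\mathcal{V}_D$ is contained in the subgroup of~$\mathfrak{z}$ generated by~$\mathcal{V}_0$. In a purely archimedean setting this would follow from connectedness of the ambient space, but $\mathbb{K}_\alpha=\mathbb{K}_\infty\times\mathbb{K}_\mathfrak{b}$ is totally disconnected in the non-archimedean factor~$\mathbb{K}_\mathfrak{b}$, so connectivity has to be extracted from the self-affine structure. The key point is that $\mathcal{F}$ is the Hausdorff limit of the sets $\alpha^{-k}\mathcal{F}_k$ (this is where Lemma~\ref{lfcovering} enters), each of which is built from $\mathcal{V}_0$-adjacent $D$-translates at scale~$\alpha^k$, so that every $D$-tile meeting~$\mathcal{F}$ can be linked to~$D$ itself through a uniformly bounded $\mathcal{V}_0$-path in~$\mathfrak{z}$.
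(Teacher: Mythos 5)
Your reduction of the tiling statement to the inequality $\mu_\alpha(\mathcal{F})\le\mu_\alpha(D)$ via Lemma~\ref{lem:measuremultiple}, and the measure-disjointness of the translates $\mathcal{F}+\Phi_\alpha(d)$, $d\in\mathcal{D}_k$, are fine and agree with the paper's starting point. The gap is exactly the step you flag yourself: the claim that every $v\in\mathcal{V}_D$ is a sum of at most $L$ elements of~$\mathcal{V}_0$. This is not a technicality that the self-affine structure will supply; as an unconditional geometric statement it is simply false. The set $\mathcal{V}_0$ depends only on the choice of the compact fundamental domain~$D$ for~$\Phi_\alpha(\mathfrak{z})$, and in the presence of the non-archimedean factor the subgroup of~$\mathfrak{z}$ generated by~$\mathcal{V}_0$ can be proper. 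Concretely, for $\alpha=\frac32$ one has $\mathfrak{z}=\mathbb{Z}[\frac12]$ and may take $D=[0,1]\times\mathbb{Z}_2\subset\mathbb{R}\times\mathbb{Q}_2$; then $\mathcal{V}_0=\{\pm1\}$, which generates only~$\mathbb{Z}$. Choosing the standard digit set $\mathcal{D}=\{0,1,\frac54\}$ (a complete residue system modulo $3\mathbb{Z}[\frac12]$), the tile $\mathcal{F}$ contains $\Phi_\alpha(\frac56)$, whose $2$-adic absolute value is~$2$, so $\mathcal{F}$ meets $D+\Phi_\alpha(\frac12)$ and hence $\frac12\in\mathcal{V}_D\setminus\mathbb{Z}$: no $\mathcal{V}_0$-chain from $0$ to this~$v$ exists, bounded or not, and your inequality $\#(\mathcal{D}_k\setminus\mathcal{D}_k^{\mathrm{int}})\le L\,\#\mathcal{V}_D\,\#U_k$ has no proof. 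The fallback heuristic cannot repair this: the $D$-translates constituting $\mathcal{F}_k$ need not have a connected intersection graph, and Hausdorff convergence of $\alpha^{-k}\cdot\mathcal{F}_k$ to~$\mathcal{F}$ gives no control over $\mathcal{V}_0$-adjacency, precisely because $\mathbb{K}_\mathfrak{b}$ is totally disconnected. Any rescue would have to extract the needed combinatorics from the hypothesis $|a_0|^{-k}\#U_k\to0$ itself rather than from the geometry of~$D$.

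For comparison, the paper sidesteps chains and interior-digit counting entirely: by induction on the set equation one gets $\alpha^k\cdot\mathcal{F}\subset N(\mathcal{F}_k,\varepsilon)$ for a fixed~$\varepsilon$ (this is~\eqref{FinclusionApprox}); by~\eqref{partialSn} the collar $N(\mathcal{F}_k,\varepsilon)\setminus\mathcal{F}_k$ lies in the $\varepsilon$-neighborhood of $\partial\mathcal{F}_k$, which is covered by $\#U_k$ sets of uniformly bounded measure, so its measure is $\ll\#U_k$; rescaling by $\alpha^{-k}$, using $\mu_\alpha(\alpha^{-k}\cdot\mathcal{F}_k)=\mu_\alpha(D)$ from~\eqref{measureequal} and the hypothesis, one obtains $\mu_\alpha(\mathcal{F})\le\mu_\alpha(D)$ directly and concludes with Lemma~\ref{lem:measuremultiple}. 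In effect the paper measures the excess of $\alpha^k\cdot\mathcal{F}$ over $\mathcal{F}_k$ metrically, where your argument tries to measure it combinatorially; the metric route only needs that points outside $\mathcal{F}_k$ but near it are near $\partial\mathcal{F}_k$, which is true regardless of any connectivity of the tiling graph.
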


\begin{proof}
Note that $(d,d') \in U_k$ implies that $d-d' \in \mathcal{V}_0$.
Moreover, we have
\begin{equation}\label{partialSn}
\partial \mathcal{F}_k = \bigcup_{(d,d') \in U_k} \big(D+\Phi_\alpha(d)\big) \cap \big(D+\Phi_\alpha(d')\big)\,.
\end{equation}

For a subset $M \subset \mathbb{K}_\alpha$, let $N(M,\varepsilon)$ be the $\varepsilon$-neighborhood of $M$ with respect to some fixed metric in $\mathbb{K}_\alpha$ and set $R(M,\varepsilon) = N(M,\varepsilon) \setminus M$.
Using this notation, we easily derive by induction on $k$ that (see also \cite[Lemma~4.4]{Groechenig-Haas:94}) there exists some $\varepsilon > 0$ such that
\begin{equation}\label{FinclusionApprox}
\alpha^k \cdot \mathcal{F} \subset N(\mathcal{F}_k, \varepsilon) \quad \hbox{for all}\ k \in \mathbb{N}\,.
\end{equation}
Observe that \eqref{partialSn} implies $\mu_\alpha(R(\mathcal{F}_k, \varepsilon)) \ll \#U_k$ for $k \to \infty$.
Thus, multiplying this by $|a_0|^{-k}$ and using the hypothesis of the proposition yields that
\begin{equation}\label{lim0}
\lim_{k \to \infty} \mu_\alpha\big(\alpha^{-k} \cdot R(\mathcal{F}_k, \varepsilon)\big) = 0\,.
\end{equation}
In view of~\eqref{FinclusionApprox}, we may write
$\mathcal{F} = \big(\mathcal{F} \cap \alpha^{-k} \cdot R(\mathcal{F}_k, \varepsilon)\big) \cup (\mathcal{F} \cap \alpha^{-k} \cdot \mathcal{F}_k)$.
Using \eqref{lim0} and~\eqref{measureequal},  this implies that
\[
\mu_{\alpha}(\mathcal{F}) = \lim_{k\to\infty} \mu_{\alpha}(\mathcal{F} \cap \alpha^{-k} \cdot \mathcal{F}_k) \le \mu_{\alpha}(\alpha^{-k} \cdot \mathcal{F}_k) = \mu_\alpha(D)\,,
\]
and the result follows from Lemma~\ref{lem:measuremultiple}.
\end{proof}

The cardinality of $U_k$ is related to the $k$-th power of the contact matrix~$\mathbf{C}$.

\begin{lemma}\label{sigmacontact}
Let $\mathbf{C}^k = (c_{x y}^{(k)})_{x,y\in\mathcal{V}}$, then $\#U_k = \sum_{x\in\mathcal{V},\,y\in\mathcal{V}_0} c_{x y}^{(k)}$.
\end{lemma}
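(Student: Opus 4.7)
The plan is to count both sides as labeled walks of length~$k$ and match them bijectively. Expanding the matrix power and the definition of $c_{xy}$ identifies $\sum_{x\in\mathcal{V},\,y\in\mathcal{V}_0} c^{(k)}_{xy}$ with the number of tuples $\bigl((x_0,\ldots,x_k);(d_0,d_0'),\ldots,(d_{k-1},d_{k-1}')\bigr)$ in which $x_j\in\mathcal{V}$, $x_k\in\mathcal{V}_0$, $(d_j,d_j')\in\mathcal{D}^2$, and $\alpha x_j+d_j=x_{j+1}+d_j'$ for every~$j$. Iterating this transition equation yields the key identity
\[
x_k \;=\; \alpha^k x_0 + \sum_{j=0}^{k-1}\alpha^{k-1-j}(d_j-d_j').
\]

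From a labeled walk, I set $d = \sum_{j=0}^{k-1}\alpha^{k-1-j}d_j' \in \mathcal{D}_k$ and $d'' = d + x_k$; the key identity rewrites $d''=\sum_{j=0}^{k-1}\alpha^{k-1-j}d_j + \alpha^k x_0$. Since Lemma~\ref{lfcovering} iterates to the unique decomposition $\mathfrak{z}=\mathcal{D}_k+\alpha^k\mathfrak{z}$ and $x_0\ne 0$, this forces $d''\notin\mathcal{D}_k$; combined with $d''-d=x_k\in\mathcal{V}_0$, we conclude $(d,d'')\in U_k$. Conversely, any $(d,d'')\in U_k$ uniquely yields digit expansions $d=\sum_{j=0}^{k-1}\alpha^{k-1-j}d_j'$ and $d''=\sum_{j=0}^{k-1}\alpha^{k-1-j}d_j+\alpha^k x$ with $x\in\mathfrak{z}\setminus\{0\}$, after which the walk is reconstructed by $x_0=x$ and $x_{j+1}=\alpha x_j+d_j-d_j'$, terminating at $x_k=d''-d\in\mathcal{V}_0$ by the key identity.

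The main obstacle is verifying that this reconstructed walk actually stays inside~$\mathcal{V}$. First, $x_j\ne 0$ throughout: were $i\ge 1$ minimal with $x_i=0$, then $\alpha x_{i-1}=d_{i-1}'-d_{i-1}\in\alpha\mathbb{Z}[\alpha]$ would force $d_{i-1}=d_{i-1}'$ by the standard digit set (CSR) property of~$\mathcal{D}$, hence $x_{i-1}=0$, contradicting the minimality of~$i$. Second, backward induction on~$\ell$ shows $x_{k-\ell}\in\mathcal{V}_\ell$: starting from $x_k\in\mathcal{V}_0$, the transition equation together with $x_{k-\ell}\in\mathcal{V}_\ell$ and $x_{k-\ell-1}\ne 0$ places $x_{k-\ell-1}$ directly in $\mathcal{V}_{\ell+1}$ by its recursive definition. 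Thus every $x_j$ belongs to~$\mathcal{V}$, and since the two constructions are mutually inverse by the uniqueness of the digit expansions, the bijection is established and the claim follows.
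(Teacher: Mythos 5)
Your proof is correct, and it takes a somewhat different (more self-contained) route than the paper, which simply adapts the inductive argument of Gr\"ochenig--Haas (their Lemma~4.7) based on the boundary decomposition \eqref{e:boundaryFk}: there one tracks, by induction on~$k$, how neighbor pairs of the approximation $\mathcal{F}_k$ arise from neighbor pairs of $\mathcal{F}_{k-1}$ under subdivision, each step contributing a factor $c_{xy}$. You instead unroll that induction into an explicit bijection between elements of $U_k$ and labeled walks of length~$k$ in the contact graph ending in~$\mathcal{V}_0$, using only the definition of~$\mathcal{V}_0$ (translation by $\Phi_\alpha(d)$ turns the intersection condition in $U_k$ into $d''-d\in\mathcal{V}_0$) and the unique decomposition $\mathfrak{z}=\mathcal{D}_k+\alpha^k\mathfrak{z}$; note that $\mathfrak{z}=\alpha\,\mathfrak{z}+\mathcal{D}$ is established inside the proof of Lemma~\ref{lfcovering} rather than in its statement, and the uniqueness you invoke comes from $\mathcal{D}$ being a complete residue system modulo $\alpha\mathbb{Z}[\alpha]$, exactly as you use it. The two genuinely delicate points are handled correctly: the nonvanishing of the $x_j$ along the reconstructed walk (a vanishing $x_i$ would force $d_{i-1}=d_{i-1}'$ by the standard digit set property and propagate backwards), and the backward induction placing $x_{k-\ell}\in\mathcal{V}_\ell$, which is what guarantees the reconstructed walk is really one of those counted by the power of $\mathbf{C}$ over the index set~$\mathcal{V}$. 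Together with the mutual inverseness coming from uniqueness of the digit strings, this yields the claimed equality; what your version buys is independence from \eqref{e:boundaryFk} and from the external reference, at the cost of a slightly longer bookkeeping argument.
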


\begin{proof}
Using~\eqref{e:boundaryFk}, this lemma is proved in the same way as Lemma~4.7 in \cite{Groechenig-Haas:94}.
\end{proof}

We obtain a tiling criterion in terms of the spectral radius $\varrho(\mathbf{C})$.

\begin{proposition} \label{p:48}
If $\varrho(\mathbf{C}) < |a_0|$, then $\{\mathcal{F} + \Phi_\alpha(x):\, x \in \mathfrak{z}\}$ forms a tiling of~$\mathbb{K}_\alpha$.
\end{proposition}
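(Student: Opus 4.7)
The strategy is to combine Proposition~\ref{sigmacriterion} with Lemma~\ref{sigmacontact} and the basic spectral-radius estimate for matrix powers. By Proposition~\ref{sigmacriterion}, it suffices to show that $|a_0|^{-k}\, \#U_k \to 0$ as $k \to \infty$, and by Lemma~\ref{sigmacontact} the quantity $\#U_k$ can be expressed as a sum of entries of the $k$-th power of the contact matrix~$\mathbf{C}$.

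Concretely, first I would fix any $\varepsilon>0$ with $\varrho(\mathbf{C})+\varepsilon<|a_0|$, which is possible by the assumption $\varrho(\mathbf{C})<|a_0|$. By Gelfand's formula (applied, say, to the entrywise maximum norm on $\#\mathcal{V}\times\#\mathcal{V}$ matrices), there exists a constant $C>0$ such that every entry $c_{xy}^{(k)}$ of $\mathbf{C}^k$ satisfies
\[
c_{xy}^{(k)} \le C\,\bigl(\varrho(\mathbf{C})+\varepsilon\bigr)^k \qquad (k\in\mathbb{N},\ x,y\in\mathcal{V}).
\]
Combining this with Lemma~\ref{sigmacontact} yields
\[
\#U_k \;=\; \sum_{x\in\mathcal{V},\,y\in\mathcal{V}_0} c_{xy}^{(k)} \;\le\; C\,(\#\mathcal{V})(\#\mathcal{V}_0)\,\bigl(\varrho(\mathbf{C})+\varepsilon\bigr)^k,
\]
and the finiteness of $\mathcal{V}$ (noted right after its definition) makes the prefactor finite.

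Dividing by $|a_0|^k$ gives
\[
|a_0|^{-k}\,\#U_k \;\le\; C\,(\#\mathcal{V})(\#\mathcal{V}_0)\left(\frac{\varrho(\mathbf{C})+\varepsilon}{|a_0|}\right)^k,
\]
and the right-hand side tends to~$0$ as $k\to\infty$ by the choice of~$\varepsilon$. Hence the hypothesis of Proposition~\ref{sigmacriterion} is satisfied, and the collection $\{\mathcal{F}+\Phi_\alpha(x):x\in\mathfrak{z}\}$ tiles~$\mathbb{K}_\alpha$. There is no real obstacle here: all the preparatory work has been carried out in the preceding lemmas, and the proposition follows by a completely standard spectral-radius argument, exactly as in the integer case of Gr\"ochenig--Haas.
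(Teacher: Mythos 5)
Your proposal is correct and follows essentially the same route as the paper: the paper likewise reduces to Proposition~\ref{sigmacriterion} via Lemma~\ref{sigmacontact}, noting that $|a_0|^{-1}\mathbf{C}$ is a contraction so that $|a_0|^{-k}c_{xy}^{(k)}\to 0$ for all $x,y\in\mathcal{V}$. Your explicit Gelfand-formula bound merely spells out this standard estimate in more detail.
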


\begin{proof}
As $|a_0|^{-1}\, \mathbf{C}$ is a contraction, we have $\lim_{k\to\infty} |a_0|^{-k}\, c_{x y}^{(k)} = 0$ for all $x, y \in \mathcal{V}$.
Thus, Lemma~\ref{sigmacontact} yields that $|a_0|^{-k}\, \#U_k \to 0$ for $k \to \infty$, and Proposition~\ref{sigmacriterion} gives the result.
\end{proof}

\begin{remark} \label{r:effort}
With some more effort, one can prove that $\varrho(\mathbf{C}) < |a_0|$ if and only if $\{\mathcal{F} + \Phi_\alpha(x):\, x \in \mathfrak{z}\}$ forms a tiling.
However, we will not need this result in the sequel.
\end{remark}

\subsection*{Fourier analytic tiling criterion} 
We now establish a Fourier analytic (non-)tiling criterion.
To this matter, we define the characters (see e.g.\ Section~4 of Tate's thesis \cite{Tate:67})
\begin{align*}
\chi_\mathfrak{p}: &\ K_\mathfrak{p} \to \mathbb{C}\,, \quad z_\mathfrak{p} \mapsto \left\{\begin{array}{ll}\exp(-2\pi i\, \mathrm{Tr}_{K_\mathfrak{p}:\mathbb{R}}(z_\mathfrak{p})) & \mbox{if}\ \mathfrak{p} \mid \infty\,, \\[.5ex] \exp(2 \pi i\, \lambda_p(\mathrm{Tr}_{K_\mathfrak{p}:\mathbb{Q}_p}(z_\mathfrak{p}))) & \mbox{if}\ \mathfrak{p} \mid \mathfrak{b}\ \mbox{and}\ \mathfrak{p} \mid p\,, \end{array}\right. \\[.5ex]
\chi_\alpha: &\ \mathbb{K}_\alpha \to \mathbb{C}\,,  \quad (z_\mathfrak{p})_{\mathfrak{p}\in S_\alpha} \mapsto \prod_{\mathfrak{p} \in S_\alpha} \chi_\mathfrak{p}(z_\mathfrak{p})\,,
\end{align*}
where $\lambda_p(x)$ denotes the fractional part of~$x \in \mathbb{Q}_p$, i.e., $\lambda_p(\sum_{j=k}^\infty b_j p^j) = \sum_{j=k}^{-1} b_j p^j$ for all sequences $(b_j)_{j\ge k}$ with $b_j \in \{0,1,\ldots,p-1\}$, $k < 0$.
In order to set up a suitable Fourier transformation, we need two lemmas on these characters.

\begin{lemma} \label{l:chixi}
For each $\xi \in \mathcal{O}_{S_\alpha}$ we have $\chi_\alpha\big(\Phi_\alpha(\xi)\big) = 1$.
\end{lemma}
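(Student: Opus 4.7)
The plan is to recognize $\chi_\alpha \circ \Phi_\alpha$ as the restriction to $S_\alpha$ of the standard adelic character of~$K = \mathbb{Q}(\alpha)$, and then invoke the product formula for adelic characters together with the $\mathfrak{p}$-adic integrality of~$\xi$ at all $\mathfrak{p} \notin S_\alpha$.

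First I would rewrite the quantity directly from the definitions:
\[
\chi_\alpha\big(\Phi_\alpha(\xi)\big) = \prod_{\mathfrak{p}\in S_\alpha} \chi_\mathfrak{p}(\xi)\,.
\]
The characters $\chi_\mathfrak{p}$ are precisely the local components of the standard additive character $\chi_K = \prod_\mathfrak{p} \chi_\mathfrak{p}$ of the ad\`ele ring~$\mathbb{A}_K$ (obtained by composing the standard character of~$\mathbb{A}_\mathbb{Q}$ with the trace $\mathrm{Tr}_{K:\mathbb{Q}}$, as set up in \S4 of Tate's thesis~\cite{Tate:67}). The product formula, which is a classical fact in adelic analysis, states that $\chi_K$ is trivial on the image of~$K$ in~$\mathbb{A}_K$, i.e., $\prod_\mathfrak{p} \chi_\mathfrak{p}(\xi) = 1$ for every $\xi \in K$.

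Next I would handle the places outside $S_\alpha$. If $\mathfrak{p}\nmid\infty$ and $\mathfrak{p}\nmid\mathfrak{b}$, then the assumption $\xi\in\mathcal{O}_{S_\alpha}$ gives $v_\mathfrak{p}(\xi)\ge 0$, so $\xi$ lies in the local ring of integers $\mathcal{O}_\mathfrak{p}\subset K_\mathfrak{p}$. Since the trace of an integral element of a local extension is a $p$-adic integer, $\mathrm{Tr}_{K_\mathfrak{p}:\mathbb{Q}_p}(\xi)\in\mathbb{Z}_p$, which means $\lambda_p\big(\mathrm{Tr}_{K_\mathfrak{p}:\mathbb{Q}_p}(\xi)\big)=0$ and hence $\chi_\mathfrak{p}(\xi)=1$ for every $\mathfrak{p}\notin S_\alpha$.

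Combining the two steps, the product formula $\prod_\mathfrak{p}\chi_\mathfrak{p}(\xi)=1$ collapses to
\[
\prod_{\mathfrak{p}\in S_\alpha}\chi_\mathfrak{p}(\xi) = \prod_{\mathfrak{p}\in S_\alpha}\chi_\mathfrak{p}(\xi)\cdot\prod_{\mathfrak{p}\notin S_\alpha}\chi_\mathfrak{p}(\xi) = 1\,,
\]
which is the claim. The only nontrivial ingredient is the global product formula; its use here is legitimate because $\xi\in\mathcal{O}_{S_\alpha}\subset K$ and, for such $\xi$, only finitely many factors in the infinite product are nontrivial, so there is no convergence issue. I do not anticipate a real obstacle beyond citing these standard facts cleanly.
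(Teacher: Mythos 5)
Your proof is correct and follows essentially the same route as the paper: the paper also shows that $\lambda_p\big(\mathrm{Tr}_{K_\mathfrak{p}:\mathbb{Q}_p}(\xi)\big)=0$ at every finite $\mathfrak{p}\notin S_\alpha$ by integrality, and then invokes the triviality of the standard adelic character on $K$ (via the argument in Tate's Lemma~4.1.5), which it states additively as $\sum_{\mathfrak{p}\nmid\infty}\lambda_p\big(\mathrm{Tr}_{K_\mathfrak{p}:\mathbb{Q}_p}(\xi)\big)\equiv\mathrm{Tr}_{K:\mathbb{Q}}(\xi)\pmod 1$, whereas you phrase the same fact multiplicatively as the product formula $\prod_\mathfrak{p}\chi_\mathfrak{p}(\xi)=1$. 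The two formulations are equivalent, so no gap.
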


\begin{proof}
Let $\mathfrak{p}$ be a prime satisfying $\mathfrak{p} \nmid \mathfrak{b}$ and $\mathfrak{p} \mid p$.
Then $|\xi|_\mathfrak{p} \le 1$ for every $\xi \in \mathcal{O}_{S_\alpha}$, thus we obtain $\mathrm{Tr}_{K_\mathfrak{p}:\mathbb{Q}_p}(\xi) \in \mathbb{Z}_p$, i.e., $\lambda_p(\mathrm{Tr}_{K_\mathfrak{p}:\mathbb{Q}_p}(\xi)) = 0$.
This implies that
\[
\sum_{\mathfrak{p}\mid\mathfrak{b}} \lambda_p(\mathrm{Tr}_{K_\mathfrak{p}:\mathbb{Q}_p}(\xi)) - \mathrm{Tr}_{K:\mathbb{Q}}(\xi) = \sum_{\mathfrak{p}\nmid\infty} \lambda_p(\mathrm{Tr}_{K_\mathfrak{p}:\mathbb{Q}_p}(\xi))  - \mathrm{Tr}_{K:\mathbb{Q}}(\xi) \equiv 0 \pmod{1},
\]
by an argument used in \cite[proof of Lemma~4.1.5]{Tate:67}, and, hence, ${\chi_{\alpha}\big(\Phi_\alpha(\xi)\big) = 1}$.
\end{proof}

Let
\begin{equation}\label{jot}
\mathfrak{z}^* = \big\{\xi \in \mathbb{Q}(\alpha):\, \chi_\alpha\big(\Phi_\alpha(\xi\, x)\big) = 1\ \hbox{for
all}\ x \in \mathfrak{z}\big\}\,,
\end{equation}
and observe that $\mathcal{O}_{S_\alpha} \subseteq \mathfrak{z}^*$ by Lemma~\ref{l:chixi}.
Moreover, denote by $\chi_{\alpha,\xi}$, $\xi \in \mathbb{Q}(\alpha)$, the character defined by $\chi_{\alpha,\xi}(\mathbf{z}) = \chi_\alpha(\xi \cdot \mathbf{z})$.

\begin{lemma} \label{l:ann}
The set $\{\chi_{\alpha,\xi}:\, \xi \in \mathfrak{z}^*\}$ is the annihilator of $\Phi_\alpha(\mathfrak{z})$ in the Pontryagin dual~$\widehat{\mathbb{K}_\alpha}$.
\end{lemma}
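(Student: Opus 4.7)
The plan is to exploit the self-duality of $\mathbb{K}_{\alpha}$: because each local completion $K_{\mathfrak{p}}$ is self-dual under $\chi_{\mathfrak{p}}$ (cf.\ Tate~\cite{Tate:67}) and $\mathbb{K}_{\alpha}=\prod_{\mathfrak{p}\in S_{\alpha}}K_{\mathfrak{p}}$ is a \emph{finite} product, the map $\mathbf{y}\mapsto\bigl(\mathbf{z}\mapsto\chi_{\alpha}(\mathbf{y}\cdot\mathbf{z})\bigr)$ is an isomorphism $\mathbb{K}_{\alpha}\xrightarrow{\sim}\widehat{\mathbb{K}_{\alpha}}$ extending the given map $\xi\mapsto\chi_{\alpha,\xi}$. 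Under this identification the lemma reduces to the equality $\Gamma^{\perp}=\Phi_{\alpha}(\mathfrak{z}^{*})$ inside $\mathbb{K}_{\alpha}$, where $\Gamma:=\Phi_{\alpha}(\mathfrak{z})$. The inclusion $\Phi_{\alpha}(\mathfrak{z}^{*})\subseteq\Gamma^{\perp}$ is immediate from~\eqref{jot}, since $\chi_{\alpha}(\Phi_{\alpha}(\xi)\cdot\Phi_{\alpha}(x))=\chi_{\alpha}(\Phi_{\alpha}(\xi x))=1$ whenever $\xi\in\mathfrak{z}^{*}$ and $x\in\mathfrak{z}$.

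For the reverse inclusion I would first record that $\Gamma$ is a lattice in $\mathbb{K}_{\alpha}$: Lemmas~\ref{delonelemma} and~\ref{l:subgroup} show that $\Phi_{\alpha}(\mathcal{O}_{S_{\alpha}})$ is Delone and $\mathfrak{z}$ has finite index in~$\mathcal{O}_{S_{\alpha}}$, so $\Gamma\subseteq\Phi_{\alpha}(\mathcal{O}_{S_{\alpha}})$ is both discrete and cocompact. Pontryagin duality then forces $\Gamma^{\perp}$ to be discrete in~$\mathbb{K}_{\alpha}$. Since $\mathcal{O}_{S_{\alpha}}$ is a ring containing~$\mathfrak{z}$, Lemma~\ref{l:chixi} also yields $\mathcal{O}_{S_{\alpha}}\subseteq\mathfrak{z}^{*}$, hence $\Phi_{\alpha}(\mathcal{O}_{S_{\alpha}})\subseteq\Gamma^{\perp}$. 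Consequently $\Gamma^{\perp}/\Phi_{\alpha}(\mathcal{O}_{S_{\alpha}})$ is a discrete subgroup of the compact group $\mathbb{K}_{\alpha}/\Phi_{\alpha}(\mathcal{O}_{S_{\alpha}})$, hence \emph{finite}; let $m$ denote its order.

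The proof now finishes with a ``divide by $m$'' trick. For any $\mathbf{y}\in\Gamma^{\perp}$, the element $m\mathbf{y}$ lies in $\Phi_{\alpha}(\mathcal{O}_{S_{\alpha}})$, so $m\mathbf{y}=\Phi_{\alpha}(\eta)$ for a unique $\eta\in\mathcal{O}_{S_{\alpha}}$. Because every $K_{\mathfrak{p}}$ has characteristic zero, $m$~is invertible in~$\mathbb{K}_{\alpha}$, and $\Phi_{\alpha}\colon\mathbb{Q}(\alpha)\to\mathbb{K}_{\alpha}$ is a $\mathbb{Q}$-algebra homomorphism; dividing by $m$ gives $\mathbf{y}=\Phi_{\alpha}(\eta/m)$ with $\eta/m\in\mathbb{Q}(\alpha)$. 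Combining $\mathbf{y}\in\Gamma^{\perp}$ with~\eqref{jot} then forces $\eta/m\in\mathfrak{z}^{*}$, establishing the reverse inclusion. The only delicate point I anticipate is the sandwich $\Phi_{\alpha}(\mathcal{O}_{S_{\alpha}})\subseteq\Gamma^{\perp}\subseteq\mathbb{K}_{\alpha}$ with finite-index left-hand inclusion: the discreteness of $\Gamma^{\perp}$ relies on the cocompactness of~$\Gamma$ via Pontryagin duality, which itself depends on the combination of Lemmas~\ref{delonelemma} and~\ref{l:subgroup}; once this sandwich is secured, the remainder is a short duality computation.
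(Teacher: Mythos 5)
Your proof is correct and follows essentially the same route as the paper: both reduce via self-duality of $\mathbb{K}_\alpha$ to identifying $\Gamma^\perp$ with $\Phi_\alpha(\mathfrak{z}^*)$, use Lemma~\ref{l:chixi} to get $\Phi_\alpha(\mathcal{O}_{S_\alpha})\subseteq\Gamma^\perp$, deduce discreteness of the annihilator from cocompactness of $\Phi_\alpha(\mathfrak{z})$, and conclude from finiteness of $\Gamma^\perp/\Phi_\alpha(\mathcal{O}_{S_\alpha})$ that $\Gamma^\perp\subseteq\Phi_\alpha(\mathbb{Q}(\alpha))$, hence equals $\Phi_\alpha(\mathfrak{z}^*)$. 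You merely spell out explicitly (the self-duality isomorphism and the ``divide by $m$'' step) what the paper leaves implicit.
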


\begin{proof}
Set $Y = \{\mathbf{z} \in \mathbb{K}_\alpha:\, \chi_\alpha(x \cdot \mathbf{z}) = 1\ \hbox{for all}\ x \in \mathfrak{z}\}$.
In view of the definition of~$\mathfrak{z}^*$, we only have to show that $Y \subseteq \Phi_\alpha(\mathbb{Q}(\alpha))$.
Since $\mathbb{K}_\alpha / \Phi_\alpha(\mathfrak{z})$ is compact, $Y$~is discrete.
Because $Y$ contains $\Phi_\alpha(\mathcal{O}_{S_\alpha})$, the factor group $Y / \Phi_\alpha(\mathcal{O}_{S_\alpha})$ is a discrete subgroup of the compact group $\mathbb{K}_\alpha / \Phi_\alpha(\mathcal{O}_{S_\alpha})$.
This implies that $Y / \Phi_\alpha(\mathcal{O}_{S_\alpha})$ is finite, thus $Y \subseteq \Phi_\alpha(\mathbb{Q}(\alpha))$.
\end{proof}

We now do Fourier analysis in $D^* = \mathbb{K}_\alpha / \Phi_\alpha(\mathfrak{z}^*)$.
As $\mathcal{O}_{S_\alpha} \subseteq \mathfrak{z}^*$, the factor group $D^*$ is compact, which implies that its Pontryagin dual~$\widehat{D^*}$ is discrete.
By Lemma~\ref{l:ann}, we may write $\widehat{D^*} = \{\chi_{\alpha,x}:\, x \in \mathfrak{z}\}$, cf.\ e.g.\ \cite[Theorem~23.25]{Hewitt-Ross:63}.
Equip $D^*$ with a Haar measure $\mu_{D^*}$ and define the Fourier transform
\[
\widehat{f}(\chi) = F(f)(\chi) = \int_{D^*} f(\mathbf{z})\, \chi(-\mathbf{z})\, d\mu_{D^*}(\mathbf{z}) \qquad(\chi \in \widehat{D^*}),
\]
see \cite[Section~31.46]{Hewitt-Ross:70} for details on how to define the Fourier transform on quotient groups.
Since $\widehat{D^*}$ is discrete, the Fourier inversion formula implies that
\[
f(\mathbf{z}) = F^{-1}(\widehat{f}\,)(\mathbf{z}) = \sum_{\chi \in \widehat{D^*}} \widehat f(\chi)\, \chi(\mathbf{z}) \qquad (\mathbf{z} \in D^*).
\]

Let $\Omega$ denote the set of vectors $(t_x)_{x\in \mathcal{V}}$ with $t_x \in \mathbb{R}$, and denote by $\widehat\Omega$ the set of all functions $f:\, \mathbb{K}_\alpha \to \mathbb{C}$ of the form $f = \sum_{x\in\mathcal{V}} t_x\, \chi_{\alpha,x}$.
We can regard the elements of $\widehat\Omega$ as functions from $D^*$ to~$\mathbb{C}$.
Thus we may apply $F$ to the elements of~$\widehat\Omega$.
Indeed, $F$~and its inverse induce the mappings (called $F$ and $F^{-1}$ again without risk of confusion)
\begin{align*}
F: &\ \widehat\Omega \to \Omega, \quad \sum_{x\in\mathcal{V}} t_x\, \chi_{\alpha,x} \mapsto (t_x)_{x\in \mathcal{V}}\,, \\
F^{-1}: &\ \Omega \to \widehat\Omega,  \quad (t_x)_{x\in\mathcal{V}} \mapsto \sum_{x\in\mathcal{V}} t_x\, \chi_{\alpha,x}\,.
\end{align*}
Following \cite[Section~5]{Groechenig-Haas:94}, we define the Fourier transform $\widehat{\mathbf{C}}$ of the contact matrix $\mathbf{C}$~by
\[
\widehat{\mathbf{C}} = F^{-1} \mathbf{C} F.
\]

We are now in the position to give a version of the Fourier analytic criterion of Gr\"ochenig and Haas~\cite[Proposition~5.3]{Groechenig-Haas:94} that will be used in order to check the tiling property.

\begin{proposition} \label{p:53}
If the collection $\{\mathcal{F} + \Phi_\alpha(x):\, x \in \mathfrak{z}\}$ does not form a tiling, then there exists a non-constant real-valued function $f \in \widehat\Omega$ satisfying $\widehat{\mathbf{C}} f = |a_0|\, f$.
\end{proposition}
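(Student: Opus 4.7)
My plan is to exhibit a non-zero real-valued eigenvector of $\widehat{\mathbf{C}}$ with eigenvalue $|a_0|$, built directly from the autocorrelation of~$\mathcal{F}$. For $y\in\mathfrak{z}$ set
\[
\rho(y) \;=\; \mu_\alpha\big(\mathcal{F} \cap (\mathcal{F} + \Phi_\alpha(y))\big) \;\ge\; 0.
\]
Since $\mathcal{F}$ is compact and $\Phi_\alpha(\mathfrak{z})$ is uniformly discrete in $\mathbb{K}_\alpha$ (Lemmas~\ref{delonelemma} and~\ref{l:subgroup}), the support $S'=\{y\in\mathfrak{z}:\,\rho(y)>0\}$ is a finite subset of~$\mathfrak{z}$. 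I would first argue that $S'\setminus\{0\}\subseteq\mathcal{V}$: given $\mathbf{z}\in\mathcal{F}\cap(\mathcal{F}+\Phi_\alpha(y))$ with $y\ne0$, tracking $\mathbf{z}$ through the iterated set equation~\eqref{iteratedset} and matching the resulting combinatorial data against the neighbour description~\eqref{e:boundaryFk} should show that $y\in\mathcal{V}_k$ for some~$k$.

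The central step is a self-similar identity for~$\rho$. Lemma~\ref{measuremult} applied to~\eqref{seteq} forces the union $\alpha\cdot\mathcal{F}=\bigcup_{d\in\mathcal{D}}(\mathcal{F}+\Phi_\alpha(d))$ to be measure-disjoint, so
\[
\mathbf{1}_{\mathcal{F}}(\mathbf{z}) \;=\; \sum_{d\in\mathcal{D}} \mathbf{1}_{\mathcal{F}}(\alpha\mathbf{z}-\Phi_\alpha(d)) \qquad (\text{$\mu_\alpha$-a.e.}).
\]
Substituting this identity twice into $\rho(y)=\int \mathbf{1}_{\mathcal{F}}(\mathbf{z})\,\mathbf{1}_{\mathcal{F}}(\mathbf{z}-\Phi_\alpha(y))\,d\mu_\alpha(\mathbf{z})$, performing the dilation $\mathbf{u}=\alpha\mathbf{z}$ (which rescales the measure by $|a_0|^{-1}$, again by Lemma~\ref{measuremult}), and translating to remove the digits yields
\[
|a_0|\,\rho(y) \;=\; \sum_{d,d'\in\mathcal{D}} \rho(\alpha y+d'-d) \;=\; \sum_{z\in\mathcal{V}\cup\{0\}} \#\{(d,d')\in\mathcal{D}^2:\,d-d'=z-\alpha y\}\,\rho(z),
\]
where the support restriction in the last sum uses the inclusion from the previous paragraph and the symmetry $\#\{(d,d'):d-d'=u\}=\#\{(d,d'):d-d'=-u\}$. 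The coefficient equals $c_{yz}$ by the very definition of the contact matrix, so $\rho=(\rho(x))_{x\in\mathcal{V}\cup\{0\}}$ is an eigenvector of (the augmentation of)~$\mathbf{C}$ with eigenvalue~$|a_0|$.

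Putting $f = F^{-1}(\rho) = \sum_{x\in\mathcal{V}\cup\{0\}}\rho(x)\,\chi_{\alpha,x}$ then produces a real-valued element of $\widehat\Omega$, and the relation $\widehat{\mathbf{C}}=F^{-1}\mathbf{C}\,F$ immediately gives $\widehat{\mathbf{C}} f = |a_0|\, f$. By the non-tiling hypothesis and Lemma~\ref{lem:measuremultiple}, the multiplicity of the covering $\{\mathcal{F}+\Phi_\alpha(x):x\in\mathfrak{z}\}$ is at least two, hence $\rho(y)>0$ for some $y\in\mathfrak{z}\setminus\{0\}$; combined with $\rho(0)=\mu_\alpha(\mathcal{F})>0$, the coefficient vector of~$f$ has at least two non-zero components supported on distinct characters, and linear independence of characters on~$D^*$ then forces $f$ to be non-constant. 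The principal obstacle is the inclusion $S'\setminus\{0\}\subseteq\mathcal{V}$, which requires transferring the neighbour combinatorics of the approximations $\mathcal{F}_k$ to the fractal limit~$\mathcal{F}$; all remaining steps are straightforward manipulations of the set equation, the change-of-variable lemma, and the definition of the contact matrix.
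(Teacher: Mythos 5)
Your overall architecture (an explicit nonnegative eigenvector coming from the autocorrelation $\rho(y)=\mu_\alpha\big(\mathcal{F}\cap(\mathcal{F}+\Phi_\alpha(y))\big)$, plus the renewal identity $|a_0|\,\rho(y)=\sum_{d,d'\in\mathcal{D}}\rho(\alpha y+d'-d)$) is internally consistent, and the renewal identity itself is correct. But the step you yourself flag as ``the principal obstacle'' is a genuine gap, not a routine verification: you need $\{y\in\mathfrak{z}\setminus\{0\}:\rho(y)>0\}\subseteq\mathcal{V}$, and the sketched justification via \eqref{iteratedset} and \eqref{e:boundaryFk} does not deliver it. The set $\mathcal{V}$ is generated from $\mathcal{V}_0$, the neighbour set of an \emph{arbitrary} compact fundamental domain $D$ of the lattice $\Phi_\alpha(\mathfrak{z})$, and membership $y\in\mathcal{V}_k$ requires an \emph{exact} contact $\mathcal{F}_k\cap\big(\mathcal{F}_k+\Phi_\alpha(\alpha^k y)\big)\neq\emptyset$ of the $D$-approximations. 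A common point of $\mathcal{F}$ and $\mathcal{F}+\Phi_\alpha(y)$ only shows, via its digit addresses, that the forward orbit $y\mapsto\alpha y+d'-d$ stays in the finite set $\{x\in\mathfrak{z}:\Phi_\alpha(x)\in\mathcal{F}-\mathcal{F}\}$, and via \eqref{FinclusionApprox} that $\mathcal{F}_k$ and $\mathcal{F}_k+\Phi_\alpha(\alpha^k y)$ come within a fixed distance $2\varepsilon$ of each other; neither fact forces the orbit to reach $\mathcal{V}_0$, nor approximate adjacency to become exact contact. Worse, the natural quantity that \emph{is} controlled by $\mathcal{V}_k$, namely $\mu_\alpha\big(\alpha^{-k}\cdot(\mathcal{F}_k\cap(\mathcal{F}_k+\Phi_\alpha(\alpha^k y)))\big)$, does \emph{not} converge to $\rho(y)$ precisely in the non-tiling case you are arguing in (already at $y=0$ it equals $\mu_\alpha(D)$ while $\rho(0)=m\,\mu_\alpha(D)$ with $m\ge2$), so the approximations systematically undercount the true overlaps and cannot be used to transfer them into $\mathcal{V}$. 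Establishing your inclusion is essentially of the same difficulty as the converse direction alluded to in Remark~\ref{r:effort}, and without it your vector $\rho$ need not be supported on $\mathcal{V}\cup\{0\}$, so it is not an eigenvector of $\mathbf{C}$.

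There is also a smaller but real mismatch with the statement: your $f=\sum_{x\in\mathcal{V}\cup\{0\}}\rho(x)\chi_{\alpha,x}$ contains the constant character $\chi_{\alpha,0}$, hence lies outside $\widehat\Omega$, and ``the augmentation of $\mathbf{C}$'' is not the operator $\widehat{\mathbf{C}}=F^{-1}\mathbf{C}F$ of the proposition; this part is repairable (for a standard digit set $d-d'=\alpha y$ with $y\neq0$ is impossible, so the $z=0$ column contributes nothing to the rows $y\in\mathcal{V}$, and one should simply take $F^{-1}(\rho|_{\mathcal{V}})$, which is real-valued since $\rho(x)=\rho(-x)$), but only once the support inclusion is in place. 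Note how the paper avoids all contact between $\mathbf{C}$ and the actual overlaps of $\mathcal{F}$: non-tiling gives $\varrho(\mathbf{C})\ge|a_0|$ by the contrapositive of Proposition~\ref{p:48}, the column sums give $\varrho(\mathbf{C})\le|a_0|$, and Perron--Frobenius then supplies a real eigenvector indexed by $\mathcal{V}$, symmetrized using $\mathcal{V}=-\mathcal{V}$; non-constancy is immediate from $0\notin\mathcal{V}$. That route is what your proposal would need to replace, and at present the replacement is missing its key lemma.
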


\begin{proof}
Suppose that $\{\mathcal{F} + \Phi_\alpha(x):\, x \in \mathfrak{z}\}$ does not form a tiling.
By Proposition~\ref{p:48}, we have $\varrho(\mathbf{C}) \ge |a_0|$.
Since
\[
\sum_{x\in\mathcal{V}} c_{x y} = \sum_{x\in\mathcal{V}} \#\big((\alpha x + \mathcal{D}) \cap (y + \mathcal{D})\big) = \#\bigg(\bigcup_{x\in\mathcal{V}} (\alpha x + \mathcal{D}) \cap (y + \mathcal{D})\bigg) \le |a_0|
\]
for all $y \in\mathcal{V}$, we also have $\varrho(\mathbf{C}) \le |a_0|$.
Therefore, there exists an eigenvector $\mathbf{t} = (t_x)_{x\in\mathcal{V}}$ such that $\mathbf{C}\, \mathbf{t} = |a_0|\, \mathbf{t}$.
Since $\mathcal{V} = -\mathcal{V}$, we can choose~$\mathbf{t}$ in a way that $t_x = t_{-x}$.
Then $F^{-1} \mathbf{t} = \sum_{x\in\mathcal{V}} t_x\, \chi_{\alpha,x}$ is a real-valued eigenfunction of $\widehat{\mathbf{C}}$ to the eigenvalue~$|a_0|$.
Furthermore, $0 \not\in \mathcal{V}$ implies that $F^{-1} \mathbf{t}$ is not constant.
\end{proof}

\begin{remark}
Note that $\widehat{\mathbf{C}} f = |a_0|\, f$ implies that $\mathbf{C} (F f) = |a_0|\, (F f)$, i.e., $\varrho(\mathbf{C}) \ge |a_0|$.
Thus, in view of Remark~\ref{r:effort}, one could in addition show the converse of Proposition~\ref{p:53}.
However, we will show in Section~\ref{sec:tilingcriterion} that non-constant real-valued functions $f \in \widehat\Omega$ satisfying $\widehat{\mathbf{C}} f = |a_0|\, f$ do not exist in our setting.
\end{remark}

\subsection*{Writing $\widehat{\mathbf{C}}$ as a transfer operator}
We conclude this section by deriving a representation of $\widehat{\mathbf{C}}$ as ``transfer operator''.
To this matter, let $\mathcal{D}^*$ be a complete residue system of $\mathfrak{z}^* / \alpha\, \mathfrak{z}^*$.
The set $\mathcal{D}^*$ can be seen as a ``dual'' set of digits.
With help of this set, the character $\chi_\alpha$ can be used to filter the elements of~$\mathfrak{z}$.
This is made precise in the following lemma.

\begin{lemma} \label{l:511}
Let $x \in \mathfrak{z}$.
We have
\[
\frac{1}{|a_0|} \sum_{d^* \in \mathcal{D}^*} \chi_\alpha\big(\Phi_\alpha(\alpha^{-1} x\, d^*)\big) = \left\{\begin{array}{cl}1 & \mbox{if}\ x \in \alpha\, \mathfrak{z}, \\ 0 & \mbox{if}\ x \not\in \alpha\, \mathfrak{z}.\end{array}\right.
\]
\end{lemma}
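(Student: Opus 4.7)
The plan is to recognize the map $f:\, d^* \mapsto \chi_\alpha(\Phi_\alpha(\alpha^{-1} x\, d^*))$ as a well-defined character of the quotient $\mathfrak{z}^*/\alpha\mathfrak{z}^*$ and then apply orthogonality of characters on this finite abelian group. Well-definedness is routine: replacing $d^*$ by $d^* + \alpha\eta$ for any $\eta \in \mathfrak{z}^*$ multiplies $f(d^*)$ by $\chi_\alpha(\Phi_\alpha(x\eta))$, which equals $1$ by the definition \eqref{jot} of $\mathfrak{z}^*$, since $x \in \mathfrak{z}$. (Note also that $\alpha\mathfrak{z}^* \subseteq \mathfrak{z}^*$, as $\mathfrak{z}$ is $\alpha$-invariant.) Multiplicativity of $f$ is automatic since $\chi_\alpha \circ \Phi_\alpha$ is an additive character on $\mathbb{Q}(\alpha)$.

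Next I would characterize when $f$ is trivial, which by orthogonality is exactly when the sum equals $|\mathcal{D}^*|$ (otherwise the sum vanishes). The forward direction, $x \in \alpha\mathfrak{z} \Rightarrow f \equiv 1$, is immediate from \eqref{jot}. The converse is the substantive step: if $\chi_{\alpha,d^*}(\Phi_\alpha(\alpha^{-1}x)) = 1$ for every $d^* \in \mathfrak{z}^*$, then Lemma~\ref{l:ann} together with Pontryagin double-annihilator forces $\Phi_\alpha(\alpha^{-1}x)$ into the closure $\overline{\Phi_\alpha(\mathfrak{z})} = \Phi_\alpha(\mathfrak{z})$ (the latter set is already closed, being discrete by Lemmas~\ref{delonelemma} and~\ref{l:subgroup}). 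Injectivity of $\Phi_\alpha$ on $\mathbb{Q}(\alpha)$ then yields $\alpha^{-1}x \in \mathfrak{z}$, i.e.\ $x \in \alpha\mathfrak{z}$.

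It remains to establish $|\mathcal{D}^*| = [\mathfrak{z}^*:\alpha\mathfrak{z}^*] = |a_0|$, which is the main obstacle. I would deduce it by pairing the two quotients. First, the identity $\alpha\mathfrak{z} + \mathcal{D} = \mathfrak{z}$ recorded in the proof of Lemma~\ref{lfcovering}, combined with the fact that the $|a_0|$ elements of $\mathcal{D}$ are pairwise distinct modulo $\alpha\mathbb{Z}[\alpha] \supseteq \alpha\mathfrak{z}$ (since $\mathcal{D}$ is a standard digit set), gives $[\mathfrak{z}:\alpha\mathfrak{z}] = |a_0|$. The bimultiplicative pairing $(x,\eta) \mapsto \chi_\alpha(\Phi_\alpha(\alpha^{-1}x\eta))$ descends to $\mathfrak{z}/\alpha\mathfrak{z} \times \mathfrak{z}^*/\alpha\mathfrak{z}^*$ by the same check used for $f$; the analysis of the previous paragraph, applied in each variable in turn, shows that both induced homomorphisms into the corresponding Pontryagin duals are injective. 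Since $|\widehat{G}| = |G|$ for a finite abelian group, these two injections yield $|\mathfrak{z}^*/\alpha\mathfrak{z}^*| \le |\mathfrak{z}/\alpha\mathfrak{z}|$ and $|\mathfrak{z}/\alpha\mathfrak{z}| \le |\mathfrak{z}^*/\alpha\mathfrak{z}^*|$, forcing $|\mathcal{D}^*| = |a_0|$ and completing the proof.
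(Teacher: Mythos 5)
Your proof is correct and takes essentially the same route as the paper: the sum is recognized as a character sum over $\mathfrak{z}^*/\alpha\,\mathfrak{z}^*$, the character is trivial precisely when $x \in \alpha\,\mathfrak{z}$ (one direction straight from the definition \eqref{jot}, the converse from Lemma~\ref{l:ann} together with the double-annihilator theorem, which is exactly what the paper's citation of Hewitt--Ross supplies), and orthogonality gives the dichotomy. The only addition is that you also justify $\#\mathcal{D}^* = [\mathfrak{z}^*:\alpha\,\mathfrak{z}^*] = |a_0|$ via the perfect pairing of $\mathfrak{z}/\alpha\,\mathfrak{z}$ with $\mathfrak{z}^*/\alpha\,\mathfrak{z}^*$, a fact the paper's proof asserts without argument, so your write-up is if anything more complete.
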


\begin{proof}
The first alternative in the statement follows from the definition of $\mathfrak{z}^*$ and since $\#\mathcal{D}^* = |a_0|$.
To prove the second one, let $x \in \mathfrak{z}$ and $G(x) = \{\chi_\alpha\big(\Phi_\alpha(\alpha^{-1} x\, \xi)\big):\, \xi \in \mathfrak{z}^*\}$.
Then $G(x)$ is a cyclic group of order dividing~$|a_0|$.
This group is non-trivial for $x \in \mathfrak{z} \setminus \alpha\, \mathfrak{z}$ (see e.g.\ \cite[Corollary~23.26]{Hewitt-Ross:63}), which implies the second alternative.
\end{proof}

Let
\[
u(\mathbf{z}) = \bigg|\frac{1}{|a_0|} \sum_{d \in \mathcal{D}} \chi_\alpha(d \cdot \mathbf{z})\bigg|^2 \qquad (\mathbf{z} \in \mathbb{K}_\alpha)
\]
be the auto-correlation function of the digits $d\in \mathcal{D}$ and set
\[
\tau_{\alpha,d^*}(\mathbf{z}) = \alpha^{-1} \cdot \big(\mathbf{z} + \Phi_\alpha(d^*)\big) \qquad (d^* \in \mathcal{D}^*,\ \mathbf{z} \in \mathbb{K}_\alpha).
\]
We need the following auxiliary result on the function~$u$.

\begin{lemma} \label{l:512}
For each $\mathbf{z} \in \mathbb{K}_\alpha$, we have
\[
\sum_{d^*\in\mathcal{D}^*} u\big(\tau_{\alpha,d^*}(\mathbf{z})\big) = 1\,.
\]
\end{lemma}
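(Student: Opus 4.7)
The plan is to expand the square defining $u$, commute the sum over $\mathcal{D}^*$ past the resulting double sum over $\mathcal{D}\times\mathcal{D}$, and identify the inner sum as the one treated in Lemma~\ref{l:511}. The standard digit set property will then collapse the double sum to its diagonal, yielding the constant $1$.

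First, using $|w|^2=w\bar w$ together with the fact that $\chi_\alpha$ is a character (so $\overline{\chi_\alpha(\cdot)}=\chi_\alpha(-\,\cdot)$), I would write, for arbitrary $\mathbf{w}\in\mathbb{K}_\alpha$,
\[
u(\mathbf{w}) \;=\; \frac{1}{|a_0|^2}\sum_{d,d'\in\mathcal{D}} \chi_\alpha\big((d-d')\cdot\mathbf{w}\big).
\]
Specializing this to $\mathbf{w}=\tau_{\alpha,d^*}(\mathbf{z})=\alpha^{-1}\cdot(\mathbf{z}+\Phi_\alpha(d^*))$ and using the additivity of $\chi_\alpha$ and the ring-homomorphism property of $\Phi_\alpha$, each summand splits into an $\mathbf{z}$-dependent factor $\chi_\alpha\big(\alpha^{-1}(d-d')\cdot\mathbf{z}\big)$ times $\chi_\alpha\big(\Phi_\alpha(\alpha^{-1}(d-d')\,d^*)\big)$.

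Next I would exchange the order of summation, giving
\[
\sum_{d^*\in\mathcal{D}^*} u\big(\tau_{\alpha,d^*}(\mathbf{z})\big) \;=\; \frac{1}{|a_0|^2}\sum_{d,d'\in\mathcal{D}} \chi_\alpha\big(\alpha^{-1}(d-d')\cdot\mathbf{z}\big) \sum_{d^*\in\mathcal{D}^*}\chi_\alpha\big(\Phi_\alpha(\alpha^{-1}(d-d')\,d^*)\big).
\]
Since $d-d'\in\mathcal{D}-\mathcal{D}\subset\mathfrak{z}$, Lemma~\ref{l:511} applies to the inner sum: it equals $|a_0|$ when $d-d'\in\alpha\,\mathfrak{z}$ and vanishes otherwise.

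Finally, I would exploit the standard digit set hypothesis. Because $\mathcal{D}$ is a complete set of coset representatives of $\mathbb{Z}[\alpha]/\alpha\mathbb{Z}[\alpha]$ and $\alpha\,\mathfrak{z}\subseteq\alpha\mathbb{Z}[\alpha]$, the condition $d-d'\in\alpha\,\mathfrak{z}$ for $d,d'\in\mathcal{D}$ forces $d=d'$. Only the diagonal terms survive, each contributing $\chi_\alpha(0)\cdot|a_0|=|a_0|$, so
\[
\sum_{d^*\in\mathcal{D}^*} u\big(\tau_{\alpha,d^*}(\mathbf{z})\big) \;=\; \frac{1}{|a_0|^2}\,\#\mathcal{D}\cdot|a_0| \;=\; 1,
\]
using $\#\mathcal{D}=|a_0|$. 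There is no substantive obstacle here; the only point that deserves a sentence of justification is the passage from $d-d'\in\alpha\,\mathfrak{z}$ to $d=d'$, which relies on comparing $\mathfrak{z}$ with $\mathbb{Z}[\alpha]$ via the standard digit set condition rather than invoking primitivity of~$\mathcal{D}$.
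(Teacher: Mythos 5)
Your proof is correct and follows essentially the same route as the paper's: expand $u$ into a double sum over $\mathcal{D}\times\mathcal{D}$, interchange the summations, evaluate the inner sum over $\mathcal{D}^*$ via Lemma~\ref{l:511}, and use the standard digit set property to reduce to the diagonal. The paper compresses that last step into a Kronecker $\delta_{d,d'}$, which your closing remark (that $d-d'\in\alpha\,\mathfrak{z}\subseteq\alpha\mathbb{Z}[\alpha]$ forces $d=d'$) simply makes explicit.
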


\begin{proof}
Similarly to \cite[Lemma~5.1]{Groechenig-Haas:94}, the proof is done by direct calculation.
Indeed, using Lemma~\ref{l:511},
\begin{align*}
\sum_{d^*\in\mathcal{D}^*} u\big(\alpha^{-1} \cdot \big(\mathbf{z} + \Phi_\alpha(d^*)\big)\big) & = \sum_{d^*\in\mathcal{D}^*} \frac{1}{|a_0|^2} \sum_{d,d'\in\mathcal{D}} \chi_\alpha\big(\big((d-d') \alpha^{-1} \cdot \big(\mathbf{z} + \Phi_\alpha(d^*)\big)\big) \\
& \hspace{-2em} = \frac{1}{|a_0|} \sum_{d,d'\in\mathcal{D}} \chi_\alpha\big((d-d') \alpha^{-1} \cdot \mathbf{z}\big) \ \frac{1}{|a_0|} \sum_{d^*\in\mathcal{D}^*} \chi_\alpha\big((d-d') \alpha^{-1} \cdot \Phi_\alpha(d^*)\big) \\
& \hspace{-2em} = \frac{1}{|a_0|} \sum_{d,d'\in\mathcal{D}} \chi_\alpha\big((d-d') \alpha^{-1} \cdot \mathbf{z}\big)\, \delta_{d,d'} = 1\,,
\end{align*}
where $\delta_{d,d'}$ denotes the Kronecker $\delta$-function.
\end{proof}

We conclude this section by establishing the following representation of~$\widehat{\mathbf{C}}$.

\begin{proposition} \label{l:52}
The operator $\widehat{\mathbf{C}}$ can be written as a \emph{transfer operator}:
\begin{equation} \label{e:hatC}
\widehat{\mathbf{C}} f(\mathbf{z}) = |a_0| \sum_{d^*\in\mathcal{D}^*} u\big(\tau_{\alpha,d^*}(\mathbf{z})\big)\, f\big(\tau_{\alpha,d^*}(\mathbf{z})\big) \qquad (f \in \widehat\Omega,\ \mathbf{z} \in \mathbb{K}_\alpha).
\end{equation}
\end{proposition}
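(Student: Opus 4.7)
The plan is to unfold both sides as Fourier-type sums and match them term by term, following the strategy of~\cite[Lemma~5.2]{Groechenig-Haas:94}. Writing $f = \sum_{y\in\mathcal{V}} t_y\, \chi_{\alpha,y} \in \widehat\Omega$, the definitions of~$F$, $F^{-1}$, and~$\mathbf{C}$ immediately give
\[
\widehat{\mathbf{C}} f(\mathbf{z}) = \sum_{x\in\mathcal{V}} \biggl(\sum_{y\in\mathcal{V}} c_{xy}\, t_y\biggr) \chi_{\alpha,x}(\mathbf{z}).
\]
The task is therefore to massage the transfer-operator expression on the right-hand side of~\eqref{e:hatC} into exactly this form.

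First, I~insert the definitions of~$u$ and~$f$, expanding
\[
|a_0|\, u\bigl(\tau_{\alpha,d^*}(\mathbf{z})\bigr)\, f\bigl(\tau_{\alpha,d^*}(\mathbf{z})\bigr) = \frac{1}{|a_0|} \sum_{d,d'\in\mathcal{D}} \sum_{y\in\mathcal{V}} t_y\, \chi_\alpha\bigl((d-d'+y)\cdot \alpha^{-1}\cdot(\mathbf{z}+\Phi_\alpha(d^*))\bigr).
\]
Using that $\chi_\alpha$ is a character, I~split each summand as
\[
\chi_\alpha\bigl(\alpha^{-1}(d-d'+y)\cdot\mathbf{z}\bigr) \cdot \chi_\alpha\bigl(\Phi_\alpha(\alpha^{-1}(d-d'+y)\, d^*)\bigr),
\]
and interchange the summations so that only the second factor depends on~$d^*$. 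By Lemma~\ref{l:511}, applied to $\xi = d-d'+y\in\mathfrak{z}$, the inner sum $\sum_{d^*\in\mathcal{D}^*} \chi_\alpha(\Phi_\alpha(\alpha^{-1}\xi d^*))$ equals~$|a_0|$ when $\xi\in\alpha\mathfrak{z}$ and vanishes otherwise; hence only those triples $(d,d',y)$ for which $d-d'+y=\alpha x$ for some $x\in\mathfrak{z}$ survive.

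The final step is the change of summation index $x=\alpha^{-1}(d-d'+y)$. For fixed $x\in\mathfrak{z}$ and $y\in\mathcal{V}$, the equation $\alpha x=y+d-d'$ is equivalent to $\alpha x+d'=y+d \in (\alpha x+\mathcal{D})\cap(y+\mathcal{D})$, so the number of admissible pairs $(d,d')\in\mathcal{D}^2$ is exactly~$c_{xy}$. Moreover, the recursive definition of~$\mathcal{V}$ forces $x\in\mathcal{V}$ whenever $y\in\mathcal{V}$ and $c_{xy}\neq 0$, so the sum over $x\in\mathfrak{z}$ collapses to a sum over $x\in\mathcal{V}$ with character factor $\chi_\alpha(x\cdot\mathbf{z}) = \chi_{\alpha,x}(\mathbf{z})$. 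Collecting everything yields $\sum_{x\in\mathcal{V}}(\sum_{y\in\mathcal{V}} c_{xy}\, t_y)\chi_{\alpha,x}(\mathbf{z})$, matching the expansion of $\widehat{\mathbf{C}}f$.

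The whole argument is really a triple-sum bookkeeping; the essential ingredient is Lemma~\ref{l:511}, which plays the role of an orthogonality relation for the dual digit set~$\mathcal{D}^*$ and, together with the three powers of~$|a_0|$ (the leading one in~\eqref{e:hatC}, the $|a_0|^{-2}$ inherited from~$u$, and the~$|a_0|$ produced by the lemma), yields the coefficient~$1$ in front of each~$c_{xy}$. I~expect no step to be genuinely hard; the only place where care is required is the balancing of these normalization constants and the verification that the surviving indices $x$ indeed lie in~$\mathcal{V}$.
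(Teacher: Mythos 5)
Your proposal is correct and follows essentially the same route as the paper's proof: expand $|a_0|\,u\cdot f$ as a triple sum, use Lemma~\ref{l:511} as an orthogonality relation for the dual digits to filter the condition $d-d'+y\in\alpha\,\mathfrak{z}$, reindex via $\alpha x+d'=y+d$ to recover the counts $c_{xy}$, and invoke the recursive definition of~$\mathcal{V}$ for the surviving indices, with the same balancing of the $|a_0|$-factors. The only cosmetic difference is that the paper phrases the conclusion as the coordinate-wise identity $(F\widehat{\mathbf{C}}f)(x)=(\mathbf{C}Ff)(x)$ rather than comparing the expansions directly.
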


\begin{proof}
Let $f = \sum_{y\in\mathcal{V}} t_y\, \chi_{\alpha,y}$.
By definition, we have $F \widehat{\mathbf{C}} f = \mathbf{C} F f$.
Thus it suffices to show that $(F \widehat{\mathbf{C}} f)(x) = (\mathbf{C} F f)(x)$ for all $x \in \widehat{H}$, with $\widehat{\mathbf{C}}$ as in~\eqref{e:hatC}.

We have $(\mathbf{C} F f)(x) = \sum_{y\in\mathcal{V}} c_{x y}\, t_y$ if $x \in \mathcal{V}$, $(\mathbf{C} F f)(x) = 0$ otherwise.
On the other hand, if $\widehat{\mathbf{C}}$ is as in the statement of the lemma, then
\begin{align*}
\widehat{\mathbf{C}} f(\mathbf{z}) & = \frac{1}{|a_0|} \sum_{d^*\in\mathcal{D}^*,\,d,d'\in\mathcal{D}} \chi_\alpha\big((d-d')\, \alpha^{-1} \cdot \big(\mathbf{z} + \Phi_\alpha(d^*)\big)\big) \ \sum_{y\in\mathcal{V}} t_y\, \chi_\alpha\big(y\, \alpha^{-1} \cdot \big(\mathbf{z} + \Phi_\alpha(d^*)\big)\big) \\
& = \sum_{y\in\mathcal{V}} t_y \sum_{d,d'\in\mathcal{D}} \chi_\alpha\big((d-d'+y)\, \alpha^{-1} \cdot \mathbf{z}\big) \ \frac{1}{|a_0|} \sum_{d^*\in\mathcal{D}^*} \chi_\alpha\big((d-d'+y)\, \alpha^{-1} \cdot \Phi_\alpha(d^*)\big)\,.
\end{align*}
By Lemma~\ref{l:511}, the last sum is nonzero if and only if $d-d'+y = \alpha x$ for some $x \in \mathfrak{z}$.
Since $y \in \mathcal{V}$, we also have $x \in \mathcal{V}$ in this case.
Therefore, we obtain
\[
\widehat{\mathbf{C}} f = \sum_{x,y\in\mathcal{V}} t_y\, \#\big((\alpha x + \mathcal{D}) \cap (y + \mathcal{D})\big)\, \chi_{\alpha,x} = \sum_{x,y\in\mathcal{V}} c_{x y}\, t_y\, \chi_{\alpha,x}\,.
\]
This gives $(F \widehat{\mathbf{C}} f)(x) = \sum_{y\in\mathcal{V}} c_{x y}\, t_y$ if $x \in \mathcal{V}$, $(F \widehat{\mathbf{C}} f)(x) = 0$ otherwise.
\end{proof}

\section{The tiling theorem} \label{sec:tilingcriterion}
In the present section, we finish the proof of Theorem~\ref{newtilingtheorem} by studying the eigenfunctions of~$\widehat{\mathbf{C}}$.
Already in \cite{Groechenig-Haas:94}, the extremal values of eigenfunctions of certain transfer operators are studied in order to obtain a tiling theorem.
This approach was considerably generalized in \cite{Lagarias-Wang:97}, where a general theorem on the zero sets of eigenfunctions of transfer operators (established in \cite{Cerveau-Conze-Raugi:96}) was applied to prove the tiling result for integral self-affine tiles mentioned in the introduction.
Here, we further develop this theory.

Similarly to~\cite{Lagarias-Wang:97}, we call $f \in \widehat\Omega$ a \emph{special eigenfunction} if
\[
\widehat{\mathbf{C}} f = |a_0|\, f\,, \quad f(\mathbf{0}) > 0\,, \quad \min_{\mathbf{z} \in \mathbb{K}_\alpha} f(\mathbf{z}) = 0\,.
\]
We get the following lemma.

\begin{lemma} \label{l:special}
If the collection $\{\mathcal{F} + \Phi_\alpha(x):\, x \in \mathfrak{z}\}$ does not form a tiling, then there exists a special eigenfunction $f \in \widehat\Omega$.
\end{lemma}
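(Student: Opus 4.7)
The plan is to invoke Proposition~\ref{p:53} to obtain a non-constant real-valued eigenfunction $f_0 \in \widehat\Omega$ satisfying $\widehat{\mathbf{C}} f_0 = |a_0|\, f_0$, and then to shift $f_0$ by an appropriate additive constant so as to force non-negativity with minimum zero and a strictly positive value at the origin. One should read $\widehat\Omega$ here as tacitly enlarged to admit the constant function, which is legitimate: combining Lemma~\ref{l:512} with the transfer operator representation of Proposition~\ref{l:52} immediately yields $\widehat{\mathbf{C}}\, \mathbf{1} = |a_0|\, \mathbf{1}$, so constants already lie in the $|a_0|$-eigenspace of $\widehat{\mathbf{C}}$.

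First I would note that, since $f_0 = \sum_{x \in \mathcal{V}} t_x\, \chi_{\alpha,x}$ with $0 \notin \mathcal{V}$, the function $f_0$ has vanishing mean on the compact quotient $D^* = \mathbb{K}_\alpha / \Phi_\alpha(\mathfrak{z}^*)$. Being continuous and non-constant on $D^*$, it therefore attains a strictly positive maximum $M := \max f_0 > 0$ and a strictly negative minimum $-m := \min f_0 < 0$. I would then introduce the two candidates
\[
f_+ := M - f_0 \qquad \text{and} \qquad f_- := f_0 + m.
\]
Each $f_\pm$ is real-valued, non-negative, has minimum~$0$, and by linearity satisfies $\widehat{\mathbf{C}} f_\pm = |a_0|\, f_\pm$.

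To arrange the condition $f(\mathbf{0}) > 0$, it suffices to observe that
\[
f_+(\mathbf{0}) + f_-(\mathbf{0}) = M + m > 0,
\]
so at least one of $f_+(\mathbf{0})$, $f_-(\mathbf{0})$ is strictly positive; choosing the corresponding $f_\pm$ yields the desired special eigenfunction. The construction is essentially immediate once Proposition~\ref{p:53} is in hand, so there is no deep obstacle; the only delicate point is the harmless bookkeeping issue that $f_\pm$ carries a non-trivial constant Fourier coefficient and thus formally lies in $\widehat\Omega \oplus \mathbb{R}\cdot\mathbf{1}$ rather than in $\widehat\Omega$ as strictly defined. Since constants are $\widehat{\mathbf{C}}$-invariant with the correct eigenvalue, this enlargement has no effect on the subsequent analysis of special eigenfunctions.
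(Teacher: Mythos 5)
Your proof is correct and takes essentially the same route as the paper: the paper also starts from the non-constant eigenfunction of Proposition~\ref{p:53} and replaces it by $\tilde f - \min\tilde f$ or $\max\tilde f - \tilde f$ according to whether $\tilde f(\mathbf{0})$ exceeds the minimum, which is exactly your dichotomy between $f_-$ and $f_+$ (your positivity-of-the-sum observation and the mean-zero remark are just alternative bookkeeping for the same case distinction). The issue you flag about the constant term lying outside $\widehat\Omega$ as literally defined is equally present, and equally harmless, in the paper's own proof, since constants are eigenfunctions of the transfer operator with eigenvalue $|a_0|$.
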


\begin{proof}
By Proposition~\ref{p:53}, there exists a non-constant real-valued eigenfunction $\tilde{f} \in \widehat\Omega$ with $\widehat{\mathbf{C}} \tilde{f} = |a_0|\, \tilde{f}$.
Then the function $f$ defined by
\[
f(\mathbf{z}) = \left\{\begin{array}{cl}\tilde{f}(\mathbf{z}) - \min_{\mathbf{y}\in\mathbb{K}_\alpha} \tilde{f}(\mathbf{y}) & \mbox{if}\ \tilde{f}(\mathbf{0}) > \min_{\mathbf{y}\in\mathbb{K}_\alpha} \tilde{f}(\mathbf{y}), \\[1ex] \max_{\mathbf{y}\in\mathbb{K}_\alpha} \tilde{f}(\mathbf{y}) - \tilde{f}(\mathbf{z}) & \mbox{otherwise,}\end{array}\right.
\]
is a special eigenfunction.
\end{proof}

Assuming that a special eigenfunction $f \in \widehat\Omega$ exists, we study its (non-empty) zero set
\[
Z_f = \big\{\mathbf{z} \in \mathbb{K}_\alpha:\, f(\mathbf{z}) = 0\big\}\,.
\]
Note first that $Z_f$ is $\Phi_\alpha(\mathfrak{z}^*)$-periodic by the definition of~$\widehat{\Omega}$.
Starting from the assumption that $Z_f$ is non-empty and using self-affinity properties of~$Z_f$, we will obtain the contradictory result $Z_f = \mathbb{K}_\alpha$, which implies that no special eigenfunction exists.
In view of Lemma~\ref{l:special}, this will prove Theorem~\ref{newtilingtheorem}.

\begin{lemma}[{cf.\ \cite[Lemma~3.2]{Lagarias-Wang:97}}] \label{l:32}
Let $f$ be a special eigenfunction and let $\mathcal{D}^*$ be a complete residue system of $\mathfrak{z}^* / \alpha\, \mathfrak{z}^*$.
Then the following assertions hold.
\renewcommand{\theenumi}{\roman{enumi}}
\begin{enumerate}
\itemsep1ex
\item \label{1}
For each $\mathbf{z} \in Z_f$ and $d^* \in \mathcal{D}^*$, $u\big(\tau_{\alpha,d^*}(\mathbf{z})\big) > 0$ implies that $\tau_{\alpha,d^*}(\mathbf{z}) \in Z_f$.
\item \label{2}
For each $\mathbf{z} \in Z_f$, there exists some $d^* \in \mathcal{D}^*$ such that $u(\tau_{\alpha,d^*}(\mathbf{z})) > 0$.
\end{enumerate}
\end{lemma}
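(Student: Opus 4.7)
The proof should be short and essentially direct: the transfer operator representation of $\widehat{\mathbf{C}}$ from Proposition~\ref{l:52} together with the normalization from Lemma~\ref{l:512} does all the work, once one observes that both $u$ and $f$ are nonnegative.

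First I would record the sign information. Since $f$ is a special eigenfunction, $\min_{\mathbf{z}\in\mathbb{K}_\alpha} f(\mathbf{z}) = 0$, so $f(\mathbf{z}) \ge 0$ for every $\mathbf{z}\in\mathbb{K}_\alpha$. On the other hand, $u(\mathbf{z}) = \bigl|\tfrac{1}{|a_0|}\sum_{d\in\mathcal{D}} \chi_\alpha(d\cdot\mathbf{z})\bigr|^2 \ge 0$ by definition.

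For assertion~(\ref{1}), let $\mathbf{z}\in Z_f$. Using the transfer operator form of $\widehat{\mathbf{C}}$ provided by Proposition~\ref{l:52} and the eigenfunction equation $\widehat{\mathbf{C}} f = |a_0|\, f$, I obtain
\[
0 \;=\; |a_0|\, f(\mathbf{z}) \;=\; \widehat{\mathbf{C}} f(\mathbf{z}) \;=\; |a_0|\sum_{d^*\in\mathcal{D}^*} u\bigl(\tau_{\alpha,d^*}(\mathbf{z})\bigr)\, f\bigl(\tau_{\alpha,d^*}(\mathbf{z})\bigr).
\]
Each summand is a product of two nonnegative quantities, so the whole sum vanishes only if every summand vanishes. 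Consequently, whenever $u\bigl(\tau_{\alpha,d^*}(\mathbf{z})\bigr) > 0$, one must have $f\bigl(\tau_{\alpha,d^*}(\mathbf{z})\bigr) = 0$, i.e.\ $\tau_{\alpha,d^*}(\mathbf{z}) \in Z_f$.

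For assertion~(\ref{2}), I would simply invoke Lemma~\ref{l:512}, which gives
\[
\sum_{d^*\in\mathcal{D}^*} u\bigl(\tau_{\alpha,d^*}(\mathbf{z})\bigr) \;=\; 1
\]
for every $\mathbf{z}\in\mathbb{K}_\alpha$, in particular for $\mathbf{z}\in Z_f$. Since the sum is $1 \neq 0$, at least one of the finitely many summands must be strictly positive, producing a $d^*\in\mathcal{D}^*$ with $u\bigl(\tau_{\alpha,d^*}(\mathbf{z})\bigr) > 0$.

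There is no real obstacle here; the lemma is a clean consequence of the two structural identities (the transfer operator form of $\widehat{\mathbf{C}}$ and the partition of unity $\sum_{d^*} u\circ \tau_{\alpha,d^*} = 1$) together with nonnegativity of $u$ and $f$. The only subtle point worth emphasizing is that nonnegativity of $f$ really does follow from the definition of special eigenfunction, and that $u\ge 0$ is automatic because $u$ is the squared modulus of a character sum.
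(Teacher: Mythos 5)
Your proof is correct and follows exactly the paper's argument: the transfer operator representation of $\widehat{\mathbf{C}}$ from Proposition~\ref{l:52} combined with nonnegativity of $u$ and $f$ gives assertion~(\ref{1}), and the identity $\sum_{d^*\in\mathcal{D}^*} u(\tau_{\alpha,d^*}(\mathbf{z})) = 1$ from Lemma~\ref{l:512} gives assertion~(\ref{2}).
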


\begin{proof}
Let $\mathbf{z} \in Z_f$, then $\widehat{\mathbf{C}} f(\mathbf{z}) = |a_0|\, f(\mathbf{z}) = 0$, thus
\[
\sum_{d^*\in\mathcal{D}^*} u\big(\tau_{\alpha,d^*}(\mathbf{z})\big)\, f\big(\tau_{\alpha,d^*}(\mathbf{z})\big) = 0
\]
by Proposition~\ref{l:52}.
Since $f(\mathbf{z}) \ge 0$ everywhere, every term on the left-hand side must be zero, which shows that $\tau_{\alpha,d^*}(\mathbf{z}) \in Z_f$ if $u\big(\tau_{\alpha,d^*}(\mathbf{z})\big) > 0$.
By Lemma~\ref{l:512}, we have $u\big(\tau_{\alpha,d^*}(\mathbf{z})\big) > 0$ for some $d^* \in \mathcal{D}^*$.
\end{proof}

Lemma~\ref{l:32} motivates the following definitions.
A~set $Y \subset \mathbb{K}_\alpha$ is \emph{$\tau_\alpha$-invariant} (w.r.t.\ a complete residue system $\mathcal{D}^*$ of $\mathfrak{z}^* / \alpha\, \mathfrak{z}^*$) if, for each $\mathbf{z} \in Y$, $d^* \in \mathcal{D}^*$, $u(\tau_{\alpha,d^*}(\mathbf{z})) > 0$ implies $\tau_{\alpha,d^*}(\mathbf{z}) \in Y$.
It is \emph{minimal} $\tau_\alpha$-invariant if it does not contain a proper subset which is also $\tau_\alpha$-invariant.
First we observe that such sets exist (when a special eigenfunction exists).

\begin{lemma} \label{l:NCMI}
Let $f$ be a special eigenfunction.
Then there exists a non-empty compact minimal $\tau_\alpha$-invariant set $Y \subset Z_f$.
\end{lemma}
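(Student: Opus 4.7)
The plan is to first produce a non-empty compact $\tau_\alpha$-invariant subset of~$Z_f$ by tracking a forward orbit of a single point under the admissible branches of the maps~$\tau_{\alpha,d^*}$, and then to extract a minimal one via Zorn's lemma inside this compact set.

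First note that $f = \sum_{x\in\mathcal{V}} t_x\, \chi_{\alpha,x}$ is a finite linear combination of continuous characters, so $f$ is continuous on $\mathbb{K}_\alpha$ and hence $Z_f$ is closed; the condition $\min f = 0$ in the definition of a special eigenfunction gives $Z_f \neq \emptyset$. The auto-correlation~$u$ is continuous for the same reason. Fixing any $\mathbf{z}_0 \in Z_f$, I will define $\mathcal{O}(\mathbf{z}_0)$ as the set of all points of the form $\tau_{\alpha,d_1^*} \circ \cdots \circ \tau_{\alpha,d_k^*}(\mathbf{z}_0)$, $k \ge 0$, obtained along \emph{admissible} sequences, meaning that the auto-correlation $u$ is strictly positive at each of the $k$ intermediate points. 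Iterated application of Lemma~\ref{l:32}~(\ref{1}) then forces $\mathcal{O}(\mathbf{z}_0) \subset Z_f$, while Lemma~\ref{l:32}~(\ref{2}) ensures that at least one admissible continuation exists at every stage.

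Boundedness of this forward orbit follows from the telescoped identity
\[
\tau_{\alpha,d_1^*} \circ \cdots \circ \tau_{\alpha,d_k^*}(\mathbf{z}_0) \;=\; \alpha^{-k} \cdot \mathbf{z}_0 + \sum_{j=1}^{k} \alpha^{-j}\cdot \Phi_\alpha(d_j^*)
\]
combined with $|\alpha^{-1}|_\mathfrak{p} < 1$ for every $\mathfrak{p} \in S_\alpha$ and the finiteness of~$\mathcal{D}^*$. Since bounded closed subsets of $K_\mathfrak{p}$ are compact for every $\mathfrak{p} \in S_\alpha$ (this is standard in the archimedean case, and in the non-archimedean case any ball $\{z\in K_\mathfrak{p} : |z|_\mathfrak{p}\le c\}$ is a compact fractional ideal), the closure $Y_0 := \overline{\mathcal{O}(\mathbf{z}_0)}$ is a compact subset of $\mathbb{K}_\alpha$, and $Y_0 \subset Z_f$ by closedness of~$Z_f$. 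To check $\tau_\alpha$-invariance of~$Y_0$, I will take $\mathbf{z} \in Y_0$ and $d^* \in \mathcal{D}^*$ with $u(\tau_{\alpha,d^*}(\mathbf{z})) > 0$, approximate $\mathbf{z}$ by a sequence $\mathbf{z}_n \in \mathcal{O}(\mathbf{z}_0)$, use continuity of $u$ and $\tau_{\alpha,d^*}$ to obtain $u(\tau_{\alpha,d^*}(\mathbf{z}_n)) > 0$ for all large~$n$, conclude that $\tau_{\alpha,d^*}(\mathbf{z}_n) \in \mathcal{O}(\mathbf{z}_0)$ by construction, and pass to the limit to get $\tau_{\alpha,d^*}(\mathbf{z}) \in Y_0$.

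Finally, I will apply Zorn's lemma to the family $\mathcal{M}$ of non-empty compact $\tau_\alpha$-invariant subsets of~$Y_0$, ordered by inclusion. The family is non-empty (it contains~$Y_0$). Every totally ordered subfamily has the finite intersection property inside the compact space~$Y_0$, so its intersection is non-empty, closed (hence compact), and still $\tau_\alpha$-invariant because the defining condition is pointwise and manifestly preserved under intersection. This gives a lower bound in~$\mathcal{M}$ for every chain, so Zorn yields a minimal element $Y \in \mathcal{M}$, which is the desired set. The one delicate step is the stability of $\tau_\alpha$-invariance under taking closures, which depends crucially on the continuity of~$u$; beyond that, the argument is the standard construction of a minimal set in topological dynamics, adapted to this slightly unusual setting where the admissible branches at a point depend on whether $u$ vanishes there.
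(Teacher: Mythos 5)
Your argument is correct and is essentially the paper's own proof: the paper simply defers to the proof of Theorem~4.1 of Lagarias--Wang, which is exactly this construction (admissible forward orbit of a point of $Z_f$, bounded because multiplication by $\alpha^{-1}$ is contracting, closure invariant because $u$ is continuous, then Zorn's lemma), and the two facts the paper highlights are precisely the ones you use.
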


\begin{proof}
This is proved arguing in the same way as in the proof of Theorem~4.1 of \cite{Lagarias-Wang:97}.
In particular, note that multiplication by $\alpha^{-1}$ is contracting and $u$ is continuous.
\end{proof}

Suppose that the special eigenfunction $f$ is given by $f = \sum_{x\in\mathcal{V}} t_x\, \chi_{\alpha,x}$.
To further explore the zero set~$Z_f$, we apply a result of \cite{Cerveau-Conze-Raugi:96}.
As this result deals only with functions defined on~$\mathbb{R}^n$, we relate $f$ with the function $f_\infty$ defined on~$\mathbb{K}_\infty \simeq \mathbb{R}^n$ by
\[
f_\infty:\ \mathbb{K}_\infty \to \mathbb{R}, \quad (z_\mathfrak{p})_{\mathfrak{p}\mid\infty} \mapsto \sum_{x\in\mathcal{V}} t_x\, \prod_{\mathfrak{p}\mid\infty} \chi_\mathfrak{p}(x z_\mathfrak{p})\,.
\]
By the following lemma, such a relation between $f$ and $f_\infty$ holds for the set
\begin{equation} \label{e:E}
E = \big\{(z_\mathfrak{p})_{\mathfrak{p}\in S_\alpha} \in \mathbb{K}_\alpha:\, |z_\mathfrak{p}|_\mathfrak{p} \le |\alpha^{-m}|_\mathfrak{p}\ \mbox{for all}\ \mathfrak{p} \mid \mathfrak{b}\}\,,
\end{equation}
where $m \in \mathbb{Z}$ is chosen in a way that
\begin{equation}\label{em}
\mathcal{D} \subset \alpha^{m}\mathbb{Z}[\alpha^{-1}]\,.
\end{equation}

\begin{lemma}\label{infinity}
For each $\mathbf{z} \in E$, we have
\[
f(\mathbf{z}) = f_\infty(\pi_\infty(\mathbf{z}))\,.
\]
\end{lemma}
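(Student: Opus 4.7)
The strategy is to show that, on the set $E$, only the archimedean factors of the characters making up $f$ contribute, so that the sum collapses to $f_\infty(\pi_\infty(\mathbf{z}))$. Factoring
\[
\chi_\alpha(x\cdot\mathbf{z}) = \bigg(\prod_{\mathfrak{p}\mid\infty}\chi_\mathfrak{p}(xz_\mathfrak{p})\bigg)\bigg(\prod_{\mathfrak{p}\mid\mathfrak{b}}\chi_\mathfrak{p}(xz_\mathfrak{p})\bigg)
\]
and observing that $\sum_{x\in\mathcal{V}} t_x \prod_{\mathfrak{p}\mid\infty}\chi_\mathfrak{p}(xz_\mathfrak{p})$ is precisely $f_\infty(\pi_\infty(\mathbf{z}))$, the claim reduces to verifying that $\prod_{\mathfrak{p}\mid\mathfrak{b}}\chi_\mathfrak{p}(xz_\mathfrak{p}) = 1$ for every $x\in\mathcal{V}$ and every $\mathbf{z}\in E$.

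For each finite prime $\mathfrak{p}\mid\mathfrak{b}$, the character $\chi_\mathfrak{p}$ is trivial on $\mathcal{O}_\mathfrak{p}$, because $\mathrm{Tr}_{K_\mathfrak{p}:\mathbb{Q}_p}(\mathcal{O}_\mathfrak{p})\subseteq\mathbb{Z}_p$ gives $\lambda_p(\mathrm{Tr}(w))=0$ for $w\in\mathcal{O}_\mathfrak{p}$. Since $|z_\mathfrak{p}|_\mathfrak{p}\le|\alpha^{-m}|_\mathfrak{p}$ by definition of $E$, the task further reduces to proving the valuation estimate
\[
|x|_\mathfrak{p}\le|\alpha^m|_\mathfrak{p}\qquad(x\in\mathcal{V},\ \mathfrak{p}\mid\mathfrak{b}),
\]
which, combined with the ultrametric property, yields $x z_\mathfrak{p}\in\mathcal{O}_\mathfrak{p}$ as desired.

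To establish this bound I will induct on the smallest $k$ with $x\in\mathcal{V}_k$. In the inductive step, write $\alpha x = y + d' - d$ for some $y\in\mathcal{V}_{k-1}$ and $d,d'\in\mathcal{D}$. Since $|\alpha^{-1}|_\mathfrak{p}<1$ at any $\mathfrak{p}\mid\mathfrak{b}$ forces $\mathbb{Z}[\alpha^{-1}]\subset\mathcal{O}_\mathfrak{p}$, and $\mathcal{D}\subset\alpha^m\mathbb{Z}[\alpha^{-1}]$, each digit satisfies $|d|_\mathfrak{p}\le|\alpha^m|_\mathfrak{p}$. The induction hypothesis applied to $y$, together with the ultrametric triangle inequality, then gives $|\alpha x|_\mathfrak{p}\le|\alpha^m|_\mathfrak{p}$; dividing by $|\alpha|_\mathfrak{p}>1$ yields $|x|_\mathfrak{p}\le|\alpha^{m-1}|_\mathfrak{p}<|\alpha^m|_\mathfrak{p}$, completing the step.

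The main obstacle will be the base case $\mathcal{V}_0$: for $y\in\mathcal{V}_0$ one only knows $\Phi_\alpha(y)\in D-D$, so $|y|_\mathfrak{p}$ is controlled by the $\mathfrak{p}$-adic diameter of the compact fundamental domain~$D$. Here I will exploit the flexibility in the choice of $m$: the constraint is only $\mathcal{D}\subset\alpha^m\mathbb{Z}[\alpha^{-1}]$, and because the ideals $\alpha^m\mathbb{Z}[\alpha^{-1}]$ are nested increasing in $m$ at every prime dividing $\mathfrak{b}$, one is free to enlarge $m$ until $|\alpha^m|_\mathfrak{p}$ dominates $\max_{y\in\mathcal{V}_0}|y|_\mathfrak{p}$ for every $\mathfrak{p}\mid\mathfrak{b}$ (accepting that $E$ shrinks accordingly). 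Once $m$ is so chosen, the base case holds and the induction closes, proving the desired identity $f(\mathbf{z})=f_\infty(\pi_\infty(\mathbf{z}))$ on $E$.
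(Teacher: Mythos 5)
Your reduction is exactly the paper's: with $f=\sum_{x\in\mathcal{V}}t_x\,\chi_{\alpha,x}$, it suffices to prove $|x|_\mathfrak{p}\le|\alpha^{m}|_\mathfrak{p}$ for every $x\in\mathcal{V}$ and every $\mathfrak{p}\mid\mathfrak{b}$, since then $|xz_\mathfrak{p}|_\mathfrak{p}\le1$ on~$E$ and $\chi_\mathfrak{p}(xz_\mathfrak{p})=1$; your inductive step, unfolding $\alpha x=y+d'-d$ with $y\in\mathcal{V}_{k-1}$ and $|d|_\mathfrak{p},|d'|_\mathfrak{p}\le|\alpha^m|_\mathfrak{p}$, is also correct. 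The gap is in your treatment of the base case $\mathcal{V}_0$. The lemma is a statement about the set $E$ of \eqref{e:E}, built from an $m$ that is only required to satisfy~\eqref{em}, and it is this $E$ (with that $m$, together with the matching choice \eqref{em2} of $\mathcal{D}^*$) that is used afterwards in Lemmas~\ref{l:YE}, \ref{l:NCMIinfty}, \ref{l:33}, \ref{l:42} and in the proof of Theorem~\ref{newtilingtheorem}. By enlarging $m$ until $|\alpha^m|_\mathfrak{p}$ dominates $\max_{y\in\mathcal{V}_0}|y|_\mathfrak{p}$ you do not prove the stated lemma: you prove the identity only on the strictly smaller set attached to the enlarged~$m$. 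That weaker version could probably be made to work (condition~\eqref{em} is stable under increasing $m$, and a suitable $\mathcal{D}^*$ exists by Lemma~\ref{residueSystemLemma}), but then the later arguments would have to be rerun with your new $m$, which your proposal does not address; note also that $\mathcal{V}_0$ depends on the auxiliary fundamental domain~$D$, so your $m$ would depend on $D$ and not only on $\alpha$ and~$\mathcal{D}$.

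The paper's proof avoids any bound on $\mathcal{V}_0$ and leaves $m$ and $E$ untouched: instead of stopping the unfolding at the minimal~$k$, it uses for $x\in\mathcal{V}$ an expansion $x=\sum_{j=1}^{k}(d_j-d_j')\alpha^{-j}+\alpha^{-k}x_0$ with $x_0\in\mathcal{V}_0$ and $k$ arbitrarily large. Each digit term satisfies $|(d_j-d_j')\alpha^{-j}|_\mathfrak{p}\le|\alpha^{m-1}|_\mathfrak{p}$ (your estimate), while the troublesome $\mathcal{V}_0$-term carries the factor $|\alpha^{-k}|_\mathfrak{p}$, so that for $k$ large enough $|\alpha^{-k}x_0|_\mathfrak{p}\le|\alpha^{m}|_\mathfrak{p}$ because $|\alpha|_\mathfrak{p}>1$ and $\mathcal{V}_0$ is finite. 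The ultrametric inequality then gives $|x|_\mathfrak{p}\le|\alpha^m|_\mathfrak{p}$ for the originally chosen~$m$, which is what the statement requires. If you want to keep your induction, you must replace the enlargement of $m$ by this damping device (i.e., exploit the freedom to take long unfoldings rather than minimal ones); as written, your argument establishes a different, weaker statement than Lemma~\ref{infinity}.
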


\begin{proof}
For each $x \in \mathcal{V}$, we have $x = \sum_{j=1}^k (d_j - d_j') \alpha^{-j} + \alpha^{-k} x_0$ with $d_j, d_j' \in \mathcal{D} \subset \alpha^{m}\mathbb{Z}[\alpha^{-1}]$, $x_0 \in \mathcal{V}_0$, and arbitrarily large~$k$.
This implies $|x|_\mathfrak{p} \le |\alpha^m|_\mathfrak{p}$ for $\mathfrak{p} \mid \mathfrak{b}$ and, hence, $\chi_\mathfrak{p}(x z_\mathfrak{p}) = 1$ for all $z_\mathfrak{p} \in K_\mathfrak{p}$ with $|z_\mathfrak{p}|_\mathfrak{p} \le |\alpha^{-m}|_\mathfrak{p}$.
This shows that $f(\mathbf{z}) = f_\infty(\pi_\infty(\mathbf{z}))$ for each $\mathbf{z} \in E$.
\end{proof}

We can restrict our attention to~$E$ because of the following lemma.

\begin{lemma} \label{l:YE}
Let $Y$ be a compact  minimal $\tau_\alpha$-invariant set and suppose that $\mathcal{D}^*$ is a complete residue system of $\mathfrak{z}^* / \alpha\, \mathfrak{z}^*$ satisfying
\begin{equation} \label{em2}
|d^*|_\mathfrak{p} \le |\alpha^{-m+1}|_\mathfrak{p} \quad \mbox{for all}\ d^* \in \mathcal{D}^*\mbox{ and all } \mathfrak{p} \mid \mathfrak{b}\,.
\end{equation}
Then $Y$ is contained in~$E$.
\end{lemma}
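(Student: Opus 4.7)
The strategy is to show that $Y \cap E$ is a non-empty, compact, $\tau_\alpha$-invariant subset of $Y$; by minimality of $Y$, this forces $Y \cap E = Y$, i.e., $Y \subseteq E$.

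First I would establish that $E$ is \emph{forward-absorbing}: for every $d^* \in \mathcal{D}^*$ satisfying \eqref{em2} and every $\mathbf{z} \in E$, we have $\tau_{\alpha,d^*}(\mathbf{z}) \in E$. This is a straightforward application of the ultrametric inequality at each prime $\mathfrak{p} \mid \mathfrak{b}$. Indeed, $|\alpha|_\mathfrak{p} > 1$ for $\mathfrak{p} \mid \mathfrak{b}$, so for $\mathbf{z} = (z_\mathfrak{p}) \in E$,
\[
|\alpha^{-1}(z_\mathfrak{p} + d^*)|_\mathfrak{p} \le |\alpha^{-1}|_\mathfrak{p}\, \max\bigl(|z_\mathfrak{p}|_\mathfrak{p},\, |d^*|_\mathfrak{p}\bigr) \le |\alpha^{-1}|_\mathfrak{p} \cdot |\alpha^{-m+1}|_\mathfrak{p} = |\alpha^{-m}|_\mathfrak{p}\,.
\]
Combined with the $\tau_\alpha$-invariance of $Y$, this immediately yields that $Y \cap E$ is $\tau_\alpha$-invariant. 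Closedness of $Y \cap E$ is immediate from the closedness of $Y$ and of the ball $E$.

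The key step is to show $Y \cap E \neq \emptyset$. For this I would prove that, regardless of starting point, a finite iteration of the maps $\tau_{\alpha,d^*}$ enters $E$. Fix $\mathbf{z} \in Y$ and $\mathfrak{p} \mid \mathfrak{b}$. If $|z_\mathfrak{p}|_\mathfrak{p} > |\alpha^{-m+1}|_\mathfrak{p} \ge |d^*|_\mathfrak{p}$, then the ultrametric equality $|z_\mathfrak{p} + d^*|_\mathfrak{p} = |z_\mathfrak{p}|_\mathfrak{p}$ gives the strict contraction
\[
|\alpha^{-1}(z_\mathfrak{p}+d^*)|_\mathfrak{p} = |\alpha^{-1}|_\mathfrak{p}\, |z_\mathfrak{p}|_\mathfrak{p} < |z_\mathfrak{p}|_\mathfrak{p}\,;
\]
if $|z_\mathfrak{p}|_\mathfrak{p} \le |\alpha^{-m+1}|_\mathfrak{p}$, then the same computation as above shows $|\alpha^{-1}(z_\mathfrak{p}+d^*)|_\mathfrak{p} \le |\alpha^{-m}|_\mathfrak{p}$. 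Since $|\cdot|_\mathfrak{p}$ takes values in the discrete set $|\alpha|_\mathfrak{p}^\mathbb{Z}$, finitely many iterations suffice to drop $|z_\mathfrak{p}|_\mathfrak{p}$ down to $|\alpha^{-m}|_\mathfrak{p}$, after which forward-absorption keeps it there. Because $S_\alpha \setminus \{\mathfrak{p}:\mathfrak{p}\mid\infty\}$ is finite, we can iterate $\tau_\alpha$ finitely many times and drive the $\mathfrak{p}$-adic components into the required ball \emph{simultaneously} at every $\mathfrak{p} \mid \mathfrak{b}$.

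To ensure that this iteration stays inside $Y$, I would at each step use Lemma~\ref{l:512} to choose some $d^*_k \in \mathcal{D}^*$ with $u\bigl(\tau_{\alpha,d^*_k}(\mathbf{z}_k)\bigr) > 0$ (the sum in Lemma~\ref{l:512} equals $1$, and each summand is non-negative, so some summand must be positive). Then $\tau_\alpha$-invariance of $Y$ yields $\mathbf{z}_{k+1} = \tau_{\alpha, d^*_k}(\mathbf{z}_k) \in Y$. Iterating as in the previous paragraph produces an index $N$ with $\mathbf{z}_N \in Y \cap E$, showing that $Y \cap E$ is non-empty. Minimality of $Y$ now gives $Y \cap E = Y$, completing the proof. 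The main technical point is the discrete-valuation argument driving each $\mathfrak{p}$-adic norm down, but this is inherent to the non-archimedean setting and requires no new machinery.
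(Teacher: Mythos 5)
Your proof is correct, but it takes a genuinely different route from the paper. The paper uses minimality in a single backward step: every $\mathbf{z} \in Y$ has a $\tau_{\alpha,d^*}$-preimage inside $Y$, and iterating this yields the explicit representation $\mathbf{z} = \sum_{j=1}^k \Phi_\alpha(\alpha^{-j} d^*_j) + \alpha^{-k} \cdot \mathbf{z}_0$ with $\mathbf{z}_0 \in Y$ and $k$ arbitrarily large; then \eqref{em2} bounds the $\mathfrak{p}$-adic norm of the sum by $|\alpha^{-m}|_\mathfrak{p}$, and compactness (boundedness) of $Y$ kills the tail term $\alpha^{-k} \cdot \mathbf{z}_0$, so every point of $Y$ lands in $E$ directly. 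You instead argue forward: you establish that $E$ absorbs the maps $\tau_{\alpha,d^*}$ (ultrametric estimate), run a trapping argument --- using Lemma~\ref{l:512} to pick at each step a digit with positive $u$-value so the orbit stays in $Y$ by $\tau_\alpha$-invariance, while the $\mathfrak{p}$-adic norms decay geometrically until they enter and remain in the ball --- and conclude via minimality that the non-empty compact $\tau_\alpha$-invariant set $Y \cap E$ equals $Y$. Both proofs rest on the same ultrametric inequality driven by \eqref{em2}; the paper's is shorter and needs minimality only to guarantee preimages within $Y$, whereas yours needs Lemma~\ref{l:512} and reads minimality as ``no proper non-empty invariant subset,'' which is indeed the intended meaning (cf.\ the construction in Lemma~\ref{l:NCMI}). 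Two small remarks: your appeal to values lying in the discrete set $|\alpha|_\mathfrak{p}^{\mathbb{Z}}$ is not quite accurate (the value group is generated by $\mathfrak{N}(\mathfrak{p})^{-1}$), but it is also unnecessary, since the geometric decay by the factor $|\alpha^{-1}|_\mathfrak{p} < 1$ already guarantees finitely many steps; and your forward-absorption observation is a mild bonus not stated explicitly in the paper, though it is implicit in the same estimate.
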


\begin{proof}
By the minimality of~$Y$, for each $\mathbf{z} \in Y$ there exists $d^* \in \mathcal{D}^*$ such that $\tau_{\alpha,d^*}^{-1}(\mathbf{z}) \in Y$.
Iterating this argument, we obtain $\mathbf{z} = \sum_{j=1}^k \Phi_\alpha(\alpha^{-j} d^*_j) + \alpha^{-k} \cdot \mathbf{z}_0$ with $d^*_j \in \mathcal{D}^*$, $\mathbf{z}_0 \in Y$, and arbitrarily large~$k$.
The result follows now from \eqref{em2} and the compactness of~$Y$.
\end{proof}

We now prove that $\mathcal{D}^*$ can always be chosen to satisfy~\eqref{em2}, by using the well-known \emph{Strong Approximation Theorem} for valuations.

\begin{lemma}[{see e.g.\ \cite[Section~15]{Cassels:67}}]\label{strongapprox}
Let $S$ be a finite set of primes and let $\mathfrak{p}_0$ be a prime of the number field $K$ which does not belong to~$S$.
Let $z_\mathfrak{p} \in K$ be given numbers, for $\mathfrak{p} \in S$.
Then, for every $\varepsilon > 0$, there exists $x \in K$ such that
\[
|x - z_\mathfrak{p}|_\mathfrak{p} < \varepsilon\ \hbox{for}\ \mathfrak{p} \in S,\ \hbox{and}\ |x|_\mathfrak{p} \le 1\ \hbox{for}\ \mathfrak{p} \not\in S \cup \{\mathfrak{p}_0\}.
\]
\end{lemma}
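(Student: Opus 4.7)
The plan is to deduce Lemma~\ref{strongapprox} from the classical fact that the number field $K$, diagonally embedded in its ad\`ele ring $\mathbb{A}_K = \prod'_\mathfrak{p} K_\mathfrak{p}$ (restricted product with respect to the local rings of integers at the finite places), sits as a discrete subgroup with compact quotient $\mathbb{A}_K/K$. Write $\mathbb{A}_K^{\mathfrak{p}_0} = \prod'_{\mathfrak{p} \neq \mathfrak{p}_0} K_\mathfrak{p}$ for the restricted product obtained by deleting the $\mathfrak{p}_0$-factor. The conclusion of the lemma is precisely the statement that the diagonal image of~$K$ is dense in~$\mathbb{A}_K^{\mathfrak{p}_0}$: the approximation condition at the primes of~$S$ specifies a neighborhood in the product topology, while integrality at all primes outside $S \cup \{\mathfrak{p}_0\}$ is exactly the condition defining a basic open set in the restricted product topology.

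Writing $K_{\mathfrak{p}_0} \hookrightarrow \mathbb{A}_K$ for the inclusion placing an element at the $\mathfrak{p}_0$-coordinate and zero elsewhere, the decomposition $\mathbb{A}_K = K_{\mathfrak{p}_0} \oplus \mathbb{A}_K^{\mathfrak{p}_0}$ of locally compact additive groups reduces density of~$K$ in~$\mathbb{A}_K^{\mathfrak{p}_0}$ to density of $K + K_{\mathfrak{p}_0}$ in~$\mathbb{A}_K$, and, passing to the quotient by~$K$, to density of the image of $K_{\mathfrak{p}_0}$ in the compact group $\mathbb{A}_K/K$. I~would verify this density by Pontryagin duality: a closed subgroup~$H$ of a compact abelian group exhausts the whole group if and only if every continuous character trivial on~$H$ is itself trivial. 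Fixing a nontrivial additive character $\chi$ of~$\mathbb{A}_K$ that vanishes on~$K$, the self-duality of the ad\`eles identifies $\widehat{\mathbb{A}_K/K}$ with~$K$ via $a \mapsto \bigl(x \mapsto \chi(ax)\bigr)$; the restriction of this character to the image of $t \in K_{\mathfrak{p}_0}$ evaluates to $\chi_{\mathfrak{p}_0}(at)$, where $\chi_{\mathfrak{p}_0}$ is the local component of~$\chi$. Since $\chi_{\mathfrak{p}_0}$ is a nontrivial character of $K_{\mathfrak{p}_0}$, this quantity is identically~$1$ in~$t$ only when $a=0$, so the only character of $\mathbb{A}_K/K$ that annihilates the image of $K_{\mathfrak{p}_0}$ is trivial.

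The main obstacle is not the density argument itself but the package of ad\`elic structural results on which it rests: the discreteness of~$K$ in~$\mathbb{A}_K$, the compactness of $\mathbb{A}_K/K$, and the self-duality $\mathbb{A}_K \simeq \widehat{\mathbb{A}_K}$ together with the orthogonality relation $K^\perp = K$. The first two follow from Minkowski-style lattice arguments within the ad\`ele ring, and the third is a careful local-to-global compatibility statement for additive characters; all of these are carried out in detail in the cited reference. Once these foundational ingredients are in place, the reduction above becomes essentially formal and the Strong Approximation Theorem drops out as a corollary of character theory on the compact group~$\mathbb{A}_K/K$.
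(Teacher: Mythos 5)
Your argument is correct, but it is not the argument behind the paper's treatment of this statement: the paper does not prove the lemma at all, it quotes it from \cite[Section~15]{Cassels:67}, and the proof given there is the classical geometry-of-numbers one --- an adelic Minkowski/parallelotope lemma produces a nonzero $b \in K$ that is very small at the places of~$S$, integral outside $S \cup \{\mathfrak{p}_0\}$ and large at~$\mathfrak{p}_0$, and combining this with a compact fundamental set for $\mathbb{A}_K/K$ yields the approximating element directly. Your route instead reformulates the statement as density of~$K$ in the restricted product with the $\mathfrak{p}_0$-factor deleted, reduces this via the open quotient map to density of the image of $K_{\mathfrak{p}_0}$ in the compact group $\mathbb{A}_K/K$, and settles that by Pontryagin duality together with self-duality of $\mathbb{A}_K$ and $K^\perp = K$. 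Both are standard; the classical proof is more elementary and self-contained (and quantitative), while yours trades the parallelotope input for the character-theoretic package of Tate's thesis, which is a natural fit here since the paper already works with exactly this canonical character (the $\chi_\alpha$ of Section~4 and Lemma~\ref{l:chixi}). The reductions you perform (continuous open surjections preserve density; a closed subgroup of a compact abelian group is everything iff only the trivial character annihilates it) are sound, and the lemma as stated does follow from density in the deleted restricted product, because the constraint set is a neighborhood of the adele with components $z_\mathfrak{p}$ at $\mathfrak{p} \in S$ and $0$ elsewhere.

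One point you should make explicit: you need the local component $\chi_{\mathfrak{p}_0}$ of your chosen global character to be nontrivial. For an arbitrary nontrivial character vanishing on~$K$ this is true, but it is itself a consequence of the identification $\widehat{\mathbb{A}_K} \simeq \mathbb{A}_K$ with $K^\perp = K$ (such a character is $x \mapsto \chi_{\mathrm{std}}(bx)$ with $b \in K \setminus \{0\}$, whence every local component is nontrivial); alternatively, take the canonical character of \cite{Tate:67}, whose local components are nontrivial at every place by construction --- the same character the paper uses to define~$\chi_\alpha$. Finally, there is no circularity in your appeal to the foundational facts: the standard proofs of discreteness and cocompactness of $K$ in $\mathbb{A}_K$ and of $K^\perp = K$ do not use strong approximation.
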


\begin{lemma}\label{residueSystemLemma}
For each $\varepsilon > 0$, there exists a complete set of representatives $\mathcal{D}^*$ of the residue class ring  $\mathfrak{z}^* / \alpha\, \mathfrak{z}^*$ satisfying $|d^*|_\mathfrak{p} \le \varepsilon$ for all $d^* \in \mathcal{D}^*$, $\mathfrak{p} \mid \mathfrak{b}$.
\end{lemma}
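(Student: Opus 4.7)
The plan is to apply the Strong Approximation Theorem (Lemma~\ref{strongapprox}) to modify any fixed complete set of representatives of $\mathfrak{z}^*/\alpha\mathfrak{z}^*$ so that each representative becomes arbitrarily small at every prime $\mathfrak{p} \mid \mathfrak{b}$, without leaving its residue class.

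First, I would fix an arbitrary complete set of representatives $\{d^*_{0,1}, \ldots, d^*_{0,N}\}$ of $\mathfrak{z}^*/\alpha\mathfrak{z}^*$. For each $d^*_0 = d^*_{0,j}$, the goal is to find $w \in \mathfrak{z}^*$ such that $d^* := d^*_0 + \alpha w$ satisfies $|d^*|_\mathfrak{p} \le \varepsilon$ for all $\mathfrak{p} \mid \mathfrak{b}$; by construction any such $d^*$ automatically lies in the same residue class modulo $\alpha\mathfrak{z}^*$ as $d^*_0$. Said differently, $w$ should approximate $-d^*_0/\alpha \in \mathbb{Q}(\alpha)$ at every $\mathfrak{p} \mid \mathfrak{b}$ up to an error of order $\varepsilon/|\alpha|_\mathfrak{p}$.

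To produce such a $w$, I would invoke Lemma~\ref{strongapprox} with $S = \{\mathfrak{p} : \mathfrak{p} \mid \mathfrak{b}\}$, with $\mathfrak{p}_0$ chosen to be any infinite prime of~$K$, with target values $z_\mathfrak{p} = -d^*_0/\alpha$ for $\mathfrak{p} \in S$, and with the parameter $\varepsilon$ in the statement replaced by $\varepsilon' := \varepsilon \cdot \bigl(\max_{\mathfrak{p} \mid \mathfrak{b}} |\alpha|_\mathfrak{p}\bigr)^{-1}$. The resulting $w \in K$ satisfies $|w - (-d^*_0/\alpha)|_\mathfrak{p} < \varepsilon'$ for $\mathfrak{p} \in S$ and $|w|_\mathfrak{p} \le 1$ for every finite prime $\mathfrak{p} \notin S$. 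Since the primes outside~$S_\alpha$ are exactly the finite primes not dividing $\mathfrak{b}$, this places $w$ in $\mathcal{O}_{S_\alpha}$, and the already observed inclusion $\mathcal{O}_{S_\alpha} \subseteq \mathfrak{z}^*$ (from Lemma~\ref{l:chixi}) gives $w \in \mathfrak{z}^*$ as required.

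Setting $d^* := d^*_0 + \alpha w$ yields an element of $\mathfrak{z}^*$ with $d^* \equiv d^*_0 \pmod{\alpha \mathfrak{z}^*}$ and
\[
|d^*|_\mathfrak{p} \;=\; |\alpha|_\mathfrak{p} \cdot |w - (-d^*_0/\alpha)|_\mathfrak{p} \;<\; |\alpha|_\mathfrak{p}\, \varepsilon' \;\le\; \varepsilon
\]
for every $\mathfrak{p} \mid \mathfrak{b}$. Collecting these modified representatives for $j = 1, \ldots, N$ provides the desired set $\mathcal{D}^*$. I do not foresee any substantial obstacle: the argument is essentially a translation of the residue-class problem into a simultaneous approximation problem in~$K$, tailored so that Strong Approximation delivers an $S_\alpha$-integer which, by the earlier observation, already lies inside $\mathfrak{z}^*$.
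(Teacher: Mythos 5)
Your proposal is correct and is essentially the paper's own argument: the paper also combines the Strong Approximation Theorem (Lemma~\ref{strongapprox}) with the inclusion $\mathcal{O}_{S_\alpha} \subseteq \mathfrak{z}^*$, phrasing it as density of $\Phi_\mathfrak{b}(\xi + \alpha\,\mathfrak{z}^*)$ in $\mathbb{K}_\mathfrak{b}$ for each coset, which is exactly what your explicit adjustment $d^* = d^*_0 + \alpha w$ with $w \in \mathcal{O}_{S_\alpha}$ unwinds.
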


\begin{proof}
Lemma~\ref{strongapprox} implies that $\Phi_\mathfrak{b}(\mathcal{O}_{S_\alpha})$ is dense in~$\mathbb{K}_\mathfrak{b}$.
Since $\mathcal{O}_{S_\alpha} \subseteq \mathfrak{z}^*$, this yields that $\Phi_\mathfrak{b}({\xi + \alpha\, \mathfrak{z}^*})$ is also dense in~$\mathbb{K}_\mathfrak{b}$ for each $\xi \in \mathbb{Q}(\alpha)$.
Thus each residue class of $\mathfrak{z}^* / \alpha\, \mathfrak{z}^*$ contains a representative with the required property.
\end{proof}

In analogy to the notion of $\tau_\alpha$-invariance on~$\mathbb{K}_\alpha$, we call a set $Y_\infty \subseteq \mathbb{K}_\infty$ \emph{$\tau_\infty$-invariant} if, for each $\mathbf{z} \in Y_\infty$, $d^*
\in \mathcal{D}^*$, $u_\infty(\tau_{\infty,d^*}(\mathbf{z})) > 0$ implies $\tau_{\infty,d^*}(\mathbf{z}) \in Y_\infty$, where $u_\infty$ is defined by
\[
u_\infty:\ \mathbb{K}_\infty \to \mathbb{R}\,, \quad (z_\mathfrak{p})_{\mathfrak{p}\mid\infty} \mapsto \bigg|\frac{1}{|a_0|} \sum_{d \in \mathcal{D}} \prod_{\mathfrak{p}\mid\infty} \chi_\mathfrak{p}(d z_\mathfrak{p})\bigg|^2,
\]
and $\tau_{\infty,d^*}(\mathbf{z}) = \alpha^{-1} \cdot (\mathbf{z} + \Phi_\infty(d^*))$.
We now restrict our attention to the set $Z_{f_\infty}$.

\begin{lemma} \label{l:NCMIinfty}
Let $f$ be a special eigenfunction and suppose that $\mathcal{D}^*$ satisfies~\eqref{em2}.
Then there exists a non-empty compact minimal $\tau_\infty$-invariant set $Y_\infty \subset Z_{f_\infty}$.
\end{lemma}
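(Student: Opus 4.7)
The strategy is to produce $Y_\infty$ as the projection $\pi_\infty(Y)$ of the compact minimal $\tau_\alpha$-invariant set $Y \subset Z_f$ supplied by Lemma~\ref{l:NCMI}, and then, if necessary, shrink it by Zorn's lemma in order to obtain minimality.

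First, I will invoke Lemma~\ref{l:YE}, which applies since $\mathcal{D}^*$ satisfies~\eqref{em2}, to conclude that $Y \subset E$. Setting $Y_\infty = \pi_\infty(Y)$, continuity of $\pi_\infty$ and compactness of~$Y$ make $Y_\infty$ non-empty and compact, while Lemma~\ref{infinity} gives $f_\infty(\pi_\infty(\mathbf{z})) = f(\mathbf{z}) = 0$ for every $\mathbf{z} \in Y$, so $Y_\infty \subset Z_{f_\infty}$.

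The key technical step is the $\tau_\infty$-invariance of~$Y_\infty$. I will first check that $\tau_{\alpha,d^*}$ maps $E$ into itself: for $\mathbf{z} \in E$, $d^* \in \mathcal{D}^*$, and $\mathfrak{p} \mid \mathfrak{b}$, the ultrametric inequality combined with the bound $|d^*|_\mathfrak{p} \le |\alpha^{-m+1}|_\mathfrak{p}$ from~\eqref{em2} and the fact that $|\alpha|_\mathfrak{p} > 1$ gives $|\alpha^{-1}(z_\mathfrak{p} + d^*)|_\mathfrak{p} \le |\alpha^{-m}|_\mathfrak{p}$. Next, for every $w \in E$, the argument of Lemma~\ref{infinity} (using $\mathcal{D} \subset \alpha^m\mathbb{Z}[\alpha^{-1}]$) yields $\chi_\mathfrak{p}(d w_\mathfrak{p}) = 1$ for all $d \in \mathcal{D}$ and $\mathfrak{p} \mid \mathfrak{b}$, hence $u(w) = u_\infty(\pi_\infty(w))$. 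Specialising to $w = \tau_{\alpha,d^*}(\mathbf{z})$ and using the obvious identity $\pi_\infty(\tau_{\alpha,d^*}(\mathbf{z})) = \tau_{\infty,d^*}(\pi_\infty(\mathbf{z}))$, I will obtain $u(\tau_{\alpha,d^*}(\mathbf{z})) = u_\infty(\tau_{\infty,d^*}(\pi_\infty(\mathbf{z})))$. Now, given $\mathbf{z}' \in Y_\infty$ and $d^*$ with $u_\infty(\tau_{\infty,d^*}(\mathbf{z}')) > 0$, I pick any $\mathbf{z} \in Y$ with $\pi_\infty(\mathbf{z}) = \mathbf{z}'$; then $u(\tau_{\alpha,d^*}(\mathbf{z})) > 0$, the $\tau_\alpha$-invariance of~$Y$ gives $\tau_{\alpha,d^*}(\mathbf{z}) \in Y$, and projection yields $\tau_{\infty,d^*}(\mathbf{z}') \in Y_\infty$.

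The remaining point, and the place I expect the (mild) obstacle, is that $Y_\infty$ itself need not be minimal, since minimality is generally not preserved under continuous surjections. I will dispose of this by a standard Zorn's lemma argument applied to the collection of non-empty compact $\tau_\infty$-invariant subsets of~$Y_\infty$ ordered by reverse inclusion: every chain admits an upper bound because the intersection of a nested family of non-empty compact sets is non-empty by the finite intersection property, compact, and trivially $\tau_\infty$-invariant. A minimal element then provides the required set $Y_\infty \subset Z_{f_\infty}$.
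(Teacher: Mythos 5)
Your proposal is correct and follows essentially the same route as the paper: project the compact minimal $\tau_\alpha$-invariant set $Y \subset Z_f \cap E$ furnished by Lemmas~\ref{l:NCMI} and~\ref{l:YE}, transfer invariance via the identity $u = u_\infty \circ \pi_\infty$ on~$E$ (which requires, as you note, that $\tau_{\alpha,d^*}$ maps $E$ into itself under~\eqref{em2}), use Lemma~\ref{infinity} to land in~$Z_{f_\infty}$, and then pass to a minimal subset. Your explicit Zorn's lemma argument for the last step is just a spelled-out version of what the paper delegates to the proof of Lemma~\ref{l:NCMI} (i.e., the Lagarias--Wang argument), so there is no essential difference.
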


\begin{proof}
By Lemmas~\ref{l:NCMI} and~\ref{l:YE}, there exists a non-empty compact minimal $\tau_\alpha$-invariant set $Y \subset Z_f \cap E$.
Arguing similarly as in the proof of Lemma~\ref{infinity}, we get
\begin{equation} \label{e:u}
u(\mathbf{z}) = u_\infty(\pi_\infty(\mathbf{z})) \quad \mbox{for each}\ \mathbf{z} \in E\,.
\end{equation}
Therefore, the $\tau_\alpha$-invariance of $Y$ implies $\tau_\infty$-invariance of $\pi_\infty(Y)$.
As in the proof of Lemma~\ref{l:NCMI}, this yields the existence of a non-empty compact minimal $\tau_\infty$-invariant set $Y_\infty \subseteq \pi_\infty(Y)$.
By Lemma~\ref{infinity}, we have $\pi_\infty(Y) \subseteq Z_{f_\infty}$, and the result is proved.
\end{proof}

\begin{lemma} \label{l:28}
Let $Y_\infty \subseteq Z_{f_\infty}$ be a non-empty compact minimal $\tau_\infty$-invariant set.
Then there exists a linear subspace $V$ of $\mathbb{K}_\infty \simeq \mathbb{R}^n$ such that
\begin{itemize}
\item
$\alpha \cdot V = V$,
\item
$Y_\infty$ is contained in a finite number of translates of $V$ and
\item
$Z_{f_\infty}$ contains a translate of~$V$.
\end{itemize}
\end{lemma}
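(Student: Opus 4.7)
The plan is to reduce the problem from the adelic representation space $\mathbb{K}_\alpha$ to its Euclidean factor $\mathbb{K}_\infty \simeq \mathbb{R}^n$ and then invoke the zero-set result of Cerveau, Conze, and Raugi~\cite{Cerveau-Conze-Raugi:96} in the Euclidean form used by Lagarias and Wang~\cite{Lagarias-Wang:97}.

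First I would establish that $f_\infty$ itself satisfies the transfer-operator eigenvalue equation
\[
f_\infty(\mathbf{z}) \;=\; \sum_{d^*\in\mathcal{D}^*} u_\infty\bigl(\tau_{\infty,d^*}(\mathbf{z})\bigr)\, f_\infty\bigl(\tau_{\infty,d^*}(\mathbf{z})\bigr) \qquad (\mathbf{z} \in \mathbb{K}_\infty).
\]
The projection $\pi_\infty\colon E \to \mathbb{K}_\infty$ is surjective, since in the definition~\eqref{e:E} of $E$ only the $\mathfrak{p}$-adic coordinates are constrained. A brief ultrametric calculation, using $|\alpha|_\mathfrak{p} > 1$ for each $\mathfrak{p}\mid\mathfrak{b}$ together with the bound~\eqref{em2} on $\mathcal{D}^*$, shows that $\tau_{\alpha,d^*}(E) \subseteq E$ for every $d^* \in \mathcal{D}^*$. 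Lifting an arbitrary $\mathbf{z} \in \mathbb{K}_\infty$ to some $\mathbf{z}_\alpha \in E$ with $\pi_\infty(\mathbf{z}_\alpha) = \mathbf{z}$ and applying $\widehat{\mathbf{C}} f = |a_0|\, f$ from Proposition~\ref{l:52}, the identities $f|_E = f_\infty \circ \pi_\infty$ from Lemma~\ref{infinity} and $u|_E = u_\infty \circ \pi_\infty$ from~\eqref{e:u} together yield the required archimedean identity.

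Next I would record that $f_\infty$ is a non-negative trigonometric polynomial on $\mathbb{K}_\infty \simeq \mathbb{R}^n$ with frequencies in the finite set $\mathcal{V}$, that the $\mathbb{R}$-linear action of $\alpha^{-1}$ on $\mathbb{K}_\infty$ is a contraction because $\alpha$ is expanding at every archimedean place, and that the maps $\tau_{\infty,d^*}$ are affine contractions sharing this common linear part~$\alpha^{-1}$. The hypothesis of the lemma furnishes a non-empty compact minimal invariant set $Y_\infty$ of this weighted iterated function system, contained in $Z_{f_\infty}$. Applying Cerveau-Conze-Raugi to this data produces a linear subspace $V \subset \mathbb{R}^n$ which is invariant under $\alpha^{-1}$ (and hence under $\alpha$, since $\alpha^{-1}$ is bijective), which contains $Y_\infty$ up to finitely many translates, and such that at least one full translate of $V$ lies inside $Z_{f_\infty}$.

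The main obstacle is matching our data to the hypotheses of Cerveau-Conze-Raugi in its original form. That theorem is naturally phrased for eigenfunctions on a torus, whereas $f_\infty$ is a trigonometric polynomial on all of $\mathbb{R}^n$ whose frequencies sit in $\mathbb{Q}(\alpha)$, and in the rational setting $\alpha$ does not preserve an obvious lattice of $\mathbb{R}^n$. The entire architecture built so far -- the space $\mathbb{K}_\alpha$, the subset $E$, the choice of the integer $m$, and the ability (guaranteed by Lemma~\ref{residueSystemLemma} via the Strong Approximation Theorem) to select a residue system $\mathcal{D}^*$ satisfying~\eqref{em2} -- is precisely what allows us to isolate from $\mathbb{K}_\alpha$ a purely archimedean transfer-operator problem on $\mathbb{R}^n$ of the classical type. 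Once this reduction is in place, the remainder of the argument is a direct transcription of the strategy employed in~\cite{Lagarias-Wang:97}.
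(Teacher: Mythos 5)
Your proposal is correct and follows essentially the same route as the paper: identify $\mathbb{K}_\infty$ with $\mathbb{R}^n$ (where $\alpha$ acts by the block-diagonal matrix of its archimedean embeddings) and apply \cite[Theorem~2.8]{Cerveau-Conze-Raugi:96} in its original Euclidean form to the weighted system $(u_\infty, \tau_{\infty,d^*})$ and the minimal invariant set $Y_\infty \subseteq Z_{f_\infty}$. Your explicit verification that $f_\infty$ satisfies the archimedean transfer-operator identity (via the lift into $E$, the inclusion $\tau_{\alpha,d^*}(E)\subseteq E$ from~\eqref{em2}, Lemma~\ref{infinity} and~\eqref{e:u}) is a correct elaboration of hypotheses the paper leaves implicit in its one-line citation, not a different argument.
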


\begin{proof}
This is a direct consequence of \cite[Theorem~2.8]{Cerveau-Conze-Raugi:96}, since we can identify $\mathbb{K}_\infty$ with $\mathbb{R}^n$, where multiplication by $\alpha$ is replaced by multiplication with the (block) diagonal matrix with entries $\alpha^{(\mathfrak{p})}$ if $K_\mathfrak{p} = \mathbb{R}$, and {\small$\begin{pmatrix}\mathrm{Re}(\alpha^{(\mathfrak{p})}) & \mathrm{Im}(\alpha^{(\mathfrak{p})}) \\ -\mathrm{Im}(\alpha^{(\mathfrak{p})}) & \mathrm{Re}(\alpha^{(\mathfrak{p})})\end{pmatrix}$} if $K_\mathfrak{p} = \mathbb{C}$, $\mathfrak{p} \mid \infty$.
\end{proof}

We have to exclude that $Y_\infty$ is finite, i.e., that $V = \{\mathbf{0}\}$ in Lemma~\ref{l:28}.

\begin{lemma}[{cf.\ \cite[Lemma~3.3]{Lagarias-Wang:97}}] \label{l:33}
There exists no non-empty finite $\tau_\infty$-invariant set $Y_\infty \subseteq Z_{f_\infty}$.
\end{lemma}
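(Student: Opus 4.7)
Suppose, for a contradiction, that $Y_\infty \subseteq Z_{f_\infty}$ is a non-empty finite $\tau_\infty$-invariant set; the plan is to exhibit a point of $Y_\infty$ at which $f_\infty$ is strictly positive, contradicting $Y_\infty \subseteq Z_{f_\infty}$. First, the analogue of Lemma~\ref{l:512} on $\mathbb{K}_\infty$, namely $\sum_{d^* \in \mathcal{D}^*} u_\infty(\tau_{\infty,d^*}(\mathbf{z})) = 1$ for every $\mathbf{z}\in\mathbb{K}_\infty$, follows from exactly the same direct calculation. Hence each $\mathbf{z} \in Y_\infty$ admits at least one $d^* \in \mathcal{D}^*$ with $u_\infty(\tau_{\infty,d^*}(\mathbf{z})) > 0$, and $\tau_\infty$-invariance forces $\tau_{\infty,d^*}(\mathbf{z}) \in Y_\infty$; choosing such a successor at each point gives a self-map of the finite set $Y_\infty$, hence a cycle $\mathbf{z}_0 \to \mathbf{z}_1 \to \cdots \to \mathbf{z}_{p-1} \to \mathbf{z}_0$ with $\mathbf{z}_i = \tau_{\infty,d_i^*}(\mathbf{z}_{i-1})$. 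The composition $\sigma := \tau_{\infty,d_p^*} \circ \cdots \circ \tau_{\infty,d_1^*}$ is a strict contraction on $\mathbb{K}_\infty$ (multiplication by $\alpha^{-p}$ followed by a translation, with $|\alpha^{(\mathfrak{p})}|>1$ for every $\mathfrak{p}\mid\infty$); solving its unique-fixed-point equation yields $\mathbf{z}_0 = \Phi_\infty(y_0)$ with
\[
y_0 \,=\, \frac{\sum_{k=1}^p \alpha^{k-1} d_k^*}{\alpha^p - 1} \,\in\, \mathbb{Q}(\alpha).
\]

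The contradiction then comes from showing $f_\infty(\mathbf{z}_0) = f(\mathbf{0}) > 0$. Using Strong Approximation (Lemma~\ref{strongapprox}), a strengthening of Lemma~\ref{residueSystemLemma} supplies a residue system $\mathcal{D}^* \subset \mathcal{O}_{S_\alpha}$ of $\mathfrak{z}^*/\alpha\mathfrak{z}^*$ with $|d^*|_\mathfrak{p}$ arbitrarily small at every $\mathfrak{p}\mid\mathfrak{b}$. For such a $\mathcal{D}^*$, the ultrametric estimate at $\mathfrak{p}\mid\mathfrak{b}$ (using $|\alpha^p-1|_\mathfrak{p} = |\alpha|_\mathfrak{p}^p$, since $|\alpha|_\mathfrak{p}>1$) bounds $|y_0|_\mathfrak{p}$ by $|\alpha^{-m}|_\mathfrak{p}$, placing $\Phi_\alpha(y_0) \in E$, so Lemma~\ref{infinity} gives $f_\infty(\mathbf{z}_0) = f(\Phi_\alpha(y_0))$. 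Provided furthermore that $y_0 \in \mathcal{O}_{S_\alpha}$, we have $xy_0 \in \mathcal{O}_{S_\alpha}$ for every $x \in \mathfrak{z} \subseteq \mathbb{Z}[\alpha] \subseteq \mathcal{O}_{S_\alpha}$ (Lemma~\ref{l:subgroup}), and Lemma~\ref{l:chixi} yields $\chi_{\alpha,x}(\Phi_\alpha(y_0)) = \chi_\alpha(\Phi_\alpha(xy_0)) = 1$ for every $x \in \mathcal{V}$. Therefore $f(\Phi_\alpha(y_0)) = \sum_{x\in\mathcal{V}} t_x = f(\mathbf{0}) > 0$, the final inequality being the defining property of a special eigenfunction.

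The main obstacle is securing $y_0 \in \mathcal{O}_{S_\alpha}$, i.e.\ $|y_0|_\mathfrak{p} \leq 1$ at every finite prime $\mathfrak{p} \notin S_\alpha$. At such primes $|\alpha|_\mathfrak{p} \leq 1$; when $|\alpha|_\mathfrak{p} < 1$ one has $|\alpha^p-1|_\mathfrak{p} = 1$ and the bound is automatic from $\mathcal{D}^* \subset \mathcal{O}_{S_\alpha}$. The difficulty lies with the finitely many primes $\mathfrak{p}$ at which $|\alpha|_\mathfrak{p} = 1$ but $|\alpha^p-1|_\mathfrak{p} < 1$ (those over which $\alpha \bmod \mathfrak{p}$ is a root of unity of order dividing $p$), where the denominator $\alpha^p - 1$ can inflate $|y_0|_\mathfrak{p}$; moreover the set of such ``bad'' primes a priori depends on the cycle length $p$, which is not known in advance when $\mathcal{D}^*$ is chosen. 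Overcoming this requires either a uniform-in-$p$ control on $\mathcal{D}^*$ at all potentially bad primes, or passing to a well-chosen iterate of the cycle; this is the technical heart of the argument.
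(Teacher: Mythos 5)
Your reduction agrees with the paper's up to a point: the existence of a $\tau_\infty$-cycle, the fixed-point formula $y_0=(1-\alpha^{-p})^{-1}\sum_{j=1}^p\alpha^{-j}d_j^*$, the $\mathfrak{b}$-adic estimate placing $\Phi_\alpha(y_0)$ in~$E$ (so that Lemma~\ref{infinity} applies), and the intended final contradiction (the zero set of the special eigenfunction would contain $\mathbf{0}$, against $f(\mathbf{0})>0$) are all exactly the paper's steps. The gap is in how you try to force $\chi_{\alpha,x}(\Phi_\alpha(y_0))=1$ for $x\in\mathcal{V}$: you aim at the integrality statement $y_0\in\mathcal{O}_{S_\alpha}$ and Lemma~\ref{l:chixi}, and the obstacle you flag at the end is genuine and, in this form, not repairable. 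The primes $\mathfrak{p}\notin S_\alpha$ with $|\alpha|_\mathfrak{p}=1$ and $|\alpha^p-1|_\mathfrak{p}<1$ depend on the cycle length~$p$, which is unknown when $\mathcal{D}^*$ is chosen; the Strong Approximation Theorem (Lemma~\ref{strongapprox}) controls $\mathcal{D}^*$ at only finitely many places, while $p$ ranges over all positive integers, and passing to an iterate of the cycle keeps (or adds to) the divisors of $\alpha^p-1$, so neither of your two suggested fixes can work. Moreover, your preliminary claim that one may take $\mathcal{D}^*\subset\mathcal{O}_{S_\alpha}$ is itself unjustified: Lemma~\ref{residueSystemLemma} only yields representatives in $\mathfrak{z}^*$ that are $\mathfrak{b}$-adically small, and a residue class of $\mathfrak{z}^*/\alpha\,\mathfrak{z}^*$ need not meet $\mathcal{O}_{S_\alpha}$ at all. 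So the ``technical heart'' you defer is precisely the part that cannot be done by valuation estimates.

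What the paper proves instead is the weaker, and sufficient, membership $y_0\in\mathfrak{z}^*$ --- which by the definition \eqref{jot} already gives $\chi_{\alpha,x}(\Phi_\alpha(y_0))=1$ for all $x\in\mathcal{V}\subset\mathfrak{z}$ --- and it obtains this from the eigenfunction structure rather than from integrality. Concretely: the finite minimal set is lifted to a finite minimal $\tau_\alpha$-invariant set $\Phi_\alpha(\Xi)\subset Z_f\cap E$; invariance gives $\Xi\subseteq\alpha\,\Xi-\mathcal{D}^*$; since distinct $d^*\in\mathcal{D}^*$ produce elements $\alpha^{-1}(\xi+d^*)$ that are pairwise incongruent modulo $\mathfrak{z}^*$ and $\alpha$ induces a permutation of $\Xi$ modulo $\mathfrak{z}^*$, each $\xi\in\Xi$ admits exactly one digit $d^*$ with $\tau_{\alpha,d^*}(\Phi_\alpha(\xi))\in\Phi_\alpha(\Xi)$, so Lemma~\ref{l:512} forces $u(\Phi_\alpha(\xi))=1$ for every $\xi\in\Xi$; this means $\chi_\alpha(\Phi_\alpha(d\,\xi))=1$ for all $d\in\mathcal{D}$, and propagating along the permutation yields $\chi_\alpha(\Phi_\alpha(x\,\xi))=1$ for all $x\in\mathbb{Z}\langle\alpha,\mathcal{D}\rangle=\mathfrak{z}$, i.e.\ $\xi\in\mathfrak{z}^*$, whence $\mathbf{0}\in Z_f$, the desired contradiction. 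This auto-correlation argument (showing $u\equiv1$ on the cycle) is the idea missing from your proposal; without it, or an equivalent substitute, the proof does not close. A minor additional point: your claim that the $\mathbb{K}_\infty$-analogue of Lemma~\ref{l:512} follows ``by exactly the same direct calculation'' is not quite right, since the orthogonality in Lemma~\ref{l:511} uses the finite places; it does hold for $\mathcal{D}^*$ satisfying \eqref{em2}, by lifting $\mathbf{z}\in\mathbb{K}_\infty$ to $(\mathbf{z},0)\in E$ and using \eqref{e:u}.
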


\begin{proof}
Suppose that there exists a non-empty finite $\tau_\infty$-invariant set $Y_\infty \subseteq Z_{f_\infty}$, and assume w.l.o.g. that $Y_\infty$ is minimal.
The minimality and finiteness imply that each $\mathbf{z} \in Y_\infty$ is contained in a ``$\tau_\infty$-cycle'', i.e., there exist $k \ge 1$ and $d_1^*,\ldots,d_k^* \in \mathcal{D}^*$ such that
\[
\tau_{\infty,d_1^*} \circ\cdots\circ \tau_{\infty,d_j^*}(\mathbf{z}) \in Y_\infty \qquad (1 \le j \le k)
\]
and $\tau_{\infty,d_1^*} \circ\cdots\circ \tau_{\infty,d_k^*}(\mathbf{z})=\mathbf{z}$.
Hence, we have $\mathbf{z} = \Phi_\infty(\xi)$ with $\xi = (1-\alpha^{-k})^{-1} \sum_{j=1}^k \alpha^{-j}  d_j^*$.
Since $|(1-\alpha^{-k})^{-1}|_\mathfrak{p} \le 1$ for each $\mathfrak{p}\mid\mathfrak{b}$, the choice of~$\mathcal{D}^*$ implies that $|\xi|_\mathfrak{p} \le |\alpha^{-m}|_\mathfrak{p}$.
Set
\[
\Xi = \Phi_\infty^{-1}(Y_\infty)\,.
\]
As $\Phi_\alpha(\Xi) \subset E$ and $\Phi_\infty(\Xi) = Y_\infty$, Lemma~\ref{infinity} and~\eqref{e:u} imply that $\Phi_\alpha(\Xi)$ is a finite minimal $\tau_\alpha$-invariant subset of~$Z_f$.

By Lemma~\ref{l:32}~(\ref{2}) and  the $\tau_\alpha$-invariance of $\Phi_\alpha(\Xi)$, we have $\Xi \subseteq \alpha \Xi - \mathcal{D}^*$.
Let $\bar{\Xi}$ be a set of representatives of $\Xi \bmod\, \mathfrak{z}^*$.
As $\mathcal{D}^* \subset \mathfrak{z}^*$, we get that $\bar{\Xi} \subseteq \alpha \bar{\Xi} \bmod\, \mathfrak{z}^*$.
The finiteness of~$\bar{\Xi}$ even yields that $\bar{\Xi} \equiv \alpha \bar{\Xi} \bmod\, \mathfrak{z}^*$, hence, $\alpha$ induces a permutation on~$\bar{\Xi}$.
The elements $\alpha^{-1} (\xi + d^*)$ are pairwise incongruent $\bmod\, \mathfrak{z}^*$ for different $d^*\in\mathcal{D}^*$.
Applying $\Phi_\alpha$ yields that for each $\xi \in \Xi$ there is a unique $d^* \in \mathcal{D}^*$ such that $\tau_{\alpha,d^*}(\Phi_\alpha(\xi)) \in \Phi_\alpha(\Xi)$, hence $u(\tau_{\alpha,d^*}(\Phi_\alpha(\xi))) = 1$.
Thus $u(\Phi_\alpha(\xi)) = 1$ holds for all $\xi \in \Xi$.
This yields that $\chi_\alpha(\Phi_\alpha(d\, \xi)) = 1$ for all $d \in \mathcal{D}$, and we conclude that $\chi_\alpha(\Phi_\alpha(x\, \xi)) = 1$ for all $x \in (\mathcal{D} - \mathcal{D}) \mathbb{Z}$.

Since $\bar{\Xi} \equiv \alpha \bar{\Xi} \bmod\, \mathfrak{z}^*$, we obtain that $\chi_\alpha(\Phi_\alpha(x\, \alpha\, \xi)) = 1$ for all $x \in (\mathcal{D} - \mathcal{D}) \mathbb{Z}$ and, inductively, $\chi_\alpha(\Phi_\alpha(x\, \xi)) = 1$ for all $x \in \mathbb{Z}\langle \alpha, \mathcal{D} \rangle = \mathfrak{z}$, i.e., $\xi \in \mathfrak{z}^*$.
Since $\Phi_\alpha(\xi) \in Z_f$ and $Z_f$ is $\Phi_\alpha(\mathfrak{z}^*)$-periodic, we get $\mathbf{0} \in Z_f$, which contradicts the assumptions on the zero set $Z_f$ of a special eigenfunction~$f$.
This contradiction proves the lemma.
\end{proof}

In view of Lemma~\ref{l:33}, we can now assume that Lemma~\ref{l:28} holds with $V \neq \{\mathbf{0}\}$.
Under this assumption, we get the following density result.

\begin{lemma} \label{l:42}
Let $V \neq \{\mathbf{0}\}$ be a linear subspace of $\mathbb{K}_\infty$ with $\alpha \cdot V = V$.
Then the set $V + \pi_\infty(\Phi_\alpha(\mathfrak{z}^*) \cap E)$ is dense in~$\mathbb{K}_\infty$.
\end{lemma}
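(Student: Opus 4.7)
The plan is to first classify the $\alpha$-invariant real subspaces of $\mathbb{K}_\infty$ and then to apply the Strong Approximation Theorem (Lemma~\ref{strongapprox}) to produce elements of $\mathcal{O}_{S_\alpha}\subseteq\mathfrak{z}^*$ whose $\Phi_\alpha$-images lie in~$E$ and whose $\Phi_\infty$-images approximate any prescribed point modulo~$V$.

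First, I would show that $V = V_P := \bigoplus_{\mathfrak{p}\in P}K_\mathfrak{p}$ for some non-empty $P\subseteq\{\mathfrak{p}\mid\infty\}$. After complexification, multiplication by~$\alpha$ on $\mathbb{K}_\infty\otimes_\mathbb{R}\mathbb{C}\cong\mathbb{C}^n$ is diagonal with the Galois conjugates of~$\alpha$ as eigenvalues, and these are pairwise distinct since $A$ is the minimal polynomial of~$\alpha$. Hence every $\alpha$-invariant complex subspace is a direct sum of eigenlines, and the real ones correspond to subsets of eigenvalues closed under complex conjugation, i.e., to subsets of $\{\mathfrak{p}\mid\infty\}$. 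In particular no ``diagonal'' subspace mixing several archimedean factors can be $\alpha$-invariant. Since $V\ne\{\mathbf{0}\}$, we may pick some archimedean prime $\mathfrak{p}_0\in P$.

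Next, given a target $z = (z_\mathfrak{p})_{\mathfrak{p}\mid\infty}\in\mathbb{K}_\infty$ and $\varepsilon>0$, set $\varepsilon' = \min(\varepsilon, \min_{\mathfrak{p}\mid\mathfrak{b}}|\alpha^{-m}|_\mathfrak{p})$. Using that $K$ is dense in every completion, replace the targets $z_\mathfrak{p}$ for $\mathfrak{p}\in P^c\cap\{\mathfrak{p}\mid\infty\}$ by elements of~$K$ up to an error smaller than $\varepsilon'/2$. I then apply Lemma~\ref{strongapprox} with $S = (\{\mathfrak{p}\mid\infty\}\setminus\{\mathfrak{p}_0\})\cup\{\mathfrak{p}\mid\mathfrak{b}\}$, the distinguished prime~$\mathfrak{p}_0$, target values $z_\mathfrak{p}$ at $\mathfrak{p}\in P^c\cap\{\mathfrak{p}\mid\infty\}$ and zero elsewhere in~$S$, and tolerance~$\varepsilon'/2$. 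The resulting $\xi\in K$ satisfies $|\xi-z_\mathfrak{p}|_\mathfrak{p}<\varepsilon$ for $\mathfrak{p}\in P^c\cap\{\mathfrak{p}\mid\infty\}$, $|\xi|_\mathfrak{p}<|\alpha^{-m}|_\mathfrak{p}$ for $\mathfrak{p}\mid\mathfrak{b}$, and $|\xi|_\mathfrak{p}\le 1$ at every prime outside $S\cup\{\mathfrak{p}_0\}$. Because $\mathfrak{p}_0$ is archimedean, $S\cup\{\mathfrak{p}_0\}=S_\alpha$, so the last condition is exactly $\xi\in\mathcal{O}_{S_\alpha}\subseteq\mathfrak{z}^*$ (using Lemmas~\ref{l:subgroup} and~\ref{l:chixi}), and by construction $\Phi_\alpha(\xi)\in E$.

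Finally, $\pi_\infty(\Phi_\alpha(\xi))=\Phi_\infty(\xi)$ agrees with~$z$ up to~$\varepsilon$ in each coordinate $\mathfrak{p}\in P^c\cap\{\mathfrak{p}\mid\infty\}$, while the $P$-coordinates of $z-\Phi_\infty(\xi)$ can be killed by subtracting a vector from $V=V_P$. Hence $z$ lies within $\varepsilon$ of $V + \pi_\infty(\Phi_\alpha(\mathfrak{z}^*)\cap E)$, which gives the density. The main subtlety is the choice of the ``uncontrolled'' prime in Lemma~\ref{strongapprox}: it must be archimedean so that the output remains an $S_\alpha$-integer, and the hypothesis $V\ne\{\mathbf{0}\}$ is exactly what makes such a choice available inside~$P$.
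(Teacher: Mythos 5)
Your proof is correct, and its core is the same as the paper's: choose an archimedean prime $\mathfrak{p}_0$ in the ``support'' of $V$, apply the Strong Approximation Theorem (Lemma~\ref{strongapprox}) with $\mathfrak{p}_0$ as the uncontrolled prime so that the resulting $\xi$ lies in $\mathcal{O}_{S_\alpha}\subseteq\mathfrak{z}^*$, and absorb the error in the $V$-directions into~$V$. You differ from the paper in two respects. First, you pin down $V$ exactly as a coordinate subproduct $\prod_{\mathfrak{p}\in P}K_\mathfrak{p}\times\prod_{\mathfrak{p}\notin P}\{0\}$ by diagonalizing the complexified action of~$\alpha$ (legitimate, since the $n$ Galois conjugates are pairwise distinct and complex places give non-real conjugates); the paper instead takes a single nonzero $\mathbf{z}\in V$ and uses the density of $\Phi_\infty(\mathbb{Q}(\alpha))$ to conclude only that $\overline{V}$ contains the subproduct over the coordinates where $z_\mathfrak{p}\neq 0$ --- weaker, but all that is needed, and it avoids the linear-algebra classification. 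Second, you enforce $\Phi_\alpha(\xi)\in E$ in one shot by putting the primes $\mathfrak{p}\mid\mathfrak{b}$ into $S$ with targets $0$ and tolerance below $\min_{\mathfrak{p}\mid\mathfrak{b}}|\alpha^{-m}|_\mathfrak{p}$, whereas the paper first applies strong approximation only at the archimedean primes to get $V+\Phi_\infty(\mathcal{O})$ dense, and then passes to the finite-index subgroup $\alpha^{-m}\mathcal{O}\cap\mathcal{O}$, whose $\Phi_\alpha$-image lies in $\Phi_\alpha(\mathfrak{z}^*)\cap E$. Your one-step variant is slightly more direct (it dispenses with the finite-index argument), at the small cost of having to replace the archimedean targets by elements of $K$ first, which you correctly do; the paper's variant keeps the approximation purely archimedean and handles the $\mathfrak{b}$-adic constraint algebraically.
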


\begin{proof}
Let $\mathbf{z} = (z_\mathfrak{p})_{\mathfrak{p}\mid\infty} \in V \setminus \{\mathbf{0}\}$.
Then we have $\mathbb{Q}(\alpha) \cdot \mathbf{z} \subset V$, and the denseness of $\Phi_\infty(\mathbb{Q}(\alpha))$ in~$\mathbb{K}_\infty$ implies that $V$ is dense in $\prod_{\mathfrak{p}\mid\infty:\,z_\mathfrak{p}\neq0} K_\mathfrak{p} \times \prod_{\mathfrak{p}\mid\infty:\,z_\mathfrak{p}=0} \{0\}$.
Choose $\mathfrak{p}_0 \mid \infty$ with $z_{\mathfrak{p}_0} \neq 0$.
The Strong Approximation Theorem (Lemma~\ref{strongapprox}) with $S = \{\mathfrak{p}:\, \mathfrak{p} \mid \infty\} \setminus \{\mathfrak{p}_0\}$ yields that the projection of $\Phi_\infty(\mathcal{O})$ to $\prod_{\mathfrak{p}\in S} K_\mathfrak{p}$ is dense.
This implies that $V + \Phi_\infty(\mathcal{O})$ is dense in~$\mathbb{K}_\infty$.
Choose $m$ as in~\eqref{em}.
Since $\alpha^{-m} \mathcal{O} \cap \mathcal{O}$ has finite index in~$\mathcal{O}$, we obtain that $V + \Phi_\infty(\alpha^{-m} \mathcal{O} \cap \mathcal{O})$ is also dense in~$\mathbb{K}_\infty$.
Observing that $\Phi_\alpha(\alpha^{-m} \mathcal{O} \cap \mathcal{O}) \subseteq \Phi_\alpha(\mathfrak{z}^*) \cap E$ proves the lemma.
\end{proof}

We are now in a position to finish the proof of Theorem~\ref{newtilingtheorem},
which states that $\{\mathcal{F} + \Phi_\alpha(x):\, x \in \mathfrak{z}\}$ forms a tiling of~$\mathbb{K}_\alpha$.

\begin{proof}[Proof of Theorem~\ref{newtilingtheorem}]
Suppose that $\{\mathcal{F} + \Phi_\alpha(x):\, x \in \mathfrak{z}\}$ does not form a tiling of~$\mathbb{K}_\alpha$.
By Lemmas~\ref{l:special}, \ref{residueSystemLemma}, \ref{l:NCMIinfty}, \ref{l:28}, and~\ref{l:33}, there exists a special eigenfunction $f \in \widehat\Omega$ and an $\alpha$-invariant linear subspace $V \neq \{\mathbf{0}\}$ of~$\mathbb{K}_\infty$ such that $Z_{f_\infty}$ contains a translate of~$V$.
Since $Z_f$ is $\Phi_\alpha(\mathfrak{z}^*)$-periodic, Lemma~\ref{infinity} implies that $Z_{f_\infty}$ is $\pi_\infty(\Phi_\alpha(\mathfrak{z}^*) \cap E)$-periodic.
Thus we may apply Lemma~\ref{l:42} in order to conclude that $Z_{f_\infty}$ is dense in~$\mathbb{K}_\infty$.
The continuity of~$f_\infty$ yields that $Z_{f_\infty} = \mathbb{K}_\infty$.

Now, we have to pull this back to~$\mathbb{K}_\alpha$.
Since $E \subset Z_f$ by Lemma~\ref{infinity}, the $\Phi_\alpha(\mathfrak{z}^*)$-periodicity of $Z_f$ yields that
\[
E + \Phi_\alpha(\mathcal{O}_{S_\alpha}) \subseteq E + \Phi_\alpha(\mathfrak{z}^*) \subset Z_f\,.
\]
The set $E + \Phi_\alpha(\mathcal{O}_{S_\alpha})$ is dense in~$\mathbb{K}_\alpha$ because $\mathbb{K}_\infty \times \Phi_\mathfrak{b}(\{0\}) \subset E$ and $\Phi_\mathfrak{b}(\mathcal{O}_{S_\alpha})$ is dense in~$\mathbb{K}_\mathfrak{b}$ (by the Strong Approximation Theorem).
Thus $f$ vanishes on a dense subset of~$\mathbb{K}_\alpha$.
The continuity of~$f$ now yields that $f \equiv 0$ on~$\mathbb{K}_\alpha$, contradicting the fact that $f$ is a special eigenfunction.
This proves the theorem.
\end{proof}

\section{intersective tiles and SRS tiles} \label{sec:relat-betw-tiles}

Now we consider the intersective tiles~$\mathcal{G}(x)$, $x \in \mathbb{Z}[\alpha]$, and show that for certain choices of~$\mathcal{D}$ they are intimately related to SRS tiles.

\subsection*{Tiling induced by intersective tiles}
As above, assume that $\alpha$ is an expanding algebraic number with minimal polynomial $A(X)$ given in \eqref{minpol}, $\mathcal{D}$~is a standard digit set for~$\alpha$, and set $\mathfrak{z} = \mathbb{Z}\langle \alpha, \mathcal{D}\rangle$.
First we prove~\eqref{e:slices}, showing that $\mathcal{F}$ can be built from intersective tiles.

\begin{proposition} \label{p:slices}
The intersective tiles~$\mathcal{G}(x)$, $x \in \mathfrak{z}$, form ``slices'' of the rational self-affine tile $\mathcal{F}$ in the sense that
\[
\mathcal{F} = \overline{\bigcup_{x \in \mathfrak{z}} \big(\mathcal{G}(x) - \Phi_\alpha(x)\big)}\,.
\]
\end{proposition}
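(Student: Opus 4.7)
Unpacking the definitions, each set $\mathcal{G}(x) - \Phi_\alpha(x)$ is nothing but the slice $\mathcal{F} \cap \pi_\mathfrak{b}^{-1}\big(\{-\Phi_\mathfrak{b}(x)\}\big)$, so the union in the claimed identity equals $\mathcal{F} \cap \pi_\mathfrak{b}^{-1}\big(-\Phi_\mathfrak{b}(\mathfrak{z})\big)$. Since $\mathcal{F}$ is compact (Theorem~\ref{basicProperties}~(\ref{property1})), its closure is automatically contained in~$\mathcal{F}$, giving one inclusion for free. For the other inclusion, Theorem~\ref{basicProperties}~(\ref{property2}) permits us to approximate every $\mathbf{z} \in \mathcal{F}$ by points of $\mathrm{int}(\mathcal{F})$, so it is enough to show that each $\mathbf{z} \in \mathrm{int}(\mathcal{F})$ lies in the closure of $\mathcal{F} \cap \pi_\mathfrak{b}^{-1}\big(-\Phi_\mathfrak{b}(\mathfrak{z})\big)$.

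The heart of the proof, and the step I expect to carry the real content, is the assertion that $\Phi_\mathfrak{b}(\mathfrak{z})$ is dense in~$\mathbb{K}_\mathfrak{b}$. I would first invoke the Strong Approximation Theorem (Lemma~\ref{strongapprox}), in the same manner as in the proof of Lemma~\ref{residueSystemLemma}, to conclude that $\Phi_\mathfrak{b}(\mathcal{O}_{S_\alpha})$ is dense in~$\mathbb{K}_\mathfrak{b}$. By Lemma~\ref{l:subgroup}~(\ref{Zalphasubroup}) and~(\ref{zsubgroup}), the group $\mathfrak{z}$ has finite index in~$\mathcal{O}_{S_\alpha}$. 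Writing $\mathcal{O}_{S_\alpha}$ as a finite disjoint union of $\mathfrak{z}$-cosets and passing to closures in~$\mathbb{K}_\mathfrak{b}$ exhibits $\mathbb{K}_\mathfrak{b}$ itself as a finite union of cosets of the closed subgroup $\overline{\Phi_\mathfrak{b}(\mathfrak{z})}$, so this subgroup has finite index in~$\mathbb{K}_\mathfrak{b}$. Since each $K_\mathfrak{p}$ with $\mathfrak{p}\mid\mathfrak{b}$ is a $\mathbb{Q}_p$-vector space, $\mathbb{K}_\mathfrak{b}$ is a divisible abelian group, and divisible abelian groups have no proper subgroup of finite index; consequently $\overline{\Phi_\mathfrak{b}(\mathfrak{z})} = \mathbb{K}_\mathfrak{b}$.

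Once this density is secured, the remainder is a soft argument. Fixing $\mathbf{z} \in \mathrm{int}(\mathcal{F})$, I choose $\varepsilon > 0$ with $B_\varepsilon(\mathbf{z}) \subset \mathcal{F}$ and then use the density to pick $y \in \mathfrak{z}$ so that $-\Phi_\mathfrak{b}(y)$ lies within $\varepsilon$ of $\pi_\mathfrak{b}(\mathbf{z})$ in~$\mathbb{K}_\mathfrak{b}$. The point $\mathbf{w} = \big(\pi_\infty(\mathbf{z}),\, -\Phi_\mathfrak{b}(y)\big)$ differs from~$\mathbf{z}$ only in its $\mathfrak{b}$-adic coordinate, hence $\mathbf{w} \in B_\varepsilon(\mathbf{z}) \subset \mathcal{F}$, while $\pi_\mathfrak{b}(\mathbf{w}) = -\Phi_\mathfrak{b}(y)$ places $\mathbf{w}$ in $\mathcal{G}(y) - \Phi_\alpha(y)$. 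Letting $\varepsilon \to 0$ produces an approximating sequence, completing the proof. The only step with any genuine content is the density assertion; everything else is a continuity/interior argument resting on the non-emptiness of $\mathrm{int}(\mathcal{F})$.
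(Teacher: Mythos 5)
Your proof is correct and follows essentially the same route as the paper: the same slice identity $\mathcal{G}(x)-\Phi_\alpha(x)=\mathcal{F}\cap\big(\mathbb{K}_\infty\times\Phi_\mathfrak{b}(-x)\big)$, the density of $\Phi_\mathfrak{b}(\mathfrak{z})$ in $\mathbb{K}_\mathfrak{b}$ via the Strong Approximation Theorem together with the finite-index statements of Lemma~\ref{l:subgroup}, and the property $\mathcal{F}=\overline{\mathrm{int}(\mathcal{F})}$ from Theorem~\ref{basicProperties}~(\ref{property2}). Your divisibility argument is simply a careful way of passing from the density of $\Phi_\mathfrak{b}(\mathcal{O}_{S_\alpha})$ to that of its finite-index subgroup $\Phi_\mathfrak{b}(\mathfrak{z})$, a step the paper leaves implicit.
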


\begin{proof}
Note that
\[
\mathcal{G}(x) - \Phi_\alpha(x) = \mathcal{F} \cap \big(\mathbb{K}_\infty \times \Phi_\mathfrak{b}(-x)\big)
\]
for all $x \in \mathfrak{z}$.
The set $\Phi_\mathfrak{b}(\mathfrak{z})$ is dense in~$\mathbb{K}_\mathfrak{b}$ by the Strong Approximation Theorem (Lemma~\ref{strongapprox}) and Lemma~\ref{l:subgroup}.
Since $\mathcal{F}$ is the closure of its interior by Theorem~\ref{basicProperties}~(\ref{property2}), the result follows.
\end{proof}

The sets $\mathcal{G}(x)$ can be characterized in terms of the $\mathbb{Z}$-module
\[
\Lambda_{\alpha,m} = \mathbb{Z}[\alpha] \cap \alpha^{m-1} \mathbb{Z}[\alpha^{-1}]\,.
\]
Here and in the following, $m$ is chosen in a way that $\mathcal{D} \subset \alpha^{m}\mathbb{Z}[\alpha^{-1}]$ holds.

\begin{lemma} \label{l:G}
For every $x \in \mathbb{Z}[\alpha]$ we have
\[
\mathcal{G}(x) = \Phi_\alpha(x) + \bigg\{\sum_{j=1}^\infty \Phi_\alpha(d_j \alpha^{-j}):\, d_j \in \mathcal{D},\,  \alpha^k x + \sum_{j=1}^k d_j \alpha^{k-j} \in \Lambda_{\alpha,m}\ \hbox{for all}\ k\ge 0\bigg\}\,.
\]
In particular, we have $\mathcal{G}(x) = \emptyset$ for all $x \in \mathbb{Z}[\alpha] \setminus \Lambda_{\alpha,m}$.
\end{lemma}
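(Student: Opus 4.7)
My plan is to write every point of $\mathcal{F}+\Phi_\alpha(x)$ in the form $\Phi_\alpha(x)+\sum_{j=1}^\infty\Phi_\alpha(d_j\alpha^{-j})$ with $d_j\in\mathcal{D}$ (obtained by iterating~\eqref{seteq}), observe that membership in $\mathcal{G}(x)$ amounts to the vanishing of the $\mathbb{K}_\mathfrak{b}$-component, i.e.\ $x+\sum_{j=1}^\infty d_j\alpha^{-j}=0$ in $K_\mathfrak{p}$ for every $\mathfrak{p}\mid\mathfrak{b}$ (the series converges because $|\alpha|_\mathfrak{p}>1$ at such primes and $\mathcal{D}$ is finite), and then translate this $\mathfrak{p}$-adic condition into the requested arithmetic condition on the finite truncations $y_k:=\alpha^k x+\sum_{j=1}^k d_j\alpha^{k-j}$.

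The trajectory $y_k$ satisfies the recursion $y_{k+1}=\alpha y_k+d_{k+1}$ with $y_0=x$; since $\mathcal{D}\subset\mathbb{Z}[\alpha]$, an immediate induction gives $y_k\in\mathbb{Z}[\alpha]$ for every $k\ge 0$, so the condition ``$y_k\in\Lambda_{\alpha,m}$'' reduces to ``$y_k\in\alpha^{m-1}\mathbb{Z}[\alpha^{-1}]$''. The ``$\supseteq$'' direction of the asserted equality is then easy: if $y_k\in\alpha^{m-1}\mathbb{Z}[\alpha^{-1}]$ for every $k$, then $|y_k|_\mathfrak{p}\le|\alpha|_\mathfrak{p}^{m-1}$ for each $\mathfrak{p}\mid\mathfrak{b}$, and hence the partial sum $w_k=\alpha^{-k}y_k=x+\sum_{j=1}^k d_j\alpha^{-j}$ satisfies $|w_k|_\mathfrak{p}\le|\alpha|_\mathfrak{p}^{m-1-k}\to 0$, giving the vanishing.

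For the harder ``$\subseteq$'' direction I would use a backward induction on the recursion. Inverting it as $y_k=\alpha^{-1}(y_{k+1}-d_{k+1})$ and combining $\alpha^{-1}\cdot\alpha^{m-1}\mathbb{Z}[\alpha^{-1}]=\alpha^{m-2}\mathbb{Z}[\alpha^{-1}]\subset\alpha^{m-1}\mathbb{Z}[\alpha^{-1}]$ with $\alpha^{-1}d_{k+1}\in\alpha^{m-1}\mathbb{Z}[\alpha^{-1}]$ (from $\mathcal{D}\subset\alpha^m\mathbb{Z}[\alpha^{-1}]$), I conclude that $y_{k+1}\in\Lambda_{\alpha,m}$ forces $y_k\in\Lambda_{\alpha,m}$. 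To bootstrap this induction I would argue that for arbitrarily large $K$ the identity $y_K=-\sum_{l=1}^\infty d_{K+l}\alpha^{-l}$ (valid in each $K_\mathfrak{p}$, $\mathfrak{p}\mid\mathfrak{b}$) yields the uniform bound $|y_K|_\mathfrak{p}\le|\alpha|_\mathfrak{p}^{m-1}$; together with $y_K\in\mathbb{Z}[\alpha]$ this will force $y_K\in\alpha^{m-1}\mathbb{Z}[\alpha^{-1}]$. The ``in particular'' statement then follows at once, since $y_0=x$ must lie in $\Lambda_{\alpha,m}$.

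The main technical obstacle is precisely this bootstrap: passing from a $\mathfrak{b}$-adic valuation bound on $y_K\in\mathbb{Z}[\alpha]$ to actual module-theoretic membership $y_K\in\alpha^{m-1}\mathbb{Z}[\alpha^{-1}]$ is not a formal consequence of the valuation estimate alone. One has to leverage the explicit presentation of $\mathbb{Z}[\alpha^{-1}]$ as a $\mathbb{Z}$-module determined by the minimal polynomial of~$\alpha$, so that primitivity of~$A(X)$ can intervene.
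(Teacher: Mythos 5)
Your decomposition of the statement (iterate the set equation, test vanishing of the $\mathbb{K}_\mathfrak{b}$-component, run the recursion $y_{k+1}=\alpha y_k+d_{k+1}$) is the same as the paper's, and your ``$\supseteq$'' direction and the downward step $y_{k+1}\in\Lambda_{\alpha,m}\Rightarrow y_k\in\Lambda_{\alpha,m}$ are correct. But the bootstrap you flag is not a minor technicality to be deferred: it \emph{is} the entire content of the hard inclusion, and as you leave it the argument has a genuine gap. Moreover, the data you propose to feed into it is too weak: from $y_K=-\sum_{l\ge1}d_{K+l}\alpha^{-l}$ in each $K_\mathfrak{p}$, $\mathfrak{p}\mid\mathfrak{b}$, you only retain the valuation bound $|y_K|_\mathfrak{p}\le|\alpha^{m-1}|_\mathfrak{p}$, and you correctly observe that this bound together with $y_K\in\mathbb{Z}[\alpha]$ does not formally yield $y_K\in\alpha^{m-1}\mathbb{Z}[\alpha^{-1}]$. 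What you should retain instead is the stronger fact that the partial sums $-\sum_{l=1}^L d_{K+l}\alpha^{-l}$ all lie in $\alpha^{m-1}\mathbb{Z}[\alpha^{-1}]$ (since $\mathcal{D}\subset\alpha^m\mathbb{Z}[\alpha^{-1}]$) and converge to $y_K$ in $\mathbb{K}_\mathfrak{b}$, so that $\Phi_\mathfrak{b}(y_K)\in\overline{\Phi_\mathfrak{b}(\alpha^{m-1}\mathbb{Z}[\alpha^{-1}])}$. Even then, one still needs the nontrivial rigidity statement that an element of $\mathbb{Z}[\alpha,\alpha^{-1}]$ whose image lies in this closure must actually belong to $\alpha^{m-1}\mathbb{Z}[\alpha^{-1}]$; this is exactly Lemma~\ref{l:boundedaway} of the paper, and it is where the real work sits.

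The mechanism you gesture at (``explicit presentation of $\mathbb{Z}[\alpha^{-1}]$ as a $\mathbb{Z}$-module so that primitivity of $A(X)$ can intervene'') is not the one that works. The paper's proof of Lemma~\ref{l:boundedaway} proceeds differently: by the argument of Lemma~\ref{l:subgroup}~(ii), $\mathbb{Z}[\alpha^{-1}]$ has finite index in $\mathcal{O}[\alpha^{-1}]$, and choosing representatives of $\mathcal{O}[\alpha^{-1}]/\mathbb{Z}[\alpha^{-1}]$ yields a single exponent $h\ge0$ with $\mathbb{Z}[\alpha,\alpha^{-1}]\cap\mathcal{O}[\alpha^{-1}]\subseteq\alpha^h\,\mathbb{Z}[\alpha^{-1}]$; then, if $x\in\mathbb{Z}[\alpha,\alpha^{-1}]$ is $\mathfrak{b}$-adically within $|\alpha^{-h}|_\mathfrak{p}$ of some $y\in\mathbb{Z}[\alpha^{-1}]$, one gets $\alpha^h(y-x)\in\mathcal{O}[\alpha^{-1}]$ and hence $y-x\in\mathbb{Z}[\alpha^{-1}]$, forcing $x\in\mathbb{Z}[\alpha^{-1}]$. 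Note that the paper actually runs the contrapositive of your bootstrap directly: if some $y_k\notin\alpha^{m-1}\mathbb{Z}[\alpha^{-1}]$, then $\Phi_\mathfrak{b}(y_k)$ is bounded away from $\overline{\Phi_\mathfrak{b}(\alpha^{m-1}\mathbb{Z}[\alpha^{-1}])}$, while the tail of the series lies in that closure, so $\pi_\mathfrak{b}(\alpha^k\cdot\mathbf{z})\neq\Phi_\mathfrak{b}(0)$ and $\mathbf{z}\notin\mathcal{G}(x)$ --- which also makes your separate downward induction unnecessary. Until you prove a statement of the strength of Lemma~\ref{l:boundedaway}, the proof is incomplete at its decisive step.
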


In the proof of Lemma~\ref{l:G}, we use the following observation.

\begin{lemma} \label{l:boundedaway}
For every $x \in \mathbb{Z}[\alpha,\alpha^{-1}] \setminus \mathbb{Z}[\alpha^{-1}]$ we have $\Phi_\mathfrak{b}(x) \not\in \overline{\Phi_\mathfrak{b}(\mathbb{Z}[\alpha^{-1}])}$.
\end{lemma}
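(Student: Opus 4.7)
Plan: I will localize the problem to a single factor of $\mathbb{K}_\mathfrak{b}$, identify the relevant closure there explicitly, and then argue by contradiction.

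Since $\mathbb{K}_\mathfrak{b}=\prod_{\mathfrak{p}\mid\mathfrak{b}}K_\mathfrak{p}$ carries the product topology, the closure $\overline{\Phi_\mathfrak{b}(\mathbb{Z}[\alpha^{-1}])}$ is contained in the product of the coordinate closures. Thus it suffices to exhibit a single prime $\mathfrak{p}\mid\mathfrak{b}$ for which the image of $x$ in $K_\mathfrak{p}$ lies outside the closure of $\mathbb{Z}[\alpha^{-1}]$ inside $K_\mathfrak{p}$. To identify this latter closure, note that $v_\mathfrak{p}(\alpha^{-1})>0$ puts $\alpha^{-1}$ inside $\mathcal{O}_\mathfrak{p}$. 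Since $\mathcal{O}_\mathfrak{p}$ is a finitely generated $\mathbb{Z}_p$-module (with $p$ the rational prime below $\mathfrak{p}$), so is $\mathbb{Z}_p[\alpha^{-1}]$, which is therefore closed; combined with $\mathbb{Z}$ being dense in $\mathbb{Z}_p$, this yields
\[
\overline{\mathbb{Z}[\alpha^{-1}]}=\mathbb{Z}_p[\alpha^{-1}]
\]
inside $K_\mathfrak{p}$, a compact $\mathbb{Z}_p$-subalgebra of $\mathcal{O}_\mathfrak{p}$.

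I would then argue by contradiction: assuming the image of $x$ lies in $\mathbb{Z}_p[\alpha^{-1}]$ inside $K_\mathfrak{p}$ for every $\mathfrak{p}\mid\mathfrak{b}$, I aim to deduce $x\in\mathbb{Z}[\alpha^{-1}]$. Writing $x=y\alpha^{-N}$ with $y\in\mathbb{Z}[\alpha]$ and $N\ge 0$ minimal, and exploiting the identity $a_n\alpha\in\mathbb{Z}[\alpha^{-1}]$ (obtained by dividing the minimal polynomial of $\alpha$ by $\alpha^{n-1}$), one sees that the quotient $\mathbb{Z}[\alpha,\alpha^{-1}]/\mathbb{Z}[\alpha^{-1}]$ is a torsion $\mathbb{Z}$-module whose support is contained in the rational primes below $\mathfrak{b}$. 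The local inclusions at each $\mathfrak{p}\mid\mathfrak{b}$ should then translate, through a finite descent peeling off successive powers of $\alpha^{-1}$, into global integrality conditions forcing $y\in\alpha^N\mathbb{Z}[\alpha^{-1}]$, i.e., $x\in\mathbb{Z}[\alpha^{-1}]$.

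The hard part will be precisely this final descent: converting the pointwise $\mathfrak{p}$-adic inclusions at the $\mathfrak{b}$-primes into a single global algebraic conclusion. I expect this to require a careful Strong Approximation argument (Lemma~\ref{strongapprox}) synchronized with the torsion structure of $\mathbb{Z}[\alpha,\alpha^{-1}]/\mathbb{Z}[\alpha^{-1}]$, so that the local data can be assembled coherently into a global element of $\mathbb{Z}[\alpha^{-1}]$ representing~$x$.
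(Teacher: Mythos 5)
Your opening reduction is the problem: you replace the closure of the \emph{diagonally} embedded set $\Phi_\mathfrak{b}(\mathbb{Z}[\alpha^{-1}])$ by the product of the coordinate-wise closures, and then aim to prove that an $x \in \mathbb{Z}[\alpha,\alpha^{-1}]$ whose image lies in $\mathbb{Z}_p[\alpha^{-1}]$ for \emph{every} $\mathfrak{p}\mid\mathfrak{b}$ must lie in $\mathbb{Z}[\alpha^{-1}]$. That target statement is false in general, so no descent can complete it. Indeed, $\alpha\,\mathbb{Z}[\alpha^{-1}]/\mathbb{Z}[\alpha^{-1}]$ is cyclic of order $|a_n|$ generated by the class of $\alpha$, whereas for each $\mathfrak{p}\mid\mathfrak{b}$ the group $\alpha\,\mathbb{Z}_p[\alpha^{-1}]/\mathbb{Z}_p[\alpha^{-1}]$ is cyclic of order $|\alpha|_\mathfrak{p}$, a power of the residue characteristic $p$, and $\prod_{\mathfrak{p}\mid\mathfrak{b}}|\alpha|_\mathfrak{p}=\mathfrak{N}(\mathfrak{b})=|a_n|$. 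Hence, with $L=\mathrm{lcm}_{\mathfrak{p}\mid\mathfrak{b}}|\alpha|_\mathfrak{p}$, the element $L\alpha$ lies in every coordinate closure $\mathbb{Z}_p[\alpha^{-1}]$ but not in $\mathbb{Z}[\alpha^{-1}]$ as soon as $L<|a_n|$, i.e., as soon as two distinct primes of $\mathfrak{b}$ lie over the same rational prime. Concretely, take $\alpha=\sqrt{-7}/2$ with primitive expanding minimal polynomial $4X^2+7$: here $\mathfrak{b}=(2)=\mathfrak{p}_1\mathfrak{p}_2$ splits, $K_{\mathfrak{p}_1}=K_{\mathfrak{p}_2}=\mathbb{Q}_2$, the coordinate-wise closure of $\mathbb{Z}[\alpha^{-1}]$ is $\mathbb{Z}_2$ in each factor, and $x=2\alpha=\sqrt{-7}$ maps to a $2$-adic unit in both factors, yet $\sqrt{-7}\notin\mathbb{Z}[\alpha^{-1}]$ (every element of $\mathbb{Z}[\alpha^{-1}]$ has $\sqrt{-7}$-coefficient in $\tfrac{2}{7}\mathbb{Z}[\tfrac{1}{7}]$). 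The lemma is nevertheless true for this $x$, but only because the closure of the diagonal is strictly smaller than the product of the closures: any point of $\overline{\Phi_\mathfrak{b}(\mathbb{Z}[\alpha^{-1}])}$ has its two $\mathbb{Q}_2$-coordinates congruent modulo $4\mathbb{Z}_2$, while the coordinates of $\Phi_\mathfrak{b}(\sqrt{-7})$ differ by an element of valuation $1$. Your localization to a single prime discards exactly this simultaneity, which is the whole content of the statement.

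By contrast, the paper keeps the simultaneity throughout: using that $\mathbb{Z}[\alpha^{-1}]$ has finite index in $\mathcal{O}[\alpha^{-1}]$ (same argument as in Lemma~\ref{l:subgroup}), it first establishes a uniform inclusion $\mathbb{Z}[\alpha,\alpha^{-1}]\cap\mathcal{O}[\alpha^{-1}]\subseteq\alpha^h\,\mathbb{Z}[\alpha^{-1}]$ for a single $h\ge0$; then, if one global $y\in\mathbb{Z}[\alpha^{-1}]$ satisfies $|y-x|_\mathfrak{p}\le|\alpha^{-h}|_\mathfrak{p}$ at all $\mathfrak{p}\mid\mathfrak{b}$ at once, it follows that $\alpha^h(y-x)\in\mathbb{Z}[\alpha,\alpha^{-1}]\cap\mathcal{O}[\alpha^{-1}]$, hence $y-x\in\mathbb{Z}[\alpha^{-1}]$, contradicting $x\notin\mathbb{Z}[\alpha^{-1}]$. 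Your preliminary observations (the identification $\overline{\mathbb{Z}[\alpha^{-1}]}=\mathbb{Z}_p[\alpha^{-1}]$ in $K_\mathfrak{p}$, and the $a_n$-power torsion of $\mathbb{Z}[\alpha,\alpha^{-1}]/\mathbb{Z}[\alpha^{-1}]$) are correct, and they could be reused, but any repair must work with a single approximant valid at all $\mathfrak{p}\mid\mathfrak{b}$ simultaneously rather than prime by prime; also note that even on its own terms your sketch stops at the decisive step, which is only announced as ``the hard part''.
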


\begin{proof}
We first show that 
\begin{equation} \label{e:boundedaway}
\mathbb{Z}[\alpha,\alpha^{-1}] \cap \mathcal{O}[\alpha^{-1}] \subseteq \alpha^h\, \mathbb{Z}[\alpha^{-1}]
\end{equation} 
for some integer $h \ge 0$.
Indeed, by analogous reasoning as in the proof of Lemma~\ref{l:subgroup}~(ii), $\mathbb{Z}[\alpha^{-1}]$~is a subgroup of finite index of~$\mathcal{O}[\alpha^{-1}]$.
Let $x_1,\ldots, x_\ell \in \mathcal{O}[\alpha^{-1}]$ be a complete set of representatives of $\mathcal{O}[\alpha^{-1}] / \mathbb{Z}[\alpha^{-1}]$, and choose integers $h_1,\ldots, h_\ell$ as follows. 
If $x_i \not\in \mathbb{Z}[\alpha,\alpha^{-1}]$, then set $h_i = 0$. 
If $x_i \in \mathbb{Z}[\alpha,\alpha^{-1}]$, then choose $h_i \ge 0$ in a way that $x_i \in \alpha^{h_i} \mathbb{Z}[\alpha^{-1}]$. 
As $x_i \not\in \mathbb{Z}[\alpha,\alpha^{-1}]$ implies $\mathbb{Z}[\alpha,\alpha^{-1}] \cap (\mathbb{Z}[\alpha^{-1}] + x_i) = \emptyset$, and $x_i \in \alpha^{h_i} \mathbb{Z}[\alpha^{-1}]$ implies $\mathbb{Z}[\alpha^{-1}] + x_i \subseteq \alpha^{h_i} \mathbb{Z}[\alpha^{-1}]$, we obtain that
\[
\mathbb{Z}[\alpha, \alpha^{-1}] \cap \mathcal{O}[\alpha^{-1}] = \bigcup_{i=1}^\ell \big(\mathbb{Z}[\alpha,\alpha^{-1}] \cap (\mathbb{Z}[\alpha^{-1}] + x_i)\big) \subseteq \alpha^{\max\{h_1,\ldots,h_\ell\}} \mathbb{Z}[\alpha^{-1}].
\]
Hence, \eqref{e:boundedaway} holds with $h = \max\{h_1,\ldots,h_\ell\}$.

To prove the lemma let $x \in \mathbb{Z}[\alpha,\alpha^{-1}] \setminus \mathbb{Z}[\alpha^{-1}]$ and suppose on the contrary that $\Phi_\mathfrak{b}(x) \in \overline{\Phi_\mathfrak{b}(\mathbb{Z}[\alpha^{-1}])}$.
Then there is $y \in \mathbb{Z}[\alpha^{-1}]$ such that $|y - x|_\mathfrak{p} \le |\alpha^{-h}|_\mathfrak{p}$ for all $\mathfrak{p} \mid \mathfrak{b}$, with $h$ as above, i.e., $\alpha^h (y-x) \in \mathcal{O}[\alpha^{-1}]$, cf.\ Lemma~\ref{l:subgroup}~(\ref{OSalpha}).
By~\eqref{e:boundedaway}, this gives $y-x \in \mathbb{Z}[\alpha^{-1}]$, contradicting that $y \in \mathbb{Z}[\alpha^{-1}]$ and $x \notin \mathbb{Z}[\alpha^{-1}]$.
\end{proof}

\begin{proof}[Proof of Lemma~\ref{l:G}]
Let $\mathbf{z} \in \mathcal{F} + \Phi_\alpha(x)$, i.e., $\mathbf{z} = \Phi_\alpha(x) + \sum_{j=1}^\infty \Phi_\alpha(d_j \alpha^{-j})$ with $d_j \in \mathcal{D}$.
We clearly have $\alpha^k x + \sum_{j=1}^k d_j \alpha^{k-j} \in \mathbb{Z}[\alpha]$ for all $k \ge 0$.

If $\alpha^k x + \sum_{j=1}^k d_j \alpha^{k-j} \in \Lambda_{\alpha,m}$ for all $k \ge 0$, then $x + \sum_{j=1}^k d_j \alpha^{-j} \in \alpha^{m-k-1} \mathbb{Z}[\alpha^{-1}]$, thus $|x + \sum_{j=1}^\infty d_j \alpha^{-j}|_\mathfrak{p} = 0$ for all $\mathfrak{p} \mid \mathfrak{b}$, i.e., $\mathbf{z} \in \mathbb{K}_\infty \times \prod_{\mathfrak{p}\mid \mathfrak{b}}\{0\}$.
This gives $\mathbf{z} \in \mathcal{G}(x)$.

If $\alpha^k x + \sum_{j=1}^k d_j \alpha^{k-j} \not\in \alpha^{m-1} \mathbb{Z}[\alpha^{-1}]$ for some $k \ge 0$, then Lemma~\ref{l:boundedaway} implies that $\Phi_\mathfrak{b}(\alpha^k x + \sum_{j=1}^k d_j \alpha^{k-j}) \not\in \overline{\Phi_\mathfrak{b}(\alpha^{m-1}\,\mathbb{Z}[\alpha^{-1}])}$.
Since $\sum_{j={k+1}}^\infty \Phi_\mathfrak{b}(d_j \alpha^{k-j}) \in \overline{\Phi_\mathfrak{b}(\alpha^{m-1}\,\mathbb{Z}[\alpha^{-1}])}$ by the choice of~$m$, we obtain $\pi_\mathfrak{b}(\alpha^k \cdot \mathbf{z}) \not\in \overline{\Phi_\mathfrak{b}(\alpha^{m-1}\,\mathbb{Z}[\alpha^{-1}])}$, in particular $\pi_\mathfrak{b}(\mathbf{z}) \neq \Phi_\mathfrak{b}(0)$, i.e., $\mathbf{z} \not\in \mathcal{G}(x)$.
\end{proof}

Define the map
\[
T_\alpha:\, \mathbb{Z}[\alpha] \to \mathbb{Z}[\alpha]\,, \quad x \mapsto \alpha^{-1} (x-d)\,,
\]
where $d$ is the unique digit in $\mathcal{D}$ such that $\alpha^{-1} (x-d) \in \mathbb{Z}[\alpha]$.

\begin{lemma} \label{l:betaxd}
For each $x \in \Lambda_{\alpha,m}$, we have $T_\alpha(x) \in \Lambda_{\alpha,m}$.
\end{lemma}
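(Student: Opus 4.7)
My plan is to show that the only nontrivial thing to verify is the inclusion in $\alpha^{m-1}\mathbb{Z}[\alpha^{-1}]$, since membership in $\mathbb{Z}[\alpha]$ is built into the definition of~$T_\alpha$. The calculation is then a short manipulation of the two ``layers'' of the module $\Lambda_{\alpha,m}$.

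First, I would record the trivial but essential observation that the chain
\[
\cdots \subseteq \alpha^{k-1}\mathbb{Z}[\alpha^{-1}] \subseteq \alpha^{k}\mathbb{Z}[\alpha^{-1}] \subseteq \alpha^{k+1}\mathbb{Z}[\alpha^{-1}] \subseteq \cdots
\]
holds for all $k \in \mathbb{Z}$, because $\alpha^{k} = \alpha^{k+1}\cdot \alpha^{-1}\in \alpha^{k+1}\mathbb{Z}[\alpha^{-1}]$. In particular $\alpha^{m-1}\mathbb{Z}[\alpha^{-1}] \subseteq \alpha^{m}\mathbb{Z}[\alpha^{-1}]$.

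Now let $x \in \Lambda_{\alpha,m}$, and let $d \in \mathcal{D}$ be the unique digit with $T_\alpha(x) = \alpha^{-1}(x-d) \in \mathbb{Z}[\alpha]$. By hypothesis $x \in \alpha^{m-1}\mathbb{Z}[\alpha^{-1}] \subseteq \alpha^{m}\mathbb{Z}[\alpha^{-1}]$, and by the choice of $m$ we have $d \in \mathcal{D} \subseteq \alpha^{m}\mathbb{Z}[\alpha^{-1}]$; hence $x - d \in \alpha^{m}\mathbb{Z}[\alpha^{-1}]$, and multiplication by $\alpha^{-1}$ gives $\alpha^{-1}(x-d) \in \alpha^{m-1}\mathbb{Z}[\alpha^{-1}]$. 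Combining this with $T_\alpha(x) \in \mathbb{Z}[\alpha]$ yields $T_\alpha(x) \in \mathbb{Z}[\alpha] \cap \alpha^{m-1}\mathbb{Z}[\alpha^{-1}] = \Lambda_{\alpha,m}$, which is the claim.

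I do not anticipate any obstacle: the argument is a single-line consequence of the way the two modules defining $\Lambda_{\alpha,m}$ interact with multiplication by $\alpha^{-1}$ and with the location of the digit set.
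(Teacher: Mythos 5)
Your proof is correct and follows the same route as the paper's (very terse) argument: $T_\alpha(x)\in\mathbb{Z}[\alpha]$ by the definition of~$T_\alpha$, and $x-d\in\alpha^{m}\mathbb{Z}[\alpha^{-1}]$ since $x\in\alpha^{m-1}\mathbb{Z}[\alpha^{-1}]\subseteq\alpha^{m}\mathbb{Z}[\alpha^{-1}]$ and $d\in\mathcal{D}\subset\alpha^{m}\mathbb{Z}[\alpha^{-1}]$, so that $T_\alpha(x)=\alpha^{-1}(x-d)\in\alpha^{m-1}\mathbb{Z}[\alpha^{-1}]$. You merely spell out the module inclusions that the paper leaves implicit; nothing is missing.
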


\begin{proof}
We have $T_\alpha(x) \in \mathbb{Z}[\alpha]$ by definition, and $T_\alpha(x) = \alpha^{-1} (x - d) \in \alpha^{m-1} \mathbb{Z}[\alpha^{-1}]$.
\end{proof}

After these preparations, we give the following set equations for intersective tiles.

\begin{proposition} \label{p:seteqG}
For each $x \in \mathbb{Z}[\alpha]$, we have
\[
\mathcal{G}(x) = \bigcup_{y \in T_\alpha^{-1}(x)} \alpha^{-1} \cdot \mathcal{G}(y)\,.
\]
\end{proposition}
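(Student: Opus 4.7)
The plan is to reduce Proposition~\ref{p:seteqG} to the defining set equation~\eqref{seteq} for~$\mathcal{F}$ via two observations: (a) an explicit description of $T_\alpha^{-1}(x)$, and (b) the fact that multiplication by~$\alpha^{-1}$ on~$\mathbb{K}_\alpha$ respects the slice~$\mathbb{K}_\infty \times \Phi_\mathfrak{b}(\{0\})$.

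First I would describe $T_\alpha^{-1}(x)$. Since $\mathcal{D}$ is a standard digit set, every $y \in \mathbb{Z}[\alpha]$ has a unique representation $y = \alpha z + d$ with $z \in \mathbb{Z}[\alpha]$ and $d \in \mathcal{D}$, namely $d$ is the unique digit with $y \equiv d \pmod{\alpha\mathbb{Z}[\alpha]}$, and then $T_\alpha(y) = z$. Hence
\[
T_\alpha^{-1}(x) \;=\; \{\alpha x + d : d \in \mathcal{D}\},
\]
with the elements on the right being pairwise distinct. So the union on the right-hand side of the proposition is simply the union over $d \in \mathcal{D}$ of the sets $\alpha^{-1} \cdot \mathcal{G}(\alpha x + d)$.

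Next, I would translate and rescale the set equation~\eqref{seteq}. From $\alpha \cdot \mathcal{F} = \bigcup_{d \in \mathcal{D}} (\mathcal{F} + \Phi_\alpha(d))$ one obtains, after adding $\Phi_\alpha(\alpha x)$ on both sides and multiplying by~$\alpha^{-1}$,
\[
\mathcal{F} + \Phi_\alpha(x) \;=\; \bigcup_{d \in \mathcal{D}} \alpha^{-1} \cdot \bigl(\mathcal{F} + \Phi_\alpha(\alpha x + d)\bigr).
\]
Intersecting both sides with $\mathbb{K}_\infty \times \Phi_\mathfrak{b}(\{0\})$ yields $\mathcal{G}(x)$ on the left, by definition.

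The remaining step is to interchange the intersection with the action of $\alpha^{-1}$ on the right. For each $\mathfrak{p} \in S_\alpha$, $\alpha^{-1}$ acts as a nonzero scalar on $K_\mathfrak{p}$, so it fixes $0 \in K_\mathfrak{p}$; consequently, multiplication by $\alpha^{-1}$ is a bijection of $\mathbb{K}_\alpha$ that maps the subspace $\mathbb{K}_\infty \times \Phi_\mathfrak{b}(\{0\})$ onto itself. Therefore
\[
\alpha^{-1} \cdot \bigl(\mathcal{F} + \Phi_\alpha(\alpha x + d)\bigr) \,\cap\, \bigl(\mathbb{K}_\infty \times \Phi_\mathfrak{b}(\{0\})\bigr) \;=\; \alpha^{-1} \cdot \bigl[\bigl(\mathcal{F} + \Phi_\alpha(\alpha x + d)\bigr) \cap \bigl(\mathbb{K}_\infty \times \Phi_\mathfrak{b}(\{0\})\bigr)\bigr] \;=\; \alpha^{-1} \cdot \mathcal{G}(\alpha x + d).
\]
Combining the displays gives the proposition. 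There is no serious obstacle here; the only point requiring care is making sure the identification of $\mathcal{G}(y)$ as a subset of $\mathbb{K}_\alpha$ (rather than of $\mathbb{K}_\infty$, as it is sometimes viewed later) is used consistently, so that ``$\alpha^{-1} \cdot \mathcal{G}(y)$'' makes sense as written.
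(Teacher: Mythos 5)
Your proof is correct, but it takes a genuinely different route from the paper's. The paper deduces Proposition~\ref{p:seteqG} from the digit-expansion characterization of $\mathcal{G}(x)$ in Lemma~\ref{l:G} (whose proof rests on Lemma~\ref{l:boundedaway}) combined with the $T_\alpha$-invariance of $\Lambda_{\alpha,m}$ from Lemma~\ref{l:betaxd}: one splits off the first digit $d_1$ of an expansion and checks that the membership conditions $\alpha^k x + \sum_{j=1}^{k} d_j\alpha^{k-j}\in\Lambda_{\alpha,m}$ transfer between $x$ and $y=\alpha x+d_1$. You instead argue entirely at the level of subsets of~$\mathbb{K}_\alpha$: you translate and rescale the defining set equation~\eqref{seteq} to obtain $\mathcal{F}+\Phi_\alpha(x)=\bigcup_{d\in\mathcal{D}}\alpha^{-1}\cdot\big(\mathcal{F}+\Phi_\alpha(\alpha x+d)\big)$, identify $T_\alpha^{-1}(x)=\{\alpha x+d:\, d\in\mathcal{D}\}$ (which uses only that $\mathcal{D}$ is a standard digit set, so that $T_\alpha$ is well defined and $T_\alpha(\alpha x+d)=x$), and intersect with the slice $\mathbb{K}_\infty\times\Phi_\mathfrak{b}(\{0\})$, which is mapped onto itself by the bijection $\mathbf{z}\mapsto\alpha^{-1}\cdot\mathbf{z}$, so that intersecting commutes with the $\alpha^{-1}$-action; this interchange step is the only point needing care and you justify it correctly. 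Your argument is therefore self-contained and more elementary: it avoids Lemmas~\ref{l:G}, \ref{l:betaxd} and~\ref{l:boundedaway}, and does not need the module $\Lambda_{\alpha,m}$ at all. What the paper's route buys is economy in context: Lemma~\ref{l:G} and the $\Lambda_{\alpha,m}$-machinery are needed anyway for the neighbouring results (e.g.\ the emptiness of $\mathcal{G}(x)$ for $x\notin\Lambda_{\alpha,m}$ and Theorem~\ref{t:24}), so the proposition drops out of them immediately, whereas your proof isolates exactly which hypotheses the set equation for $\mathcal{G}$ really depends on.
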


\begin{proof}
This is a direct consequence of Lemmas~\ref{l:G} and~\ref{l:betaxd}.
\end{proof}

We now start the preparations for the proof of Theorems~\ref{srstiling} and~\ref{t:24}.
Recall that we identify $\mathbb{K}_\infty$ with $\mathbb{K}_\infty \times \Phi_\mathfrak{b}(\{0\})$.

\begin{lemma} \label{l:cover}
The collection $\{\mathcal{G}(x):\, x \in \mathfrak{z}\}$ forms a covering of~$\mathbb{K}_\infty$.
\end{lemma}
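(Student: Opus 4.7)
The plan is to reduce this statement to the covering property already established in Lemma~\ref{lfcovering}. That lemma says that $\{\mathcal{F} + \Phi_\alpha(x) : x \in \mathfrak{z}\}$ is a uniformly locally finite covering of the whole space $\mathbb{K}_\alpha = \mathbb{K}_\infty \times \mathbb{K}_\mathfrak{b}$ -- that is, every point of $\mathbb{K}_\alpha$ (not merely almost every point) lies in at least one translate of $\mathcal{F}$. Once this is granted, the lemma follows by simply ``slicing'' this covering at the $\mathbb{K}_\mathfrak{b}$-coordinate $\Phi_\mathfrak{b}(0)$.

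Concretely, given an arbitrary $z \in \mathbb{K}_\infty$, I would view it inside $\mathbb{K}_\alpha$ via the embedding $\mathbb{K}_\infty \simeq \mathbb{K}_\infty \times \Phi_\mathfrak{b}(\{0\}) \hookrightarrow \mathbb{K}_\alpha$ that the paper already uses. Apply Lemma~\ref{lfcovering} to the point $(z, \Phi_\mathfrak{b}(0))$: there exists $x \in \mathfrak{z}$ with $(z, \Phi_\mathfrak{b}(0)) \in \mathcal{F} + \Phi_\alpha(x)$. By the very definition of the intersective tile, a point of $\mathcal{F} + \Phi_\alpha(x)$ whose $\mathbb{K}_\mathfrak{b}$-projection equals $\Phi_\mathfrak{b}(0)$ belongs to $\mathcal{G}(x)$. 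Hence $z \in \mathcal{G}(x)$ under the identification $\mathbb{K}_\infty \simeq \mathbb{K}_\infty \times \Phi_\mathfrak{b}(\{0\})$, and since $z$ was arbitrary we conclude $\bigcup_{x \in \mathfrak{z}} \mathcal{G}(x) = \mathbb{K}_\infty$.

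There is essentially no obstacle: the only minor point worth remarking on is that Lemma~\ref{lfcovering} is stronger than a multiple-tiling statement in the sense of the paper, because it yields an honest set-theoretic covering. This is what allows the pointwise argument above to go through at the measure-zero ``slice'' $\mathbb{K}_\infty \times \Phi_\mathfrak{b}(\{0\})$, where almost-everywhere statements would be useless. No further information about the structure of~$\mathcal{G}(x)$ (self-affinity, boundary measure, etc.) is needed for this step -- those inputs are reserved for the subsequent, more refined tiling statement of Theorem~\ref{srstiling}.
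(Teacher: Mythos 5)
Your proof is correct and is essentially the paper's own argument: the paper also deduces the lemma directly from the fact that $\{\mathcal{F} + \Phi_\alpha(x):\, x \in \mathfrak{z}\}$ covers $\mathbb{K}_\alpha$ (Lemma~\ref{lfcovering}), by slicing at the $\mathbb{K}_\mathfrak{b}$-coordinate $\Phi_\mathfrak{b}(0)$ exactly as you do. Your remark that one needs a genuine pointwise covering rather than an almost-everywhere statement is a valid observation, but nothing further is required.
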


\begin{proof}
This follows from the fact that $\{\mathcal{F} + \Phi_\alpha(x):\, x \in \mathfrak{z}\}$ covers~$\mathbb{K}_\alpha$.
\end{proof}

In the next step, we consider properties of the $\mathbb{Z}$-module~$\Lambda_{\alpha,m}$. Recall that a subset of $\mathbb{K}_\infty$ is a lattice if it is a Delone set and an additive group.

\begin{lemma} \label{l:zLambda}
Let $k\in \mathbb{Z}$ be given. Then the sets $\Phi_\infty(\Lambda_{\alpha,k})$ and $\Phi_\infty(\mathfrak{z} \cap \Lambda_{\alpha,k})$ form lattices in~$\mathbb{K}_\infty \simeq \mathbb{R}^n$. Moreover,  the cardinality of $\Lambda_{\alpha,k+1} / \Lambda_{\alpha,k}$ is~$|a_n|$, where $a_n$ is the leading coefficient of the polynomial $A(X)$ in \eqref{minpol}.
\end{lemma}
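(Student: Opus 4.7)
My plan is to split the argument by its two assertions. The lattice property is a discreteness-plus-rank count handled by the already-established machinery; the index computation reduces, via a determinant identity, to showing a certain map is surjective, and that surjectivity is where the real work lies.

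For the lattice statements, I would first prove discreteness of $\Phi_\infty(\Lambda_{\alpha,k})$ by observing that every $x\in\Lambda_{\alpha,k}$ has $|x|_\mathfrak{p}$ bounded at every finite prime of~$K$: for $\mathfrak{p}\notin S_\alpha$ this follows from $x\in\mathbb{Z}[\alpha]\subseteq\mathcal{O}_{S_\alpha}$ (Lemma~\ref{l:subgroup}~(\ref{OSalpha})); for $\mathfrak{p}\mid\mathfrak{a}$ from $x\in\mathbb{Z}[\alpha]$ and integrality of $\alpha$; for $\mathfrak{p}\mid\mathfrak{b}$ from $x\in\alpha^{k-1}\mathbb{Z}[\alpha^{-1}]$ and integrality of $\alpha^{-1}$. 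So any $\Phi_\infty$-bounded subset of $\Lambda_{\alpha,k}$ lifts to a $\Phi_\alpha$-bounded subset of $\mathbb{Z}[\alpha]$, which is finite by the uniform discreteness of $\Phi_\alpha(\mathbb{Z}[\alpha])$ (Lemmas~\ref{delonelemma} and~\ref{l:subgroup}). For the full rank, I would use $A(\alpha)=0$ in the form $a_n\alpha=-(a_{n-1}+a_{n-2}\alpha^{-1}+\cdots+a_0\alpha^{-(n-1)})\in\mathbb{Z}[\alpha^{-1}]$; taking powers gives $a_n^j\alpha^j\in\mathbb{Z}[\alpha^{-1}]$ for all $j\ge0$, hence $a_n^j\alpha^{j+k-1}\in\alpha^{k-1}\mathbb{Z}[\alpha^{-1}]$, and for any $j$ with $j+k-1\ge0$ this element also lies in $\mathbb{Z}[\alpha]$ and therefore in~$\Lambda_{\alpha,k}$. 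Letting $j$ range over $n$ consecutive such values yields $n$ elements whose $\mathbb{Q}$-independence is inherited from $1,\alpha,\ldots,\alpha^{n-1}$, so their $\Phi_\infty$-images span $\mathbb{K}_\infty$ over~$\mathbb{R}$. The statement for $\mathfrak{z}\cap\Lambda_{\alpha,k}$ follows because $\mathfrak{z}$ has finite index in $\mathbb{Z}[\alpha]$ by Lemma~\ref{l:subgroup}~(\ref{zsubgroup}).

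For the index, I would set up two compatible injections tied together by a volume identity. The composition
\[
\Lambda_{\alpha,k+1}/\Lambda_{\alpha,k}\ \hookrightarrow\ \alpha^k\mathbb{Z}[\alpha^{-1}]/\alpha^{k-1}\mathbb{Z}[\alpha^{-1}]\ \xrightarrow{\,\cdot\alpha^{-k}\,}\ \mathbb{Z}[\alpha^{-1}]/\alpha^{-1}\mathbb{Z}[\alpha^{-1}]\ \cong\ \mathbb{Z}/|a_n|\mathbb{Z},
\]
with the last isomorphism sending a Laurent tail $\sum c_i\alpha^{-i}$ to $c_0\bmod|a_n|$, is well defined: after clearing denominators, any vanishing relation becomes a non-trivial integer polynomial vanishing at~$\alpha$, which by primitivity of~$A(X)$ is a $\mathbb{Z}[X]$-multiple of~$A$, forcing $a_n$ to divide its leading coefficient. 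This gives $[\Lambda_{\alpha,k+1}:\Lambda_{\alpha,k}]\mid|a_n|$. Dually, using $\alpha\Lambda_{\alpha,k}=\alpha\mathbb{Z}[\alpha]\cap\alpha^k\mathbb{Z}[\alpha^{-1}]$, one obtains
\[
\Lambda_{\alpha,k+1}/\alpha\Lambda_{\alpha,k}\ \hookrightarrow\ \mathbb{Z}[\alpha]/\alpha\mathbb{Z}[\alpha]\ \cong\ \mathbb{Z}/|a_0|\mathbb{Z}.
\]
Since multiplication by $\alpha$ has determinant $|N_{K:\mathbb{Q}}(\alpha)|=|a_0|/|a_n|$ on $\mathbb{K}_\infty$ (as in the proof of Lemma~\ref{measuremult}), the volume identity $\mathrm{vol}(\alpha\Lambda_{\alpha,k})=(|a_0|/|a_n|)\,\mathrm{vol}(\Lambda_{\alpha,k})$ gives
\[
[\Lambda_{\alpha,k+1}:\alpha\Lambda_{\alpha,k}]\ =\ \tfrac{|a_0|}{|a_n|}\,[\Lambda_{\alpha,k+1}:\Lambda_{\alpha,k}]\,,
\]
so reaching the index $|a_n|$ is equivalent to the second injection being surjective.

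That surjectivity is the main obstacle I foresee, and it is trivial only for $k\ge0$, where $1\in\Lambda_{\alpha,k+1}$ already maps to~$1$. For $k<0$, I would extract from $\alpha^{i-n}A(\alpha)=0$ (with $0\le i\le n-1$) the elements
\[
T_i\ =\ a_{n-i}+a_{n-i+1}\alpha+\cdots+a_n\alpha^i\ =\ -\bigl(a_{n-i-1}\alpha^{-1}+\cdots+a_0\alpha^{i-n}\bigr),
\]
which lie in $\mathbb{Z}[\alpha]\cap\alpha^{-1}\mathbb{Z}[\alpha^{-1}]$ and whose image in $\mathbb{Z}[\alpha]/\alpha\mathbb{Z}[\alpha]$ is~$a_{n-i}$. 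The powers $T_i^{-k}$ then lie in $\mathbb{Z}[\alpha]\cap\alpha^k\mathbb{Z}[\alpha^{-1}]=\Lambda_{\alpha,k+1}$ with images $a_{n-i}^{-k}\bmod|a_0|$. Primitivity of~$A$ precludes any prime from dividing all of $a_0,a_1,\ldots,a_n$, and hence from dividing all of $a_0,a_1^{-k},\ldots,a_n^{-k}$, so $a_1^{-k},\ldots,a_n^{-k}$ generate $\mathbb{Z}/|a_0|\mathbb{Z}$. This establishes surjectivity and therefore $[\Lambda_{\alpha,k+1}:\Lambda_{\alpha,k}]=|a_n|$, finishing the lemma.
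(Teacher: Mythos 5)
Your argument is correct, and while the lattice part runs essentially parallel to the paper (discreteness from the bounds $|x|_\mathfrak{p}\le 1$ for $\mathfrak{p}\notin S_\alpha$ and $|x|_\mathfrak{p}\le|\alpha^{k-1}|_\mathfrak{p}$ for $\mathfrak{p}\mid\mathfrak{b}$, full rank from elements of the form $a_n^j\alpha^{j+k-1}$ --- the paper instead multiplies a chosen basis $x_1,\ldots,x_n\in\mathfrak{z}$ by $a_n^j$, and your finite-index shortcut for $\mathfrak{z}\cap\Lambda_{\alpha,k}$ is a mild simplification), the index computation is done by a genuinely different mechanism. Both proofs exploit the embedding $\Lambda_{\alpha,k+1}/\Lambda_{\alpha,k}\hookrightarrow\alpha^k\mathbb{Z}[\alpha^{-1}]/\alpha^{k-1}\mathbb{Z}[\alpha^{-1}]\simeq\mathbb{Z}/|a_n|\mathbb{Z}$ (the paper phrases it as the partition of $\Lambda_{\alpha,k+1}$ into the sets $(c\,\alpha^k+\alpha^{k-1}\mathbb{Z}[\alpha^{-1}])\cap\mathbb{Z}[\alpha]$, $0\le c<|a_n|$, and both rely on the Gauss-lemma fact that $\mathbb{Z}[\alpha^{-1}]/\alpha^{-1}\mathbb{Z}[\alpha^{-1}]$ is cyclic of order $|a_n|$); the difference is how one sees that every class is hit. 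The paper proves this surjectivity directly and $\mathfrak{p}$-adically: $\Phi_\mathfrak{b}(\mathbb{Z}[\alpha])$ is dense in $\mathbb{K}_\mathfrak{b}$ by the Strong Approximation Theorem (Lemma~\ref{strongapprox}), and Lemma~\ref{l:boundedaway} upgrades proximity of some $x\in\mathbb{Z}[\alpha]$ to $c\,\alpha^k+\alpha^{k-1}\mathbb{Z}[\alpha^{-1}]$ into actual membership. You avoid the adelic input altogether: you take the upper bound $[\Lambda_{\alpha,k+1}:\Lambda_{\alpha,k}]\mid|a_n|$ from the injection, transfer the problem to the dual quotient $\Lambda_{\alpha,k+1}/\alpha\Lambda_{\alpha,k}\hookrightarrow\mathbb{Z}[\alpha]/\alpha\mathbb{Z}[\alpha]$ via the covolume identity $[\Lambda_{\alpha,k+1}:\alpha\Lambda_{\alpha,k}]=\tfrac{|a_0|}{|a_n|}[\Lambda_{\alpha,k+1}:\Lambda_{\alpha,k}]$ (the determinant $|a_0|/|a_n|$ being the archimedean part of Lemma~\ref{measuremult}), and then prove surjectivity onto $\mathbb{Z}/|a_0|\mathbb{Z}$ by exhibiting the explicit elements $T_i^{-k}$ (powers of the $w_i$ that reappear in Lemma~\ref{l:Lambda}) whose residues $a_{n-i}^{-k}$ generate $\mathbb{Z}/|a_0|\mathbb{Z}$ thanks to primitivity of $A$. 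What the paper's route buys is brevity, since Lemmas~\ref{strongapprox} and~\ref{l:boundedaway} are already in place and are reused elsewhere; what your route buys is a purely algebraic and constructive proof (Gauss's lemma, the norm of $\alpha$, lattice covolumes) that needs no density or approximation argument and makes explicit where primitivity of $A$ enters --- it also explains the index $|a_n|$ as the counterpart of $\#(\mathbb{Z}[\alpha]/\alpha\mathbb{Z}[\alpha])=|a_0|$ under multiplication by~$\alpha$.
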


\begin{proof}
For each $x \in \Lambda_{\alpha,k}$, we have $|x|_\mathfrak{p} \le |\alpha^{k-1}|_\mathfrak{p}$ for all $\mathfrak{p} \mid \mathfrak{b}$, and $|x|_\mathfrak{p} \le 1$ for all other $\mathfrak{p} \nmid \infty$.
Therefore, $\Phi_\infty(\Lambda_{\alpha,k})$ is a lattice and, hence, $\Phi_\infty(\mathfrak{z} \cap \Lambda_{\alpha,k})$ is contained in a lattice.
To show that $\Phi_\infty(\mathfrak{z} \cap \Lambda_{\alpha,k})$ contains a lattice, choose $x_1, \ldots, x_n \in \mathfrak{z}$ such that $\{\Phi_\infty(x_1), \ldots, \Phi_\infty(x_n)\}$ is a basis of~$\mathbb{K}_\infty$ (regarded as a vector space over~$\mathbb{R}$).
There exists $j \in \mathbb{N}$ such that, with $a_n$ as in \eqref{minpol}, 
we have  $a_n^j x_1, \ldots, a_n^j x_n \in \Lambda_{\alpha,k}$.
Since $\Phi_\infty(\mathfrak{z} \cap \Lambda_{\alpha,k})$ is a $\mathbb{Z}$-module containing the basis $\{\Phi_\infty(a_n^j x_1), \ldots, \Phi_\infty(a_n^j x_n)\}$ of~$\mathbb{K}_\infty$, it contains a lattice. Being an additive group, $\Phi_\infty(\mathfrak{z} \cap \Lambda_{\alpha,k})$ is therefore a lattice.

To show that the cardinality $\Lambda_{\alpha,k+1} / \Lambda_{\alpha,k}$ is $|a_n|$, first note that
\[
\Lambda_{\alpha,k+1} = \alpha^k \mathbb{Z}[\alpha^{-1}] \cap \mathbb{Z}[\alpha] = \bigcup_{c\in\{0,1,\ldots,|a_n|-1\}} (c\, \alpha^k + \alpha^{k-1} \mathbb{Z}[\alpha^{-1}]) \cap \mathbb{Z}[\alpha]
\]
is a partition of~$\Lambda_{\alpha,k+1}$.
For any $x \in (c\, \alpha^k + \alpha^{k-1} \mathbb{Z}[\alpha^{-1}]) \cap \mathbb{Z}[\alpha]$, we have 
\[
(c\, \alpha^k + \alpha^{k-1} \mathbb{Z}[\alpha^{-1}]) \cap \mathbb{Z}[\alpha] = x  + \Lambda_{\alpha,k}\,.
\]
It remains to show that $(c\, \alpha^k + \alpha^{k-1} \mathbb{Z}[\alpha^{-1}]) \cap \mathbb{Z}[\alpha] \neq \emptyset$ for all $c\in\{0,1,\ldots,|a_n|-1\}$.
As $\Phi_\mathfrak{b}(\mathbb{Z}[\alpha])$ is dense in~$\mathbb{K}_\mathfrak{b}$ by Lemma~\ref{l:subgroup}~(\ref{Zalphasubroup}) and the Strong Approximation Theorem in Lemma~\ref{strongapprox}, there is, for each $c \in \mathbb{Z}$, some $x \in \mathbb{Z}[\alpha]$ such that $\Phi_\mathfrak{b}(x) \in \overline{\Phi_\mathfrak{b}(c\, \alpha^k+\alpha^{k-1} \mathbb{Z}[\alpha^{-1}])}$.
By Lemma~\ref{l:boundedaway}, we have $x - c\, \alpha^k \in \alpha^{k-1} \mathbb{Z}[\alpha^{-1}]$, thus $x \in (c\, \alpha^k + \alpha^{k-1} \mathbb{Z}[\alpha^{-1}]) \cap \mathbb{Z}[\alpha] \neq \emptyset$.
\end{proof}

We are now in a position to show that $\mu_\infty(\partial\mathcal{G}(x)) = 0$ holds
for each $x \in \mathbb{Z}[\alpha]$. 

\begin{proof}[Proof of Theorem~\ref{srstiling}~(\ref{three:i})]
Let $x \in \mathbb{Z}[\alpha]$, and $X \subset \mathbb{K}_\infty$ be a rectangle containing~$\mathcal{G}(x)$. 
Since $\alpha^\ell \cdot \mathcal{G}(x) = \bigcup_{y \in T_\alpha^{-\ell}(x)} \mathcal{G}(y)$ by Proposition~\ref{p:seteqG}, we have $\alpha^\ell \cdot \partial\mathcal{G}(x) \subset \bigcup_{y\in\mathbb{Z}[\alpha]} \partial\mathcal{G}(y)$ and thus
\begin{equation} \label{e:muXB}
\frac{\mu_\infty(\partial\mathcal{G}(x))}{\mu_\infty(X)} = \frac{\mu_\infty(\alpha^\ell\cdot\partial\mathcal{G}(x))}{\mu_\infty(\alpha^\ell\cdot X)} \le \frac{\mu_\infty\big(\bigcup_{y\in\mathbb{Z}[\alpha]} \partial\mathcal{G}(y) \cap \alpha^\ell \cdot X\big)}{\mu_\infty(\alpha^\ell \cdot X)}
\end{equation}
for all $\ell \in \mathbb{N}$. 
To get an upper bound for $\mu_\infty\big(\bigcup_{y\in\mathbb{Z}[\alpha]} \partial\mathcal{G}(y) \cap \alpha^\ell \cdot X\big)$, let 
\[
R_k = \big\{z \in \mathcal{D}_k: \partial(\alpha^k \cdot \mathcal{F}) \cap \big(\mathcal{F} + \Phi_\alpha(z)\big) \neq \emptyset\big\}\,
\]
with $\mathcal{D}_k$ as in \eqref{iteratedset}.
Then, for every $y \in \mathbb{Z}[\alpha]$,
\[
\partial\mathcal{G}(y) \subset \partial\big(\mathcal{F} +  \Phi_\alpha(y)\big) \cap (\mathbb{K}_\infty \times \Phi_\mathfrak{b}(\{0\})) \subset \bigcup_{z\in R_k} \alpha^{-k} \cdot \big(\mathcal{F} + \Phi_\alpha(\alpha^k\, y+z)\big) \cap (\mathbb{K}_\infty \times \Phi_\mathfrak{b}(\{0\})).
\]
If $\Phi_\mathfrak{b}(0) \in \pi_\mathfrak{b}(\mathcal{F}) + \Phi_\mathfrak{b}(\alpha^k\, y+z) \subset  \overline{\Phi_\mathfrak{b}(\alpha^{m-1}\,\mathbb{Z}[\alpha^{-1}])} + \Phi_\mathfrak{b}(\alpha^k\, y+z)$ holds for a given $z \in R_k$, then Lemma~\ref{l:boundedaway} yields $\alpha^k\, y + z \in \alpha^{m-1}\,\mathbb{Z}[\alpha^{-1}]$. Thus
\begin{equation}\label{Ginclusion}
\bigcup_{y\in\mathbb{Z}[\alpha]} \partial\mathcal{G}(y) \subset \bigcup_{z\in R_k} \bigcup_{y\in \mathbb{Z}[\alpha] \cap \alpha^{-k} (\alpha^{m-1}\mathbb{Z}[\alpha^{-1}]-z)} \Phi_\infty(y + \alpha^{-k} z) + \pi_\infty(\alpha^{-k} \cdot \mathcal{F})\,.
\end{equation}
If we set
\[
C_{k,\ell}(z) = \{y \in \mathbb{Z}[\alpha] \cap \alpha^{-k} (\alpha^{m-1}\mathbb{Z}[\alpha^{-1}]-z):\, \Phi_\infty(y + \alpha^{-k} z) \in \alpha^\ell \cdot X - \pi_\infty(\alpha^{-k} \cdot \mathcal{F})\big\}\,,
\]
then \eqref{Ginclusion} implies that
\begin{equation}\label{Ginclusion2}
\bigcup_{y\in\mathbb{Z}[\alpha]} \partial\mathcal{G}(y) \cap \alpha^\ell \cdot X
\subset
\bigcup_{z\in R_k} \bigcup_{y\in C_{k,\ell}(z)} \Phi_\infty(y + \alpha^{-k} z) + \pi_\infty(\alpha^{-k} \cdot \mathcal{F})\,.
\end{equation}
To estimate the measure of the right hand side of~\eqref{Ginclusion2}, we first deal with the number of elements in $R_k$ and $C_{k,\ell}(z)$. From the proof of Theorem~\ref{basicProperties}~(\ref{property3}), we get that 
\begin{equation}\label{Gestimate1}
\# R_k = O(r^k)
\end{equation}
with $r < |a_0|$.
To derive an estimate for $\# C_{k,\ell}(z)$, observe that, for each $z \in \mathcal{D}_k$, the set $\mathbb{Z}[\alpha] \cap \alpha^{-k} (\alpha^{m-1}\mathbb{Z}[\alpha^{-1}]-z)$ forms a residue class of $\Lambda_{\alpha,m} / \Lambda_{\alpha,m-k}$.
By Lemma~\ref{l:zLambda}, the cardinality of $\Lambda_{\alpha,m} / \Lambda_{\alpha,m-k}$ is~$|a_n|^k$.
For $\ell$ sufficiently large (in terms of~$k$), we obtain that 
\begin{equation}\label{Gestimate2}
\frac{\max_{z\in R_k} \# C_{k,\ell}(z)}{\# \big\{y \in \Lambda_{\alpha,m}:\, \Phi_\infty(y) \in \alpha^\ell \cdot X\big\}} \le \frac{2}{|a_n|^k}\,.
\end{equation}
(Just note that $\Phi_\infty(C_{k,\ell}(z))$ is essentially the intersection of a shifted version of the sublattice $\Phi_\infty(\Lambda_{\alpha, m-k})$ of $\Phi_\infty(\Lambda_{\alpha, m})$ with the large rectangle $\alpha^\ell\cdot X$; the subtraction of $\pi_\infty(\alpha^{-k} \cdot \mathcal{F})$ is minor as $\ell$ is large.) 
As $\Phi_\infty(\Lambda_{\alpha,m})$ is a lattice in $\mathbb{K}_\infty$, we gain that 
\begin{equation}\label{Gestimate3}
\# \{y \in \Lambda_{\alpha,m}:\, \Phi_\infty(y) \in \alpha^\ell \cdot X\} = O\big(\mu_\infty(\alpha^\ell \cdot X)\big)\,.
\end{equation}
Since $\prod_{\mathfrak{p}\mid\infty} |\alpha|_\mathfrak{p} = \frac{|a_0|}{|a_n|}$, we have
\begin{equation}\label{Gestimate4}
\mu_\infty\big(\pi_\infty(\alpha^{-k} \cdot \mathcal{F})\big) = O\bigg(\frac{|a_n|^k}{|a_0|^k}\bigg)\,.
\end{equation}
Putting \eqref{Ginclusion2} together with the estimates \eqref{Gestimate1},  \eqref{Gestimate2}, \eqref{Gestimate3}, and~\eqref{Gestimate4}, we arrive at
\[
\frac{\mu_\infty\big(\bigcup_{y\in\mathbb{Z}[\alpha]} \partial\mathcal{G}(y) \cap \alpha^\ell \cdot X\big)}{\mu_\infty(\alpha^\ell \cdot X)} \le \frac{\# R_k\, \max_{z\in R_k} \# C_{k,\ell}(z)\, \mu_\infty\big(\pi_\infty(\alpha^{-k} \cdot \mathcal{F})\big)}{\mu_\infty(\alpha^\ell \cdot X)} \le  \frac{c\,r^k}{|a_0|^k}
\]
for some constant $c > 0$, which holds for each $k \in \mathbb{N}$ and for all sufficiently large~$\ell$. 
Inserting this estimate in \eqref{e:muXB} implies that $\frac{\mu_\infty(\partial\mathcal{G}(x))}{\mu_\infty(X)} \le \frac{c\,r^k}{|a_0|^k}$ for all $k \in \mathbb{N}$, i.e., $\mu_\infty(\partial\mathcal{G}(x)) = 0$. 
\end{proof}

The main problem in the proof of the tiling property of the collection $\{\mathcal{G}(x):\, x \in \mathfrak{z}\}$ consists in finding an \emph{exclusive point}, i.e., a point $\mathbf{z} \in \mathbb{K}_\infty$ which is contained in exactly one element of this collection.
Using the tiling theorem for rational self-affine tiles (Theorem~\ref{newtilingtheorem}) we exhibit such an exclusive point in the two following lemmas (see \cite[Section~4]{BSSST:11}, where similar methods were employed).

\begin{lemma} \label{l:Takz}
Let $Y = \{y \in \mathfrak{z} \cap \Lambda_{\alpha,m}:\, \Phi_\infty(y) \in \pi_\infty(\mathcal{F})\}$.
There exist $z \in \mathfrak{z} \cap \Lambda_{\alpha,m}$, $k \ge 0$, such~that
\[
T_\alpha^k(z + y) = 0 = T_\alpha^k(z) \quad\mbox{for all}\ y \in Y.
\]
\end{lemma}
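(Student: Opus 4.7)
The plan is to construct $z$ as a truncation of the $\alpha$-expansion of a suitably generic point of $\mathcal{F}$, and to invoke Theorem~\ref{newtilingtheorem} to force each $z+y$ ($y\in Y$) to likewise possess a finite $\mathcal{D}$-expansion. A short induction using that $\mathcal{D}$ is a standard digit set shows $T_\alpha^k(x)=0$ if and only if $x\in\mathcal{D}_k$, with $\mathcal{D}_k$ as in~\eqref{iteratedset}; thus the task is to produce $z\in\mathfrak{z}\cap\Lambda_{\alpha,m}$ and $k\ge 0$ with $z,z+y\in\mathcal{D}_k$ for every $y\in Y$. Observe also that $Y$ is finite, since $\Phi_\infty(\mathfrak{z}\cap\Lambda_{\alpha,m})$ is a lattice in $\mathbb{K}_\infty$ by Lemma~\ref{l:zLambda} and $\pi_\infty(\mathcal{F})$ is compact.

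Using Theorem~\ref{newtilingtheorem} together with $\mu_\alpha(\partial\mathcal{F})=0$, I pick $\mathbf{w}\in\mathrm{int}(\mathcal{F})$ lying outside the countable union, over $k\ge 1$ and $y\in Y\cup\{0\}$, of the (measure-zero) sets of $\mathbf{w}$ for which $\alpha^k\mathbf{w}+\Phi_\alpha(y)$ lies in more than one tile of the tiling $\{\mathcal{F}+\Phi_\alpha(x):x\in\mathfrak{z}\}$. This $\mathbf{w}$ admits a unique expansion $\mathbf{w}=\sum_{j=1}^\infty\Phi_\alpha(d_j\alpha^{-j})$ with $d_j\in\mathcal{D}$; set $z_k=\sum_{j=1}^k d_j\alpha^{k-j}\in\mathcal{D}_k$. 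Since $\mathcal{D}\subset\alpha^m\mathbb{Z}[\alpha^{-1}]$, a direct check yields $z_k\in\mathfrak{z}\cap\Lambda_{\alpha,m}$, and $T_\alpha^k(z_k)=0$ is immediate from $z_k\in\mathcal{D}_k$.

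Now pick $k$ so large that $\mathbf{w}+\alpha^{-k}\Phi_\alpha(y)\in\mathcal{F}$ for every $y\in Y$, which is possible because $Y$ is finite and $\alpha^{-1}$ is a uniform contraction on $\mathbb{K}_\alpha$. The iterated set equation~\eqref{iteratedset} then gives $\alpha^k\mathbf{w}+\Phi_\alpha(y)\in\mathcal{F}+\Phi_\alpha(d_y)$ for some $d_y\in\mathcal{D}_k\subset\mathfrak{z}$, unique by the genericity of $\mathbf{w}$. Put $\mathbf{w}'_k=\alpha^k\mathbf{w}-\Phi_\alpha(z_k)\in\mathcal{F}$ and $\xi_y=z_k+y-d_y\in\mathfrak{z}$, so that $\mathbf{w}'_k+\Phi_\alpha(\xi_y)\in\mathcal{F}$: hence $\mathbf{w}'_k$ lies in both $\mathcal{F}$ and $\mathcal{F}+\Phi_\alpha(-\xi_y)$. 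The genericity of $\mathbf{w}$ applied with $y=0$ says that $\alpha^k\mathbf{w}$ lies in exactly one tile, necessarily $\mathcal{F}+\Phi_\alpha(z_k)$, so $\mathbf{w}'_k$ sits in only one tile; this forces $\xi_y=0$, i.e.\ $z_k+y=d_y\in\mathcal{D}_k$ and $T_\alpha^k(z_k+y)=0$. Setting $z=z_k$ concludes the proof.

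The hard part is arranging a single $\mathbf{w}$ to be generic for all $k$ and all $y\in Y\cup\{0\}$ simultaneously; this works only because Theorem~\ref{newtilingtheorem} delivers a genuine tiling, so that each intersection $(\mathcal{F}+\Phi_\alpha(x))\cap(\mathcal{F}+\Phi_\alpha(x'))$ with $x\ne x'$ has $\mu_\alpha$-measure zero and the countable union of bad sets remains negligible; Theorem~\ref{basicProperties}~(\ref{property5}) alone (multiple tiling) would not suffice here.
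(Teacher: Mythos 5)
Your proposal is correct, and it reaches the conclusion by a genuinely different mechanism than the paper, whose proof is only a few lines: there one fixes an open ball $B \subset \mathrm{int}(\mathcal{F})$, uses Lemma~\ref{l:zLambda} (the lattice property of $\Phi_\infty(\mathfrak{z} \cap \Lambda_{\alpha,m})$, together with the $\mathfrak{b}$-adic boundedness of $\Lambda_{\alpha,m}$) to find $k \ge 0$ and a single translate $z \in \mathfrak{z} \cap \Lambda_{\alpha,m}$ with $\Phi_\alpha(z+Y) \subset \alpha^k \cdot B$, and then invokes Theorem~\ref{newtilingtheorem} to get $\Phi_\alpha(\mathfrak{z}) \cap \alpha^k \cdot \mathrm{int}(\mathcal{F}) \subseteq \Phi_\alpha(\mathcal{D}_k)$, whence $z + Y \subseteq \mathcal{D}_k$ and $T_\alpha^k(z+y) = 0$ for all $y \in Y$ (which covers $T_\alpha^k(z) = 0$ since $0 \in Y$). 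You instead anchor at a $\mu_\alpha$-generic $\mathbf{w} \in \mathrm{int}(\mathcal{F})$, take $z$ to be the $k$-th expansion prefix $z_k \in \mathcal{D}_k$ of $\mathbf{w}$, and use a.e.-uniqueness of the tile containing $\alpha^k \cdot \mathbf{w} + \Phi_\alpha(y)$ to force $z_k + y \in \mathcal{D}_k$; the perturbation step $\mathbf{w} + \alpha^{-k} \cdot \Phi_\alpha(y) \in \mathcal{F}$ works because $Y$ is finite and $\alpha^{-1}$ is a uniform contraction, and your auxiliary checks ($z_k \in \mathfrak{z} \cap \Lambda_{\alpha,m}$ via $\mathcal{D} \subset \alpha^m \mathbb{Z}[\alpha^{-1}]$, and $T_\alpha^k(x) = 0$ precisely for $x \in \mathcal{D}_k$) are sound. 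What each route buys: yours gets $z \in \Lambda_{\alpha,m}$ for free and never has to fit the translated set $z+Y$ inside a blown-up ball, at the cost of a countable measure-zero genericity selection; the paper's avoids all measure-theoretic selection by using the lattice translation, at the cost of the geometric step placing $z+Y$ in $\alpha^k \cdot B$. Both rest in an essential and equivalent way on the genuine (not merely multiple) tiling of Theorem~\ref{newtilingtheorem}, i.e.\ on pairwise measure-disjointness of the tiles indexed by $\mathfrak{z}$, exactly as your closing remark observes.
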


\begin{proof}
By Lemma~\ref{l:zLambda} and because $\pi_\infty(\mathcal{F})$ is compact, the set $Y$ is finite.
Since the interior of~$\mathcal{F}$ is non-empty by Theorem~\ref{basicProperties}~(\ref{property2}), there exists an open ball $B \subset \mathrm{int}(\mathcal{F})$.
By Lemma~\ref{l:zLambda}, we can find some $k \ge 0$ and some $z \in \mathfrak{z} \cap \Lambda_{\alpha,m}$ such that $z + Y \subset \alpha^k \cdot B$.
We have $\alpha^k \cdot \mathcal{F} = \mathcal{F} + \mathcal{D}_k$ by~\eqref{iteratedset}, and $\mathfrak{z} \cap \alpha^k \cdot \mathrm{int}(\mathcal{F}) \subseteq \mathcal{D}_k$ by Theorem~\ref{newtilingtheorem}.
Hence, we conclude that $z + Y \subseteq \mathcal{D}_k$, i.e., $T_\alpha^k(z+y) = 0$ for all $y \in Y$.
\end{proof}

\begin{lemma} \label{l:exclusive}
Let $z \in \mathfrak{z} \cap \Lambda_{\alpha,m}$, $k \ge 0$ such that $T_\alpha^k(z + y) = T_\alpha^k(z)$ for all $y \in Y$, with $Y$ as in Lemma~\ref{l:Takz}.
Then $\Phi_\infty(\alpha^{-k} z) \in \mathcal{G}(T_\alpha^k(z))$ and $\Phi_\infty(\alpha^{-k} z) \not\in \mathcal{G}(x)$ for all $x \in \mathfrak{z} \setminus \{T_\alpha^k(z)\}$, i.e., $\Phi_\infty(\alpha^{-k} z)$ is an exclusive point of~$\mathcal{G}(T_\alpha^k(z))$.
\end{lemma}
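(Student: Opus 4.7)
The proof will pivot on the digit expansion of~$\alpha^{-k}z$ obtained by iterating the defining relation $x = \alpha\, T_\alpha(x) + d(x)$ of the map~$T_\alpha$. With $e_j = d(T_\alpha^{j-1}(z)) \in \mathcal{D}$, one has $z = \sum_{j=1}^k e_j\, \alpha^{j-1} + \alpha^k T_\alpha^k(z)$, which dividing by $\alpha^k$ rearranges to the key identity
\[
\alpha^{-k}z = T_\alpha^k(z) + \sum_{i=1}^k e_{k-i+1}\,\alpha^{-i}.
\]
This will provide the natural candidate digit sequence for each direction of the statement.

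For the membership claim, I would apply Lemma~\ref{l:G} at $x = T_\alpha^k(z)$ with $d_i = e_{k-i+1}$ for $1\le i\le k$. A direct calculation shows that the partial sum $\alpha^{k'}T_\alpha^k(z) + \sum_{j\le k'} d_j\, \alpha^{k'-j}$ coincides with $T_\alpha^{k-k'}(z)$ for each $0\le k'\le k$, which is in~$\Lambda_{\alpha,m}$ by Lemma~\ref{l:betaxd}. A standard continuation argument (relying on the surjection $\mathcal{D}\twoheadrightarrow\mathbb{Z}[\alpha]/\alpha\mathbb{Z}[\alpha]$ afforded by the standard digit set condition) extends the sequence for $j>k$ while keeping partial sums in~$\Lambda_{\alpha,m}$. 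Because partial sums in $\Lambda_{\alpha,m}$ force $\alpha^{-k'}\Phi_\mathfrak{b}(\Lambda_{\alpha,m})$ to contract to zero in~$\mathbb{K}_\mathfrak{b}$, the $\mathfrak{b}$-adic part of the completed infinite sum vanishes automatically, and Lemma~\ref{l:G} then yields $\Phi_\infty(\alpha^{-k}z) \in \mathcal{G}(T_\alpha^k(z))$.

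For the non-membership claim, suppose $\Phi_\infty(\alpha^{-k}z) \in \mathcal{G}(x)$ for some $x \in \mathfrak{z}$. Lemma~\ref{l:G} furnishes competing digits $d'_j \in \mathcal{D}$, and the element $s = \alpha^k x + \sum_{j\le k} d'_j\, \alpha^{k-j}$ lies in~$\Lambda_{\alpha,m}$ and satisfies $T_\alpha^k(s)=x$. Splitting off the first $k$ terms of the $\Phi_\infty$-identity, the residual $y' := z - s$ lies in $\mathfrak{z}\cap\Lambda_{\alpha,m}$ and has $\Phi_\infty(y') = \sum_{i\ge 1}\Phi_\infty(d'_{k+i}\,\alpha^{-i}) \in \pi_\infty(\mathcal{F})$, so $y' \in Y$. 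The hypothesis applied at~$y'$ then gives $T_\alpha^k(z+y') = T_\alpha^k(z)$, while the construction above yields $T_\alpha^k(z-y') = T_\alpha^k(s) = x$. Invoking the interior-point property coming from Lemma~\ref{l:Takz}'s construction---which places $\Phi_\alpha(\alpha^{-k}(z+y'))$ (and, after enlarging the ambient ball to absorb the symmetric set $Y\cup(-Y)$, also $\Phi_\alpha(\alpha^{-k}(z-y'))$) in $\operatorname{int}(\mathcal{F})$---the tiling Theorem~\ref{newtilingtheorem} forces $x = T_\alpha^k(z)$.

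The principal obstacle will be this last step: the hypothesis naturally controls $z+y'$, whereas the derived identity concerns $z-y'$. Bridging the sign discrepancy is what compels the use of the interior information supplied by Lemma~\ref{l:Takz}'s concrete construction of~$(z,k)$, together with the tiling uniqueness of Theorem~\ref{newtilingtheorem}; the secondary technical point is the continuation argument in the membership step, whose precise form depends on the combinatorics of~$\mathcal{D}$ modulo~$\Lambda_{\alpha,m}$.
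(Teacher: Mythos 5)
Your uniqueness half is essentially the paper's argument, but your membership half does not work, and your bridge for the sign problem steps outside the lemma's hypotheses. Concretely, the continuation step fails twice over. First, extending the digit string beyond $j=k$ while keeping all partial sums in $\Lambda_{\alpha,m}$ requires that $\mathcal{D}$ contain a complete residue system of $\alpha^m\mathbb{Z}[\alpha^{-1}]/\alpha^{m-1}\mathbb{Z}[\alpha^{-1}]$ (this is exactly Lemma~\ref{l:continuation}), which is \emph{not} assumed here: a standard digit set only surjects onto $\mathbb{Z}[\alpha]/\alpha\mathbb{Z}[\alpha]$, and in the example $\alpha=\frac43$, $\mathcal{D}=\{0,1,2,\frac13\}$ the continuation genuinely breaks down ($\mathcal{G}(x)=\emptyset$ for many $x\in\Lambda_{\alpha,m}$). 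Second, even when a continuation exists, the point it produces is $\Phi_\infty(\alpha^{-k}z)+\alpha^{-k}\cdot\sum_{i\ge1}\Phi_\infty(d_{k+i}\alpha^{-i})$, not $\Phi_\infty(\alpha^{-k}z)$ itself; to hit the exact point you would need the archimedean part of the tail to vanish, i.e.\ $\Phi_\infty(z)\in\mathcal{G}(z)$, which is false in general (for $\alpha=\frac32$ one has $\mathcal{G}(-2)=\{0\}\not\ni\Phi_\infty(-2)$). The paper never constructs an expansion: by Lemma~\ref{l:cover} the point $\Phi_\infty(\alpha^{-k}z)$ lies in \emph{some} $\mathcal{G}(x)$ with $x\in\mathfrak{z}$, and the uniqueness argument then forces $x=T_\alpha^k(z)$, so membership and exclusivity come out in one stroke.

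For the non-membership part you follow the paper up to producing $y'=z-s\in Y$ with $T_\alpha^k(s)=x$ (the paper gets the same $z'=s$ from Proposition~\ref{p:seteqG} and notes $z-z'\in Y$), and you are right that the hypothesis controls $z+y'$ while the conclusion needs $z-y'$ --- a sign the paper itself passes over silently, since it applies the assumption with $y=z'-z$ after deriving $z-z'\in Y$. But your proposed bridge is not a proof: Lemma~\ref{l:exclusive} assumes only the displacement identity $T_\alpha^k(z+y)=T_\alpha^k(z)$ for $y\in Y$, not that the pair $(z,k)$ arises from the construction in Lemma~\ref{l:Takz}, so inside this lemma you have no ball to enlarge; and Theorem~\ref{newtilingtheorem}, which concerns translates of $\mathcal{F}$ in $\mathbb{K}_\alpha$, does not by itself decide which intersective tile contains a point of the slice. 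The clean repair is to symmetrize the finite set: replace $Y$ by $Y\cup(-Y)$ in Lemmas~\ref{l:Takz} and~\ref{l:exclusive} (the proof of Lemma~\ref{l:Takz} works verbatim with the larger finite set, as does the deduction in the proof of Theorem~\ref{srstiling}); then the hypothesis applies with $y=-y'$ and gives $x=T_\alpha^k(z-y')=T_\alpha^k(z)$ directly, with no appeal to interior points or to the global tiling theorem inside this lemma.
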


\begin{proof}
Consider any $x \in \mathfrak{z}$ such that $\Phi_\infty(\alpha^{-k} z) \in \mathcal{G}(x)$.
Note that such an $x$ exists by Lemma~\ref{l:cover}.
By Proposition~\ref{p:seteqG}, there exists $z' \in T_\alpha^{-k}(x) \cap \Lambda_{\alpha,m}$ such that $\Phi_\infty(\alpha^{-k} z) \in \alpha^{-k} \cdot \mathcal{G}(z')$.
This means that $\Phi_\infty(\alpha^{-k} z) = \alpha^{-k} \cdot \big(\Phi_\infty(z') + \sum_{j=1}^\infty \Phi_\infty(d_j \alpha^{-j})\big)$ for some $d_j \in \mathcal{D}$, thus $\Phi_\infty\big(z - z') \in \pi_\infty(\mathcal{F})$.
Since $T_\alpha^{-k}(\mathfrak{z}) \subseteq \mathfrak{z}$, we have $z - z' \in Y$.
By the definition of~$z'$ and the assumption of the lemma, we obtain that
\[
x = T_\alpha^k(z') = T_\alpha^k(z + (z'-z)) = T_\alpha^k(z)\,.
\]
Therefore, $\Phi_\infty(\alpha^{-k} z)$ is an exclusive point of $\mathcal{G}(T_\alpha^k(z))$.
\end{proof}

We can now show that $\{\mathcal{G}(x):\, x \in \mathfrak{z}\}$ forms a tiling of~$\mathbb{K}_\infty$.

\begin{proof}[Proof of Theorem~\ref{srstiling}~(\ref{three:ii})]
Being the intersection of the compact set $\mathcal{F} + \Phi_\alpha(x)$ with $\mathbb{K}_\infty \times \Phi_\mathfrak{b}(\{0\})$, the set $\mathcal{G}(x)$~is compact for each $x \in \mathfrak{z}$.
The collection $\{\mathcal{G}(x):\, x \in \mathfrak{z}\}$ is uniformly locally finite because $\mathcal{G}(x) = \emptyset$ for all $x \in \mathfrak{z} \setminus \Lambda_{\alpha,m}$ by Lemma~\ref{l:G}, $\Phi_\infty(\mathfrak{z} \cap \Lambda_{\alpha,m})$ forms a lattice of $\mathbb{K}_\infty$ by Lemma~\ref{l:zLambda}, and $\mathcal{G}(x) - \Phi_\infty(x) \subseteq \pi_\infty(\mathcal{F})$ for all $x \in \mathfrak{z}$.

Let $z \in \mathfrak{z} \cap \Lambda_{\alpha,m}$ and $k \ge 0$ be as in Lemma~\ref{l:Takz}.
By the definition of~$T_\alpha$, this implies that $T_\alpha^k(x + y) = T_\alpha^k(x)$ for all $x \in z + \alpha^k \mathbb{Z}[\alpha]$, $y \in Y$.
In particular, this holds true for $x \in z + \alpha^k (\mathfrak{z} \cap \Lambda_{\alpha,m-k}) \subseteq \mathfrak{z} \cap \Lambda_{\alpha,m}$.
By Lemma~\ref{l:exclusive}, the point $\Phi_\infty(\alpha^{-k} x)$ is exclusive for each of these~$x$.
Since $\Phi_\infty(\mathfrak{z} \cap \Lambda_{\alpha,m-k})$ is a lattice, we have found a relatively dense set of exclusive points in~$\mathbb{K}_\infty$.
Proposition~\ref{p:seteqG} shows that, for every exclusive point~$\mathbf{z}$, the point $\alpha^{-1} \cdot \mathbf{z}$ is exclusive as well.
Therefore, the set of exclusive points is dense.
Since the boundary of each tile has zero measure (by Theorem~\ref{srstiling}~(\ref{three:i})) and $\{\mathcal{G}(x):\, x \in \mathfrak{z}\}$ is a uniformly locally finite collection of compact sets, this proves the tiling property.
\end{proof}

For certain digit sets, the set $\mathcal{G}(x)$ is non-empty for each $x$ in the $\mathbb{Z}$-module~$\Lambda_{\alpha,m}$. 
This is made precise in the following lemma.

\begin{lemma} \label{l:continuation}
Suppose that $\mathcal{D}$ contains a complete residue system of $\alpha^m \mathbb{Z}[\alpha^{-1}] / \alpha^{m-1} \mathbb{Z}[\alpha^{-1}]$.
Then, for every $x \in \Lambda_{\alpha,m}$, we have $\alpha x + d \in \Lambda_{\alpha,m}$ for some $d \in \mathcal{D}$, thus $\mathcal{G}(x) \neq \emptyset$.
\end{lemma}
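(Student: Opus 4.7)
The plan is to first verify the single-step claim ``$\alpha x + d \in \Lambda_{\alpha,m}$ for some $d \in \mathcal{D}$'' by a residue-class argument, and then iterate this to produce an admissible digit sequence that realises a point of $\mathcal{G}(x)$ via the explicit description in Lemma~\ref{l:G}.

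For the single-step claim, fix $x \in \Lambda_{\alpha,m} = \mathbb{Z}[\alpha] \cap \alpha^{m-1}\mathbb{Z}[\alpha^{-1}]$. Because $x \in \mathbb{Z}[\alpha]$ and $\mathcal{D} \subset \mathbb{Z}[\alpha]$, every element of the form $\alpha x + d$ with $d \in \mathcal{D}$ automatically lies in $\mathbb{Z}[\alpha]$, so only the second condition $\alpha x + d \in \alpha^{m-1}\mathbb{Z}[\alpha^{-1}]$ needs attention. By $x \in \alpha^{m-1}\mathbb{Z}[\alpha^{-1}]$ we have $\alpha x \in \alpha^m \mathbb{Z}[\alpha^{-1}]$, and by the choice of $m$ in~\eqref{em} we also have $\mathcal{D} \subset \alpha^m \mathbb{Z}[\alpha^{-1}]$. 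Thus $-\alpha x$ lies in some residue class of $\alpha^m \mathbb{Z}[\alpha^{-1}] / \alpha^{m-1}\mathbb{Z}[\alpha^{-1}]$, and by hypothesis $\mathcal{D}$ hits every such residue class. Picking $d \in \mathcal{D}$ congruent to $-\alpha x$ modulo $\alpha^{m-1}\mathbb{Z}[\alpha^{-1}]$ yields $\alpha x + d \in \alpha^{m-1}\mathbb{Z}[\alpha^{-1}]$, hence $\alpha x + d \in \Lambda_{\alpha,m}$.

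For the nonemptiness of $\mathcal{G}(x)$, I would build a sequence $(d_j)_{j \ge 1}$ in $\mathcal{D}$ recursively. Set $x_0 = x$ and, assuming $x_{k-1} \in \Lambda_{\alpha,m}$, choose $d_k \in \mathcal{D}$ by the previous paragraph so that $x_k := \alpha x_{k-1} + d_k \in \Lambda_{\alpha,m}$. A straightforward induction gives
\[
x_k = \alpha^k x + \sum_{j=1}^k d_j \alpha^{k-j} \in \Lambda_{\alpha,m} \qquad (k \ge 0)\,.
\]
This is exactly the admissibility condition appearing in Lemma~\ref{l:G}, so $\Phi_\alpha(x) + \sum_{j=1}^\infty \Phi_\alpha(d_j \alpha^{-j}) \in \mathcal{G}(x)$, proving $\mathcal{G}(x) \neq \emptyset$.

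There is no real obstacle here; the only subtlety is to realise that the relevant congruence lives in the quotient $\alpha^m \mathbb{Z}[\alpha^{-1}] / \alpha^{m-1}\mathbb{Z}[\alpha^{-1}]$ (where both $\alpha x$ and each $d$ live), which is precisely what the hypothesis on $\mathcal{D}$ controls. Once that identification is made, the one-step claim is immediate, and the infinite construction is just its iteration, matched against the description of $\mathcal{G}(x)$ in Lemma~\ref{l:G}.
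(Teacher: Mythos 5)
Your proposal is correct and matches the paper's own argument: the same residue-class step in $\alpha^m \mathbb{Z}[\alpha^{-1}] / \alpha^{m-1}\mathbb{Z}[\alpha^{-1}]$ (using $\alpha x + d \in \mathbb{Z}[\alpha] \cap \alpha^m\mathbb{Z}[\alpha^{-1}]$ and the complete residue system hypothesis), followed by the same inductive construction of a digit sequence and the appeal to the description of $\mathcal{G}(x)$ in Lemma~\ref{l:G}. No issues to report.
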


\begin{proof}
For each $x \in \Lambda_{\alpha,m}$, we have $\alpha x + d \in \mathbb{Z}[\alpha] \cap \alpha^m \mathbb{Z}[\alpha^{-1}]$ for all $d \in \mathcal{D}$.
Since $\mathcal{D}$ contains a complete residue system of $\alpha^m \mathbb{Z}[\alpha^{-1}] / \alpha^{m-1} \mathbb{Z}[\alpha^{-1}]$, there exists some $d \in \mathcal{D}$ such that $\alpha x + d \in \alpha^{m-1} \mathbb{Z}[\alpha^{-1}]$.
Inductively, we obtain a sequence $d_1, d_2, \ldots$ such that $\alpha^k x + \sum_{j=1}^k d_j \alpha^{k-j} \in \Lambda_{\alpha,m}$ for all $k\ge 0$, thus $\Phi_\alpha(x) + \sum_{j=1}^\infty \Phi_\alpha(d_j \alpha^{-j}) \in \mathcal{G}(x)$.
\end{proof}

To prepare the proof of Theorem~\ref{t:24}~(\ref{almostperiodic}), we start with the following representation of~$\mathcal{G}(x)$.

\begin{lemma} \label{l:GLim}
Suppose that $\mathcal{D}$ contains a complete residue system of $\alpha^m \mathbb{Z}[\alpha^{-1}] / \alpha^{m-1} \mathbb{Z}[\alpha^{-1}]$.
Then, for each $x \in \Lambda_{\alpha,m}$, we have
\[
\mathcal{G}(x) = \mathop{\mathrm{Lim}}_{k\to\infty} \Phi_\infty\big(\alpha^{-k} \big(T_\alpha^{-k}(x) \cap \Lambda_{\alpha,m}\big)\big)\,,
\]
where the limit is taken with the respect to the Hausdorff distance~$\delta_H$.
\end{lemma}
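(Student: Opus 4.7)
The plan is to parametrise both sides of the identity via digit expansions, using Lemma~\ref{l:G}, and then to realise the elements of $\Phi_\infty(\alpha^{-k}(T_\alpha^{-k}(x) \cap \Lambda_{\alpha,m}))$ as $k$-th partial sums of the infinite series describing points of $\mathcal{G}(x)$. The backbone of the argument is the following bijection: points $y \in T_\alpha^{-k}(x) \cap \Lambda_{\alpha,m}$ correspond to finite tuples $(e_1,\ldots,e_k) \in \mathcal{D}^k$ such that $y_l := \alpha^l x + \sum_{j=1}^l e_j\, \alpha^{l-j} \in \Lambda_{\alpha,m}$ for every $0 \le l \le k$. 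Indeed, setting $y_l := T_\alpha^{k-l}(y)$ gives $y_0 = x$, $y_k = y$, and the standard digit set property furnishes a unique $e_l \in \mathcal{D}$ with $y_l = \alpha y_{l-1} + e_l$; all $y_l$ lie in $\Lambda_{\alpha,m}$ by Lemma~\ref{l:betaxd}. The converse direction is immediate from the same recursion together with uniqueness of $T_\alpha$. Crucially, under this identification
\[
\Phi_\infty(\alpha^{-k} y) \;=\; \Phi_\infty(x) + \sum_{l=1}^k \Phi_\infty(e_l \alpha^{-l}),
\]
so the sets appearing on the right-hand side of the claimed identity are exactly the $k$-th partial sums of admissible series representations of points in $\mathcal{G}(x)$.

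To prove that every point of $\mathcal{G}(x)$ is approximated, I would take $\mathbf{z} = \Phi_\infty(x) + \sum_{l \ge 1} \Phi_\infty(e_l \alpha^{-l}) \in \mathcal{G}(x)$ as in Lemma~\ref{l:G} and use its $k$-th truncation $y_k$, which lies in $T_\alpha^{-k}(x) \cap \Lambda_{\alpha,m}$ by the above correspondence. Since $\mathcal{D}$ is finite and $|\alpha^{-1}|_\mathfrak{p} < 1$ for each $\mathfrak{p} \mid \infty$, there is a constant $C$ depending only on $\alpha$ and $\mathcal{D}$ such that
\[
\big\|\mathbf{z} - \Phi_\infty(\alpha^{-k} y_k)\big\| \;\le\; C \max_{\mathfrak{p} \mid \infty} |\alpha^{-k}|_\mathfrak{p},
\]
which tends to zero. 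For the reverse inclusion, given $y \in T_\alpha^{-k}(x) \cap \Lambda_{\alpha,m}$ with associated tuple $(e_1,\ldots,e_k)$, I would invoke Lemma~\ref{l:continuation}: applied to $y \in \Lambda_{\alpha,m}$ it supplies $e_{k+1} \in \mathcal{D}$ with $\alpha y + e_{k+1} \in \Lambda_{\alpha,m}$, and iteration produces an infinite admissible extension $(e_l)_{l \ge 1}$. The corresponding point $\mathbf{z} \in \mathcal{G}(x)$ then satisfies the same tail estimate, so every element of $\Phi_\infty(\alpha^{-k}(T_\alpha^{-k}(x) \cap \Lambda_{\alpha,m}))$ lies within $O(\max_\mathfrak{p} |\alpha^{-k}|_\mathfrak{p})$ of $\mathcal{G}(x)$.

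Putting both inclusions together yields the uniform estimate $\delta_H\bigl(\mathcal{G}(x),\,\Phi_\infty(\alpha^{-k}(T_\alpha^{-k}(x)\cap\Lambda_{\alpha,m}))\bigr) \le C \max_{\mathfrak{p}\mid\infty} |\alpha^{-k}|_\mathfrak{p}$, which proves the Hausdorff limit identity and incidentally delivers the rate of convergence required later for Theorem~\ref{t:24}~(\ref{almostperiodic}). The substantive step is the reverse inclusion: it is here that the hypothesis that $\mathcal{D}$ contains a complete residue system of $\alpha^m \mathbb{Z}[\alpha^{-1}] / \alpha^{m-1}\mathbb{Z}[\alpha^{-1}]$ enters decisively, through Lemma~\ref{l:continuation}, in order to extend a finite admissible tuple to an infinite one; without this hypothesis a given $y \in T_\alpha^{-k}(x) \cap \Lambda_{\alpha,m}$ need not correspond to any genuine point of $\mathcal{G}(x)$. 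Beyond that, the argument is essentially bookkeeping plus a geometric tail estimate.
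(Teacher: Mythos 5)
Your proposal is correct and is essentially the paper's argument: the paper packages your digit-string bijection as the iterated set equation of Proposition~\ref{p:seteqG} combined with Lemma~\ref{l:G}, uses Lemma~\ref{l:continuation} in exactly the same decisive way to guarantee that each piece (equivalently, each finite admissible tuple) extends, and obtains the same Hausdorff estimate $\delta_H \le c'\max_{\mathfrak{p}\mid\infty}|\alpha^{-k}|_\mathfrak{p}$, there via $\mathcal{G}(y)\subset\Phi_\infty(y)+\pi_\infty(\mathcal{F})$ rather than your explicit geometric tail bound. The two formulations are interchangeable, so no further comment is needed.
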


\begin{proof}
Let $x \in \Lambda_{\alpha,m}$.
By Proposition~\ref{p:seteqG} and Lemma~\ref{l:G}, we have
\[
\mathcal{G}(x) = \bigcup_{y \in T_\alpha^{-k}(x) \cap \Lambda_{\alpha,m}} \alpha^{-k} \cdot \mathcal{G}(y)\,.
\]
All the involved sets are non-empty by Lemma~\ref{l:continuation}, and
\[
\max_{y \in \Lambda_{\alpha,m}} \mathrm{diam}\big(\alpha^{-k} \cdot \mathcal{G}(y)\big) \le c' \max_{\mathfrak{p}\mid\infty} |\alpha^{-k}|_\mathfrak{p}
\]
for some $c' > 0$ because $\mathcal{G}(y) \subset \pi_\infty(\mathcal{F})$.
This implies that
\begin{equation} \label{e:deltaH}
\delta_H\Big(\mathcal{G}(x), \Phi_\infty\big(\alpha^{-k} \big(T_\alpha^{-k}(x) \cap \Lambda_{\alpha,m}\big)\big)\Big) \le c' \max_{\mathfrak{p}\mid\infty} |\alpha^{-k}|_\mathfrak{p}\,.
\end{equation}
Since $\alpha^{-1}$ is contracting, this yields the lemma.
\end{proof}

\begin{lemma} \label{l:xminusy}
Let $x, y \in \Lambda_{\alpha,m}$, $k \ge 0$ such that $x-y \in \Lambda_{\alpha,m-k}$.
Then
\[
\alpha^{-k} \big(T_\alpha^{-k}(x) \cap \Lambda_{\alpha,m}\big) - x = \alpha^{-k} \big(T_\alpha^{-k}(y) \cap \Lambda_{\alpha,m}\big) - y\,.
\]
\end{lemma}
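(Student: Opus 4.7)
The plan is first to give a concrete description of the inverse image $T_\alpha^{-k}(x)$, and then reduce the equality in the statement to the single inclusion $\alpha^k(x-y) \in \Lambda_{\alpha,m}$, which will follow directly from the hypothesis $x-y \in \Lambda_{\alpha,m-k}$.

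Concretely, I would first unfold $T_\alpha^{-k}$. Recall that $T_\alpha(z) = \alpha^{-1}(z-d)$ with $d$ the unique digit in $\mathcal{D}$ lying in the coset $z + \alpha\mathbb{Z}[\alpha]$; since $\mathcal{D}$ is a standard digit set, preimages under $T_\alpha$ are free, and iterating one verifies that
\[
T_\alpha^{-k}(x) \;=\; \alpha^k x + \mathcal{D}_k\,,
\]
with $\mathcal{D}_k = \{\sum_{j=0}^{k-1} \alpha^j d_j : d_j \in \mathcal{D}\}$ as in~\eqref{iteratedset}. (Indeed, for any $w = \alpha^k x + \sum_{j=0}^{k-1} \alpha^j d_j$ one reads off $w \equiv d_0 \pmod{\alpha\mathbb{Z}[\alpha]}$, so $T_\alpha(w) = \alpha^{k-1}x + \sum_{j=0}^{k-2}\alpha^j d_{j+1}$, and induction gives $T_\alpha^k(w) = x$; conversely every element of $T_\alpha^{-k}(x)$ is of this shape by unrolling $k$ steps.)

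Using this, the left-hand side becomes
\[
\alpha^{-k}\bigl(T_\alpha^{-k}(x)\cap\Lambda_{\alpha,m}\bigr) - x
\;=\; \alpha^{-k}\bigl\{d\in\mathcal{D}_k : \alpha^k x + d \in \Lambda_{\alpha,m}\bigr\},
\]
and similarly for $y$. Hence the asserted equality is equivalent to the equality of digit sets
\[
\bigl\{d\in\mathcal{D}_k : \alpha^k x + d \in \Lambda_{\alpha,m}\bigr\} \;=\; \bigl\{d\in\mathcal{D}_k : \alpha^k y + d \in \Lambda_{\alpha,m}\bigr\}.
\]
Since $\Lambda_{\alpha,m}$ is an additive group, this in turn reduces to proving the single fact $\alpha^k(x-y) \in \Lambda_{\alpha,m}$.

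This last step is immediate from the definition $\Lambda_{\alpha,\ell} = \mathbb{Z}[\alpha]\cap \alpha^{\ell-1}\mathbb{Z}[\alpha^{-1}]$: from $x-y\in\Lambda_{\alpha,m-k}\subseteq\mathbb{Z}[\alpha]$ we get $\alpha^k(x-y)\in\mathbb{Z}[\alpha]$, and from $x-y\in\alpha^{m-k-1}\mathbb{Z}[\alpha^{-1}]$ we get $\alpha^k(x-y)\in\alpha^{m-1}\mathbb{Z}[\alpha^{-1}]$. Hence $\alpha^k(x-y)\in\Lambda_{\alpha,m}$, which finishes the proof. There is really no obstacle to speak of here; the only point requiring a bit of care is the bookkeeping for $T_\alpha^{-k}(x) = \alpha^k x + \mathcal{D}_k$, which relies essentially on the fact that $\mathcal{D}$ is a complete residue system modulo $\alpha\mathbb{Z}[\alpha]$.
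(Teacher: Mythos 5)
Your proof is correct and follows essentially the same route as the paper: both reduce the claim to the observation that $\alpha^k(x-y)\in\Lambda_{\alpha,m}$ (immediate from $x-y\in\Lambda_{\alpha,m-k}$) and then compare the digit sets $\{d\in\mathcal{D}_k:\alpha^k x+d\in\Lambda_{\alpha,m}\}$ and $\{d\in\mathcal{D}_k:\alpha^k y+d\in\Lambda_{\alpha,m}\}$, using $T_\alpha^{-k}(x)=\alpha^k x+\mathcal{D}_k$. The only difference is that you spell out the preimage description of $T_\alpha^{-k}$, which the paper uses implicitly.
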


\begin{proof}
Let $x, y \in \Lambda_{\alpha,m}$, $k \ge 0$ such that $x-y \in \Lambda_{\alpha,m-k}$.
This implies that $\alpha^k (x-y) \in \Lambda_{\alpha,m}$.
Therefore, for each $d \in \mathcal{D}_k$, $\alpha^k\, x + d \in \Lambda_{\alpha,m}$ is equivalent to $\alpha^k\, y + d \in \Lambda_{\alpha,m}$.
We get that
\begin{multline*}
\big(T_\alpha^{-k}(x) \cap \Lambda_{\alpha,m}\big) - \alpha^k\, x = \{d \in \mathcal{D}_k:\, \alpha^k\, x + d \in \Lambda_{\alpha,m}\} \\
= \{d \in \mathcal{D}_k:\, \alpha^k\, y + d \in \Lambda_{\alpha,m}\} = \big(T_\alpha^{-k}(y) \cap \Lambda_{\alpha,m}\big) - \alpha^k\, y\,.
\end{multline*}
Multiplying by~$\alpha^{-k}$, the lemma follows.
\end{proof}

\begin{proof}[{Proof of Theorem~\ref{t:24}}]
The fact that $\{\mathcal{G}(x):\, x \in \Lambda_{\alpha,m} \cap \mathbb{Z}\langle \alpha, \mathcal{D}\rangle\}$ forms a tiling of~$\mathbb{K}_\infty$ follows directly from Theorem~\ref{srstiling} and Lemma~\ref{l:G}.
Assertion (\ref{Gnotempty}) is the content of Lemmas~\ref{l:G} and~\ref{l:continuation}.
Finally, assertion (\ref{almostperiodic}) is a consequence of Lemma~\ref{l:xminusy} and~\eqref{e:deltaH}.
Indeed, we obtain that
\begin{align*}
\delta_H\big(\mathcal{G}(x) - \Phi_\infty(x), \mathcal{G}(y) - \Phi_\infty(y)\big) & \le \delta_H\Big(\mathcal{G}(x) - \Phi_\infty(x), \Phi_\infty\big(\alpha^{-k} \big(T_\alpha^{-k}(x) \cap \Lambda_{\alpha,m}\big) - x\big)\Big) \\
& \quad + \delta_H\Big(\Phi_\infty\big(\alpha^{-k} \big(T_\alpha^{-k}(y) \cap \Lambda_{\alpha,m}\big) - y\big), \mathcal{G}(y) - \Phi_\infty(y)\Big) \\
& \le 2 c' \max_{\mathfrak{p}\mid\infty} |\alpha^{-k}|_\mathfrak{p}\,. \qedhere
\end{align*}
\end{proof}

The following example shows that the set $\{x \in \mathbb{Z}[\alpha]:\, \mathcal{G}(x) \neq \emptyset\}$ need not be a $\mathbb{Z}$-module when the condition of Lemma~\ref{l:continuation} is not satisfied.

\begin{example}
Let $\alpha = \frac{4}{3}$ and $\mathcal{D} = \{0, 1, 2, \frac{1}{3}\} \subset \alpha \mathbb{Z}[\alpha^{-1}]$, which gives the tiling depicted in Figure~\ref{fig:m34}.
For this choice of $\alpha$ and~$\mathcal{D}$, \eqref{em} holds with~$m = 1$, and Lemma~\ref{l:G} implies that $\mathcal{G}(x) \ne \emptyset$ holds if and only if $x \in T_\alpha^k(\Lambda_{\alpha,1})$ for all $k \ge 0$.
We first observe that $\Lambda_{\alpha,1} = \mathbb{Z}[\tfrac{1}{3}] \cap \mathbb{Z}[\tfrac{1}{4}] = \mathbb{Z}$ and
\[
T_\alpha(\Lambda_{\alpha,1}) = \{x \in \mathbb{Z}:\, \alpha x + d \in \mathbb{Z}\ \mbox{for some}\ d \in \mathcal{D}\} = \{x \in \mathbb{Z}:\, x \not\equiv 1 \bmod 3\}\,.
\]
Inductively, we obtain that
\begin{align*}
T_\alpha^k(\Lambda_{\alpha,1}) & = \{x \in T_\alpha^{k-1}(\Lambda_{\alpha,1}):\, \alpha x + d \in T_\alpha^{k-1}(\Lambda_{\alpha,1})\ \mbox{for some}\ d \in \mathcal{D}\} \\
& = \{x \in \mathbb{Z}:\, x \not\equiv 2\,3^j-1 \bmod 3^{j+1}\ \mbox{for all}\ 0 \le j < k\}\,,
\end{align*}
e.g. $T_\alpha^4(\Lambda_{\alpha,1}) = \{0,2,3,6,8,9,11,12,15,18,20,21,24,26\} + 27\, \mathbb{Z}$.
Therefore, the set $\mathcal{G}(x)$, $x \in \mathbb{Z}[\alpha]$, is non-empty if and only if $x \in \mathbb{Z}$ and $x \not\equiv 2\,3^j-1 \bmod 3^{j+1}$ for all $j \ge 0$.

Since $T_\alpha^{-1}(-3) = \{-3\}$ and $T_\alpha^{-1}(-1) = \{-1\}$, we have $\mathcal{G}(-3) = \{0\} = \mathcal{G}(-1)$.

\begin{figure}[ht]
\includegraphics{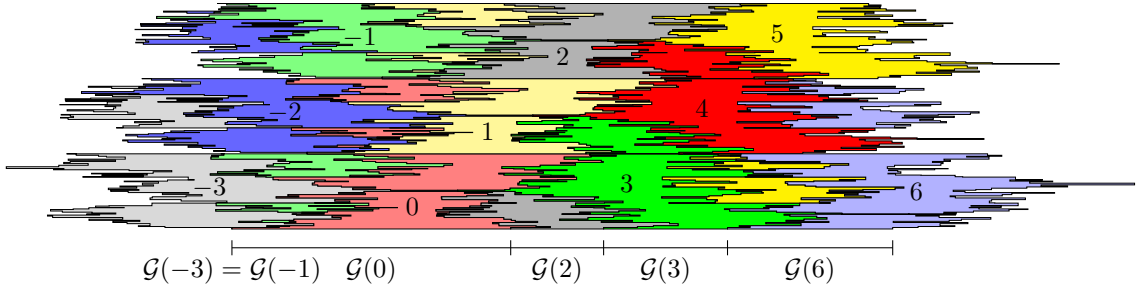}
\caption{The tiles $\mathcal{F} + \Phi_\alpha(x) \in \mathbb{R} \times \mathbb{Q}_3$ for $\alpha = \frac{4}{3}$, $\mathcal{D} = \{0,1,2,\frac{1}{3}\}$, $x \in \{-6,-5,\ldots,3\}$, and the corresponding intersective tiles~$\mathcal{G}(x) \in \mathbb{R}$.
An element $\sum_{j=k}^\infty b_j \alpha^{-j}$ of~$\mathbb{Q}_3$, with $b_j \in \{0,1,2\}$, is represented by $\sum_{j=k}^\infty b_j 3^{-j}$.} \label{fig:m34}
\end{figure}
\end{example}

\subsection*{SRS tiles}
We will now relate SRS tiles with intersective tiles given by digit sets of the shape $\mathcal{D} = \{0, 1, \ldots, |a_0|-1\}$.
This will enable us to infer that Theorem~\ref{srsthm} is a consequence of Theorem~\ref{t:24}.

Recall that the minimal polynomial $A(X) = a_n X^n + \cdots + a_1 X + a_0$ of $\alpha$ is a primitive expanding polynomial.
Therefore, we have $|a_0| > |a_n|$, and the set $\mathcal{D} = \{0, 1, \ldots, |a_0|-1\}$ contains a complete residue system of $\mathbb{Z}[\alpha^{-1}] / \alpha^{-1} \mathbb{Z}[\alpha^{-1}]$.
By Lemmas~\ref{l:G} and~\ref{l:continuation}, this implies that $\mathcal{G}(x)$ is non-empty if and only if $x \in \Lambda_{\alpha,0}$.
We first determine a basis of~$\Lambda_{\alpha,0}$.

\begin{lemma} \label{l:Lambda}
The $\mathbb{Z}$-module $\Lambda_{\alpha,0}$ is generated by $w_0 = a_n$, $w_i = \alpha w_{i-1} + a_{n-i} $, $1 \le i < n$.
\end{lemma}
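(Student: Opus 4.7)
The plan is to verify the two inclusions $L \subseteq \Lambda_{\alpha,0}$ and $\Lambda_{\alpha,0} \subseteq L$, where $L := \sum_{i=0}^{n-1}\mathbb{Z}\,w_i$.

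For the first inclusion, an easy induction on $i$ using the defining recursion yields the closed form $w_i = a_n\alpha^i + a_{n-1}\alpha^{i-1} + \cdots + a_{n-i}$, whence $w_i \in \mathbb{Z}[\alpha]$. Multiplying by $\alpha^{n-i}$ and using $A(\alpha) = 0$, we obtain $w_i\,\alpha^{n-i} = -(a_{n-i-1}\alpha^{n-i-1} + \cdots + a_0)$, hence
\[
w_i = -a_{n-i-1}\alpha^{-1} - a_{n-i-2}\alpha^{-2} - \cdots - a_0\alpha^{-(n-i)}.
\]
Since $n-i \ge 1$ for $0 \le i \le n-1$, this shows $w_i \in \alpha^{-1}\mathbb{Z}[\alpha^{-1}]$, so $L \subseteq \Lambda_{\alpha,0}$.

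For the reverse inclusion, fix $x \in \Lambda_{\alpha,0}$. Since $x \in \alpha^{-1}\mathbb{Z}[\alpha^{-1}]$, I may write $x = \sum_{k=1}^N b_k\alpha^{-k}$ with $b_k \in \mathbb{Z}$, and argue by induction on $N$ that $x \in L$ (the case $N=0$, where $x=0$, is trivial). The crucial step is the divisibility claim $a_0 \mid b_N$. Using $x \in \mathbb{Z}[\alpha]$, write also $x = \sum_{j=0}^M d_j\alpha^j$ with $d_j \in \mathbb{Z}$, so that
\[
p(X) := \sum_{j=0}^M d_j X^{N+j} - \sum_{k=1}^N b_k X^{N-k} \in \mathbb{Z}[X]
\]
satisfies $p(\alpha) = 0$. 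Primitivity of $A$ together with Gauss's lemma implies that $p = A\cdot g$ for some $g\in\mathbb{Z}[X]$; comparing constant terms gives $-b_N = p(0) = a_0\,g(0)$, hence $a_0 \mid b_N$.

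Equipped with this, I distinguish two cases. If $N > n$, the relation $A(\alpha)\alpha^{-N} = 0$ yields $a_0\alpha^{-N} = -\sum_{i=1}^n a_i\alpha^{i-N}$ with all exponents $i-N$ strictly negative. Substituting $b_N\alpha^{-N} = (b_N/a_0)\cdot a_0\alpha^{-N}$ produces a new representation of $x$, again with integer coefficients (since $b_N/a_0\in\mathbb{Z}$) and top index at most $N-1$, so the induction hypothesis gives $x \in L$. If instead $1 \le N \le n$, then $n-N \in \{0,\ldots,n-1\}$ and the closed form for $w_{n-N}$ shows that its $\alpha^{-N}$-coefficient equals $-a_0$. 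Hence $x' := x + (b_N/a_0)\,w_{n-N}$ lies in $\Lambda_{\alpha,0}$ and admits a representation with top index at most $N-1$; by induction $x' \in L$, and therefore $x = x' - (b_N/a_0)\,w_{n-N} \in L$. The main obstacle is the divisibility claim $a_0 \mid b_N$, where primitivity of $A$ enters essentially through Gauss's lemma: without primitivity, $p$ would only be guaranteed to be a $\mathbb{Q}[X]$-multiple of $A$, and the integrality needed to carry out either reduction step in $\mathbb{Z}$-coefficients would break down.
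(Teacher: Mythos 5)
Your argument is correct, and it takes the dual route to the one in the paper. The inclusion $L\subseteq\Lambda_{\alpha,0}$ is handled identically (the two expansions of $w_i$ in positive and negative powers of~$\alpha$). For the converse, the paper writes $x=P(\alpha)=Q(\alpha)$ with $P\in\mathbb{Z}[X]$, $Q\in X^{-1}\mathbb{Z}[X^{-1}]$, uses primitivity of $A$ (Gauss's lemma) to see that $P-Q$ is a multiple of $A$ in $\mathbb{Z}[X,X^{-1}]$, and exploits this at the \emph{top}: the leading coefficient of $P$ is divisible by $a_n$, which first allows the reduction to $\deg P<n$ and then drives a Horner-type scheme $P_{i+1}(X)=P_i(X)-b_{n-i}X^{i-n}A(X)$ producing integers $b_{n-1},\ldots,b_0$ with $x=\sum_{i=0}^{n-1}b_i w_i$. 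You instead expand $x$ on the $\mathbb{Z}[\alpha^{-1}]$ side, induct on the length $N$ of that expansion, and exploit the same Gauss-type divisibility at the \emph{bottom}: comparing constant terms in $p=A\,g$ gives $a_0\mid b_N$, after which the deepest term is stripped off either via $a_0\alpha^{-N}=-\sum_{i=1}^n a_i\alpha^{i-N}$ (when $N>n$) or by subtracting $(b_N/a_0)\,w_{n-N}$ (when $1\le N\le n$), whose $\alpha^{-N}$-coefficient is $-a_0$. The two proofs are mirror images --- divisibility by the constant coefficient $a_0$ versus by the leading coefficient $a_n$ --- and both invoke primitivity of $A$ exactly where you say it is essential. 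Your version has somewhat cleaner inductive bookkeeping (a single divisibility claim, then a one-step reduction), while the paper's version has the mild advantage of explicitly producing the coordinates $b_0,\ldots,b_{n-1}$ of $x$ in the basis $w_0,\ldots,w_{n-1}$ rather than only membership in~$L$.
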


\begin{proof}
For $0 \le i < n$, we have $w_i = \sum_{j=0}^i a_{n-i+j} \alpha^j = - \sum_{j=i-n}^{-1} a_{n-i+j} \alpha^j$, thus $w_i \in \Lambda_{\alpha,0}$ for $0 \le i < n$.
Clearly, the $\mathbb{Z}$-module generated by these elements is in $\Lambda_{\alpha,m}$ as well.

Let $x \in \Lambda_{\alpha,0}$, i.e., $x = P(\alpha)$ and $x = Q(\alpha)$ with polynomials $P \in \mathbb{Z}[X]$, $Q \in X^{-1} \mathbb{Z}[X^{-1}]$.
Therefore, $P(X) - Q(X)$ is a multiple of~$A(X)$ in $\mathbb{Z}[X,X^{-1}]$ and, hence, the leading coefficent $p$ of $P(X)$ is divisible by~$a_n$.
If $\mathrm{deg}(P) \ge n$, then $R(X) = P(X) - \frac{p}{a_n} X^{\mathrm{deg}(P)-n} A(X) \in \mathbb{Z}[X]$ gives the alternative representation $x = R(\alpha)$, where $\mathrm{deg}(R) < \mathrm{deg}(P)$.
Therefore, we can assume that $\mathrm{deg}(P) < n$., i.e., $x = \sum_{j=0}^{n-1} p_j \alpha^j$ with $p_j \in \mathbb{Z}$.

Set $P_1(X) = \sum_{j=0}^{n-1} p_j X^j$ and
\[
b_{n-1} = \frac{p_{n-1}}{a_n}\,,\ b_{n-2} = \frac{p_{n-2} - b_{n-1} a_{n-1}}{a_n}\,,\ \dots\,,\ b_0 = \frac{p_0 - b_{n-1} a_1 - \cdots - b_1 a_{n-1}}{a_n}\,.
\]
Define, recursively for $1 \le i < n$, the Laurent polynomials
\[
P_{i+1}(X) = P_i(X) - b_{n-i} X^{i-n} A(X)\,.
\]
Inductively we obtain that $a_n b_{n-i}$ is the coefficient of $X^{n-i}$ in~$P_i(X)$, and it is either $0$ or the leading coefficient of~$P_i(X)$.
Now, $P_i(\alpha) - Q(\alpha) = 0$ implies that $a_n b_{n-i}$ is divisible by~$a_n$, hence $b_{n-i}$ is an integer.
By the definition of~$w_i$, we have $\sum_{j=0}^{n-1} p_j \alpha^j = \sum_{i=0}^{n-1} b_i w_i$.
Therefore, $x$~is in the $\mathbb{Z}$-module generated by $w_0,\ldots,w_{n-1}$.
\end{proof}

In view of Lemma~\ref{l:Lambda}, we define the mapping
\[
\iota_\alpha:\ \mathbb{Q}^n \to \mathbb{Q}(\alpha)\,, \quad (z_0, \ldots, z_{n-1}) \mapsto \mathop{\mathrm{sgn}}(a_0) \sum_{i=0}^{n-1} z_i w_i\,.
\]
The following proposition is the transcription of \cite[Theorem~5.12]{BSSST:11} into our setting.
Since establishing all the necessary notational correspondences is more complicated than giving a proof, we include its proof.

\begin{proposition} \label{p:SRS}
Let $\mathcal{D} = \{0, 1, \ldots, |a_0|-1\}$, $\mathbf{r} = (\frac{a_n}{a_0},\ldots,\frac{a_1}{a_0})$.
Then we have
\[
\mathcal{G}\big(\iota_\alpha(\mathbf{z})\big) = \Upsilon\big(\mathcal{T}_\mathbf{r}(\mathbf{z})\big) \quad \mbox{for all}\ \mathbf{z} \in \mathbb{Z}^n,
\]
where $\Upsilon:\, \mathbb{R}^n \to \mathbb{K}_\infty$ is the linear transformation that is equal to $\Phi_\infty \circ \iota_\alpha$ on~$\mathbb{Q}^n$.
\end{proposition}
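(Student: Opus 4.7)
The plan is to match the two Hausdorff limit representations of the sides. On the SRS side we have $\mathcal{T}_\mathbf{r}(\mathbf{z})=\mathop{\mathrm{Lim}}_{k\to\infty}\mathbf{M}_\mathbf{r}^k\tau_\mathbf{r}^{-k}(\mathbf{z})$ by definition. On the intersective side, Lemma~\ref{l:GLim} is applicable with $m=0$, since $|a_0|>|a_n|$ forces $\mathcal{D}=\{0,1,\ldots,|a_0|-1\}$ to contain the complete residue system $\{0,1,\ldots,|a_n|-1\}$ of $\mathbb{Z}[\alpha^{-1}]/\alpha^{-1}\mathbb{Z}[\alpha^{-1}]$, so $\mathcal{G}(x)=\mathop{\mathrm{Lim}}_{k\to\infty}\Phi_\infty\bigl(\alpha^{-k}(T_\alpha^{-k}(x)\cap\Lambda_{\alpha,0})\bigr)$. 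To match these limits it suffices to establish three items: (A)~the conjugation $\iota_\alpha\circ\tau_\mathbf{r}=T_\alpha\circ\iota_\alpha$ on $\mathbb{Z}^n$; (B)~the conjugation $\Upsilon\circ\mathbf{M}_\mathbf{r}=\alpha^{-1}\cdot\Upsilon$ as $\mathbb{R}$-linear maps on $\mathbb{R}^n$; and (C)~the fact that $\iota_\alpha$ restricts to a bijection $\mathbb{Z}^n\to\Lambda_{\alpha,0}$.

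For~(A), fix $\mathbf{z}\in\mathbb{Z}^n$, set $x=\iota_\alpha(\mathbf{z})=\mathrm{sgn}(a_0)\sum_{i=0}^{n-1}z_iw_i$, and compute $\alpha\cdot\iota_\alpha(\tau_\mathbf{r}(\mathbf{z}))$ by means of the relations $\alpha w_i=w_{i+1}-a_{n-i-1}$ for $0\le i<n-1$ and $\alpha w_{n-1}=-a_0$, both immediate from the recursive definition of the $w_i$ together with $A(\alpha)=0$. A short telescoping then yields
\[
x-\alpha\cdot\iota_\alpha(\tau_\mathbf{r}(\mathbf{z}))=\mathrm{sgn}(a_0)\bigl(S-a_0\lfloor\mathbf{r}\mathbf{z}\rfloor\bigr)=|a_0|\,\{\mathbf{r}\mathbf{z}\},
\]
where $S=\sum_{j=0}^{n-1}z_ja_{n-j}=a_0\,\mathbf{r}\mathbf{z}$. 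The right-hand side is a nonnegative integer strictly below $|a_0|$, so it belongs to $\mathcal{D}$. Moreover, it is congruent to $x$ modulo $\alpha\mathbb{Z}[\alpha]$, and since $\mathcal{D}$ is a complete set of coset representatives of $\mathbb{Z}[\alpha]/\alpha\mathbb{Z}[\alpha]$, it must be the unique digit $d\in\mathcal{D}$ used in the definition of $T_\alpha(x)=\alpha^{-1}(x-d)$. Hence $T_\alpha(x)=\iota_\alpha(\tau_\mathbf{r}(\mathbf{z}))$, proving~(A).

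For~(B), decompose $\tau_\mathbf{r}(\mathbf{z})=\mathbf{M}_\mathbf{r}\mathbf{z}+\{\mathbf{r}\mathbf{z}\}\,e_n$ and $T_\alpha(x)=\alpha^{-1}x-|a_0|\{\mathbf{r}\mathbf{z}\}\,\alpha^{-1}$. Rewriting $\alpha w_{n-1}=-a_0$ as $|a_0|\alpha^{-1}=-\mathrm{sgn}(a_0)w_{n-1}=-\iota_\alpha(e_n)$ shows that the two fractional-part contributions match, so~(A) combined with the $\mathbb{Q}$-linearity of $\iota_\alpha$ yields $\iota_\alpha(\mathbf{M}_\mathbf{r}\mathbf{z})=\alpha^{-1}\iota_\alpha(\mathbf{z})$ on $\mathbb{Q}^n$; composing with the ring homomorphism $\Phi_\infty$ and extending $\mathbb{R}$-linearly to $\mathbb{R}^n$ produces~(B). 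Claim~(C) follows from Lemma~\ref{l:Lambda}: the change-of-basis matrix from $(1,\alpha,\ldots,\alpha^{n-1})$ to $(w_0,\ldots,w_{n-1})$ is lower triangular with every diagonal entry equal to $a_n\ne 0$, so $(w_0,\ldots,w_{n-1})$ is a $\mathbb{Z}$-basis of $\Lambda_{\alpha,0}$; the overall sign $\mathrm{sgn}(a_0)$ does not alter the image, since $\Lambda_{\alpha,0}$ is closed under negation.

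Finally, by Lemma~\ref{l:zLambda} the map $\Upsilon$ is an $\mathbb{R}$-linear homeomorphism (it carries $\mathbb{Z}^n$ onto the lattice $\Phi_\infty(\Lambda_{\alpha,0})$), hence it commutes with the Hausdorff limit. Iterating~(B) gives $\Upsilon(\mathbf{M}_\mathbf{r}^k\tau_\mathbf{r}^{-k}(\mathbf{z}))=\alpha^{-k}\cdot\Phi_\infty\bigl(\iota_\alpha(\tau_\mathbf{r}^{-k}(\mathbf{z}))\bigr)$, and iterating~(A) together with~(C) yields $\iota_\alpha(\tau_\mathbf{r}^{-k}(\mathbf{z}))=T_\alpha^{-k}(\iota_\alpha(\mathbf{z}))\cap\Lambda_{\alpha,0}$; substituting into the limit reproduces the representation of $\mathcal{G}(\iota_\alpha(\mathbf{z}))$ given by Lemma~\ref{l:GLim}, which is the claim. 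The only genuinely computational step is the telescoping identity in~(A); everything else is formal bookkeeping around the $\mathbb{Z}$-basis $(w_0,\ldots,w_{n-1})$ of $\Lambda_{\alpha,0}$.
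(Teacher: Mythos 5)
Your proposal is correct and follows essentially the same route as the paper: conjugating $\tau_\mathbf{r}$ to $T_\alpha$ via $\iota_\alpha$ on the $\mathbb{Z}$-basis $w_0,\ldots,w_{n-1}$ of $\Lambda_{\alpha,0}$ (Lemma~\ref{l:Lambda}), identifying $\mathbf{M}_\mathbf{r}$ with multiplication by $\alpha^{-1}$, iterating to get $\iota_\alpha(\mathbf{M}_\mathbf{r}^k\tau_\mathbf{r}^{-k}(\mathbf{z}))=\alpha^{-k}(T_\alpha^{-k}(\iota_\alpha(\mathbf{z}))\cap\Lambda_{\alpha,0})$, and concluding with Lemma~\ref{l:GLim}. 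You merely spell out a few steps the paper leaves implicit (the telescoping computation, the explicit residue system $\{0,\ldots,|a_n|-1\}$, and the basis/bijectivity bookkeeping), which is fine.
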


\begin{proof}
The matrix
\[
\mathbf{M}_\mathbf{r} = \begin{pmatrix}
0 & 1 & 0 & \cdots & 0 \\
\vdots & \ddots & \ddots & \ddots & \vdots \\
\vdots & & \ddots & \ddots & 0 \\
0 & \cdots & \cdots & 0 & 1 \\
\frac{-a_n}{a_0} & \frac{-a_{n-1}}{a_0} & \cdots & \frac{-a_2}{a_0} & \frac{-a_1}{a_0}
\end{pmatrix}
\]
represents the multiplication by $\alpha^{-1}$ with respect to the basis $\{w_0,w_1,\ldots,w_{n-1}\}$ of~$\mathbb{Q}(\alpha)$, which is defined in Lemma~\ref{l:Lambda}.
This means that $\alpha^{-1} \iota_\alpha(\mathbf{z}) = \iota_\alpha(M_\mathbf{r}\, \mathbf{z})$ for all $\mathbf{z} \in \mathbb{Q}^n$.
For $\mathbf{z} \in \mathbb{Z}^n$, we have, using the definition of~$\tau_\mathbf{r}$ and the fact that $w_{n-1} = - a_0 \alpha^{-1}$,
\begin{align*}
\iota_\alpha\big(\tau_\mathbf{r}(\mathbf{z})\big) & = \iota_\alpha\big(\mathbf{M}_\mathbf{r}\, \mathbf{z} + \big(0, \ldots, 0, \mathbf{r z} - \lfloor \mathbf{r z} \rfloor\big)\big) \\
& = \alpha^{-1} \iota_\alpha(\mathbf{z}) + \tfrac{d}{|a_0|} \mathop{\mathrm{sgn}}(a_0) w_{n-1} = \alpha^{-1} \big(\iota_\alpha(\mathbf{z}) - d\big) = T_\alpha\big(\iota_\alpha(\mathbf{z})\big)\,,
\end{align*}
where $d = |a_0|\, (\mathbf{r z} - \lfloor \mathbf{r z} \rfloor)$ is the unique element in $\mathcal{D}$ such that $\alpha^{-1} (\iota_\alpha(\mathbf{z}) - d) \in \mathbb{Z}[\alpha]$.
Iterating this and observing that $\mathbf{z} \in \mathbb{Z}^n$ is equivalent to $\iota_\alpha(\mathbf{z}) \in \Lambda_{\alpha,0}$, we obtain that
\[
\iota_\alpha\big(\mathbf{M}_\mathbf{r}^k\, \tau_\mathbf{r}^{-k}(\mathbf{z})\big) = \alpha^{-k}\, \Big(T_\alpha^{-k}\big(\iota_\alpha(\mathbf{z})\big) \cap \Lambda_{\alpha,0}\Big) \quad \mbox{for all}\ k \ge 0,
\]
and
\[
\Upsilon\big(\mathcal{T}_\mathbf{r}(\mathbf{z})\big) = \Upsilon\Big(\mathop{\mathrm{Lim}}_{k\to\infty} \mathbf{M}_\mathbf{r}^k\, \tau_\mathbf{r}^{-k}(\mathbf{z})\Big) = \mathop{\mathrm{Lim}}_{k\to\infty} \Phi_\infty\Big(\alpha^{-k}\, \Big(T_\alpha^{-k}\big(\iota_\alpha(\mathbf{z})\big) \cap \Lambda_{\alpha,0}\Big)\Big) = \mathcal{G}(\iota_\alpha(\mathbf{z}))\,. \qedhere
\]
\end{proof}

Now, we can conclude the proof of our last theorem.

\begin{proof}[{Proof of Theorem~\ref{srsthm}}]
This follows immediately from Theorem~\ref{t:24} and Proposition~\ref{p:SRS}.
\end{proof}

\section*{Acknowledgments}
We thank W.~Narkiewicz and A.~Schinzel for indicating an argument of J.~W\'ojcik that led to the proof of Lemma~\ref{measuremult}.
We are also indebted to B.~Conrad and F.~Halter-Koch for their help and to C.~van de Woestijne for many stimulating discussions.

\bibliographystyle{amsalpha}
\bibliography{padic}
\end{document}